\documentclass[12pt,a4paper]{article}

\usepackage{graphicx}
\usepackage{subfig}
\usepackage{amsmath,amsthm,amssymb}
\usepackage{esint}
\usepackage{mathrsfs}

\usepackage[T1]{fontenc}

\usepackage{hyperref}

\usepackage[left=2.2cm,right=2.2cm,top=3cm,bottom=3.5cm]{geometry}

\usepackage{color}
\usepackage{array}
\usepackage{multirow}

\theoremstyle{plain}
\newtheorem{thm}{Theorem}[section]
\newtheorem{prop}[thm]{Proposition}
\newtheorem{lemma}[thm]{Lemma}
\newtheorem{cor}[thm]{Corollary}
\newtheorem{taggedthmx}{Theorem}
\newenvironment{taggedthm}[1]
 {\renewcommand\thetaggedthmx{#1}\taggedthmx}
 {\endtaggedthmx}

\theoremstyle{definition}
\newtheorem{defn}[thm]{Definition}

\theoremstyle{remark}
\newtheorem{rem}[thm]{Remark}

\numberwithin{equation}{section}
\numberwithin{figure}{section}
\numberwithin{table}{section}

\usepackage{csquotes}
\usepackage[backend=bibtex8, style=numeric-comp, maxnames=6, sorting=nyt]{biblatex}
\usepackage{enumitem}
\renewcommand{\labelenumi}{(\alph{enumi})}
\renewcommand{\theenumi}{\labelenumi}

\newcommand{\R}{\mathbb{R}} 
\newcommand{\N}{\mathbb{N}}
\newcommand{\Z}{\mathbb{Z}}

\newcommand{\Grad}{\nabla}  
\newcommand{\Div}{{\rm div}\,} 
\newcommand{\Curl}{{\rm curl}\,} 
\newcommand{\Lap}{\Delta}  
\newcommand{\Gradh}{\nabla_h}  
\newcommand{\Divh}{{\rm div}_h\,} 
\newcommand{\parthree}[3]{\partial_{#1#2#3}^3}

\newcommand{\dx}{\,{\rm d}x}
\newcommand{\dy}{\,{\rm d}y}

\newcommand{\dt}{\,{\rm d}t}  
\newcommand{\dr}{\,{\rm d}r} 
\newcommand{\ds}{\,{\rm d}s} 
\newcommand{\dTheta}{\,{\rm d}\Theta} 
\newcommand{\dPhi}{\,{\rm d}\Phi} 
\newcommand{\dS}{\,{\rm d}S}

\newcommand{\id}{\mathbb{I}} 

\newcommand{\symz}[1]{{\rm Sym}_0(#1)}
\newcommand{\sz}{\symz{2}} 

\newcommand{\tr}{{\rm tr}\,} 
\newcommand{\trans}{\top} 

\newcommand{\closure}[1]{\ov{#1}} 
\newcommand{\interior}[1]{{#1}^\circ} 

\newcommand{\imag}{{\rm i}} 
\newcommand{\T}{\mathbb{T}} 
\renewcommand{\rho}{\varrho}
\newcommand{\half}{\tfrac{1}{2}}  
\newcommand{\Cc}{C^\infty_{\rm c}} 
\newcommand{\Cb}{C_{\rm b}} 
\newcommand{\Cweak}{C_{\rm weak}}
\newcommand{\ov}[1]{\overline{#1}}  
\newcommand{\un}[1]{\underline{#1}}  
\newcommand{\ep}{\varepsilon} 
\newcommand{\co}{{\rm co}} 
\newcommand{\pert}{{\rm pert}}
\newcommand{\dist}{{\rm dist}}
\newcommand{\sign}{{\rm sign}}
\newcommand{\incomp}{{\rm incomp}}
\newcommand{\hydrost}{{\rm hydrost}}
\newcommand{\Artanh}{{\rm Artanh}}
\renewcommand{\mod}{\,{\rm mod}\,}
\let\subsetnotused\subset 
\renewcommand{\subset}{\subseteq}
\newcommand{\sU}{\mathcal{U}}
\newcommand{\sW}{\mathcal{W}}
\newcommand{\sK}{\mathcal{K}}
\newcommand{\sM}{\mathcal{M}} 
\renewcommand{\setminus}{\smallsetminus}
\newcommand{\opL}{\mathscr{L}}
\newcommand{\subsetcomp}{\subsetnotused\joinrel\subsetnotused}

\newcommand{\tf}{{\rm f}}
\renewcommand{\emptyset}{\varnothing}
\newcommand{\name}[1]{\textsc{#1}} 
\newcommand{\I}{I} 
\newcommand{\topology}{\mathcal{T}}
\newcommand{\QGunknown}{V} 
\newcommand{\QGmatrixset}{\sM^{3\times 2}} 

\allowdisplaybreaks

\begin{document}

\title{A Generalized Framework for $L^r$ Convex Integration and its Application to Geophysical Models} 

\author{Daniel W. Boutros\footnote{Department of Applied Mathematics and Theoretical Physics, University of Cambridge, Cambridge CB3 0WA, UK. Email: \textsf{dwb42@cam.ac.uk}} \and Simon Markfelder\footnote{Department of Mathematics and Statistics, University of Konstanz, 78457 Konstanz, Germany. Email: \textsf{simon.markfelder@uni-konstanz.de}} \and Edriss S. Titi\footnote{Department of Mathematics, Texas A\&M University, College Station, TX 77843-3368, USA; Department of Applied Mathematics and Theoretical Physics, University of Cambridge, Cambridge CB3 0WA, UK; also Department of Computer Science and Applied Mathematics, Weizmann Institute of Science, Rehovot 76100, Israel. Emails: \textsf{titi@math.tamu.edu} \; \textsf{Edriss.Titi@maths.cam.ac.uk} }} 

\date{June 10, 2026}

\maketitle
	
\begin{abstract} 
    In this paper a general framework for convex integration is developed, in order to construct weak solutions to the Cauchy problem, by building on ideas from [C. De Lellis and L. Sz{\'e}kelyhidi, Arch. Ration. Mech. Anal., 195 (2010)] and [S. Markfelder, Nonlinearity, 37 (2024)]. This framework may be applied to a large family of partial differential equations in order to construct weak solutions in $L^\infty ((0,T) \times \Omega)$ (for a bounded domain $\Omega)$ which are weakly continuous in time with respect to the weak topology of $L^r (\Omega)$ for some $r \in (1,\infty)$. This allows us to construct solutions which obey an energy inequality. 

    In the second part of the paper we apply the framework to several inviscid models appearing in the field of geophysical fluid mechanics in order to show existence of (infinitely many) weak solutions for all initial data, and to prove that there exist initial data for which there are infinitely many solutions which satisfy an energy inequality (such initial data are sometimes called ``wild data''). More precisely, we first consider the incompressible and the barotropic compressible Euler equations to recover the corresponding results from the literature. In addition, the framework allows us to prove a new result for the incompressible Euler equations, namely the global existence for the Cauchy problem in $L^\infty$. We then apply the framework to the shallow water and lake equations. Moreover, we use the framework in the context of the hydrostatic Euler equations (also known as the incompressible inviscid primitive equations), which leads to the first convex integration approach which is able to construct admissible solutions with the natural energy for this system. A crucial ingredient in the proof of this result is the computation of a large subset of the convex hull, as an explicit characterization of the convex hull does not seem to be available. Finally, we apply the framework to the compressible inviscid primitive equations and to the inviscid quasi-geostrophic equations to obtain the first results on existence of wild data for these two geophysical models. 
\end{abstract}

\bigskip

\noindent\textbf{Keywords:} Convex Integration, Wild Solutions, Incompressible Euler Equations, Barotropic Euler Equations, Shallow Water Equations, Lake Equations, Hydrostatic Euler Equations, Inviscid Primitive Equations, Quasi-Geostrophic Equations, Non-Uniqueness, Weak Solutions, Convex Hull, Energy Inequality

\bigskip

\noindent\textbf{MSC (2020) codes:} 35F50 (primary), 35A01, 35A02, 76B03, 76N10, 35Q35, 35Q86 (secondary) 

\bigskip

\tableofcontents

\section{Introduction} \label{sec:intro} 

\subsection{Convex integration} \label{subsec:intro-ci} 

The technique called \emph{convex integration} has drawn substantial interest within the field of mathematical fluid mechanics in the past years. The ideas behind convex integration go back to \name{Nash}'s work on isometric embeddings (see, e.g., \cite{Nash54}) and were further developed by \name{Gromov} (see, e.g., \cite{Gromov}) who used the term \emph{convex integration} for the first time. 

More recently, this technique was brought into the area of mathematical fluid mechanics in order to construct solutions to partial differential equations (PDEs) that are studied in this field. More precisely, it was the pioneering work by \name{De~Lellis} and \name{Sz{\'e}kelyhidi}~\cite{DelSze09} that showed for the first time how convex integration can be used in this context. 

In \cite{DelSze09} weak solutions to the incompressible Euler equations (see Sect.~\ref{subsubsec:intro-incomp-euler} below) with compact support in time and space were constructed. In particular, these solutions do not conserve energy. In the follow-up work \cite{DelSze10}, solutions which satisfy several kinds of energy inequality were generated. Interestingly, a certain freedom when applying convex integration leads to non-uniqueness of the aforementioned solutions. Shortly after these results, \name{Wiedemann}~\cite{Wiedemann11} proved existence of infinitely many weak solutions of the incompressible Euler equations for all $L^2$ initial data. These solutions however do not comply with any kind of admissibility criterion regarding the energy. 

Later, convex integration was also applied to other systems of PDEs, in particular, the compressible Euler equations, see \cite{DelSze10,Chiodaroli14,Feireisl14,ChiDelKre15,AkrWie21,Markfelder,DebSkiWie23} among others, but also the Euler-Korteweg-Poisson system \cite{DonFeiMar15}, the Euler-Fourier system \cite{ChiFeiKre15} and the ideal MHD equations \cite{FarLinSze21,FarLinSze24}, just to list a few. 

In the works mentioned so far, the constructed solutions are merely in $L^r$ for some $r\in [1,\infty]$. Convex integration has been further refined in \cite{DelSze13} in order to construct continuous solutions to the incompressible Euler equations. After stepwise improvements of the regularity of the constructed solutions in \cite{IsettPHD,DelSze14,BuckmasterPHD,Buckmaster15,BDIS15,DanSze17}, convex integration was used by \name{Isett}~\cite{Isett18} and \name{Buckmaster~et~al.}~\cite{BDSV18} to complete the proof of a conjecture by \name{Onsager}~\cite{Onsager49}, which provides a regularity threshold for energy conservation/dissipation by a (weak) solution of the incompressible Euler equations. The conservation part of Onsager's conjecture, i.e.~the sufficient regularity conditions to guarantee energy conservation, has been established in \cite{constantin1994}, see also \cite{eyink1994} as well as \cite{duchon2000,cheskidov2008,bardos2018,bardos2025}. The two-dimensional case of the dissipative part of the conjecture was treated later in \cite{giri2024b}.

Convex integration was also used to construct H{\"o}lder continuous solutions to other systems of PDEs, e.g.~the ideal MHD equations \cite{beekie2020}. In particular, in the geophysical context there have been several papers applying convex integration to the Boussinesq equations, see for example \cite{miao2024,xu2025,tao2017,tao2018a,tao2018b,luo2020b}. In addition to that, a further type of convex integration scheme (usually called \emph{intermittent convex integration}) has been introduced in \cite{BucVic19} and utilized in several papers, e.g.~\cite{ModSze18,modena2019,modena2020,BucColVic22,BMNV23,novack2023,CheLuo22,cheskidov2021,cheskidov2023,cheskidov2024,luo2020a,giri2023,giri2024}. In some of the latter papers convex integration is applied to the transport equation and to the incompressible Navier-Stokes equations. However in this paper we will only deal with $L^r$ convex integration as has been explained before.

The main goal of this paper is to provide a general framework for $L^r$ convex integration, which is applicable to a large family of PDEs. In particular, we want to produce solutions to a linear, first order system of PDEs of the form of \eqref{eq:lin-eq} below, which take values in a family of so-called constitutive sets $\sK_\theta$, which usually contain a non-linear constraint. Such a problem (i.e., a linear, first order system of PDEs together with a non-linear constraint in the form of a constitutive set) is also known as a \emph{differential inclusion}. More details can be found in Sect.~\ref{subsubsec:gen-prelim-pde+K}, below. Following \emph{Tartar's framework} (see, e.g., \cite{Tartar79}), one can rewrite non-linear first order systems of PDEs as differential inclusions, see also Rem.~\ref{rem:tartars-framework}, below. This way, our framework can be used to solve non-linear first order systems of PDEs. 

A framework which is related to the one established in this paper, has been introduced by the second author in \cite{Markfelder24}. However, there are two main differences between the method developed in \cite{Markfelder24} and the technique established in this paper, namely regarding the continuity in time of the constructed solutions as well as the energy balance. The solutions produced in \cite{Markfelder24} are merely bounded, i.e., they lie in $L^\infty$ in time and space. Moreover, if one applies the framework \cite{Markfelder24} to a ``cylindrically shaped'' space-time domain $[0,T)\times \Omega$, one obtains an energy jump to a higher value at the initial time, which makes the solutions inadmissible in the sense that they do not satisfy the energy inequality. In the $L^\infty$ framework there is no way to fix this (at least in cylindrical space-time domains $[0,T)\times \Omega$). Since in \cite{Markfelder24} convex integration is applied in a ``wedge'' in space-time rather than in a cylindrical domain $[0,T)\times \Omega$, the constructed solutions are still admissible as at the initial time the wedge shrinks down to a single point. 

In contrast to that, in the current work we construct solutions which are continuous in time with respect to the weak topology of $L^r$, where $r\in (1,\infty)$. As a consequence of that, we can make sense of the solution not just almost everywhere in time and space, but \emph{everywhere} in time and almost everywhere in space. This way, we are able to construct admissible solutions to the Cauchy problem on cylindrical domains $[0,T)\times \Omega$. In particular, the solutions constructed in the present paper do not just take values in the constitutive set $\sK_\theta$ almost everywhere (like in \cite{Markfelder24}) but for all times and almost every position in space. This has an important advantage, namely it helps to construct initial data and infinitely many solutions which do not exhibit the aforementioned energy jump at the initial time. We emphasize that the aforementioned differences between the framework established in \cite{Markfelder24} and the present paper imply the need for several new ideas as part of the construction. 

For the incompressible Euler equations, solutions with these properties (i.e., weak continuity in time and taking values in $\sK_\theta$ for all $t$ and a.e.~$x$) were already constructed by \name{De~Lellis}-\name{Sz{\'e}kelyhidi}~\cite{DelSze10}. So our framework may also be viewed as a generalization of \cite{DelSze10} to a larger class of PDEs. 

Let us also mention that \name{Feireisl}~\cite{Feireisl16} developed a framework for $L^r$ convex integration (also producing solutions that are weakly continuous in time). This framework is however much less universal compared to ours as it only applies to a very special class of PDEs which are inspired by the incompressible Euler equation. In particular, the hydrostatic Euler equations \eqref{eq:hydrost-euler-mass}-\eqref{eq:hydrost-euler-p}, below, which we study in this paper (among other PDEs), cannot be treated by means of the framework \cite{Feireisl16}.

In contrast, our result may be applied to a very large class of PDEs (including the hydrostatic Euler equations \eqref{eq:hydrost-euler-mass}-\eqref{eq:hydrost-euler-p}, below), more precisely to their associated differential inclusions. This class is characterized by four ``structural'' assumptions which are stated rigorously in the main theorem (Thm.~\ref{thm:conv-int}, below). In particular, we need 
\begin{itemize}
    \item some mild assumptions on the family of constitutive sets $\sK_\theta$, 
    \item the existence of a differential operator which has certain properties with regard to the given linear system of PDEs,
    \item the convex hull and the so-called $\Lambda$-convex hull of the constitutive sets $\sK_\theta$ must coincide, and 
    \item existence of a distance map which enables us to measure how far a so-called \emph{subsolution} (which can be understood as special kind of approximate solution) is away from being a solution. 
\end{itemize} 
Here, the third item, above, (i.e., the fact that the convex hull and the $\Lambda$-convex hull coincide) tends to be the most stringent condition.

\subsection{Examples of PDEs to which the general framework is applicable} \label{subsec:intro-examples}

Another aim of this paper is to exhibit some examples of PDEs which fit into the general framework that is established in this paper. For the sake of completeness, we demonstrate that the framework is able to recover some existing results in the literature, but we also show that the framework is sufficiently general and flexible in order to improve these existing results (for example in the case of the incompressible Euler equations). One of the primary motivations is to apply the framework to a wide class of inviscid geophysical models, see below for more details of the models under consideration. For several of these models the method of convex integration as it exists in the literature does not apply in a straightforward manner. 

In this paper we are interested in two types of statements:

\begin{taggedthm}{A} \label{thm:general-exforalldata} 
    For all initial data, there exist infinitely many weak solutions. 
\end{taggedthm}
 
\begin{taggedthm}{B} \label{thm:general-wilddata} 
	There exist initial data for which there are infinitely many admissible weak solutions. 
\end{taggedthm}

In this paper, weak solutions are solutions in the sense of distributions which are bounded (i.e., in $L^\infty$ in space and time) and continuous in time with respect the the weak topology of some $L^r$, $r\in (1,\infty)$. Weak solutions are called \emph{admissible} if the corresponding local energy inequality, as it is described in Sects.~\ref{subsubsec:intro-incomp-euler}-\ref{subsubsec:intro-QG}, below, is satisfied in the sense of distributions. 

In Sects.~\ref{subsubsec:intro-incomp-euler}-\ref{subsubsec:intro-QG}, below, we will have a more detailed look at the PDEs which are considered in this paper. There, we will give rigorous definitions of the notions of \emph{weak solutions} and \emph{admissible weak solutions}. In addition, we will state the appropriate versions of Thms.~\ref{thm:general-exforalldata} and \ref{thm:general-wilddata} rigorously in each case. Note however, that for some PDEs we will only show a result of type Thm.~\ref{thm:general-wilddata}. In this case we will explain the reason for not showing Thm.~\ref{thm:general-exforalldata} in the corresponding Sects.~\ref{subsubsec:intro-incomp-euler}-\ref{subsubsec:intro-QG}. The rigorously stated theorems will be proved in Sects.~\ref{sec:ex-incomp-euler}-\ref{sec:ex-QG}.

Table~\ref{tab:main-thms} summarizes the examples of inviscid PDEs which are treated in this paper. Furthermore, it refers to the rigorously formulated theorems in each case, so that the reader may quickly find them. The comments in Table~\ref{tab:main-thms} will be explained in more detail in the corresponding Sects.~\ref{subsubsec:intro-incomp-euler}-\ref{subsubsec:intro-QG}. 

\renewcommand{\arraystretch}{1.2} 
\begin{table}[h] 
	\centering 
	\begin{tabular}{|c|c|c|c|} \hline 
        \textsl{Model} & \textsl{Thm.~\ref{thm:general-exforalldata}} & \textsl{Thm.~\ref{thm:general-wilddata}} & \textsl{Comments} \tabularnewline \hline \hline 
		\multirow{2}{*}{Incompressible Euler}{} & Thm.~\ref{thm:incomp-euler-exforalldata} & & New global existence result in $L^\infty$ \tabularnewline \cline{2-4}
		& & Thm.~\ref{thm:incomp-euler-wilddata} & Already in \cite[Thm.~1(a)]{DelSze10} \tabularnewline \hline \hline 
        \multirow{2}{*}{Compressible Euler} & $-$ & & \tabularnewline \cline{2-4}
		& & Thm.~\ref{thm:compr-euler-wilddata} & Already in \cite[Thm.~1.3]{Feireisl14} \tabularnewline \hline \hline 
        \multirow{2}{*}{Shallow water} & $-$ & & \tabularnewline \cline{2-4} 
		& & Cor.~\ref{cor:sw-wilddata} & First such result for this system \tabularnewline \hline \hline 
        \multirow{2}{*}{Lake equation} & Thm.~\ref{thm:lake-exforalldata} & & Only for $C^1$ initial data \tabularnewline \cline{2-4} 
		& & Thm.~\ref{thm:lake-wilddata} & First such result for this system \tabularnewline \hline \hline
        \multirow{3}{*}{Hydrostatic Euler} & Thm.~\ref{thm:hydrost-euler-exforalldata} & & New global existence result in $L^\infty$ \tabularnewline 
        \cline{2-4}
		& & Thm.~\ref{thm:hydrost-euler-wilddata} & First construction of admissible solutions \tabularnewline 
        & & & (by using the natural energy) \tabularnewline \hline \hline 
        \multirow{2}{*}{Compr. invisc. primitive} & $-$ & & \tabularnewline \cline{2-4}
		& & Thm.~\ref{thm:compr-prim-wilddata} & First such result for this system \tabularnewline \hline \hline 
        \multirow{2}{*}{Quasi-geostrophic} & $-$ & & \tabularnewline \cline{2-4}
		& & Thm.~\ref{thm:QG-wilddata} & First such result for this system \tabularnewline \hline
	\end{tabular} 
	\caption{Examples of PDEs to which we apply the framework established in Sect.~\ref{sec:general} and where to find rigorous formulations of the two main statements in each case.} \label{tab:main-thms} 
\end{table} 
\renewcommand{\arraystretch}{1}

\subsubsection{Incompressible Euler equations} \label{subsubsec:intro-incomp-euler}

Firstly, we would like to demonstrate that the general convex integration framework developed in this paper recovers and extends the results for the incompressible Euler equations which were founded by \name{De~Lellis}-\name{Sz{\'e}kelyhidi}~\cite{DelSze10} and \name{Wiedemann}~\cite{Wiedemann11}. 

The incompressible Euler equations read
\begin{align}
	\Div v &= 0, \label{eq:incomp-euler-mass} \\
	\partial_t v + \Div (v\otimes v) + \Grad p &= 0. \label{eq:incomp-euler-mom}
\end{align}
Here the velocity $v$ and the pressure $p$ are the unknowns which take values in $\R^n$ and $\R$, respectively. Both unknowns are functions of time $t\in [0,T)$, for some $T\in (0,\infty) \cup \{\infty\}$, and space $x\in \Omega\subset \R^n$. In this paper we will only deal with two cases regarding the spatial domain $\Omega$: 
\renewcommand{\labelenumi}{(\roman{enumi})}
\renewcommand{\theenumi}{\labelenumi}
\begin{enumerate} 
	\item \label{item:domain-torus} either $\Omega=\T^3$, i.e.~the three-dimensional torus which may be viewed as a cube with periodic boundaries, 
	\item \label{item:domain-bdd} or $\Omega\subset \R^n$ a bounded Lipschitz domain. In this case we suppose the space dimension to be $n=2$ or $n=3$, and we supplement system \eqref{eq:incomp-euler-mass}, \eqref{eq:incomp-euler-mom} with the impermeability boundary conditions, i.e., 
    \begin{equation} \label{eq:impermeability-BC}
        v\cdot \nu|_{\partial\Omega} = 0,
    \end{equation}
    where $\nu$ is the outward pointing normal vector to the boundary $\partial \Omega$.
\end{enumerate}
\renewcommand{\labelenumi}{(\alph{enumi})}
\renewcommand{\theenumi}{\labelenumi}

We are interested in the initial value problem of system \eqref{eq:incomp-euler-mass}, \eqref{eq:incomp-euler-mom} where the initial data are denoted by $v_0$. The latter must have the property $\Div v_0 = 0$ in order to be compatible with \eqref{eq:incomp-euler-mass}. For initial data $v_0\in L^\infty(\Omega)$, the aforementioned incompressibility compatibility condition must at least be satisfied in the sense of distributions, i.e., 
\begin{equation} \label{eq:incomp-euler-compatibility-initialdata}
    \int_\Omega v_0 \cdot \Grad \phi \dx = 0 \qquad \text{ for all test functions\footnotemark }\ \phi\in \Cc (\closure{\Omega}). 
\end{equation} 
\footnotetext{See Rem.~\ref{rem:incomp-euler-testfunctions} below for a remark on the support of the test function $\phi$.} 

A simple computation shows that classical solutions of the incompressible Euler system \eqref{eq:incomp-euler-mass}, \eqref{eq:incomp-euler-mom} satisfy the following conservation law for the kinetic energy:
$$
	\partial_t \left(\half |v|^2\right) + \Div \left[\left(\half |v|^2 + p\right)v\right] = 0. 
$$
Besides weak solutions as they are defined below in Defn.~\ref{defn:incomp-euler-sol}~\ref{item:incomp-euler-sol-weak} with no additional constraint, we will also be interested in admissible weak solutions which satisfy in addition the local energy inequality
\begin{equation} \label{eq:incomp-euler-enineq}
	\partial_t \left(\half |v|^2\right) + \Div \left[\left(\half |v|^2 + p\right)v\right] \leq 0. 
\end{equation}

Now, we are ready to define weak solutions and admissible weak solutions. 

\begin{defn} \label{defn:incomp-euler-sol} 
    \begin{enumerate} 
        \item \label{item:incomp-euler-sol-weak} A pair\footnote{Usually in the literature, in the absence of physical boundaries a weak solution to the incompressible Euler equations \eqref{eq:incomp-euler-mass}, \eqref{eq:incomp-euler-mom} is defined to consist only of a velocity $v$. This is due to the fact that the pressure can be recovered from the velocity by solving the Poisson equation $\Delta p = - \Div\Div(v\otimes v)$ which follows from \eqref{eq:incomp-euler-mass}, \eqref{eq:incomp-euler-mom}. However, in the presences of physical boundaries the formulation of the correct boundary-value problem for the pressure for weak solutions of the Euler equations is not a priori clear. The correct form of the boundary-value problem was recently established in \cite{bardos2025} for Hölder continuous weak solutions, but this problem remains open for velocities with Besov or $L^p$ regularity. To keep the presentation simple, we decided to treat the pressure as a part of a weak solution. In particular, for the solutions constructed in this paper, the pressure can be explicitly stated without having to recover it from $v$.} $(v,p)\in L^\infty((0,T)\times \Omega;\R^n\times \R)$ is called a \emph{weak solution} of the incompressible Euler equations \eqref{eq:incomp-euler-mass}, \eqref{eq:incomp-euler-mom} with initial data\footnote{We would like to remark that for weak solutions in $L^\infty((0,T)\times\Omega)$ the initial data are ``attained'' only in a very weak sense, which is described in more detail in \cite[p.~6]{FeiNov}. Note that the solutions we construct in Thms.~\ref{thm:incomp-euler-exforalldata} and \ref{thm:incomp-euler-wilddata} below are weakly continuous in time. In this case, the initial data are attained as a weak limit.} $v_0\in L^\infty(\Omega;\R^n)$ (where $v_0$ satisfies the incompressibility compatibility condition \eqref{eq:incomp-euler-compatibility-initialdata}) if the equations are satisfied in the sense of distributions, i.e., 
        \begin{align*} 
            \int_0^T \int_\Omega v \cdot \Grad \phi \dx \dt &= 0, \\
            \int_0^T \int_\Omega \Big[ v \cdot \partial_t \varphi + v\otimes v : \Grad \varphi + p \ \Div \varphi \Big] \dx\dt + \int_\Omega v_0 \cdot \varphi(0,\cdot) \dx &= 0, 
        \end{align*}
        for all test functions 
        $$
        	(\phi,\varphi)\in \left\{ 
        	\begin{array}{ll} 
        		\Cc ([0,T)\times \T^3; \R \times \R^3) & \text{ in case \ref{item:domain-torus}, i.e. }\Omega = \T^3, \\
       			\Cc ([0,T)\times \closure{\Omega}; \R \times \R^n) \text{ with }\varphi\cdot \nu|_{\partial\Omega} = 0 & \text{ in case \ref{item:domain-bdd}}. 
       		\end{array} \right.
        $$
        Note that $\phi$ is a scalar-valued test function while $\varphi$ is vector-valued.
        
        \item \label{item:incomp-euler-sol-adm} A weak solution $(v,p)$ of the incompressible Euler equations \eqref{eq:incomp-euler-mass}, \eqref{eq:incomp-euler-mom} with initial data $v_0$ is called \emph{admissible} if the local energy inequality \eqref{eq:incomp-euler-enineq} holds in the sense of distributions, i.e. 
        \begin{align*} 
            \int_0^T \int_\Omega \Big[ \half |v|^2 \partial_t \Phi + \left(\half |v|^2 + p\right) v\cdot \Grad\Phi \Big] \dx \dt + \int_\Omega \half |v_0|^2 \Phi (0,\cdot) \dx &\geq  0, 
        \end{align*}
        for all test functions $\Phi\in \Cc ([0,T)\times \closure{\Omega}; \R^+_0)$. 
    \end{enumerate}
\end{defn}

\begin{rem} \label{rem:incomp-euler-testfunctions} 
    The reader should notice that the test functions $\phi,\varphi,\Phi$ in Defn.~\ref{defn:incomp-euler-sol} have compact support in $[0,T)\times \closure{\Omega}$. In case~\ref{item:domain-torus} (i.e.~$\Omega=\T^3$), it holds that $\closure{\Omega} = \T^3$ and consequently the test functions are just periodic. In case~\ref{item:domain-bdd} (i.e.~$\Omega\subset \R^n$ bounded with $n=2$ or $n=3$, see above) we obtain in particular, that the test functions do not need to vanish at the boundary $\partial\Omega$. This way the boundary condition $v\cdot \nu|_{\partial\Omega} = 0$ is implicitly incorporated into Defn.~\ref{defn:incomp-euler-sol}.
\end{rem}

Now, we are able to precisely state Thms.~\ref{thm:general-exforalldata} and \ref{thm:general-wilddata} in the context of the incompressible Euler equations \eqref{eq:incomp-euler-mass}, \eqref{eq:incomp-euler-mom}. 

\begin{thm} \label{thm:incomp-euler-exforalldata} 
    Let $\Omega = \T^3$ (case~\ref{item:domain-torus}) and $T\in (0,\infty)\cup \{\infty\}$. Moreover, let $v_0\in L^\infty(\Omega;\R^3)$ satisfying the compatibility condition \eqref{eq:incomp-euler-compatibility-initialdata}. Then there exist infinitely many weak solutions $(v,p)$ of the incompressible Euler equations \eqref{eq:incomp-euler-mass}, \eqref{eq:incomp-euler-mom} in the sense of\footnote{In particular, in contrast to \cite{Wiedemann11} the solutions are in $L^\infty$.} Defn.~\ref{defn:incomp-euler-sol}~\ref{item:incomp-euler-sol-weak}, all of which satisfy $v\in \Cweak([0,T);L^2(\Omega;\R^3))$.
\end{thm}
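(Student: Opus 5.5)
We will deduce Thm.~\ref{thm:incomp-euler-exforalldata} from the general convex integration theorem (Thm.~\ref{thm:conv-int}), following the strategy of \name{Wiedemann}~\cite{Wiedemann11} but now within the $L^\infty$, weakly-continuous-in-time framework. We rewrite \eqref{eq:incomp-euler-mass}, \eqref{eq:incomp-euler-mom} in Tartar form. The state is $(v,u,q)$ with $u$ a symmetric trace-free matrix and $q$ a generalized pressure. The linear system is $\Div v = 0$, $\partial_t v + \Div u + \Grad q = 0$, and the constitutive set is
$$\sK_\theta = \{(v,u,q) : u = v\otimes v - \tfrac{1}{3}|v|^2\id,\ |v|^2 = \theta\},$$
where $\theta$ plays the role of a (twice the) prescribed energy density. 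The structural hypotheses of Thm.~\ref{thm:conv-int} are then checked as in \cite{DelSze10}:
\begin{itemize}
    \item the mild regularity and compactness assumptions on $\theta\mapsto\sK_\theta$;
    \item the existence of the potential / differential operator generating localized plane waves;
    \item the coincidence of the $\Lambda$-convex hull and the convex hull, which is exactly \cite[Lemma~3]{DelSze10};
    \item the distance map, which can be taken as $\theta - |v|^2$, or more precisely through $\tfrac{3}{2}\lambda_{\max}(v\otimes v - u)$ versus $\theta$.
\end{itemize}
Presumably the paper verifies these hypotheses once for Euler in Sect.~\ref{sec:ex-incomp-euler}, and we will reuse that verification.

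The main task is to produce, for arbitrary $v_0\in L^\infty$ with zero divergence, a \emph{strict subsolution} starting from $v_0$. This means a triple $(\bar v,\bar u,\bar q)$ solving the linear system with $\bar v(0)=v_0$, weakly continuous in time, together with a bounded function $\theta$ such that the triple takes values in the interior of the convex hull $\sK_\theta^{\co}$ for $t>0$. Following Wiedemann, we first solve, for a short time, a regularized problem. Concretely, we take $\bar v(t) = $ the solution of the heat equation (or a mollification in time-dependent scale) with data $v_0$. Since the Fourier multipliers commute, this $\bar v$ stays divergence free, is bounded by $\|v_0\|_\infty$, is smooth for $t>0$, and satisfies $\bar v\in \Cweak([0,T);L^2)$. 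We then define $\bar u$ and $\bar q$ by solving $\Div \bar u + \Grad\bar q = -\partial_t\bar v$. Since $\partial_t\bar v = \Delta \bar v$ is divergence free, we can choose $\bar q = 0$ and take $\bar u$ to be the symmetric trace-free part of $-(\Grad\bar v + \Grad\bar v^\trans)$, adjusted by the standard operator $\mathcal{R}$ inverting the divergence on symmetric trace-free matrices.

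The main obstacle is boundedness. On $\T^3$ the term $\Grad \bar v(t)$ blows up like $t^{-1/2}$ as $t\to0$, so $\bar u$ is not in $L^\infty$ near $t=0$. I would try first to exploit the freedom in $\theta$, which only needs to be bounded on compact subsets of time away from zero; Thm.~\ref{thm:conv-int} presumably allows this. If instead $\theta$ must be globally bounded, I would replace heat smoothing by a construction with $\bar u$ equal to $v_0\otimes v_0 - \tfrac13|v_0|^2\id$ near $t=0$ plus a correction. One option is to set
$$\bar v(t) = v_0 \text{ for } t\in[0,\tau] \text{ (after an } L^\infty\text{-bounded correction)},$$
with $\bar u = \mathring{(v_0\otimes v_0)}$ and $\bar q = 0$ there, which is stationary and bounded. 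The strict inequality needed is then obtained by choosing $\theta(t,x)$ large, say $\theta \equiv C(1+\|v_0\|_\infty^2)$. The hull $\sK_\theta^{\co}$ contains all $(v,u)$ with $\lambda_{\max}(v\otimes v-u) < \theta/3$, and this holds once $\theta$ exceeds $3\|v_0\|_\infty^2$ plus bounds on $\bar u$. Since Thm.~\ref{thm:incomp-euler-exforalldata} asks for no energy inequality, taking $\theta$ constant and huge is allowed. In fact, the simplest subsolution $(\bar v,\bar u,\bar q) = (v_0, \mathring{(v_0\otimes v_0)}, 0)$ for all $t$ (with $\theta$ large) already solves the linear system, since $\Div\mathring{(v_0\otimes v_0)}$ need not vanish. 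The fix is to take $\bar u = 0$ and $\bar v=v_0$ stationary, which trivially solves $\partial_t\bar v+\Div\bar u+\Grad\bar q=0$. Then $(v_0,0,0)$ lies in the interior of the hull for $\theta>3\|v_0\|_\infty^2$, so we will try this first.

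With the subsolution in hand, Thm.~\ref{thm:conv-int} yields infinitely many $(v,u,q)$ with values in $\sK_\theta$ for all $t$ and a.e.~$x$, weakly continuous in $L^2$, and attaining $v_0$ at $t=0$. Setting $p = q - \tfrac13\theta$ (and using $u = v\otimes v - \tfrac13|v|^2\id$, $|v|^2=\theta$) recovers a weak solution of \eqref{eq:incomp-euler-mass}, \eqref{eq:incomp-euler-mom} in the sense of Defn.~\ref{defn:incomp-euler-sol}~\ref{item:incomp-euler-sol-weak}. Finally, $p$ is bounded because $\theta$ and $q$ are, and infinitely many solutions arise since the construction yields a residual set in the space of subsolutions. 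For $T=\infty$, we apply the result on bounded intervals, or use the time-independent $\theta$ directly.
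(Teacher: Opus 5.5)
\textbf{The proposal has a genuine gap: your final subsolution is not admissible in Thm.~\ref{thm:conv-int}.} You end with the stationary triple $(\ov v,\ov u,\ov q)=(v_0,0,0)$. The framework requires $\ov\zeta\in C^1((0,T)\times\closure{\Omega})$, with the linear system holding pointwise and $\ov\zeta(t,x)\in\sU_{\theta(t,x)}$ for every $(t,x)$. A stationary $v_0\in L^\infty$ is not even continuous.

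This hypothesis is not cosmetic. The Baire argument runs on $X_0$, which consists of $C^1$ strict subsolutions with initial trace $v_0$. The perturbation property (Prop.~\ref{prop:pert-prop}) freezes $\zeta$ and $\theta$ at the centres of small space-time cubes. It then uses their uniform continuity, cf.~\eqref{eq:unif-cont-K}, \eqref{eq:unif-cont-boundaryU}, to keep $\zeta+\widetilde\zeta$ inside $\sU$ and to make $\I_{\sigma_0,\sigma_1}$ small. If $v_0$ jumps by $O(\|v_0\|_{L^\infty})$ inside every small cube, oscillations built around the frozen value (which must be large, to drive $D$ towards $0$) can leave $\sU$.

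Your stationary choice is exactly what works for $C^1$ data; compare Thm.~\ref{thm:lake-exforalldata} and Rem.~\ref{rem:incomp-euler-exforalldata-otherdomains}. The whole content of the $L^\infty$ statement lies in producing a smooth, uniformly bounded subsolution that starts from rough data. Your other two options are also closed:
\begin{enumerate}
\item Heat smoothing gives $\ov u=-(\Grad\ov v+\Grad\ov v^\trans)$, which is of size $t^{-1/2}$.
\item You cannot compensate with a $\theta$ that blows up at $t=0$. Thm.~\ref{thm:conv-int} needs $\theta\in\Cb$, and since $|v|^2=\theta$ on $\sK_\theta$, the solutions would not be in $L^\infty$ anyway.
\end{enumerate}

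\textbf{The missing idea is to regularize at the hyperbolic scale $|x|\sim t$ rather than the parabolic scale $|x|\sim\sqrt t$.} This is what your passing remark about ``a mollification in time-dependent scale'' needed to become. For a radial kernel $\phi_t(x)=t^{-3}\phi(x/t)$ one checks that $\partial_t\phi_t=-\Div\big(x\phi_t/t\big)$. Hence $\ov v=\phi_t\ast v_0$ and $\ov U_{ij}=(x_i\phi_t/t)\ast[v_0]_j+(x_j\phi_t/t)\ast[v_0]_i$ satisfy $\Div\ov v=0$ and $\partial_t\ov v+\Div\ov U=0$. The symmetrizing term is divergence free, and $\tr\ov U=0$ because $x\phi_t/t$ is the gradient of a radial function; both facts use $\Div v_0=0$.

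Crucially, $\|x\phi_t/t\|_{L^1}$ does not depend on $t$. This gives $\|\ov v(t)\|_{L^\infty}+\|\ov U(t)\|_{L^\infty}\le C\|v_0\|_{L^\infty}$ uniformly in $t$. One must still verify $\ov v(t)\rightharpoonup v_0$ and $\ov U(t)\rightharpoonup 0$ in $L^2$ as $t\to0$.

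This is precisely Prop.~\ref{prop:app-sfad}, which uses the specific self-similar kernel $tF(t,|x|)$ with $4F+t\partial_tF+r\partial_rF=0$. After that, taking $p,q$ constant with $q-p$ large places $(\ov v,\ov U)$ in $\sU_{p,q}$ by \eqref{eq:interior-convhull-incomp-euler}. Thm.~\ref{thm:conv-int} with $r=2$ then finishes the proof as you describe.
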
 

\begin{thm} \label{thm:incomp-euler-wilddata} 
    Let $n=2,3$, $\Omega\subset \R^n$ a bounded Lipschitz domain (case~\ref{item:domain-bdd}), and $T\in (0,\infty)\cup \{\infty\}$. There exist initial data $v_0\in L^\infty(\Omega;\R^n)$ satisfying the compatibility condition \eqref{eq:incomp-euler-compatibility-initialdata}, for which there are infinitely many admissible weak solutions $(v,p)$ of the incompressible Euler equations \eqref{eq:incomp-euler-mass}, \eqref{eq:incomp-euler-mom} in the sense of Defn.~\ref{defn:incomp-euler-sol}~\ref{item:incomp-euler-sol-adm}. All of these solutions satisfy $v\in \Cweak([0,T);L^2(\Omega;\R^n))$.
\end{thm}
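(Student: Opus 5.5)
The plan is to recast \eqref{eq:incomp-euler-mass}, \eqref{eq:incomp-euler-mom} as a differential inclusion in the sense of Sect.~\ref{subsubsec:gen-prelim-pde+K}, and then apply the general convex integration theorem (Thm.~\ref{thm:conv-int}) twice. The first application produces a subsolution that is sharp exactly at $t=0$; the second generates infinitely many solutions from it. Following \cite{DelSze10}, the state is $z=(v,U,q)$ with $U$ symmetric and traceless, subject to the linear system
\[
\Div v=0,\qquad \partial_t v+\Div U+\Grad q=0 .
\]
For a constant $e>0$ the constitutive set is
\[
\sK_e=\Big\{(v,U,q):\ U=v\otimes v-\tfrac{|v|^2}{n}\id,\ |v|^2=e,\ q=p+\tfrac{e}{n}\Big\}.
\]
I would check the four structural hypotheses in turn.
\begin{itemize}
\item Boundedness and continuity of $\theta\mapsto\sK_\theta$ are immediate.
\item The differential operator producing plane-wave/potential solutions is the one from \cite{DelSze09}. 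In case~\ref{item:domain-bdd} I would verify that its compactly supported outputs respect the weak form with test functions satisfying $\varphi\cdot\nu|_{\partial\Omega}=0$.
\item The interior of the convex hull is $\{\lambda_{\max}(v\otimes v-U)<e/n\}$, and it equals the $\Lambda$-convex hull, as in \cite[Lemma~3]{DelSze10}.
\item For the distance map I would take $\frac{e}{2}-\frac12|v|^2$, integrated over $\Omega$ at fixed $t$. This is the quantity controlled by the weak $L^2$-continuity in the framework.
\end{itemize}

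Admissibility then reduces to an energy identity. Any solution produced by the framework satisfies $|v(t,x)|^2=e$ for every $t$ and a.e.\ $x$, and has the pressure $p=q-\frac{e}{n}$ explicitly. Take the strict subsolution $z\equiv 0$, for which $q=0$, so $p\equiv-\frac{e}{n}$ is constant. Then
\[
\partial_t\big(\half|v|^2\big)+\Div\Big[\big(\half|v|^2+p\big)v\Big]=\big(\tfrac e2-\tfrac en\big)\Div v=0
\]
in the sense of distributions, including the boundary term, because of the impermeability built into the test functions. The initial term in Defn.~\ref{defn:incomp-euler-sol}~\ref{item:incomp-euler-sol-adm} matches provided $|v_0|^2=e$ a.e. So every solution of the inclusion with an initial datum of modulus $\sqrt e$ is admissible. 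The task therefore reduces to finding one datum $v_0$ with $|v_0|^2=e$ a.e.\ that admits infinitely many solutions of the inclusion in $\Cweak([0,T);L^2)$.

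The key step, which I expect to be the main obstacle, is producing a subsolution $\bar z$ on $[0,T)$ with $\bar z(0)\in\sK_e$ a.e.\ but $\bar z(t)$ in the interior of the hull for $t>0$. This is the analogue of \cite[Prop.~5]{DelSze10}. I would try the following.

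First, apply Thm.~\ref{thm:conv-int} to the strict subsolution $0$ on a time interval $(-\tau,\tau)$. This gives the space $X_0$ of strict subsolutions and its closure $X$ in the metric of $\Cweak$. Second, use the Baire-category argument in $X$: the map $z\mapsto\int_\Omega|v(0)|^2\dx$ is Baire-one, so generic elements are continuity points. At such points the perturbation lemma of the framework, namely the distance map bounding $\int(e-|v|^2)$ from below by the size of an achievable oscillation, forces $|v(0)|^2=e$ a.e. Third, for such a continuity point, replace the restriction of $z$ to $[0,\tau)$ by an element of $X_0$ that agrees with $z$ at $t=0$ and is strict for $t>0$. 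Here I would use the fact that elements of $X_0$ are strict on compact subintervals, and do a diagonal approximation near $t=0$ in the spirit of \cite{DelSze10}, shifting time so that strictness holds on $(0,T)$.

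Finally, apply Thm.~\ref{thm:conv-int} once more with $\bar z$ as the starting subsolution on $[0,T)$ and the fixed datum $v_0:=\bar v(0)$. This yields a residual, hence infinite, set of solutions in $\Cweak([0,T);L^2(\Omega;\R^n))$ with $|v|^2=e$ for all $t$ and initial value $v_0$. The infinitude follows because the set of solutions is residual in $X$, which is not a single point. By the reduction above, all of these solutions are admissible. The compatibility condition \eqref{eq:incomp-euler-compatibility-initialdata} for $v_0$ is inherited from the weak continuity and $\Div\bar v=0$.
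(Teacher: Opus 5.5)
The admissibility reduction, the structural checks, the final application of Thm.~\ref{thm:conv-int} and the compatibility argument all match the paper. The gap is in the step you flagged: producing a subsolution that is sharp at $t=0$ and strict for $t>0$. This is the content of the paper's Thm.~\ref{thm:idiK}, and your Baire-category route cannot deliver it.

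Steps~1--2 give at best some $z\in X$ with $|v(0)|^2=e$ a.e. But membership in the closure $X$ carries no strictness at any time. In the standard version of the Baire argument the residual set consists of exact solutions at every time (Lemma~\ref{lemma:Xi-I=0}). Even with only the time-$0$ functional, the Baire theorem gives no way to select a continuity point that is strict for $t>0$. Step~3 then asks for a strict subsolution attaining the prescribed sharp datum $z(0)$, which is the whole difficulty. A ``diagonal approximation'' cannot supply it, for two reasons. First, the approximants $z_k\in X_0$ of $z$ do not agree on any time slice, so concatenating them in time violates $\partial_t v+\Div U+\Grad q=0$: a jump of $v$ across $\{t=s\}$ creates a Dirac mass in $\partial_t v$. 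Second, $z(0)$ is only the weak limit of $z_k(0)$ along an uncontrolled sequence, so its sharpness is not inherited by anything built from the $z_k$.

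What is needed is to build the approximating sequence by hand instead of taking a generic point.
\begin{itemize}
\item Fix $T_0$ inside the time interval, put $\zeta_1=0$ and $\beta_k=\beta 2^{-k}$. Obtain $\zeta_{k+1}$ from $\zeta_k$ via Prop.~\ref{prop:pert-prop} with $[\sigma_0,\sigma_1]=[T_0-\beta_{k+1},T_0+\beta_{k+1}]$ and $(\bar\sigma_0,\bar\sigma_1)=(T_0-\beta_k,T_0+\beta_k)$.
\item Then $\zeta_{k+1}=\zeta_k$ outside $(T_0-\beta_k,T_0+\beta_k)$. So the $d$-limit $\zeta$ coincides near every $t\neq T_0$ with some $\zeta_k\in X_0$, and is a $C^1$ strict subsolution there. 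In addition, $\int_\Omega D(\theta(T_0,x),\zeta_{k+1}(T_0,x))\dx\le 2^{-k}$.
\item Passing this bound to the limit requires \emph{strong} $L^r$ convergence of $\zeta_k(T_0,\cdot)$, which $d$-convergence does not provide. Following \cite[Sect.~5]{DelSze10}, the paper forces it with mollifiers. Choose scales $\hat\epsilon_k$ with $\|\zeta_k(T_0,\cdot)-\zeta_k(T_0,\cdot)*\phi_{\hat\epsilon_k}\|_{L^r}\le 2^{-k}$. Then take each perturbation so $d$-close that $\|(\zeta_k-\zeta_{k+1})(T_0,\cdot)*\phi_{\hat\epsilon_\ell}\|_{L^r}\le 2^{-k}$ for all $\ell\le k$.
\item Strong convergence together with property~\ref{item:suitable-D-convergence} of $D$ gives $\zeta(T_0,x)\in\sK_{\theta(T_0,x)}$ for a.e.\ $x$. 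Shifting time by $T_0$ yields the subsolution you need, and your last paragraph then goes through.
\end{itemize}

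A smaller point: $q$ must be a parameter of the constitutive set, not a component of the unknown. Otherwise $q\equiv p+e/n$ on $\sK_e$, its hull has empty interior, and no strict subsolution exists at all.
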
 

To the best knowledge of the authors, Thm.~\ref{thm:incomp-euler-exforalldata} is a new result in the literature on the incompressible Euler equations. The global existence of weak solutions for $L^2$-data has been proved by \name{Wiedemann} in \cite[Thm.~1]{Wiedemann11}. The global existence of weak solutions for Hölder continuous data has been established in \cite{khor2022} (with an infinitesimal loss of Hölder regularity between the data and the solution). Results on the density of ``wild data'', i.e., initial data for which there are infinitely many admissible weak solutions, have been obtained in \cite{SzeWie12,DanSze17,daneri2021} (and also \cite{daneri2014}). Thm.~\ref{thm:incomp-euler-wilddata} is already available in the literature, see \name{De~Lellis}-\name{Sz{\'e}kelyhidi}~\cite[Thm.~1(a)]{DelSze10}. 

\begin{rem} \label{rem:incomp-euler-exforalldata-otherdomains} 
	In this paper we do not prove a two-dimensional version of Thm.~\ref{thm:incomp-euler-exforalldata} (with initial data still in $L^\infty$). To obtain such a 2D version, one needs a 2D version of Prop.~\ref{prop:app-sfad}, below, (i.e.~the construction of a subsolution for all $L^\infty$ data) which we omit for the sake of simplicity of the presentation, and leave for future work. On the other hand, if one is only interested in existence of weak solutions for all initial data in $C^1$, then the restriction to the three-dimensional torus $\T^3$ is not needed. In particular, one could prove a version of Thm.~\ref{thm:incomp-euler-exforalldata} for all data $v_0\in C^1(\closure{\Omega};\R^3)$ where $\Omega=\T^2$ or $\Omega\subset \R^n$ as in case~\ref{item:domain-bdd}. We obtain such a result in the context of the lake equations, see Thm.~\ref{thm:lake-exforalldata}, below. 
\end{rem} 

\begin{rem} \label{rem:incomp-euler-wilddata-otherdomains}
    Thm.~\ref{thm:incomp-euler-wilddata} still holds if $\Omega=\T^3$ (i.e.~case~\ref{item:domain-torus}) or $\Omega=\T^2$. 
\end{rem}

\begin{rem} \label{rem:incomp-euler-furtherresults} 
    Let us also mention that our framework is able to recover even more results on convex integration for the incompressible Euler equations \eqref{eq:incomp-euler-mass}, \eqref{eq:incomp-euler-mom}, e.g., the results in \cite{DelSze10,SzeWie12}, but due to space limitations we will not do so here.
\end{rem}

\subsubsection{Barotropic compressible Euler equations} \label{subsubsec:intro-compr-euler}

The barotropic compressible Euler equations are given by 
\begin{align}
    \partial_t \rho + \Div (\rho u) &= 0, \label{eq:compr-euler-mass} \\ 
    \partial_t (\rho u) + \Div (\rho u\otimes u) + \Grad p(\rho) &= 0, \label{eq:compr-euler-mom}
\end{align}
where the unknowns are the density $\rho$ and the velocity $u$ which take values in\footnote{In this paper we exclude vacuum, i.e.~$\rho$ will be strictly positive.} $\R^+$ and $\R^n$, respectively. In the context of the barotropic compressible Euler equations \eqref{eq:compr-euler-mass}, \eqref{eq:compr-euler-mom}, the pressure is a given function of the density, i.e., $p=p(\rho)$, which we assume to be in $C^1((0,\infty);\R^+)$. Regarding the spatial domain, we only consider $\Omega=\T^n$ where $n=2$ or $n=3$. 

In contrast to the incompressible Euler equations introduced in Sect.~\ref{subsubsec:intro-incomp-euler}, there is no compatibility condition for the initial data $\rho_0,u_0$ in the context of the compressible Euler equations \eqref{eq:compr-euler-mass}, \eqref{eq:compr-euler-mom}. 

The local energy inequality in the compressible case reads 
\begin{equation} \label{eq:compr-euler-enineq}
    \partial_t \left( \half \rho |u|^2 + P(\rho) \right) + \Div \left[ \left( \half \rho |u|^2 + P(\rho) + p(\rho) \right) u \right] \leq 0,
\end{equation} 
where $P(\rho)$ is the pressure potential given by 
\begin{equation} \label{eq:pressure-potential} 
    P(\rho) = \rho \int_{\ov{\rho}}^\rho \frac{p(r)}{r^2} \dr, 
\end{equation}
with an arbitrary number $\ov{\rho}$.

\begin{defn} \label{defn:compr-euler-sol} 
    \begin{enumerate} 
        \item \label{item:compr-euler-sol-weak} A pair\footnote{The reader should keep in mind that in the context of the compressible Euler system \eqref{eq:compr-euler-mass}, \eqref{eq:compr-euler-mom}, we only consider $\Omega=\T^n$ with $n=2$ or $n=3$.} $(\rho,u)\in L^\infty((0,T)\times \Omega;\R^+ \times \R^n)$ is called a \emph{weak solution} of the barotropic compressible Euler equations \eqref{eq:compr-euler-mass}, \eqref{eq:compr-euler-mom} with initial data $(\rho_0,u_0)\in L^\infty(\Omega;\R^+\times \R^n)$ if the equations are satisfied in the sense of distributions, i.e. 
        \begin{align*} 
            \int_0^T \int_\Omega \Big[ \rho \partial_t \phi + \rho u \cdot \Grad \phi \Big] \dx \dt + \int_\Omega \rho_0 \cdot \phi(0,\cdot) \dx &= 0, \\
            \int_0^T \int_\Omega \Big[ \rho u \cdot \partial_t \varphi + \rho u\otimes u : \Grad \varphi + p(\rho) \Div \varphi \Big] \dx\dt + \int_\Omega \rho_0 u_0 \cdot \varphi(0,\cdot) \dx &= 0, 
        \end{align*}
        for all test functions $(\phi,\varphi)\in \Cc ([0,T)\times \Omega; \R \times \R^n)$.

        \item \label{item:compr-euler-sol-adm} A weak solution $(\rho,u)$ of the compressible Euler equations \eqref{eq:compr-euler-mass}, \eqref{eq:compr-euler-mom} with initial data $\rho_0,u_0$ is called \emph{admissible} if the energy inequality \eqref{eq:compr-euler-enineq} holds in the sense of distributions, i.e. 
        \begin{align*} 
            \int_0^T \int_\Omega \Big[ \left( \half \rho |u|^2 + P(\rho) \right) \partial_t \Phi + \left(\half \rho |u|^2 + P(\rho) + p(\rho)\right) u\cdot \Grad\Phi \Big] \dx \dt & \\
            + \int_\Omega \left( \half \rho_0 |u_0|^2 + P(\rho_0) \right) \Phi (0,\cdot) \dx &\geq  0, 
        \end{align*}
        for all test functions $\Phi\in \Cc ([0,T)\times \Omega; \R^+_0)$. 
    \end{enumerate}
\end{defn}

In the context of the compressible Euler equations, we do not prove Thm.~\ref{thm:general-exforalldata}. The reason for this is explained in the following remark. 

\begin{rem}[See also Rem.~\ref{rem:incomp-euler-exforalldata-otherdomains}] \label{rem:compr-models-no-exforalldata} 
	In order to prove Thm.~\ref{thm:general-exforalldata} for the barotropic compressible Euler equations \eqref{eq:compr-euler-mass}, \eqref{eq:compr-euler-mom}, a ``compressible version'' of Prop.~\ref{prop:app-sfad}, below, would be necessary. To keep the presentation simple, we do not show this in the current paper. If one is only interested in existence for more regular initial data, one could also proceed using the ideas of \name{Donatelli~et~al.}~\cite{DonFeiMar15} together with the approach elaborated in \name{Feireisl}~\cite{Feireisl14} or \name{Chiodaroli}~\cite{Chiodaroli14}.
\end{rem}

Next, we state the specific version of Thm.~\ref{thm:general-wilddata} for this model. 

\begin{thm} \label{thm:compr-euler-wilddata} 
    Let $n=2,3$, $\Omega=\T^n$ and $T\in (0,\infty)\cup \{\infty\}$. Let furthermore $p\in C^1((0,\infty);\R^+)$, and $0<\un{\rho}<\ov{\rho}<\infty$. Then there exists $\ep>0$, depending only on $\un{\rho}$, $\ov{\rho}$ and the pressure function $p$, with the following property. For any initial density $\rho_0\in C^1(\Omega;\R^+)$ with 
    $$
        \un{\rho}\leq \rho_0(x) \leq \ov{\rho} \quad \text{ for all }x\in \Omega\qquad \text{ and } \qquad\sup_{x\in\Omega}|\Grad \rho_0(x)|\leq \ep, 
    $$
    there exists an initial velocity $u_0\in L^\infty(\Omega;\R^n)$, for which there are infinitely many admissible weak solutions $(\rho,u)$ of the compressible Euler equations \eqref{eq:compr-euler-mass}, \eqref{eq:compr-euler-mom} in the sense of Defn.~\ref{defn:compr-euler-sol}~\ref{item:compr-euler-sol-adm}. All of these solutions satisfy $\rho\in C^1([0,T)\times \Omega;\R^+)$ with 
    $$
        \un{\rho}\leq \rho(t,x) \leq \ov{\rho} \quad \text{ for all }(t,x)\in [0,T)\times \Omega, 
    $$
    and $u\in \Cweak([0,T);L^2(\Omega;\R^n))$. 
\end{thm}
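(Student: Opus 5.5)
The plan follows the strategy of \name{Feireisl}~\cite{Feireisl14} and \name{Chiodaroli}~\cite{Chiodaroli14}: freeze a smooth density and reduce the compressible problem to an incompressible-type differential inclusion, to which the general framework (Thm.~\ref{thm:conv-int}) applies with $r=2$. The density will be $\rho(t,x)=\rho_0(x)$, constant in time. The momentum $m=\rho u$ must then satisfy $\Div m=0$, because the continuity equation reduces to $\Div m = -\partial_t\rho = 0$.

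\textbf{Step 1: reformulation as a differential inclusion.} The momentum equation is written as
\begin{equation*}
\partial_t m + \Div U + \Grad q = 0,
\end{equation*}
with $U\in\symz{n}$ traceless. The constitutive set is
\begin{equation*}
\sK_{(t,x)} = \left\{ (m,U) \;:\; U = \frac{m\otimes m}{\rho_0(x)} - \frac{|m|^2}{n\rho_0(x)}\id,\ \frac{|m|^2}{\rho_0(x)} = \tf(t,x)\right\},
\end{equation*}
where the pressure is $q = p(\rho_0) + \tfrac1n\tf$. The prescribed function $\tf$ plays the role of twice the kinetic energy density, and it is a parameter $\theta=(\rho_0(x),\tf(t,x))$. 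We then check the framework's hypotheses, as for the incompressible case in Sect.~\ref{sec:ex-incomp-euler}. The convex hull of $\sK$ is the set where $\lambda_{\max}\big(\tfrac{m\otimes m}{\rho_0}-U\big) < \tfrac{\tf}{n}$, and it coincides with the $\Lambda$-convex hull. The differential operator is the standard one for divergence-free potentials. The distance map can be taken to be $\tf - |m|^2/\rho_0$. The mild continuity assumptions on $\theta\mapsto\sK_\theta$ hold because $\rho_0\in[\un\rho,\ov\rho]$ is continuous.

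\textbf{Step 2: construction of a subsolution and the initial datum.} We look for a smooth strict subsolution $(\rho_0,\ov m,\ov U,\ov q)$ with $\Div\ov m=0$ that takes values in the interior of the hull. The simplest choice is $\ov m = 0$ and $\ov U = 0$. This forces $\Grad\ov q = 0$, so the pressure $p(\rho_0)+\tfrac1n\tf$ must be spatially constant. We therefore set
\begin{equation*}
\tf(t,x) = n\big(\Lambda(t) - p(\rho_0(x))\big),
\end{equation*}
with $\Lambda$ large enough that $\tf>0$. Strictness holds automatically, since $0<\tf/n$.

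Admissibility requires more. The energy of the solution equals $\half\tf + P(\rho_0)$, and the energy flux contains $\big(\half\tf + P+p\big)u$. Since $\Lambda$ depends only on $t$, the local energy inequality reduces to two conditions.
\begin{itemize}
\item Energy must not increase in time. This holds if $\Lambda$ is nonincreasing.
\item Terms like $u\cdot\Grad(\tf/2 + P(\rho_0)+p(\rho_0))$ must be controlled. Using $\Div(\rho_0 u)=0$, the flux reduces to $\Div\big[(\Lambda + P(\rho_0) + (1-\tfrac n2)p(\rho_0))u\big]$. Its sign is then handled using the smallness of $\Grad\rho_0$ against a strict decrease rate $\Lambda'<0$.
\end{itemize}
This is exactly where $\ep$ enters. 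A cleaner alternative, following Chiodaroli, is to use a nonzero $\ov m$ with a genuine time profile $\Lambda(t)$ strictly decreasing on a short interval. That keeps the strict subsolution condition while making the energy inequality hold with a margin of order $|\Lambda'|-C\ep\|u\|_\infty$, which is positive for small $\ep$.

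\textbf{Step 3: wild data.} Wild initial data are obtained as in De~Lellis--Sz\'ekelyhidi. The framework first produces infinitely many solutions on $[0,T)$ starting from the subsolution at a slightly positive time $t_0$, where it is still strict. The framework's weak continuity and the property of lying in $\sK_\theta$ for all $t$ give the following.
\begin{itemize}
\item The solutions attain the energy $\tf(t_0)$ exactly at $t=t_0$.
\item The datum $u_0 := m(t_0)/\rho_0$ lies in $L^\infty$.
\end{itemize}
After a time shift, this $u_0$ has infinitely many solutions, all satisfying $\half\rho_0|u_0|^2 = \half\tf(t_0)$. Hence there is no energy jump at the initial time, and the admissibility computed in Step 2 passes to the initial layer. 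Finally, $\rho=\rho_0\in C^1$ with the required bounds, and $u=m/\rho_0\in\Cweak([0,T);L^2)$.

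The main obstacle is Step 2: producing a subsolution that is simultaneously strict and energy-admissible, given the term $u\cdot\Grad p(\rho_0)$ whose sign is uncontrolled. That is why the smallness of $\Grad\rho_0$ is imposed. I would first try the profile $\Lambda(t)=\Lambda_0 - \delta t$ (capped so that it stays bounded below for $T=\infty$, possibly by taking $\delta t$ replaced by a bounded decreasing function). I would then verify that the dissipation $\delta$ dominates $C(\un\rho,\ov\rho,p)\,\ep\,\sup|u|$, using the bound $|u|^2\le \tf/\un\rho$ to make the constant depend only on the stated quantities.
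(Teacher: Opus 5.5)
There is a genuine gap in Step~2. With $\rho\equiv\rho_0$ and a spatially constant pressure $\ov q=\Lambda(t)$, you cannot reach $T=\infty$, and for finite $T$ your $\ep$ would have to depend on $T$. The statement requires $\ep=\ep(\un\rho,\ov\rho,p)$ for every $T\in(0,\infty]$.

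For every solution, the energy balance equals $\frac n2\Lambda'(t)+m\cdot w$ with $w=\Grad\big((\frac n2\Lambda+P(\rho_0)+(1-\frac n2)p(\rho_0))/\rho_0\big)$. Convex integration gives no control on the direction of $m$ on the sphere $|m|^2=\rho_0\tf$, so you need $\frac n2|\Lambda'|\ge|m|\,|w|$ pointwise, as you say yourself. For non-constant $\rho_0$ this right-hand side does not decay in time. For example, take $n=2$ and strictly increasing $p$, and a compact subset of the nonempty open set $\{\Grad\rho_0\neq0\}$. There $p(\rho_0)<\max p(\rho_0)$, so $|m|$ and $|w|$ are bounded below uniformly in $t$ as long as $\Lambda(t)>\max p(\rho_0)$, which strictness of the subsolution forces.

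Hence $\Lambda$ must decrease at a rate bounded below by a positive constant, proportional to the size of $\Grad\rho_0$, and it leaves the admissible range in finite time. Replacing $\Lambda_0-\delta t$ by a bounded decreasing profile does not help, because then $\Lambda'\in L^1(0,\infty)$. What you obtain is essentially the local-in-time statement of Rem.~\ref{rem:compr-euler-wilddata-Chiod}.

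The idea you are missing is to let the density itself relax. The paper takes $\rho(t,x)=h(t)\rho_0(x)+(1-h(t))\widetilde\rho$, with a cut-off $h$ and $\widetilde\rho$ the mean of $\rho_0$. It absorbs $\partial_t\rho$ by the nonzero potential momentum $\ov m=\Grad\Psi$, $\Lap\Psi=-\partial_t\rho$, and sets $q=\chi(t)-\partial_t\Psi$. After a window of fixed length $\rho$ is constant, so the uncontrolled flux term vanishes identically and $\chi\equiv C_1$ works forever. On the window, $\chi$ is a time-reversed solution of the Riccati equation of Lemma~\ref{lemma:app-constr-phi}. The smallness of $\ep$ serves only to keep this solution from blowing up on a window of fixed length, which is why $\ep$ does not depend on $T$.

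Step~3 also does not work as written. If you apply Thm.~\ref{thm:conv-int} on $[t_0,T)$, item~\ref{item:main-thm-sol1b} gives $\zeta(t_0)=\ov\zeta(t_0)\in\sU$. With your $\ov m\equiv0$ this means $u_0=0$, so the kinetic energy jumps from $0$ to $\half\tf$ and the energy inequality fails at the initial time.

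If instead you evaluate one solution at an interior time $t_0$, then $m(t_0)$ does lie in $\sK$. But different solutions give different data, and for a fixed such datum you know only one solution. On $(t_0,T)$ that solution already takes values in $\sK$, so the framework cannot perturb it further.

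The required ingredient is Thm.~\ref{thm:idiK}, the abstract version of \cite[Sect.~5]{DelSze10}. One iterates the perturbation property on time intervals shrinking to $T_0$. The mollifier estimates \eqref{eq:idiK-est-1}--\eqref{eq:idiK-est-2} give strong $L^2$ convergence at $T_0$, and property~\ref{item:suitable-D-convergence} of $D$ lets you pass to the limit in $\int_\Omega D$. This produces a single subsolution that is strict for $t\neq T_0$ and lies in $\sK$ at $T_0$. Thm.~\ref{thm:conv-int} on $(T_0,T)$ then gives infinitely many solutions sharing the datum $\zeta(T_0)$, with no energy jump.
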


Thm.~\ref{thm:compr-euler-wilddata} was originally proved by \name{Feireisl}~\cite[Thm.~1.3]{Feireisl14}. We recall it here for the sake of completeness.

\begin{rem} \label{rem:compr-euler-wilddata-DelSze}
    If one is only interested in existence of $(\rho_0,u_0)\in L^\infty(\Omega;\R^+  \times \R^n)$ for which there are infinitely many admissible weak solutions, then one could choose $\rho\equiv 1$ and utilize Thm.~\ref{thm:incomp-euler-wilddata}. This way one can even consider other domains, in particular $\Omega\subset \R^n$ bounded with impermeability boundary conditions. This procedure was originally carried out by \name{De~Lellis}-\name{Sz{\'e}kelyhidi}~\cite[Thm.~2]{DelSze10}. 
\end{rem}

\begin{rem} \label{rem:compr-euler-wilddata-Chiod}
    Let us also mention that a local-in-time version of Thm.~\ref{thm:compr-euler-wilddata} was established by \name{Chiodaroli}~\cite[Thm.~1.1]{Chiodaroli14}. Here ``local-in-time'' means that the final time $T$ depends on $\rho_0$. In this case, the requirement $\sup_{x\in\Omega}|\Grad \rho_0(x)|\leq \ep$ is no longer necessary and can be dropped. We prove such a result in the context of the lake equations \eqref{eq:lake-mass}, \eqref{eq:lake-mom}, see Thm.~\ref{thm:lake-wilddata}, below.
\end{rem}

\begin{rem} \label{rem:compr-euler-wilddata-dependencies}
    As already observed by \name{Feireisl}~\cite{Feireisl14}, when splitting the initial momentum $m_0 = \rho_0 u_0$ in Thm.~\ref{thm:compr-euler-wilddata} in terms of Helmholtz decomposition, i.e., $m_0 = v_0 + \Grad\Psi_0$, where $\Div v_0=0$, then $\Psi_0$ depends on the initial density $\rho_0$, whereas $v_0$ can be constructed in such a way that it only depends on $\un{\rho}$, $\ov{\rho}$ and $p$ but not on the specific choice of $\rho_0$.
\end{rem} 

\begin{rem}
    We would like to stress that our convex integration framework which is presented in Sect.~\ref{sec:general}, below, works on any bounded Lipschitz domain $\Omega$, and not only just on the torus $\T^n$. However, the construction of a subsolution, which is also needed to prove Thm.~\ref{thm:compr-euler-wilddata}, is quite involved for the case $\Omega\neq \T^n$. To be precise, it is even unclear if a subsolutions exists in this case. For this reason we restrict ourselves to the case $\Omega=\T^n$. 
\end{rem}

\subsubsection{Shallow water equations} \label{subsubsec:intro-sw}

Next, we look at the shallow water equations 
\begin{align} 
	\partial_t h + \Div (h u) &= 0, \label{eq:sw-mass} \\
	\partial_t (h u) + \Div (h u \otimes u) + \Grad \left( \half h^2\right) &= 0, \label{eq:sw-mom}
\end{align}
with unknown water height $h$ and velocity $u$ taking values in\footnote{Similar to the compressible Euler case, where we excluded vacuum (i.e.~we required $\rho>0$, see above), we exclude dry points when studying the shallow water equations. The latter means that the water height will be strictly larger than $0$ in this paper.} $\R^+$ and $\R^2$ respectively. In the context of the shallow water equations \eqref{eq:sw-mass}, \eqref{eq:sw-mom}, the spatial domain $\Omega$ is considered to be the two-dimensional torus, i.e., $\Omega= \T^2$. The shallow water equations model is derived from the incompressible Euler equations with constant density at the limit of small aspect ratio of the domain occupied by the fluid with solid bottom boundary and free upper boundary. Therefore, one can assume that the hydrostatic balance is valid and utilize it in the derivation. For a more detailed introduction of the shallow water equations, we refer to \cite[Chap.~4]{Vallis} and also \cite{bresch2009}. 

The local energy inequality for the shallow water equations \eqref{eq:sw-mass}, \eqref{eq:sw-mom} is given by 
\begin{equation} \label{eq:sw-enineq}
	\partial_t \left( \half h |u|^2 + \half h^2\right) + \Div \left[ \left( \half h |u|^2 + h^2 \right) u \right] \leq 0. 
\end{equation}

It is a simple and well-known observation that the shallow water equations \eqref{eq:sw-mass}, \eqref{eq:sw-mom} coincide with the compressible Euler system \eqref{eq:compr-euler-mass}, \eqref{eq:compr-euler-mom} via identifying the height $h$ with the density $\rho$, and choosing the pressure to be $p(\rho)=\half \rho^2$. Consequently it is not necessary to write down the definition of weak and admissible weak solutions in the context of \eqref{eq:sw-mass}, \eqref{eq:sw-mom} because these definitions are already contained in the corresponding definition for the compressible Euler system \eqref{eq:compr-euler-mass}, \eqref{eq:compr-euler-mom}, see Defn.~\ref{defn:compr-euler-sol}.

Like for the compressible Euler equations \eqref{eq:compr-euler-mass}, \eqref{eq:compr-euler-mom}, we do not show Thm.~\ref{thm:general-exforalldata}, above, in the context of the shallow water equations \eqref{eq:sw-mass}, \eqref{eq:sw-mom}, see also Rem.~\ref{rem:compr-models-no-exforalldata}. 

For the shallow water equations Thm.~\ref{thm:general-wilddata} is rigorously stated as follows.

\begin{cor} \label{cor:sw-wilddata} 
	Let $\Omega= \T^2$ and $T\in (0,\infty)\cup \{\infty\}$. Let furthermore $0<\un{h}<\ov{h}<\infty$. Then there exists $\ep>0$, depending only on $\un{h}$ and $\ov{h}$, with the following property. For any initial height $h_0\in C^1(\Omega;\R^+)$ with 
    $$
        \un{h}\leq h_0(x) \leq \ov{h} \quad \text{ for all }x\in \Omega\qquad \text{ and } \qquad\sup_{x\in\Omega}|\Grad h_0(x)|\leq \ep, 
    $$
    there exists an initial velocity $u_0\in L^\infty(\Omega;\R^2)$, for which there are infinitely many admissible weak solutions $(h,u)$ of the shallow water equations \eqref{eq:sw-mass}, \eqref{eq:sw-mom}. All of these solutions satisfy $h\in C^1([0,T)\times \Omega;\R^+)$ with 
    $$
        \un{h}\leq h(t,x) \leq \ov{h} \quad \text{ for all }(t,x)\in [0,T)\times \Omega, 
    $$ 
    and $u\in \Cweak([0,T);L^2(\Omega;\R^2))$. 
\end{cor}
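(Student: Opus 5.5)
This corollary follows directly from Thm.~\ref{thm:compr-euler-wilddata}, via the identification noted above between the shallow water equations \eqref{eq:sw-mass}, \eqref{eq:sw-mom} and the barotropic compressible Euler system \eqref{eq:compr-euler-mass}, \eqref{eq:compr-euler-mom}.

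I would set $n=2$, $\Omega=\T^2$, $\rho=h$ and choose the pressure law $p(\rho)=\half\rho^2$. This function belongs to $C^1((0,\infty);\R^+)$, since it is smooth and strictly positive on $(0,\infty)$, so the hypotheses of Thm.~\ref{thm:compr-euler-wilddata} are met. With $\un{\rho}=\un{h}$ and $\ov{\rho}=\ov{h}$, that theorem supplies an $\ep>0$ depending only on $\un{h}$, $\ov{h}$ and this fixed $p$, hence only on $\un{h},\ov{h}$. For any $h_0\in C^1$ with $\un{h}\le h_0\le\ov{h}$ and $\sup|\Grad h_0|\le\ep$, it also yields an initial velocity $u_0\in L^\infty$ and infinitely many admissible weak solutions $(\rho,u)$. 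These have $\rho\in C^1$, $\un{h}\le\rho\le\ov{h}$ and $u\in\Cweak([0,T);L^2)$.

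The one point to check is that admissibility in the compressible sense coincides with the shallow water energy inequality \eqref{eq:sw-enineq}. For this I would compute the pressure potential \eqref{eq:pressure-potential} with $p(r)=\half r^2$:
$$
P(\rho)=\rho\int_{\ov{\rho}}^\rho \tfrac12\dr=\tfrac12\rho^2-\tfrac12\ov{\rho}\,\rho .
$$
This gives $P(\rho)+p(\rho)=\rho^2-\tfrac12\ov{\rho}\rho$. The energy density and flux in \eqref{eq:compr-euler-enineq} therefore differ from those in \eqref{eq:sw-enineq} by $-\tfrac12\ov{\rho}\,(\rho,\rho u)$.

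By the weak form of the mass equation, this correction contributes exactly zero when tested against any $\Phi\in\Cc([0,T)\times\Omega)$, including the initial term $-\tfrac12\ov{\rho}\int\rho_0\Phi(0,\cdot)\dx$. Hence the two distributional inequalities are equivalent. Alternatively, since $\ov{\rho}$ is arbitrary in \eqref{eq:pressure-potential}, one may simply let $\ov{\rho}\to 0$, where $P(\rho)=\half\rho^2$ exactly.

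Renaming $\rho=h$ then gives the statement. I see no real obstacle; the only care needed is this affine-in-$\rho$ ambiguity of $P$.
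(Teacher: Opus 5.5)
Your proposal is correct and takes the same route as the paper, which deduces the corollary by identifying $h$ with $\rho$ and choosing $p(\rho)=\tfrac12\rho^2$ in Thm.~\ref{thm:compr-euler-wilddata}. You also check that the choice of reference constant in the pressure potential only shifts the energy by a multiple of the weak mass equation, so the two energy inequalities agree; this detail is correct, and the paper leaves it implicit by treating the definitions as identical.
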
 

The authors are not aware of any paper which states Cor.~\ref{cor:sw-wilddata} in this form (i.e., for the shallow water equations), although naturally it is closely related to the existing results in the literature for the compressible Euler equations (see, e.g., \cite{Feireisl14}). However, we believe that it is worth mentioning Cor.~\ref{cor:sw-wilddata} here since the shallow water equations fit into the context of this paper, namely as a geophysical model.

Rems.~\ref{rem:compr-euler-wilddata-DelSze}-\ref{rem:compr-euler-wilddata-dependencies} hold analogously for the shallow water equations \eqref{eq:sw-mass}, \eqref{eq:sw-mom}. In particular, if one is only interested in a local-in-time version of Cor.~\ref{cor:sw-wilddata}, the requirement $\sup_{x\in\Omega}|\Grad h_0(x)|\leq \ep$ is not needed.

\subsubsection{Lake equations} \label{subsubsec:intro-lake}

Another system, which we consider, are the lake equations
\begin{align} 
	\Div (bu) &= 0, \label{eq:lake-mass} \\
	\partial_t (bu) + \Div (bu\otimes u) + b \Grad p &= 0. \label{eq:lake-mom}
\end{align} 
Here the velocity $u$ (which takes values in $\R^2$) and the pressure $p$ (taking values in $\R$) are the unknowns, while the topography $b\in C^1(\closure{\Omega};\R^+)$ is a given function. We consider a bounded Lipschitz domain $\Omega\subset \R^2$ together with impermeability boundary conditions $u\cdot \nu|_{\partial\Omega} = 0$. The lake equations can be formally derived from the three-dimensional incompressible Euler equations (with a free upper surface) by taking the small aspect ratio and small Froude number limits in a domain with varying bottom topography \cite{LevOliTit96}.

Analogously to the incompressible Euler equations \eqref{eq:incomp-euler-mass}, \eqref{eq:incomp-euler-mom}, the initial data $u_0\in L^\infty(\Omega;\R^2)$ for the lake equations must be compatible with \eqref{eq:lake-mass}, i.e.~we require
\begin{equation} \label{eq:lake-compatibility-initialdata}
	\int_\Omega b u_0 \cdot \Grad \phi \dx = 0 \qquad \text{ for all test functions }\phi\in \Cc (\closure{\Omega}) .
\end{equation} 

For the lake equations, the local energy inequality reads
\begin{equation} \label{eq:lake-enineq}
	\partial_t \left(\half b |u|^2\right) + \Div \left[\left( \half b |u|^2 + bp \right) u \right] \leq 0. 
\end{equation}
The global well-posedness of the lake equations was established in \cite{LevOliTit96}, under the assumption of nondegenerate bottom topography. The case with vanishing topography on the boundary (i.e., the shore) was studied in \cite{bresch2006}. It was proved in \cite{lacave2014} that weak solutions of the lake equations are stable under perturbations in the topography, which also leads to the existence of weak solutions for singular domains. The case of emerging or vanishing islands was treated in \cite{hientzsch2021}. 

The vanishing viscosity limit from the viscous lake equations with Navier boundary conditions to the (inviscid) lake equations was proved in \cite{jiu2012}. It was shown in \cite{altaki2022} that the (inviscid) lake equations possess global classical solutions and that the weak solutions of the viscous system converge to the classical solution. 

In this paper, weak and admissible weak solutions to the lake equations \eqref{eq:lake-mass}, \eqref{eq:lake-mom} are defined analogously to Defns.~\ref{defn:incomp-euler-sol} and \ref{defn:compr-euler-sol}.

Now we are ready to provide rigorous formulations of Thms.~\ref{thm:general-exforalldata} and \ref{thm:general-wilddata} for this model.

\begin{thm} \label{thm:lake-exforalldata} 
    Let $\Omega\subset \R^2$ be a bounded Lipschitz domain and $T\in (0,\infty)\cup \{\infty\}$. Moreover, let $b\in C^1(\closure{\Omega};\R^+)$, and $u_0\in C^1(\closure{\Omega};\R^2)$ satisfying the compatibility condtion \eqref{eq:lake-compatibility-initialdata}. Then there exist infinitely many weak solutions $(u,p)$ of the lake equations \eqref{eq:lake-mass}, \eqref{eq:lake-mom}, all of which satisfy $u\in \Cweak([0,T);L^2(\Omega;\R^2))$. 
\end{thm}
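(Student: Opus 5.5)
The plan is to reduce Thm.~\ref{thm:lake-exforalldata} to the general convex integration theorem (Thm.~\ref{thm:conv-int}) by rewriting the lake equations as a differential inclusion and constructing a strict subsolution emanating from $u_0$.

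\textbf{Linear system and constitutive set.} Set $m = bu$ and introduce the unknowns $(m, U, q)$, where $U$ is a symmetric traceless $2\times 2$ matrix and $q$ is a scalar. Since $b\Grad p$ is not in divergence form, write $b\Grad p = \Grad(bp) - p\Grad b$; the second term is a lower order term with a given coefficient. Cleaner still is to absorb $\Grad b$ into the linear system itself. The linear system is
\begin{align*}
\Div m &= 0, \\
\partial_t m + \Div U + \Grad q - \tfrac{q}{b}\Grad b &= 0 .
\end{align*}
Here $q$ plays the role of $\tfrac{1}{2b}|m|^2 + bp$ up to normalization, and the symmetric matrix is $\tfrac{m\otimes m}{b} - \tfrac{|m|^2}{2b}\id$. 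The constitutive set at $(t,x)$ is
\[
\sK_{(t,x)} = \Big\{ (m,U) : U = \tfrac{m\otimes m}{b(x)} - \tfrac{|m|^2}{2b(x)}\id,\ \tfrac{|m|^2}{2b(x)} = e(t,x) \Big\},
\]
with a prescribed continuous energy density $e$. This is an $x$-dependent rescaling of the incompressible Euler set in 2D. The structural hypotheses therefore follow from the incompressible Euler case treated earlier, transported by the scaling $m \mapsto m/\sqrt{b}$:
\begin{itemize}
\item convex hull equals $\Lambda$-hull,
\item the differential operator generating plane waves (a potential for the 2D divergence-free pair),
\item the distance map $e - \tfrac{|m|^2}{2b}$.
\end{itemize}
The lower order term $\tfrac{q}{b}\Grad b$ does not affect plane-wave analysis, since the framework only needs the principal part. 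One must check, however, that the framework allows such zero-order terms or $x$-dependent coefficients. If it does not, I would keep $bp$ and $p$ as independent unknowns linked by a given coefficient.

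\textbf{Subsolution.} This is the main step. Let $u_0 \in C^1$ with $\Div(bu_0) = 0$. Take the stationary-in-time triple
\[
m(t,x) = b u_0(x),\qquad U = \tfrac{m\otimes m}{b} - \tfrac{|m|^2}{2b}\id + \text{correction},\qquad q = q(x),
\]
chosen so that the linear system holds. Since $\partial_t m = 0$, we need
\[
\Div U + \Grad q - \tfrac{q}{b}\Grad b = 0.
\]
Equivalently, with $U = \tfrac{bu_0\otimes u_0}{1}\cdot(\text{traceless part}) + R$, we must solve a symmetric-divergence problem $\Div R = f$ for the $C^0$ right-hand side
\[
f = -\Div(bu_0\otimes u_0)_{\rm traceless} - \ldots
\]
A bounded symmetric traceless solution $R$ exists on a bounded Lipschitz 2D domain by a standard Bogovskii/elasticity-type inverse. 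We only need $L^\infty$ here, so this is done via $W^{1,r}$ estimates for large $r$ plus embedding. The scalar $q$ adds freedom and can simply be set to $0$.

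We then pick $e(t,x)$ continuous and large, say $e = \tfrac{|m|^2}{2b} + C$ with $C$ exceeding the largest eigenvalue bound coming from $R$. This places $(m,U)$ in the interior of the convex hull for all $t$, so the triple is a strict subsolution, continuous in $t$ and attaining $m(0) = bu_0$.

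\textbf{Applying the framework.} Thm.~\ref{thm:conv-int} then yields infinitely many solutions $(m,U,q)$ that lie in $\sK_{(t,x)}$ for all $t$ and a.e.\ $x$, with $m \in \Cweak([0,T);L^2)$ and $m(0) = bu_0$. We set $u = m/b$, which requires $b > 0$ on $\closure{\Omega}$; this holds since $b$ is $C^1$ and positive on the compact closure. We set $p = (q - \tfrac{b|u|^2}{2})/b$ accordingly, and unwind the definitions to obtain weak solutions in the sense analogous to Defn.~\ref{defn:incomp-euler-sol}.

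The boundary condition enters only through the test-function class: the divergence constraint is tested against $\phi \in \Cc(\closure{\Omega})$. This requires that the framework's perturbations be compactly supported in $\Omega$, which is the case. No energy inequality is needed, so the jump of energy at $t=0$ is harmless.

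I expect the main obstacle to be making the inclusion fit the framework's assumptions. The difficulty is the non-conservative term $b\Grad p$ together with the $x$-dependence of $\sK$, and in particular verifying that the hull computation survives the $b$-rescaling.
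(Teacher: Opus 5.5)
Your reformulation of the momentum equation is not equivalent to \eqref{eq:lake-mom}. With $U=\frac{m\otimes m}{b}-\frac{|m|^2}{2b}\id$ and $q=\frac{|m|^2}{2b}+bp$, one computes
\[
\Div U+\Grad q-\frac{q}{b}\Grad b=\Div\Big(\frac{m\otimes m}{b}\Big)+b\Grad p-\frac{|m|^2}{2b^2}\Grad b ,
\]
so the term $\frac{|m|^2}{2b^2}\Grad b$ is lost. Concretely, the objects you produce (with $q\equiv 0$ and $p=(q-\frac{b|u|^2}{2})/b=-\frac{|u|^2}{2}$) satisfy $\partial_t(bu)+\Div(bu\otimes u)+b\Grad p=\frac{|u|^2}{2}\Grad b$. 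Since $|u|^2=2e/b>0$ on your constitutive set, this is nonzero wherever $\Grad b\neq 0$.

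Renormalizing $q$ cannot repair this while your energy density $e=\frac{b|u_0|^2}{2}+C$ depends on $x$. A compensating pressure would need $b\Grad p=-\Grad e$, i.e.\ $\frac1b\Grad e$ would have to be a gradient, which fails in general. Note also that $q$ cannot be a genuine unknown: it is unconstrained by $\sK$, which would then be non-compact. The framework also admits no zero-order terms in the unknown, so $q$ must be a frozen parameter.

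The subsolution step has a second gap. Thm.~\ref{thm:conv-int} requires $\ov{\zeta}\in C^1((0,T)\times\closure{\Omega})$ solving the linear system pointwise, so ``we only need $L^\infty$'' is false. A correction $R$ obtained from a $W^{1,r}$ Bogovskii/elasticity-type inverse applied to merely continuous data is only H\"older continuous. Your $x$-dependent $e$ would in fact force such a nontrivial inverse, since $\ov{U}$ would have to absorb $\Grad e$, which is only $C^0$.

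The idea you are missing is that none of this is needed. Keep the physical pressure $p$ and the energy level $q$ as parameters, i.e.\ write $\partial_t m+\Div U=-\Grad q-b\Grad p$ with $\sK_{b,p,q}=\{\frac{m\otimes m}{b}-U-q\id=0\}$. Because $U$ is traceless, this unwinds exactly to \eqref{eq:lake-mom}. Now choose $p$ and $q$ \emph{constant}, so the right-hand side vanishes. Then $\ov{m}=bu_0$, $\ov{U}\equiv 0$ is a $C^1$ subsolution:
\begin{itemize}
\item $\Div(bu_0)=0$ holds pointwise, by the compatibility condition and $C^1$ regularity;
\item $\partial_t\ov{m}+\Div\ov{U}=0$ holds trivially;
\item $\lambda_{\max}(bu_0\otimes u_0-q\id)=b|u_0|^2-q<0$ once $q>\max_{\closure{\Omega}}b|u_0|^2$.
\end{itemize}
Thm.~\ref{thm:conv-int} with $r=2$ then yields infinitely many solutions with constant pressure. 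In your notation this amounts to taking $e$ constant and $U\equiv 0$ (i.e.\ $R=-(\frac{m\otimes m}{b})_{\mathrm{traceless}}$, with no elliptic inverse), and recovering a constant pressure rather than $p=(q-\frac{b|u|^2}{2})/b$.
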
 

\begin{thm} \label{thm:lake-wilddata} 
	Let $\Omega\subset \R^2$ be a bounded Lipschitz domain and $b\in C^1(\closure{\Omega};\R^+)$. There exists a time $T\in (0,\infty)\cup \{\infty\}$, depending on $b$, and initial data $u_0\in L^\infty(\Omega;\R^2)$ satisfying the compatibility condition \eqref{eq:lake-compatibility-initialdata}, for which there are infinitely many admissible weak solutions $u$ of the lake equations \eqref{eq:lake-mass}, \eqref{eq:lake-mom} on $[0,T)$. All of these solutions satisfy $u\in \Cweak([0,T);L^2(\Omega;\R^2))$.
\end{thm}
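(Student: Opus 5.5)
We prove Thm.~\ref{thm:lake-wilddata} by reducing the lake equations to a differential inclusion, applying the general convex integration framework (Thm.~\ref{thm:conv-int}) on the cylinder $[0,T)\times\Omega$, and constructing a suitable strict subsolution by hand. The reduction goes through the momentum $m=bu$ and a symmetric traceless matrix $U$. Set $q := p + \frac{1}{2b}\cdot\frac{|m|^2}{2}$ up to the correct normalisation, so that the nonlinear part is encoded as
\[
U=\frac{m\otimes m}{b}-\frac{|m|^2}{2b}\id .
\]
Formally, the system \eqref{eq:lake-mass}, \eqref{eq:lake-mom} then becomes the linear system
\[
\Div m=0,\qquad \partial_t m+\Div U+b\Grad q=0 ,
\]
together with the pointwise constraint $(m,U)\in\sK_{\theta}$, where $\theta=(b(x),e(t,x))$ prescribes the energy density $\frac{|m|^2}{2b}=e$. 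The term $b\Grad q$ has a variable coefficient, but since $b\in C^1$ is given we may absorb it by writing $b\Grad q=\Grad(bq)-q\Grad b$, or, more simply, treat the pressure term as part of the linear operator with $x$-dependent coefficients, which the framework permits. Denote by $\sK_\theta$ the constitutive set. As in the incompressible Euler case (De~Lellis--Sz\'ekelyhidi), its convex hull is
\[
\{(m,U): \lambda_{\max}(m\otimes m/b - U)<e\},
\]
and it coincides with the $\Lambda$-convex hull; after rescaling $m\mapsto m/\sqrt b$ this is exactly the 2D Euler computation. The distance map is $e-\frac{|m|^2}{2b}$. The differential operator producing plane waves (potentials) is the 2D analogue of the Euler one, composed with multiplication by $b$ to enforce $\Div m=0$, so the structural hypotheses of Thm.~\ref{thm:conv-int} are checked largely as in Sect.~\ref{sec:ex-incomp-euler}.

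The key step is constructing the initial subsolution $(\ov m,\ov U,\ov q)$, smooth on $[0,T)\times\closure\Omega$ and satisfying the linear system with $\ov m\cdot\nu=0$, together with an energy $\ov e$ lying strictly in the interior of the hull for $t>0$. To obtain wild data without an energy jump, we follow De~Lellis--Sz\'ekelyhidi and take the subsolution constant-in-time in spirit. We pick a smooth divergence-free (in the $b$-weighted sense) field $\ov m$ and set $\ov U=0$. Then $\ov q$ must solve $b\Grad \ov q=-\partial_t\ov m$, which forces us to allow $\ov m$ to evolve. Instead, the simplest choice is a classical solution of the lake equations (Levermore--Oliver--Titi global well-posedness) for smooth data $\bar u_0$, for which $\ov U=\frac{\ov m\otimes\ov m}{b}-\frac{|\ov m|^2}{2b}\id$ and $\ov q$ is the classical pressure. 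We then choose the energy
\[
\ov e(t,x)=\frac{|\ov m|^2}{2b}+\chi(t),
\]
with $\chi>0$ decreasing, which puts $(\ov m,\ov U)$ strictly inside the hull. Here we use that $\ov m$ is bounded on a finite interval, which is where the dependence of $T$ on $b$ enters.

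The framework then yields infinitely many solutions in $\Cweak([0,T);L^2)$ with $\frac{|m|^2}{2b}=\ov e$ for all $t$ and a.e.~$x$, all sharing the same initial datum $u_0=m(0)/b$. A Baire-category argument shows that the initial datum is itself a solution-type state, i.e.~it lies on the energy surface, which is part of Thm.~\ref{thm:conv-int}. Admissibility then reduces to
\[
\partial_t \ov e+\Div\bigl[(\ov e+bq)u\bigr]\le0 .
\]
We arrange this by choosing the pressure shift $q=\ov q-\frac{2}{2}\cdot\frac{\chi}{b}$ analogously to the Euler case, and choosing $\chi$ with $\chi'$ sufficiently negative to dominate the remaining error terms. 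Those error terms involve $\Grad b$ and the smooth background, and they are bounded on $[0,T)$.

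The main obstacle is this last energy computation. Unlike Euler, the term $\Div(\chi u)$ does not vanish, since $\Div(bu)=0$ gives $\Div u=-u\cdot\Grad b/b$, which leaves a term $\chi\, u\cdot\Grad\log b$ with unknown-sign $L^\infty$ factor $u$. We would try to control it by $|u|\le\sqrt{2\ov e/b}$ and choose $\chi$ solving a Gronwall-type ODE $\chi'\le -C(\|\Grad b\|_\infty,\|\ov m\|_\infty)\chi$, keeping $\chi>0$. Alternatively, we would restrict $T$ so that such bounds close; this is plausibly why $T$ depends on $b$.
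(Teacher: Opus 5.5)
\textbf{Main gap: the initial energy jump.} Thm.~\ref{thm:conv-int} does \emph{not} place the initial datum on the energy surface, and no Baire-category argument inside it does. Item~\ref{item:main-thm-sol1b} forces $\zeta(0,\cdot)=\ov{\zeta}(0,\cdot)$, while item~\ref{item:main-thm-sol2} holds only for $t\in(0,T)$. In your construction $\ov{\zeta}(0)$ lies strictly inside the hull, since $|\ov m(0,x)|^2/(2b)=\ov e(0,x)-\chi(0)$. So every solution you obtain has $\half b|u(t,x)|^2=\ov e(t,x)$ for all $t>0$, but $\half b|u_0|^2=\ov e(0,\cdot)-\chi(0)$. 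The local energy therefore jumps \emph{up} by $\chi(0)>0$ at $t=0^+$. Test the distributional energy inequality with $\Phi(t,x)=\psi(x)\phi_\delta(t)$, where $\phi_\delta(0)=1$ and $\phi_\delta$ is supported in $[0,\delta)$, and let $\delta\to0$. What remains is $-\chi(0)\int_\Omega\psi\dx<0$, so none of your solutions is admissible, whatever decreasing $\chi$ you pick.

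The missing ingredient is Thm.~\ref{thm:idiK}, the analogue of Sect.~5 of De~Lellis--Sz\'ekelyhidi. You iterate the perturbation property on the shrinking windows $(T_0-\beta_k,T_0+\beta_k)$. The mollification estimates give strong $L^r$ convergence at the single time $T_0$. Property~\ref{item:suitable-D-convergence} of $D$ then yields a new subsolution with values in $\sK_\theta$ at $t=T_0$ and in $\sU_\theta$ for $t\neq T_0$. Only after shifting time by $T_0$ and then applying Thm.~\ref{thm:conv-int} do the solutions lie in $\sK_\theta$ for all $t\ge0$, with no jump. The wild datum is then $u_0=m(T_0)/b$, which is merely $L^\infty$. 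Its compatibility condition \eqref{eq:lake-compatibility-initialdata} has to be derived from weak continuity in time, as in the proof of Thm.~\ref{thm:incomp-euler-wilddata}.

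\textbf{Secondary problems.} Your subsolution is also unjustified and unnecessarily complicated. Classical solutions of the lake equations (Levermore--Oliver--Titi) need a smooth boundary and more regularity of $b$ than $C^1$, whereas here $\Omega$ is only Lipschitz. A nonzero background also produces $O(1)$ error terms such as $\partial_t(|\ov m|^2/2b)$, which an inequality $\chi'\le -C\chi$ cannot absorb.

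The paper's route is simpler. It takes $(\ov m,\ov U)\equiv(0,0)$, $p$ constant, and the energy density $q=q(t)>0$ as a parameter, with constitutive relation $\frac{m\otimes m}{b}-U-q\id_2=0$. On solutions $|m|^2/(2b)=q$ and $\Div m=0$, so the energy balance equals $\partial_t q+m\cdot\Grad(q/b)=\partial_t q-q\,m\cdot\Grad b/b^2$. Since $|m|=\sqrt{2bq}$, this is at most $\partial_t q+C_1q^2+C_2q$, and $q$ is obtained from the corresponding ODE.

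Finally, the framework does not allow $x$-dependent coefficients in the linear part: $a$ and $A$ are constant linear maps. Also, $\Grad(|m|^2/2b)+b\Grad p$ is not of the form $b\Grad q$. Keep $p$ and the energy as separate parameters in $\theta$, so that $b\Grad p$ enters the prescribed right-hand side. With $m$ as the unknown, the linear system is exactly that of incompressible Euler. The operators and wave cone of Sect.~\ref{sec:ex-incomp-euler} then apply verbatim, and no multiplication by $b$ inside $\opL$ is needed; in fact it would destroy $\Div m=0$.
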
 

Thms.~\ref{thm:lake-exforalldata} and \ref{thm:lake-wilddata} as stated above are -- to the best of the authors' knowledge -- not known in the literature so far. However, these results are similar in spirit to \name{Chiodaroli}'s work \cite{Chiodaroli14} on the compressible Euler equations. We will sketch the proofs of Thms.~\ref{thm:lake-exforalldata} and \ref{thm:lake-wilddata} in Sect.~\ref{subsec:ex-lake-pf}, below.

\begin{rem}
    In comparision with Thm.~\ref{thm:incomp-euler-exforalldata}, we would like to underline that the initial data considered in Thm.~\ref{thm:lake-exforalldata} are $C^1$ rather than merely $L^\infty$. To extend the latter result to $L^\infty$ data, one needs a 2D version of Prop.~\ref{prop:app-sfad}, below, see also Rem.~\ref{rem:incomp-euler-exforalldata-otherdomains}. Furthermore, we note that Thm.~\ref{thm:lake-exforalldata} can be also shown for $\Omega=\T^2$.
\end{rem}

\begin{rem}
    Thm.~\ref{thm:lake-wilddata} is also valid for $\Omega=\T^2$. Moreover, a global-in-time version of Thm.~\ref{thm:lake-wilddata} analogous to Thm.~\ref{thm:compr-euler-wilddata} is true as well. Then, however, there are some restrictions on the topography $b$ like for the initial density in Thm.~\ref{thm:compr-euler-wilddata}. See also Rem.~\ref{rem:compr-euler-wilddata-Chiod}.
\end{rem}

\subsubsection{Hydrostatic Euler equations -- inviscid primitive equations} \label{subsubsec:intro-hydrost-euler}

The hydrostatic Euler equations are given by
\begin{align}
	\Divh u + \partial_z w &= 0, \label{eq:hydrost-euler-mass} \\
	\partial_t u + \Divh (u\otimes u) + \partial_z (uw) + \Gradh p &=0, \label{eq:hydrost-euler-mom} \\
	\partial_z p &=0 \label{eq:hydrost-euler-p} 
\end{align}
with unknowns horizontal velocity $u$, vertical velocity $w$ and pressure $p$ which take values in $\R^2$, $\R$ and $\R$, respectively. All unknowns are functions of time $t$ and the three-dimensional space variable $x=(x_1,x_2,z)$. For the divergence and the gradient with respect to $(x_1,x_2)$ we write $\Divh$ and $\Gradh$, respectively (here the subscript $h$ stands for \emph{horizontal}). 

The hydrostatic Euler equations \eqref{eq:hydrost-euler-mass}-\eqref{eq:hydrost-euler-p} can be formally derived from the incompressible Euler equations \eqref{eq:incomp-euler-mass}, \eqref{eq:incomp-euler-mom} by considering the small aspect ratio limit. The hydrostatic Euler equations are also known as the incompressible inviscid primitive equations of oceanic and atmospheric dynamics.

For the spatial domain $\Omega$ we consider the channel $\Omega = \T^2 \times (0,1)$. The boundary condition reads $w|_{z=0,1} = 0$, meaning that the fluid does not penetrate bottom and top of the channel.

Like in the context of the incompressible Euler equations \eqref{eq:incomp-euler-mass}, \eqref{eq:incomp-euler-mom}, the initial data $(u_0,w_0) \in L^\infty(\Omega;\R^2 \times \R)$ for the hydrostatic Euler system have to be compatible with \eqref{eq:hydrost-euler-mass}, i.e., they must satisfy 
\begin{equation} \label{eq:hydrost-euler-compatibility-initialdata}
    \int_\Omega \Big[ u_0 \cdot \Gradh \phi + w_0 \partial_z \phi\Big] \dx = 0 \qquad \text{ for all test functions }\phi\in \Cc (\closure{\Omega}) .
\end{equation} 

\begin{rem} \label{rem:hydrost-euler-initialdata} 
    Since the vertical velocity $w$ is not a dynamical unknown in the the context of the hydrostatic Euler equations \eqref{eq:hydrost-euler-mass}-\eqref{eq:hydrost-euler-p}, one usually treats $u$ as the only unknown for the system while $w$ and $p$ are recovered from $u$. For the same reason, one only imposes initial data $u_0$ for the horizontal velocity $u$. To keep the presentation simple, we decided to also impose initial data $w_0$ for the vertical velocity $w$. Then $w_0$ may be understood as the vertical velocity which has been recovered from the initial horizontal velocity $u_0$, i.e., $u_0$ and $w_0$ need to be compatible satisfying \eqref{eq:hydrost-euler-mass}. Similarly a solution will be a triple $(u,w,p)$ rather than just a horizontal velocity $u$. The role of the vertical velocity $w$ is ``similar to the role of the pressure $p$'' in the incompressible Euler equations \eqref{eq:incomp-euler-mass}, \eqref{eq:incomp-euler-mom}.

    On the other hand, it is very important that for weak solutions we require that both $u$ and $w$ lie in $L^\infty$, see Defn.~\ref{defn:hydrost-euler-sol}. Note that in general if the horizontal velocity is in $L^\infty$, then the corresponding vertical velocity $w$ could be of lower regularity than $L^\infty$, see \cite{BouMarTit23} for more details on this issue and how it can be addressed by introducing a different class of weak solutions. We exclude such a scenario by assumption. Similarly, we will assume in Thms.~\ref{thm:hydrost-euler-exforalldata} and \ref{thm:hydrost-euler-wilddata} that both $u_0$ and $w_0$ are in $L^\infty$.
\end{rem} 

The local energy inequality in the context of the hydrostatic Euler system \eqref{eq:hydrost-euler-mass}-\eqref{eq:hydrost-euler-p} reads 
\begin{equation} \label{eq:hydrost-euler-enineq}
	\partial_t \left(\half |u|^2\right) + \Divh \left[\left(\half |u|^2 + p\right)u\right] + \partial_z \left[\left(\half |u|^2 + p\right)w\right] \leq 0. 
\end{equation}

\begin{rem} \label{rem:hydrost-euler-energy}
	Note that the left-hand side of \eqref{eq:hydrost-euler-enineq} describes the time evolution of the \emph{horizontal} local kinetic energy $\half |u|^2$ rather than the complete three-dimensional kinetic energy $\half (|u|^2 + w^2)$. The horizontal kinetic energy is indeed the right energy to be considered in this context because it is conserved by weak solutions of \eqref{eq:hydrost-euler-mass}-\eqref{eq:hydrost-euler-p} with sufficient regularity as shown by the authors in \cite{BouMarTit23}.
\end{rem}

The viscous primitive equations, i.e., the system \eqref{eq:hydrost-euler-mass}-\eqref{eq:hydrost-euler-p} with viscosity, were first introduced in \cite{richardson1922} for numerical weather prediction. The rigorous mathematical study of these equations was initiated in \cite{LioTemWan92,LioTemWan1995,LioTemWan92b}, in which the global existence of weak solutions was proved. The local existence of strong solutions in three dimensions was then established in \cite{guillen2001}, while the global well-posedness in two dimensions was proved in \cite{bresch2003,bresch2005}.

Subsequently, the global well-posedness of the three-dimensional viscous primitive equations was first proved in \cite{cao2005} (see also \cite{kobelkov2006}). The case with Dirichlet boundary conditions was then treated in \cite{kukavica2007a,kukavica2007b}. The existence of strong solutions was established in \cite{hieber2016} for a larger class of initial data, by means of semigroup techniques. The non-uniqueness of very weak solutions to the viscous primitive equations was established in \cite{BouMarTit24}. In \cite{cao2014,cao2016,cao2020} (and see references therein) the global well-posedness of the primitive equations with partial viscosities and diffusivities was established. The global well-posedness of these equations with a nonlinear equation of state was established in \cite{korn2021}, while in \cite{korn2024} the equations with eddy parametrization were treated. 

In the inviscid case, in \cite{renardy2009} the primitive equations were shown to be linearly ill-posed. Thereafter the nonlinear ill-posedness of the inviscid primitive equations was demonstrated in \cite{ibrahim2021}. The local well-posedness of the system for analytic data was proved in \cite{kukavica2010,kukavica2011}, and subsequently in \cite{GILT22} rotation was shown to prolong the lifespan of the analytic strong solution. One can prove the local well-posedness in Sobolev spaces under convexity assumptions (the local Rayleigh condition) on the initial data \cite{brenier1999,masmoudi2012}. In \cite{bianchini2024a} the local well-posedness in Sobolev spaces was established under stable stratification and including eddy diffusivity. The local well-posedness for the free boundary problem for analytic data was established in \cite{ignatova2012}.

It was found in \cite{cao2015,wong2015} (see also \cite{canulef2024,cui2024}) that solutions of these equations become singular in finite time. A more quantitative description of the singularity formation was given in \cite{collot2023} (see also \cite{ILQT25pre}). An analogue of the Onsager conjecture was considered in \cite{BouMarTit23}, in which new types of weak solutions for the equations were introduced and a `family' of Onsager conjectures was identified. The non-uniqueness of weak solutions for these equations was proved in \cite{ChiMic17,BouMarTit24}. 

In the viscous case the aspect ratio limit was justified in a weak sense in \cite{azerad2001}, while it was significantly established in a strong sense in \cite{Li2019} (with explicit estimates for the convergence rate in terms of the aspect ratio). The hydrostatic limit with different scalings of the viscosity under the aspect ratio has been considered in \cite{li2022,furukawa2023}. It was proved in \cite{grenier1999,bianchini2024b} that in the inviscid case the hydrostatic limit does not hold in certain settings. The hydrostatic limit was justified under the local Rayleigh condition in \cite{brenier2003,grenier1999}. The viscosity limit for the primitive equations with Dirichlet boundary conditions was established in \cite{kukavica2016} for analytic data.

For the hydrostatic Euler equations \eqref{eq:hydrost-euler-mass}-\eqref{eq:hydrost-euler-p} several notions of weak solutions are studied in the literature, e.g.~generalized weak solutions introduced by the authors in \cite{BouMarTit23,BouMarTit24}. However in the current paper we will work with the standard notion of weak solution, which is analogous to Defns.~\ref{defn:incomp-euler-sol} and \ref{defn:compr-euler-sol}. For completeness and due to some technical details regarding the boundary condition (see Rem.~\ref{rem:hydrost-euler-testfunctions} below), we give a precise definition as follows.

\begin{defn} \label{defn:hydrost-euler-sol} 
	\begin{enumerate}
		\item \label{item:hydrost-euler-sol-weak} A triple $(u,w,p)\in L^\infty((0,T)\times \Omega; \R^2 \times \R \times \R)$ is called a \emph{weak solution} of the hydrostatic Euler equations \eqref{eq:hydrost-euler-mass}-\eqref{eq:hydrost-euler-p} with initial data $(u_0,w_0)\in L^\infty(\Omega;\R^2 \times \R)$ (which satisfy the compatibility condition \eqref{eq:hydrost-euler-compatibility-initialdata}) if the equations are satisfied in the sense of distributions, i.e., 
		\begin{align*} 
			\int_0^T \int_\Omega \Big[ u \cdot \Gradh \phi + w \partial_z \phi\Big] \dx \dt &= 0, \\
			\int_0^T \int_\Omega \Big[ u \cdot \partial_t \varphi + u\otimes u : \Gradh \varphi + uw \cdot \partial_z \varphi + p \ \Divh \varphi \Big] \dx\dt + \int_\Omega u_0 \cdot \varphi(0,\cdot) \dx &= 0, \\
			\int_0^T \int_\Omega p \partial_z \psi \dx\dt &= 0,
		\end{align*}
		for all test functions $(\phi,\varphi)\in \Cc ([0,T)\times \closure{\Omega}; \R \times \R^2)$, $\psi\in \Cc ([0,T)\times \Omega)$. 
		
		\item \label{item:hydrost-euler-sol-adm} A weak solution $(u,w,p)$ of the hydrostatic Euler equations \eqref{eq:hydrost-euler-mass}-\eqref{eq:hydrost-euler-p} with initial data $(u_0,w_0)$ is called \emph{admissible} if the local energy inequality \eqref{eq:hydrost-euler-enineq} holds in the sense of distributions, i.e.,  
		\begin{align*} 
			\int_0^T \int_\Omega \Big[ \half |u|^2 \partial_t \Phi + \left(\half |u|^2 + p\right) ( u\cdot \Gradh\Phi + w \partial_z \Phi ) \Big] \dx \dt + \int_\Omega \half |u_0|^2 \Phi (0,\cdot) \dx &\geq  0, 
		\end{align*}
		for all test functions $\Phi\in \Cc ([0,T)\times \closure{\Omega}; \R^+_0)$. 
	\end{enumerate}
\end{defn}

\begin{rem} \label{rem:hydrost-euler-testfunctions} 
	Like in the case of the incompressible Euler equations, see Rem.~\ref{rem:incomp-euler-testfunctions}, the boundary condition $w|_{z=0,1} = 0$ is implicitly included in Defn.~\ref{defn:hydrost-euler-sol} by the choice of the test functions $\phi,\varphi$ and $\Phi$. The reader should notice that the test function $\psi$ has to vanish at the boundary $\{z=0,1\}$.
\end{rem}

Now we are ready to present the rigorous statements of Thms.~\ref{thm:general-exforalldata} and \ref{thm:general-wilddata} in the context of the hydrostatic Euler equations \eqref{eq:hydrost-euler-mass}-\eqref{eq:hydrost-euler-p}.

\begin{thm} \label{thm:hydrost-euler-exforalldata} 
	Let $\Omega= \T^2\times (0,1)$ and $T\in (0,\infty)\cup \{\infty\}$. Moreover let\footnote{As explained in Rem.~\ref{rem:hydrost-euler-initialdata}, we work with initial data for the horizontal \emph{and} the vertical velocity. One may understand this $w_0$ as being recovered from $u_0$. We would like to emphasize that we not only require that the initial data for the horizontal velocity $u_0\in L^\infty(\Omega;\R^2)$ but also the initial data for the vertical velocity $w_0 \in L^\infty(\Omega)$. Furthermore, we underline that the solutions constructed in Thm.~\ref{thm:hydrost-euler-exforalldata} not only satisfy $u\in L^\infty((0,T)\times\Omega;\R^2)$ but also $w\in L^\infty((0,T)\times\Omega)$.} $(u_0,w_0)\in L^\infty(\Omega;\R^3)$ satisfying the compatibility condition \eqref{eq:hydrost-euler-compatibility-initialdata}. Then there exist infinitely many weak solutions $(u,w,p)$ of the hydrostatic Euler equations \eqref{eq:hydrost-euler-mass}-\eqref{eq:hydrost-euler-p} in the sense of Defn.~\ref{defn:hydrost-euler-sol}~\ref{item:hydrost-euler-sol-weak}, all of which satisfy $(u,w)\in \Cweak([0,T);L^2(\Omega;\R^3))$. 
\end{thm}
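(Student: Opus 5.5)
We plan to derive Thm.~\ref{thm:hydrost-euler-exforalldata} from the general framework (Thm.~\ref{thm:conv-int}) applied to the differential inclusion associated with \eqref{eq:hydrost-euler-mass}-\eqref{eq:hydrost-euler-p}, in Tartar's form. Concretely, we will rewrite the momentum equation linearly as $\partial_t u + \Divh \sigma + \partial_z m + \Gradh q = 0$, together with $\Divh u + \partial_z w = 0$ and $\partial_z q = 0$. Here $\sigma$ is a traceless symmetric $2\times 2$ matrix (with the trace absorbed into $q$), $m\in\R^2$ plays the role of $uw$, and $q$ is the modified pressure $p+\half|u|^2$. The constitutive set is
\[
\sK_\theta=\bigl\{(u,w,\sigma,m,q): \sigma=u\otimes u-\half|u|^2\id,\ m=uw,\ |u|^2=\theta_1,\ |w|\le \theta_2\bigr\},
\]
with $\theta$ playing the role of a prescribed energy profile. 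A bound on $w$ must be encoded in $\theta$, since by Rem.~\ref{rem:hydrost-euler-initialdata} we need $w\in L^\infty$.

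First, we will verify the four structural hypotheses. The mild assumptions on $\sK_\theta$ (compactness, continuity in $\theta$) are straightforward. The differential operator producing localized plane waves will be built from potentials, as in De~Lellis--Sz\'ekelyhidi, adapted to the anisotropic wave cone; since $\partial_z q=0$, the wave cone consists of directions for which $q$ varies only horizontally. The distance map will be the defect $\theta_1 - |u|^2$ together with the $w$-constraint. The hard step is the third hypothesis: the coincidence of the convex hull and the $\Lambda$-convex hull. As the abstract indicates, an explicit characterization of the hull seems unavailable, so we will instead exhibit a large open subset $\sU_\theta\subset\interior{(\sK_\theta^\co)}$ which is contained in the $\Lambda$-convex hull. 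The plan is to show that every state $(u,w,\sigma,m)$ with $\lambda_{\max}(u\otimes u-\sigma)<\half\theta_1$ and $m$ sufficiently close to $uw$ (quantified through $\theta_2$) can be written as a finite laminate of $\Lambda$-segments ending in $\sK_\theta$. We will first split in the $(u,\sigma)$ variables along incompressible-Euler-type directions, reusing the 2D Euler hull computation, and then split in $w$, compensating in $m$ along vertical directions. Applying the framework only on $\sU_\theta$ is a variant the framework should permit, as in \cite{Markfelder24}, and we expect this to be the main technical obstacle.

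Second, we need a subsolution for arbitrary data $(u_0,w_0)\in L^\infty$, namely a weakly continuous solution of the linear system with values in $\sU_\theta$ for $t>0$ and initial value $(u_0,w_0)$. Following the role of Prop.~\ref{prop:app-sfad} for Euler on $\T^3$, we will take $u(t)=u_0$, $w(t)=w_0$ constant in time and choose $\sigma,m,q$ so that the linear equations hold. Since $\partial_t u=0$, it suffices to take $\sigma=0$, $q=0$, $m=0$, or more generally to pick $m$ and $\sigma$ so that $\Divh\sigma+\partial_z m=0$. We will then choose $\theta_1$ large (a constant exceeding $\|u_0\|_\infty^2$ by a margin) and $\theta_2$ large, so that $(u_0,w_0,0,0,0)$ lies strictly inside $\sU_\theta$ whenever $\sU_\theta$ contains all states with $|m-uw|\lesssim \theta_2\,|u|$ small relative to the gap. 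If the hull description requires $m$ close to $uw$, we will instead set $m=u_0w_0+\partial_z$-correction; choosing $\theta_2$ large should make $0$ admissible. We then invoke Thm.~\ref{thm:conv-int} on $[0,T)$ (for $T=\infty$, on the whole half-line with time-independent $\theta$) to obtain infinitely many solutions in $\sK_\theta$ for all $t$ and a.e.\ $x$, in $\Cweak([0,T);L^2)$, with the prescribed initial data. Finally, we will recover $p=q-\half|u|^2=q-\half\theta_1$, which is bounded and satisfies $\partial_z p=0$ because $\theta_1$ is constant. Hence $(u,w,p)$ is a weak solution in the sense of Defn.~\ref{defn:hydrost-euler-sol}~\ref{item:hydrost-euler-sol-weak}, and the boundary condition $w|_{z=0,1}=0$ is inherited from the framework's treatment of the boundary.
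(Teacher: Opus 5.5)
There is a genuine gap in your subsolution step. Thm.~\ref{thm:conv-int} requires $\ov{\zeta}\in C^1((0,T)\times\closure{\Omega};\R^M)$, satisfying the linear system pointwise and lying in $\sU_{\theta(t,x)}$ at every point. This is used essentially: the perturbation property freezes $\zeta^\ast=\zeta(t_\alpha,x_\alpha)$ on small space-time cubes via uniform continuity, with a uniform margin from $\partial\sU$. Your time-independent choice $(u_0,w_0,0,0)$ is merely $L^\infty$ in $x$, so it does not satisfy the hypotheses. Pointwise membership in $\sU$ for large parameters is not the issue. As with the lake equations (Thm.~\ref{thm:lake-exforalldata}), a constant-in-time subsolution only works for $C^1$ data.

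The boundary is affected in the same way. Item~\ref{item:main-thm-sol1a} of Thm.~\ref{thm:conv-int} carries the flux $A(\ov{\zeta})\cdot\nu$ on $\partial\Omega$. To obtain Defn.~\ref{defn:hydrost-euler-sol}, whose test functions need not vanish at $z=0,1$, you need $\ov{w}=0$ and $\ov{m}=0$ pointwise on $\{z=0,1\}$. This is not inherited from the framework, and it is meaningless for $w_0\in L^\infty$.

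The missing idea is a regularizing subsolution. It must be $C^1$ for $t>0$, bounded by $C\|(u_0,w_0)\|_{L^\infty}$ uniformly in time (so that it lies in $\sU$ for large constant parameters), and weakly continuous at $t=0$ with value $(u_0,w_0)$. The paper builds it in three steps:
\begin{enumerate}
\item[(i)] It extends the data to $\T^3$ with $u$ even and $w$ odd in $z$; the compatibility condition then gives $\Div v_0=0$ there.
\item[(ii)] It applies Prop.~\ref{prop:app-sfad}. Its radial convolution kernels preserve this symmetry, so $\ov{w}$ and $\ov{U}_{13},\ov{U}_{23}$ vanish at $z=0,1$.
\item[(iii)] It runs convex integration for the 3D \emph{incompressible} Euler inclusion on the channel with constant $p,q$. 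Constant pressure makes every such solution hydrostatic: the horizontal momentum equation is exactly \eqref{eq:hydrost-euler-mom}, and $\partial_z p=0$ trivially.
\end{enumerate}

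Staying with the hydrostatic inclusion as you propose is harder. Converting that 3D subsolution would push the trace of the horizontal block of $\ov{U}$ into $q$. That quantity is not continuous at $t=0$, whereas $\theta$ must lie in $C^1([0,T]\times\closure{\Omega})$.

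Two minor points. First, $q$ must be a parameter rather than a component of $\zeta$, or $\sK_\theta$ is unbounded. Second, the hull condition is not the hard step: the completeness argument in Lemma~\ref{lemma:ass-hydrost-euler}~\ref{item:ass3-hydrost-euler} does not use the $w$-constraint, so $\sK^\Lambda=\sK^\co$ directly. The subset $\sW_{p,q,c}$ only serves to certify specific points of $\sU$.
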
 

\begin{thm} \label{thm:hydrost-euler-wilddata} 
	Let $\Omega= \T^2\times (0,1)$ and $T\in (0,\infty)\cup \{\infty\}$. There exist initial data $(u_0,w_0)\in L^\infty(\Omega;\R^3)$ satisfying the compatibility condition \eqref{eq:hydrost-euler-compatibility-initialdata}, for which there are infinitely many admissible weak solutions $(u,w,p)$ of the hydrostatic Euler equations \eqref{eq:hydrost-euler-mass}-\eqref{eq:hydrost-euler-p} in the sense of Defn.~\ref{defn:hydrost-euler-sol}~\ref{item:hydrost-euler-sol-adm}. All of these solutions satisfy $(u,w)\in \Cweak([0,T);L^2(\Omega;\R^3))$. 
\end{thm}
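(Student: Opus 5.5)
We prove Thm.~\ref{thm:hydrost-euler-wilddata} by casting the hydrostatic Euler equations as a differential inclusion and applying the general framework of Sect.~\ref{sec:general} (Thm.~\ref{thm:conv-int}). The essential step is to construct a suitable subsolution; the framework then supplies infinitely many solutions.

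\textbf{The differential inclusion.} Following Tartar's framework, we introduce the state variables $(u,w,\sigma,q)$ with $u$ horizontal, $w$ vertical, $\sigma$ a symmetric traceless $2\times 2$ matrix and $q$ a scalar, linearized in the spirit of De~Lellis--Sz\'ekelyhidi. We also need the vector $uw\in\R^2$ as a further variable, say $m$. The linear system is then
\begin{align*}
\Divh u + \partial_z w &= 0,\\
\partial_t u + \Divh \sigma + \partial_z m + \Gradh q &= 0,
\end{align*}
where $q$ is independent of $z$ (the pressure absorbs $\tfrac{1}{2}|u|^2$ via $q = p + \tfrac{1}{2}|u|^2$). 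The constitutive set for a prescribed energy density $\theta=\tfrac12|u|^2$ is
\[
\sK_\theta = \Big\{ (u,w,\sigma,m) : \sigma = u\otimes u - \tfrac12|u|^2\id,\ m = uw,\ \tfrac12|u|^2 = \theta,\ |w|\le M \Big\}.
\]
The bound on $w$ is needed to obtain $L^\infty$ solutions.

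\textbf{Verifying the structural assumptions.} The four hypotheses of Thm.~\ref{thm:conv-int} must be checked in turn. Compactness and continuity of $\theta\mapsto\sK_\theta$ are straightforward. The required differential operator is a potential for the linear system built from curl-type operators in the horizontal and vertical variables. The wave cone $\Lambda$ should be large: plane waves in $(t,x_h,z)$ exist for almost all directions.

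\textbf{Main obstacle: the hulls.} The key and hardest step is showing that the $\Lambda$-convex hull coincides with the convex hull, or at least with a large open subset of it sufficient for a subsolution. Since no explicit characterization of $\sK_\theta^{\co}$ seems available, we would first show that $\sK_\theta$ lies in the closure of a set $\sU_\theta$ defined by strict inequalities. We would take $\sU_\theta$ to be the set of points with $\lambda_{\max}(u\otimes u-\sigma)<\theta$, together with a constraint on $m - uw$ that is controlled via $w$-bounds. We would then prove that every point of $\sU_\theta$ is reached by $\Lambda$-segments of a controlled length, using explicit segments along which $u$ and $w$ oscillate jointly.

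The distance map will be $e(u,\sigma)=\tfrac12|u|^2$ compared with $\theta$. The framework then yields the following: any strict subsolution taking values in $\sU_\theta$ for all $t>0$, and lying in $\sK_\theta$ at $t=0$, generates infinitely many solutions with the initial data of the subsolution.

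\textbf{Constructing the subsolution and wild data.} For the wild data we take $\theta$ constant. Inspired by \cite{DelSze10}, we first build a strict subsolution with zero horizontal velocity, $\sigma=0$ and $m=0$, which lies in the interior of the hull. We then apply the framework on the time interval $(-1,0)$, or equivalently use the time-reversal trick, to obtain a state at time $0$ that is $\sK_\theta$-valued and whose energy equals $\theta$ a.e. This state serves as the initial data $(u_0,w_0)$, and it is compatible with the incompressibility constraint by construction.

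\textbf{Admissibility.} For $t>0$, applying the framework again produces infinitely many solutions with $\tfrac12|u|^2=\theta$ for every $t$ and a.e.\ $x$. Weak continuity in $L^2$ then ensures that the initial energy is attained without a jump. With $\theta$ constant and $p$ chosen appropriately, the local energy inequality \eqref{eq:hydrost-euler-enineq} reduces to the divergence constraint and holds with equality. This yields infinitely many admissible weak solutions with $(u,w)\in\Cweak([0,T);L^2)$.
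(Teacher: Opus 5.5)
Your overall architecture matches the paper's: the Tartar reformulation with extra unknowns $U$ and $W=uw$, constant parameters, the zero subsolution, and energy equality from $\tfrac12|u|^2\equiv q-p$. Your relaxation $|w|\le M$ in place of $|w|=c$ is harmless. If $D=\theta-\tfrac12|u|^2$ vanishes at a convex combination of points of $\sK_\theta$, then all $u_i$ coincide with $u$, so $m=uw$ and $D$ is still a suitable distance map.

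The genuine gap is the production of the wild data. Suppose you apply Thm.~\ref{thm:conv-int} on $(-1,0)$ to the zero subsolution. Then item~\ref{item:main-thm-sol1b} of Thm.~\ref{thm:conv-int} forces the end value $\zeta(0,\cdot)=\ov{\zeta}(0,\cdot)=0\notin\sK_\theta$. The framework's solutions are glued to the subsolution at both ends of the time interval, which is exactly the energy-jump phenomenon. Moreover, even if you had some $\sK_\theta$-valued state (say the time-$0$ slice of a solution on $(-1,T)$), your second application ``for $t>0$'' needs a particular subsolution. It must lie in $\Cweak([0,T];L^2)\cap C^1((0,T)\times\closure{\Omega})$, be $\sU$-valued for $t>0$, and have exactly this state as initial value. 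You never construct one, and for a rough $\sK$-valued datum this is the whole difficulty. Starting instead from the zero subsolution at $t=0$ gives energy $0$ at $t=0$ and $\theta$ for $t>0$, which violates the energy inequality.

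The missing ingredient is Thm.~\ref{thm:idiK}, the generalization of \cite[Sect.~5]{DelSze10}. One iterates Prop.~\ref{prop:pert-prop} on the nested windows $(T_0-\beta_k,T_0+\beta_k)$, $\beta_k=2^{-k}\beta$, with $\ep=2^{-k}$. The parameter $\ov{\ep}$ is chosen so small that $\|(\zeta_k-\zeta_{k+1})(T_0,\cdot)\ast\phi_{\widehat{\ep}_\ell}\|_{L^r}\le 2^{-k}$ for all $\ell\le k$.
\begin{itemize}
\item Away from $T_0$, the $d$-limit agrees locally with some $\zeta_k$, so it is $C^1$, solves the linear system and is $\sU$-valued there.
\item At $T_0$, the mollifier bounds upgrade the convergence to strong $L^r$ convergence.
\end{itemize}
This upgrade is indispensable. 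Since $D$ is concave, $\int_\Omega D$ is only upper semicontinuous under weak convergence, so the bounds $\int_\Omega D(\theta,\zeta_{k+1}(T_0,\cdot))\dx\le 2^{-k}$ would not pass to the limit. With property~\ref{item:suitable-D-convergence} they do, and give $\zeta(T_0,\cdot)\in\sK$ a.e. Shifting $T_0$ to $0$ yields the subsolution to which Thm.~\ref{thm:conv-int} is then applied. Compatibility of $(u_0,w_0)$ follows from the divergence constraint for $t\ne T_0$ plus weak continuity.

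Note also that the step you single out as hardest is immediate here. $\Lambda$ is complete with respect to $\sK$: take $\eta_x\perp(u_2-u_1,w_2-w_1)$ and $\eta_t=-(u_2,w_2)\cdot\eta_x$. Hence $\sK^\Lambda=\sK^\co$. The real geometric work is proving $0\in\interior{(\sK^\co)}$, which the paper does with the explicit set $\sW_{p,q,c}$.
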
 

Thm.~\ref{thm:hydrost-euler-exforalldata} can by viewed as an improvement of \name{Chiodaroli}-\name{Mich{\'a}lek}~\cite[Thm.~2.3]{ChiMic17} who consider only continuous initial data.

\begin{rem}
    Thm.~\ref{thm:hydrost-euler-exforalldata} still holds if $\Omega=\T^3$. 
\end{rem}

\begin{rem} \label{rem:hydrost-euler-differences-ChiMic}
    We note that Thm.~\ref{thm:hydrost-euler-wilddata} is the first construction of admissible weak solutions for the hydrostatic Euler equations (i.e., which uses the 2D instead of the 3D kinetic energy). In particular, our results are fundamentally different from the work by \name{Chiodaroli}-\name{Mich{\'a}lek} \cite{ChiMic17} who treat the hydrostatic Euler equations by constructing solutions to the 3D Euler equations where the pressure does not depend on $z$. In particular, they work with the 3D kinetic energy rather than the horizontal one. But we observe once again that this is not the right energy to be considered for the hydrostatic Euler equations, see also Rem.~\ref{rem:hydrost-euler-energy}. We emphasize that using the correct energy fundamentally changes the convex integration procedure which results in essential differences between our approach and the one used in \cite{ChiMic17}, see Sect.~\ref{sec:ex-hydrost-euler} for more details.
\end{rem}

\begin{rem} \label{rem:hydrost-euler-decreasing-energy}
    We note that infinitely many of the solutions constructed in Thm.~\ref{thm:hydrost-euler-wilddata} even conserve the horizontal kinetic energy, i.e., \eqref{eq:hydrost-euler-enineq} holds with equality (in the sense of distributions). Moreover -- even though the vertical kinetic energy is not formally conserved, see Rem.~\ref{rem:hydrost-euler-energy} -- the constructed solutions also conserve the local vertical kinetic energy, i.e., 
    $$
        \partial_t \left(\half w^2\right) + \Divh \left[\left(\half w^2 + p\right)u\right] + \partial_z \left[\left(\half w^2 + p\right)w\right] = 0
    $$
    in the sense of distributions. Consequently they conserve the 3D kinetic energy $\half (|u|^2 + w^2)$.

    A slight modification of the proof of Thm.~\ref{thm:hydrost-euler-wilddata} yields initial data for which there are infinitely many admissible weak solutions with strictly decreasing horizontal kinetic energy and strictly decreasing vertical kinetic energy. Even the combinations ``strictly decreasing horizontal energy / conserved vertical energy'' and ``conserved horizontal energy / strictly decreasing vertical energy'' are possible. 
\end{rem} 

\begin{rem}
    Thm.~\ref{thm:hydrost-euler-wilddata} also holds in the case of $\Omega=\T^3$, and $\Omega\subset \R^3$ bounded with impermeability boundary condition, see case~\ref{item:domain-bdd} in Sect.~\ref{subsubsec:intro-incomp-euler}.
\end{rem}

\subsubsection{Compressible inviscid primitive equations} \label{subsubsec:intro-compr-prim}

A compressible version of the hydrostatic Euler equations \eqref{eq:hydrost-euler-mass}-\eqref{eq:hydrost-euler-p} is given by the compressible inviscid primitive equations 
\begin{align}
	\partial_t \rho + \Divh (\rho u) + \partial_z (\rho w) &= 0, \label{eq:compr-prim-mass} \\
	\partial_t (\rho u) + \Divh (\rho u\otimes u) + \partial_z (\rho uw) + \Gradh p(\rho) &=0, \label{eq:compr-prim-mom} \\
	\partial_z p(\rho) &=0. \label{eq:compr-prim-p} 
\end{align}
Here the unknowns are the density $\rho$, the horizontal velocity $u$ and the vertical velocity $w$, which take values in\footnote{Like for the consideration of the compressible Euler equations \eqref{eq:compr-euler-mass}, \eqref{eq:compr-euler-mom}, we exclude vacuum in this paper, i.e.~we require that $\rho$ is strictly positive.} $\R^+$, $\R^2$ and $\R$, respectively. Like for the compressible Euler equations (see Sect.~\ref{subsubsec:intro-compr-euler}), $p\in C^1((0,\infty);\R^+)$ is a given function of $\rho$. We consider the same domain as in the incompressible case, i.e., the channel $\Omega=\T^2 \times (0,1)$ with boundary condition $w|_{z=0,1} = 0$. The compressible primitive equations are a fundamental model for atmospheric dynamics on a planetary scale. We note that we are considering the case without gravity. We also emphasize that in this work we will not use pressure coordinates ($p$-coordinates) but rather work with physical cartesian coordinates.

Moreover, we are interested in the initial value problem with initial data $(\rho_0,u_0,w_0)\in L^\infty(\Omega;\R^+\times \R^2\times \R)$. In this case, $\rho_0$ must satisfy 
\begin{equation} \label{eq:compr-prim-compatibility-initialdata}
    \rho_0(x)= \rho_0(x_1,x_2), \quad \text{i.e., }\rho_0\text{ is independent of }z, 
\end{equation}
in order to be compatible with the hydrostatic balance equation \eqref{eq:compr-prim-p}. 

The local energy inequality reads
$$
    \partial_t\left( \half \rho |u|^2 + P(\rho) \right) + \Divh \left[ \left( \half \rho |u|^2 + P(\rho) + p(\rho) \right) u \right] + \partial_z \left[ \left( \half \rho |u|^2 + P(\rho) + p(\rho) \right) w \right] \leq 0
$$
where as in the compressible Euler equations the pressure potential $P$ is given by \eqref{eq:pressure-potential}.

Like in the incompressible case, the viscous compressible primitive equations were first studied in \cite{LioTemWan92} using pressure coordinates. The global existence of weak solutions in two dimensions was established in \cite{gatapov2005,ersoy2012}, while the stability of weak solutions was considered in \cite{ErsNgoSy11,tang2015}. Sufficient conditions for the energy equality to hold for weak solutions were identified in \cite{necasova2023}. 

The local well-posedness of the viscous compressible primitive equations was established in \cite{liu2021} for the cases of gravity without vacuum and vacuum without gravity. The vanishing Mach number limit towards the incompressible viscous primitive equations was rigorously justified in \cite{liu2020,liu2023}, while the inviscid case was considered in \cite{gao2019}. In \cite{gao2021,gao2022}, under the assumption that the density of the solution remains independent of $z$, i.e., in the absence of fast vertical acoustic waves, the hydrostatic limit was established (see also \cite{TanNec23} for results on the inviscid case) as a vanishing asymptotic limit of small aspect ratio between vertical and horizontal spatial scales. In \cite{liu2024a} the hydrostatic limit was justified rigorously, with rate of convergence in terms of the small aspect ratio, without this assumption, i.e., allowing for fast vertical acoustic waves which induce, in the limit, an averaging mechanism of the density in the vertical direction. Finally, the asymptotic stability of the equilibrium for the free boundary problem of the viscous equations was established in \cite{liu2024b}. 

The notions of weak and admissible weak solutions in the context of the compressible inviscid primitive equations used in this paper are defined in the same way as in the previous subsections, see Defns.~\ref{defn:incomp-euler-sol}, \ref{defn:compr-euler-sol} and \ref{defn:hydrost-euler-sol}. 

Similar to the compressible Euler equations, we will not prove Thm.~\ref{thm:general-exforalldata} in the context of the compressible inviscid primitive equations \eqref{eq:compr-prim-mass}-\eqref{eq:compr-prim-p}, see Rem.~\ref{rem:compr-models-no-exforalldata} for the reason. 

Let us next state Thm.~\ref{thm:general-wilddata}, for this model, precisely.

\begin{thm} \label{thm:compr-prim-wilddata} 
    Let $\Omega= \T^2\times (0,1)$ and $T\in (0,\infty)\cup \{\infty\}$. Furthermore, let $p\in C^1((0,\infty);\R^+)$, and $0<\un{\rho}<\ov{\rho}<\infty$. Then there exists $\ep>0$, depending only on $\un{\rho}$, $\ov{\rho}$ and the pressure function $p$, with the following property. For any initial density $\rho_0\in C^1(\closure{\Omega};\R^+)$ which satisfies the compatibility condition \eqref{eq:compr-prim-compatibility-initialdata} and 
    $$
        \un{\rho}\leq \rho_0(x) \leq \ov{\rho} \quad \text{ for all }x\in \closure{\Omega}\qquad \text{ and } \qquad\sup_{x\in\closure{\Omega}}|\Grad \rho_0(x)|\leq \ep, %
    $$
    there exists an initial velocity $(u_0,w_0)\in L^\infty(\Omega;\R^3)$, for which there are infinitely many admissible weak solutions $(\rho,u,w)$ of the compressible inviscid primitive equations \eqref{eq:compr-prim-mass}-\eqref{eq:compr-prim-p}. All of these solutions satisfy $\rho\in C^1([0,T)\times \closure{\Omega};\R^+)$ 
    with 
    $$
        \un{\rho}\leq \rho(t,x) \leq \ov{\rho} \quad \text{ for all }(t,x)\in [0,T)\times \closure{\Omega}, 
    $$ 
    and $(u,w)\in \Cweak([0,T);L^2(\Omega;\R^3))$. 
\end{thm}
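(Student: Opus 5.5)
The plan is to follow Feireisl's strategy for Thm.~\ref{thm:compr-euler-wilddata} and adapt it to the hydrostatic structure. Since $\rho_0$ is independent of $z$, I will look for solutions whose density $\rho=\rho(t,x_1,x_2)$ is $z$-independent, smooth, and solves a prescribed transport problem. The hydrostatic balance \eqref{eq:compr-prim-p} then holds automatically, and $p(\rho)$ becomes a known $C^1$ function. Concretely, I will choose a smooth $z$-independent horizontal potential field $\Grad_h\Psi$ and set $\rho$ to be the solution of $\partial_t\rho + \Divh(\Gradh\Psi)=0$. Equivalently, I will take $\Psi$ solving $\Delta_h\Psi = -\partial_t\rho$, with $\rho(t)$ chosen as a slow interpolation: for instance, $\rho(t,\cdot)$ relaxing from $\rho_0$ towards its mean, so that $\partial_t\rho$ and hence $\Grad\Psi$ stay small. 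The smallness is guaranteed by $\sup|\Grad\rho_0|\le\ep$, and $\un{\rho}\le\rho\le\ov{\rho}$ holds for all times.

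Writing the momentum as $(m,\mu)=(\rho u,\rho w) = (v+\Gradh\Psi,\ \mu)$ reduces the system to a hydrostatic, incompressible-type differential inclusion for $(v,\mu)$. The constraint $\Divh v+\partial_z\mu=0$ is linear, and the momentum equation
\begin{equation*}
\partial_t v + \Divh\Big(\tfrac{(v+\Gradh\Psi)\otimes(v+\Gradh\Psi)}{\rho}\Big)+\partial_z\Big(\tfrac{(v+\Gradh\Psi)\mu}{\rho}\Big)+\Gradh\big(p(\rho)+\partial_t\Psi\big)=0
\end{equation*}
has coefficients depending smoothly on $(t,x)$ through $\rho,\Psi$. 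I will cast this in the form of the general framework of Sect.~\ref{sec:general}, with a $(t,x)$-dependent family of constitutive sets $\sK_\theta$, $\theta=(t,x)$. These sets fix the horizontal energy density $\frac{|v+\Gradh\Psi|^2}{2\rho}$ to a prescribed function $e(t,x)$. I then verify the four structural hypotheses of Thm.~\ref{thm:conv-int}, reusing the hydrostatic Euler analysis of Sect.~\ref{sec:ex-hydrost-euler}. That analysis gives the large subset of the convex hull where the hull coincides with the $\Lambda$-hull, and the distance map; I will transport both via the affine change of variables $v\mapsto (v+\Gradh\Psi)/\sqrt\rho$.

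Next, I will construct a strict subsolution following the hydrostatic wild-data construction. Starting from the subsolution used in Thm.~\ref{thm:hydrost-euler-wilddata}, I will perturb it by the small terms from $\Gradh\Psi$ and $\rho$, and choose the energy $e(t,x)$ suitably. The subsolution must lie in the interior of the computed subset of the hull for $t>0$ and be a genuine element of $\sK_\theta$ at $t=0$. Given such a subsolution, Thm.~\ref{thm:conv-int} produces infinitely many solutions in $\Cweak([0,T);L^2)$, all sharing the same initial data $(\rho_0,u_0,w_0)$ with $u_0 = (v(0)+\Gradh\Psi(0))/\rho_0$. Admissibility then follows because the energy is pinned to $e$, and $e$ is chosen so that the local energy inequality holds: its time derivative plus the flux terms must be nonpositive. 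I will arrange this by letting $e$ absorb $P(\rho)$ and the $\partial_t\Psi$ pressure contribution, as in Feireisl's argument.

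The main obstacle I expect is keeping the subsolution strictly inside the explicit subset of the convex hull uniformly on $[0,T)$. The only available description is the subset computed in the hydrostatic case, not the full hull. The perturbation must therefore be small relative to the margin of the hydrostatic subsolution, and this is exactly where $\ep$ depending only on $\un{\rho},\ov{\rho},p$ enters. I would first try making $\rho(t)\to$ const exponentially with rate controlled by $\ep$, so that all perturbation terms are uniformly $O(\ep)$ for all times, and then absorb them by scaling up the prescribed energy $e$.
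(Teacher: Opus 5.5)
Your reduction ($z$-independent $\rho$ relaxing to its mean, $\Delta_h\Psi=-\partial_t\rho$, momentum $\Gradh\Psi$ plus a hydrostatic part, rescaling by $\sqrt{\rho}$ to reuse the hydrostatic hull, $q=\chi(t)-\partial_t\Psi$) matches the paper. \textbf{The gap is in how you obtain initial values in $\sK_\theta$.} That is the step that makes the solutions admissible.

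You propose to get it by perturbing the hydrostatic wild-data subsolution by the small terms coming from $\Gradh\Psi$ and $\rho$. This cannot work under either reading.
\begin{itemize}
\item If you start from the $\sK^\hydrost$-valued subsolution of the hydrostatic case, the constitutive relations $\frac{m_h\otimes m_h}{\rho}-U+(p(\rho)-q)\id_2=0$ and $W=\frac{m_hm_v}{\rho}$ cannot be transported to a non-constant $\rho_0$ while keeping the linear equations. For example, $(\frac{1}{\rho_0}-1)\,u\otimes u$ would have to be a multiple of the identity. A set with empty interior is not stable under perturbation, and that subsolution has no margin in $\sU$ near $t=0$ to absorb anything.
\item If you start from $(0,0,0,0)$, the perturbed field $(\Gradh\Psi,0,0,0)$ lies in $\sU_{\rho,q,c}$ at $t=0$, not in $\sK_{\rho,q,c}$ (note $m_v=0\neq\pm c$). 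Thm.~\ref{thm:conv-int} then gives $\zeta(0)=\ov{\zeta}(0)$. So $\frac{|m_h|^2}{2\rho}<q-p(\rho)$ at $t=0$ but $=q-p(\rho)$ for $t>0$: an upward energy jump that violates admissibility.
\end{itemize}
The missing tool is Thm.~\ref{thm:idiK}. Apply it to the compressible subsolution $(\Gradh\Psi,0,0,0)$, with parameters $(\rho,q,c)$ and $c\equiv 1$, at some $T_0>0$ where $\rho\equiv\rho_0$. This uses property~\ref{item:suitable-D-convergence} of the quadratic distance map for $r=2$. Then shift time by $T_0$. The resulting $\sK$-valued slice is a strong $L^2$ limit of iterated perturbations (Prop.~\ref{prop:pert-prop}), not a small smooth perturbation.

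Your ``main obstacle'' is misplaced accordingly.
\begin{itemize}
\item A margin to $\partial\sU$ that is uniform up to $t=0$ is incompatible with $\sK$-valued initial data. A uniform margin keeps $D(\theta,\zeta)\ge\delta>0$. By concavity of $D$ and weak continuity, $\int_\Omega D(\theta(0,x),\zeta(0,x))\dx\ge\limsup_{t\to0}\int_\Omega D(\theta(t,x),\zeta(t,x))\dx\ge\delta|\Omega|$, which is impossible if $\zeta(0,\cdot)\in\sK$.
\item The subsolution condition needs no smallness. By Prop.~\ref{prop:zero-in-U-compr}, $(\Gradh\Psi,0,0,0)\in\sU_{\rho,q,c}$ as soon as $\chi\ge C_1$.
\item The hypothesis $\sup|\Grad\rho_0|\le\ep$ enters only in the energy inequality. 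Use the trace identity $\frac{|m_h|^2}{2\rho}=q-p(\rho)$ from $\sK_{\rho,q,c}$; the $m_v$-flux drops because $\rho$ and $q$ do not depend on $z$. On the interval where $\rho$ varies you then need $\chi'+\ep C_2\chi^2+C_3\chi+C_4\le 0$ with $\chi\ge C_1$.
\item Because of the quadratic term, ``scaling up the prescribed energy'' makes this worse. The paper instead runs the Riccati solution of Lemma~\ref{lemma:app-constr-phi} backward in time. Its blow-up time exceeds the needed interval only when $\ep$ is small.
\end{itemize}

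A smaller point: $\theta=(t,x)$ is not admissible for $T=\infty$, since $\Theta$ must be compact. Parametrize by the values $(\rho,q,c)$ instead; this also makes your Helmholtz shift unnecessary.
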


Rems.~\ref{rem:compr-euler-wilddata-DelSze}-\ref{rem:compr-euler-wilddata-dependencies} hold in a similar fashion for the compressible inviscid primitive equations. Moreover, an analogous version of Rem.~\ref{rem:hydrost-euler-decreasing-energy} is true, more precisely:

\begin{rem}
    One can even achieve that the solutions established in Thm.~\ref{thm:compr-prim-wilddata} satisfy an energy inequality for the 3D energy $\half \rho (|u|^2 + w^2) +P(\rho)$. 
\end{rem}

To the best of the authors' knowledge, no results similar to Thm.~\ref{thm:compr-prim-wilddata} (in the context of the compressible inviscid primitive equations \eqref{eq:compr-prim-mass}-\eqref{eq:compr-prim-p}) are known so far.

\subsubsection{Quasi-geostrophic equations} \label{subsubsec:intro-QG}

Finally, we will apply our general convex integration framework to the quasi-geostrophic equations, which model a fluid that is close to the so-called geostrophic balance, see, e.g., \cite{Pedlosky}. The quasi-geostrophic equations read in their original form
\begin{align}
    \partial_t (\Lap \Psi) + (\Gradh \Psi)^\perp \cdot \Gradh (\Lap \Psi) &= 0, \qquad (t,x,y,z)\in [0,T) \times \T^2\times [0,1], \label{eq:QG-orig-1} \\
    \partial_t (\partial_\nu \Psi) + (\Gradh \Psi)^\perp \cdot \Gradh (\partial_\nu \Psi) &= 0, \qquad (t,x,y,z)\in [0,T) \times \T^2\times \{0,1\}, \label{eq:QG-orig-2}
\end{align}
see, e.g., \cite{dutton1974}. The unknown in the quasi-geostrophic equations \eqref{eq:QG-orig-1}, \eqref{eq:QG-orig-2} is the stream function $\Psi$, which takes values in $\R$. The velocity field is given by $(\Gradh \Psi)^\perp$ and the potential vorticity by $\Lap \Psi$. We use the notation $\QGunknown_h$ for the horizontal part of a three-dimensional vector $\QGunknown$, i.e.~$\QGunknown_h = ([\QGunknown]_1 , [\QGunknown]_2 )^\trans\in \R^2$, and $\QGunknown_v = [\QGunknown]_3\in \R$ for the vertical part. Furthermore, we denote $v^\perp = (-[v]_2 ,[v]_1)^\trans$ for any $v\in \R^2$. In addition, $\partial_\nu\Psi$ denotes the outward pointing normal derivative of $\Psi$ at $z=0,1$.

In this paper we consider the following ``reformulated'' version of the quasi-geostrophic equations, which was introduced in \cite{PueVas15} (see also \cite{Novack20}) and reads\footnote{Note that \eqref{eq:QG-vassuer-mom} is indeed a system of three equations. The fact that the terms $\partial_t (\Grad \Psi)$ and $\Curl Q$ have three components is obvious. Moreover, the product $\Grad \Psi \otimes (\Gradh \Psi)^\perp$ can be understood as a matrix consisting of three rows and two columns. Hence its row-wise horizontal divergence $\Divh$ is well-defined and takes values in $\R^3$ as desired.} 
\begin{align}
    \partial_t (\Grad \Psi) + \Divh ( \Grad \Psi \otimes (\Gradh \Psi)^\perp ) - \Curl Q &= 0. \label{eq:QG-vassuer-mom}
\end{align} 
Equation \eqref{eq:QG-vassuer-mom} is formally closed by the elliptic equation 
\begin{equation} \label{eq:QG-recover-Q}
    - \Delta Q = \Curl ( (\nabla_h \Psi)^\perp \cdot \nabla_h (\nabla \Psi)). 
\end{equation}
The spatial domain for \eqref{eq:QG-vassuer-mom} is the same as in as in Sects.~\ref{subsubsec:intro-hydrost-euler} and \ref{subsubsec:intro-compr-prim}, i.e., $\Omega = \T^2 \times (0,1)$. The boundary condition can then be expressed by 
\begin{equation} \label{eq:BC-QG}
    \Curl Q|_{z=0,1} \cdot (0,0,1)=0.
\end{equation}

The mathematical analysis of the quasi-geostrophic equations \eqref{eq:QG-orig-1}, \eqref{eq:QG-orig-2} was initiated in \cite{dutton1974,dutton1976}. These equations were derived rigorously in \cite{bourgeois1994} in a bounded domain for initial data which are close to the geostrophic balance, the so-called well-prepared initial data. The case of general initial data (which lead to resonance terms) was studied in \cite{embid1996,embid1998,majda1998} in the case of periodic boundary conditions, and in \cite{bardos2024} for general initial data in the presence of physical boundaries. The global existence of weak solutions of the inviscid quasi-geostrophic equations was established in \cite{PueVas15}, using the aforementioned reformulation of the system. This result was extended in \cite{novack2019} to a larger class of initial data. 

The global existence of weak solutions (with bounded potential vorticity) for the quasi-geostrophic equations with physical lateral boundaries was established in \cite{novack2020a}, while the local well-posedness was proved in \cite{novack2020b}. The non-uniqueness of weak solutions was established in \cite{Novack20} for Hölder exponents less than $\frac{1}{5}$ (for the velocity field), in the presence of physical boundaries in the vertical direction. Note that for the constructed solutions the potential vorticity is not necessarily defined (unlike for the previous results). A sufficient condition for the conservation of energy was found in \cite{novack2019}. 

In addition, in \cite{novack2018} the global well-posedness was established for the quasi-geostrophic equations with Ekman pumping (which leads to a viscous dissipation term on the boundary). The global existence of weak solutions in the presence of Ekman pumping with a degenerate density background profile was achieved in \cite{hu2024}. The rigorous derivation of the viscous equations was carried out in \cite{desjardins1998} (see also \cite{niu2010}). 

In this paper we will consider $\QGunknown:= \Grad \Psi$ instead of $\Psi$ itself as the unknown. Consequently, we deal with the following equations
\begin{align} 
    \Curl \QGunknown &=0 , \label{eq:QG-curl} \\
    \partial_t \QGunknown + \Divh ( \QGunknown \otimes (\QGunknown_h)^\perp ) - \Curl Q &= 0. \label{eq:QG-mom}
\end{align}
Note that \eqref{eq:QG-curl} is equivalent to the existence of $\Psi$ with $\QGunknown = \Grad \Psi$, see \cite[Thm.~1]{AmrCiaCia07} which is used in Cor.~\ref{cor:QG}, below. Moreover, we would like to stress that $\QGunknown$ does not play the role of a velocity\footnote{In particular, the velocity in this case is given by $(\Grad_h\Psi)^\perp$.}. 

In the context of system \eqref{eq:QG-curl}, \eqref{eq:QG-mom}, we treat $Q$ as an additional unknown rather than recovered from $\QGunknown$ via \eqref{eq:QG-recover-Q} (where $\Psi$ has to be replaced by $\QGunknown$ through $\QGunknown:= \Grad \Psi$). This is in the same spirit as we treated the pressure $p$ in the context of the incompressible Euler equations, see Sect.~\ref{subsubsec:intro-incomp-euler}, above.

We supplement the system \eqref{eq:QG-curl}, \eqref{eq:QG-mom} by initial data $\QGunknown_0\in L^\infty(\Omega;\R^3)$ which we require to fulfill
\begin{equation} \label{eq:QG-compatibility-initialdata}
    \int_\Omega \QGunknown_0 \cdot \Curl \phi \dx = 0 \qquad \text{ for all test functions }\phi\in \Cc(\Omega;\R^3),
\end{equation}
in order to be compatible with \eqref{eq:QG-curl}. The boundary condition for system \eqref{eq:QG-curl}, \eqref{eq:QG-mom} is given by \eqref{eq:BC-QG}.

The local energy inequality for the quasi-geostrophic equations in the form \eqref{eq:QG-curl}, \eqref{eq:QG-mom} is given by 
\begin{equation} \label{eq:QG-enineq} 
    \partial_t \left( \half |\QGunknown|^2\right) + \Divh \left( \half |\QGunknown|^2 (\QGunknown_h)^\perp \right) - \Div(Q\times \QGunknown ) \leq 0.
\end{equation}

In this paper we work with the following definition of weak solutions and admissible weak solutions of the quasi-geostrophic equations \eqref{eq:QG-curl}, \eqref{eq:QG-mom}. 

\begin{defn} \label{defn:QG-sol} 
	\begin{enumerate}
		\item \label{item:QG-sol-weak} A pair $(\QGunknown,Q)$ with $\QGunknown\in L^\infty((0,T)\times \Omega; \R^3)$ and $Q\in C^1([0,T]\times \closure{\Omega}; \R^3)$ is called a \emph{weak solution} of the quasi-geostrophic equations \eqref{eq:QG-curl}, \eqref{eq:QG-mom} with initial data $\QGunknown_0\in L^\infty(\Omega;\R^3)$ (which satisfy the compatibility condition \eqref{eq:QG-compatibility-initialdata}) if the equations are satisfied in the sense of distributions, i.e., 
		\begin{align*} 
			\int_0^T \int_\Omega \QGunknown \cdot \Curl \phi \dx \dt &= 0, \\
			\int_0^T \int_\Omega \Big[ \QGunknown \cdot \partial_t \varphi + (\QGunknown\otimes (\QGunknown_h)^\perp) : \Gradh \varphi + \Curl Q \cdot \varphi \Big] \dx\dt + \int_\Omega \QGunknown_0 \cdot \varphi(0,\cdot) \dx &= 0, 
		\end{align*}
		for all test functions $\phi\in \Cc ([0,T)\times \Omega; \R^3)$, $\varphi\in \Cc ([0,T)\times \closure{\Omega};\R^3)$, and the boundary condition \eqref{eq:BC-QG} holds. 
		
		\item \label{item:QG-sol-adm} A weak solution $(\QGunknown,Q)$ of the quasi-geostrophic equations \eqref{eq:QG-curl}, \eqref{eq:QG-mom} with initial data $\QGunknown_0$ is called \emph{admissible} if the local energy inequality \eqref{eq:QG-enineq} holds in the sense of distributions, i.e., 
		\begin{align*} 
			\int_0^T \int_\Omega \Big[ \half |\QGunknown|^2 \partial_t \Phi + \half |\QGunknown|^2 (\QGunknown_h)^\perp \cdot \Gradh \Phi - (Q\times \QGunknown) \cdot \Grad \Phi \Big] \dx \dt + \int_\Omega \half |\QGunknown_0|^2 \Phi (0,\cdot) \dx &\geq  0, 
		\end{align*}
		for all test functions $\Phi\in \Cc ([0,T)\times \Omega; \R^+_0)$. 
	\end{enumerate}
\end{defn}

\begin{rem} \label{rem:QG-Q} 
    We remark that our requirement $Q\in C^1$ is very strong. However, for the solutions that are constructed in this paper (see Thm.~\ref{thm:QG-wilddata}, below), $Q$ will satisfy such high regularity. A major reason why we will not consider the case with lower regularity for $Q$ is that a detailed discussion of the attainment of the boundary condition \eqref{eq:BC-QG} in the sense of trace at low regularity of $Q$ is outside the scope of this paper. 
\end{rem}

In the context of the quasi-geostrophic equations \eqref{eq:QG-curl}, \eqref{eq:QG-mom}, we will not prove Thm.~\ref{thm:general-exforalldata}. If we wanted to show it, we would need a version of Prop.~\ref{prop:app-sfad}, below, which is adjusted to the quasi-geostrophic equations. As in Rem.~\ref{rem:incomp-euler-exforalldata-otherdomains} such a version of Prop.~\ref{prop:app-sfad} might be feasible, but in order to keep the presentation simple and concise, we leave this for future work.

For the quasi-geostrophic equations \eqref{eq:QG-curl}, \eqref{eq:QG-mom}, Thm.~\ref{thm:general-wilddata} can be precisely stated as follows.

\begin{thm} \label{thm:QG-wilddata}
    Let $\Omega= \T^2\times (0,1)$ and $T\in (0,\infty)\cup \{\infty\}$. There exist initial data $\QGunknown_0\in L^\infty(\Omega;\R^3)$ satisfying the compatibility condition \eqref{eq:QG-compatibility-initialdata}, for which there are infinitely many admissible weak solutions $(\QGunknown,Q)$ of the quasi-geostrophic equations \eqref{eq:QG-curl}, \eqref{eq:QG-mom} in the sense of Defn.~\ref{defn:QG-sol}~\ref{item:QG-sol-adm}. All of these solutions satisfy $\QGunknown\in \Cweak([0,T);L^2(\Omega;\R^3))$. 
\end{thm}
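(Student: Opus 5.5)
The plan is to apply the general convex integration framework (Thm.~\ref{thm:conv-int}) to a differential inclusion encoding \eqref{eq:QG-curl}, \eqref{eq:QG-mom}, following Tartar's framework. The linear system is
\[
\Curl \QGunknown = 0,\qquad \partial_t \QGunknown + \Divh M - \Curl Q = 0,
\]
where $M\in\QGmatrixset$ is a new unknown, a $3\times 2$ matrix. The nonlinear constraint is encoded in the constitutive set
\[
\sK_\theta=\{(\QGunknown,M)\colon M=\QGunknown\otimes(\QGunknown_h)^\perp,\ \tfrac12|\QGunknown|^2=\theta\},
\]
parametrized by an energy density $\theta$.

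The first step is the $\Lambda$-analysis. The wave cone here consists of states $(\bar \QGunknown,\bar M)$ admitting a plane wave $(\bar \QGunknown,\bar M)h(\xi\cdot(t,x))$ solving the linear system with $Q$ as a free potential. Since $Q$ absorbs a curl, the momentum equation modulo curls only constrains the projection along $\xi_x$. I will show that $\Lambda$ is large enough that every segment needed in the convex hull is a $\Lambda$-segment. I will then compute, or at least find a large open subset of, the convex hull $\sK_\theta^{\co}$ in the spirit of the hydrostatic computation. The candidate is
\[
\sU_\theta=\{(\QGunknown,M)\colon \lambda_{\max}(\text{suitable symmetrization of }\QGunknown\otimes(\QGunknown_h)^\perp-M)<\theta-\tfrac12|\QGunknown|^2\}.
\]
I expect to verify $\co=\Lambda$-$\co$ on that subset by exhibiting explicit $\Lambda$-segments connecting interior points to $\sK_\theta$. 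Next I will construct the differential operator required by the framework: a potential $(\QGunknown,M,Q)=\mathcal L(\text{potential})$ producing compactly supported plane-wave-like localized solutions. I will take $\QGunknown=\Grad\partial_t\chi$-type gradients, so that $\Curl \QGunknown=0$ holds automatically, and choose $Q$ to compensate. The distance map will be $(\QGunknown,M)\mapsto \theta-\frac12|\QGunknown|^2$, which vanishes exactly on $\sK_\theta$ within $\closure{\sU_\theta}$.

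Second, I need a strict subsolution with good initial data. I plan to take a stationary, smooth background: for instance $\QGunknown\equiv 0$, $M\equiv 0$, $Q\equiv0$ on $[0,T)\times\Omega$ with constant $\theta>0$, so that $(0,0)$ is in the interior of the hull. The boundary condition \eqref{eq:BC-QG} is then trivially satisfied and $Q\in C^1$. Because the framework produces solutions with $\frac12|\QGunknown(t,x)|^2=\theta$ for every $t$ and a.e.~$x$, the initial datum would be $0$ while the energy at positive times is $\theta$, which gives an energy jump. Following \cite{DelSze10}, I will therefore first run the Baire-category argument only up to a short time so as to obtain a subsolution at $t=0$ that already lies on $\sK_\theta$. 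In other words, I will use the framework's conclusion that the set of $t$-continuous points is residual to select $\QGunknown_0:=\QGunknown(0)$ with $\frac12|\QGunknown_0|^2=\theta$ a.e. The result is a strict subsolution on $(0,T)$ whose trace at $t=0$ is a solution state. Applying the framework on $(0,T)$ from this starting point then yields infinitely many solutions with $\frac12|\QGunknown|^2\equiv\theta$, $\Cweak([0,T);L^2)$.

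Third, admissibility. Since $\frac12|\QGunknown|^2=\theta$ is constant and $Q$ is smooth, the energy inequality \eqref{eq:QG-enineq} reduces to $\Divh(\theta(\QGunknown_h)^\perp)-\Div(Q\times \QGunknown)$. Using $\Curl \QGunknown=0$, we have $\Divh(\QGunknown_h^\perp)=-\Curl \QGunknown\cdot e_3=0$ and $\Div(Q\times \QGunknown)=\QGunknown\cdot\Curl Q$. The latter vanishes for $Q=0$; for a general smooth $Q$ I will simply keep $Q$ of the background, namely $Q=0$. The boundary terms vanish because the test functions in Defn.~\ref{defn:QG-sol} are compactly supported in $\Omega$. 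The compatibility condition \eqref{eq:QG-compatibility-initialdata} follows from the weak continuity together with $\Curl \QGunknown=0$.

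I expect the main obstacle to be the hull computation: showing that the chosen open set lies in the $\Lambda$-convex hull of $\sK_\theta$, given the non-symmetric $3\times2$ structure $\QGunknown\otimes(\QGunknown_h)^\perp$. To attack it, I will first try an explicit lamination argument, decomposing into segments in directions $(a,a\otimes b_h^\perp+b\otimes a_h^\perp)$ that belong to $\Lambda$.
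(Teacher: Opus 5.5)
\textbf{Gap 1: the relaxation and the subsolution.} Your constitutive set does not fit your matrix space. The top $2\times2$ block of $\QGunknown\otimes(\QGunknown_h)^\perp$ has $[M]_{12}=[\QGunknown]_1^2$ and $[M]_{21}=-[\QGunknown]_2^2$. So it lies in $\QGmatrixset$ only if $\QGunknown_h=0$, and then $\sK_\theta$ is just two points.

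If you instead let $M$ range over a larger matrix space, the linear functional $M\mapsto[M]_{12}-[M]_{21}$ equals $|\QGunknown_h|^2\geq 0$ on $\sK_\theta$, hence on $(\sK_\theta)^\co$, and it vanishes at $(0,0)$. So $(0,0)$ is never an interior point of the hull, and your background is not a strict subsolution.

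The missing idea is to pin the antisymmetric part $\tfrac12|\QGunknown_h|^2\mathbb{E}^{3\times2}$ by a parameter. The paper sets $U:=\QGunknown\otimes(\QGunknown_h)^\perp-q\mathbb{E}^{3\times2}\in\QGmatrixset$, which forces $|\QGunknown_h|^2=2q$. It then moves $\Divh(q\mathbb{E}^{3\times2})=\Curl(0,0,q)$ to the right-hand side as a parameter term, and adds $|\QGunknown_v|=c$ to get compact sets. The background $(\QGunknown,U)=(0,0)$, i.e.\ $M=q\mathbb{E}^{3\times2}$ in your variables, is shown to lie in $\sU_{Q,q,c}$ in Prop.~\ref{prop:zero-in-U-QG}. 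That proof transports to the hydrostatic sets and uses the explicit subset $\sW_{p,q,c}$; this is exactly the hull work you leave open.

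Relatedly, $Q$ cannot act as a free potential in the wave cone or in the localizing operator. In Thm.~\ref{thm:conv-int}, $\Lambda$ and $\opL_\zeta$ involve only $\zeta$, and Defn.~\ref{defn:QG-sol} requires $Q\in C^1$. So $Q$ must be a fixed parameter; the paper takes $Q=0$. With $Q$ fixed, the natural cone contains states with $\QGunknown_h=0\neq\QGunknown_v$ and $\eta_x\parallel(0,0,1)$, for which no suitable operator exists. This is why the paper shrinks the cone to $\QGunknown_h\neq0$, $(\eta_1,\eta_2)\neq0$. It then recovers $\interior{\big((\sK_{Q,q,c})^\Lambda\big)}=\interior{\big((\sK_{Q,q,c})^\co\big)}$ from completeness of the larger cone, plus a perturbation making the $(\QGunknown_i)_h$ pairwise distinct.

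\textbf{Gap 2: the initial data.} The Baire-category step cannot produce a $\sK$-valued initial trace. Every element of $X$ satisfies $\zeta(0)=\ov{\zeta}(0)$, which is $0$ here. The residual set $\Xi$ consists of continuity points of the functionals $\I_{\sigma_0,\sigma_1}$ on $(X,d)$, not of times, so there is nothing to ``select''. Nor can you restart from $\zeta(t_0)$ of a solution, since a solution is not a strict subsolution.

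What is needed is Thm.~\ref{thm:idiK}. Apply Prop.~\ref{prop:pert-prop} iteratively on shrinking windows $(T_0-\beta 2^{-k},T_0+\beta 2^{-k})$ around an interior time $T_0$. Choose $\ov{\ep}$ so small that the mollified slice differences at $T_0$ are summable. This upgrades $d$-convergence to strong $L^r$ convergence of $\zeta_k(T_0)$. Property~\ref{item:suitable-D-convergence} of $D$ then gives $\int_\Omega D(\theta(T_0,x),\zeta(T_0,x))\dx=0$, hence $\zeta(T_0,x)\in\sK_{\theta(T_0,x)}$ for a.e.\ $x$. Away from $T_0$ the limit coincides locally with some $\zeta_k$, so it remains a $C^1$ strict subsolution there.

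Shifting time by $T_0$ and applying Thm.~\ref{thm:conv-int} then gives infinitely many solutions. Your energy computation goes through as in the paper, with $\tfrac12|\QGunknown|^2=q+\tfrac12c^2$ constant.
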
 

To the best of the authors' knowledge, no results similar to Thm.~\ref{thm:QG-wilddata} (in the context of the quasi-geostrophic equations \eqref{eq:QG-curl}, \eqref{eq:QG-mom}) are known so far. 

Let us finally rewrite the statement of Thm.~\ref{thm:QG-wilddata} in terms of the stream function $\Psi$:
\begin{cor} \label{cor:QG}
    Let $\Omega= \T^2\times (0,1)$ and $T\in (0,\infty)\cup \{\infty\}$. There exist initial data $\Psi_0\in H^1(\Omega;\R^3)$, for which there are infinitely many weak solutions $(\Psi,Q)$ of the quasi-geostrophic equations \eqref{eq:QG-vassuer-mom} in the sense of \cite{PueVas15} (see Sect.~2 therein). All of these solutions satisfy the local energy inequality \eqref{eq:QG-enineq} (with $\QGunknown:= \Grad \Psi)$ in the sense of distributions, as well as $\Psi\in \Cweak([0,T);H^1(\Omega;\R^3))$. 
\end{cor}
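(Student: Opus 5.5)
The plan is to derive Cor.~\ref{cor:QG} directly from Thm.~\ref{thm:QG-wilddata}, translating the statement about $\QGunknown$ into one about the stream function $\Psi$. First, take the initial datum $\QGunknown_0\in L^\infty(\Omega;\R^3)$ provided by Thm.~\ref{thm:QG-wilddata}. By the compatibility condition \eqref{eq:QG-compatibility-initialdata} we have $\Curl \QGunknown_0 = 0$ in the sense of distributions on $\Omega$. Since $\Omega=\T^2\times(0,1)$ is bounded and Lipschitz, \cite[Thm.~1]{AmrCiaCia07} gives $\Psi_0\in H^1(\Omega)$ with $\Grad\Psi_0 = \QGunknown_0$. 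Here $L^\infty\subset L^2$ on a bounded domain, and $\Psi_0$ is unique up to an additive constant, which we normalise to have zero mean. Note, however, that on $\T^2\times(0,1)$ curl-free fields may also carry harmonic (cohomological) parts, namely constant horizontal vectors. I expect this to be the main obstacle. I would handle it either by using the version of \cite{AmrCiaCia07} as cited in the paper, or by checking that the construction of $\QGunknown_0$ in the proof of Thm.~\ref{thm:QG-wilddata} gives zero horizontal means, absorbing any constant horizontal mean into $\Psi$ via the linear term $c\cdot(x_1,x_2)$, which is then understood as a non-periodic potential.

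Next, for each of the infinitely many admissible solutions $(\QGunknown,Q)$ from Thm.~\ref{thm:QG-wilddata}, apply the same lemma for every fixed $t\in[0,T)$. Because $\QGunknown\in\Cweak([0,T);L^2)$, the first identity of Defn.~\ref{defn:QG-sol}~\ref{item:QG-sol-weak} holds for every $t$, not only almost every $t$: test with $\phi(t,x)=\eta(t)\chi(x)$ and use weak continuity. Hence $\Curl \QGunknown(t)=0$ for all $t$, and we obtain $\Psi(t)\in H^1$ with zero mean and $\Grad\Psi(t)=\QGunknown(t)$. The zero-mean normalisation makes $\Psi(t)$ depend linearly and continuously on $\QGunknown(t)$ through the Poincaré inequality, so $\Psi\in\Cweak([0,T);H^1)$ and $\Psi(0)=\Psi_0$. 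Distinct $\QGunknown$ give distinct $\Psi$, so we obtain infinitely many solutions.

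Finally, we verify the solution notion of \cite[Sect.~2]{PueVas15}. The weak formulation there is the distributional form of \eqref{eq:QG-vassuer-mom} tested against $\varphi$, together with the boundary condition \eqref{eq:BC-QG}. Substituting $\QGunknown=\Grad\Psi$ and $\QGunknown_h^\perp=(\Gradh\Psi)^\perp$ into the second identity of Defn.~\ref{defn:QG-sol}~\ref{item:QG-sol-weak} reproduces exactly that formulation. The regularity $Q\in C^1$ exceeds what is required there, and the energy inequality transfers verbatim from Defn.~\ref{defn:QG-sol}~\ref{item:QG-sol-adm}. The only routine point is to check that the class of test functions in \cite{PueVas15} is contained in, or equivalent to, ours.
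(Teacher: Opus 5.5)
Your proposal is correct and follows the paper's route: the paper only says that Cor.~\ref{cor:QG} follows from Thm.~\ref{thm:QG-wilddata} via \cite[Thm.~1]{AmrCiaCia07}, and your steps (curl-freeness at every $t$ by weak continuity, then the zero-mean normalisation plus Poincar\'e to get $\Psi\in\Cweak([0,T);H^1)$) fill in what the paper leaves implicit. The harmonic-field issue you raise on $\T^2\times(0,1)$ is genuine and the paper does not address it; your second option is the one that settles it (the non-periodic linear fallback would take $\Psi$ out of $H^1(\Omega)$ and is not needed), because with $Q=0$ as in the construction, testing the momentum equation with $\varphi=\eta(t)e_j$, $j=1,2$, shows that $\int_\Omega [\QGunknown]_j(t,x)\dx$ is constant in time, and this constant is zero since the modified subsolution from Thm.~\ref{thm:idiK} equals $0$ for $t<T_0-\beta$ and conserves its spatial mean (horizontal periodicity kills $\int_\Omega \Divh U\dx$), so the potential is genuinely periodic.
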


Cor.~\ref{cor:QG} immediately follows from Thm.~\ref{thm:QG-wilddata} by using \cite[Thm.~1]{AmrCiaCia07}. 

\begin{rem} 
    As shown in \cite[Thm.~2.1]{PueVas15}, the reformulated quasi-geostrophic equations \eqref{eq:QG-vassuer-mom} are equivalent to the original quasi-geostrophic equations \eqref{eq:QG-orig-1}, \eqref{eq:QG-orig-2} as long as 
    \begin{equation} \label{eq:QG-lapl2}
        \Lap\Psi\in L^\infty((0,T);L^2(\Omega)).
    \end{equation}
    It should be emphasized that the solutions in Cor.~\ref{cor:QG} do not necessarily satisfy \eqref{eq:QG-lapl2}. Consequently, they will not necessarily solve the original quasi-geostrophic equations \eqref{eq:QG-orig-1}, \eqref{eq:QG-orig-2}. Note that the same holds for the solutions constructed in \cite{Novack20}.
\end{rem}

\subsection{Organization of this paper} 

This paper is organized as follows. In Sect.~\ref{sec:general} the general framework for convex integration is elaborated. This is the main part of the paper. The framework itself is stated in Thm.~\ref{thm:conv-int}. 

In Sects.~\ref{sec:ex-incomp-euler}-\ref{sec:ex-QG} the general convex integration framework (i.e., Thm.~\ref{thm:conv-int}) is applied to the PDE systems listed in Table~\ref{tab:main-thms} and specified in Sects.~\ref{subsubsec:intro-incomp-euler}-\ref{subsubsec:intro-QG}. The goal in Sects.~\ref{sec:ex-incomp-euler}-\ref{sec:ex-QG} is to prove the theorems stated in Sects.~\ref{subsubsec:intro-incomp-euler}-\ref{subsubsec:intro-QG}. Each of the Sects.~\ref{sec:ex-incomp-euler}-\ref{sec:ex-QG} contains
\begin{itemize}
    \item a subsection where the problem is reformulated such that the framework which is established in Sect.~\ref{sec:general} can be applied (see Sects.~\ref{subsec:ex-incomp-euler-prel}, \ref{subsec:ex-compr-euler-prel}, \ref{subsec:ex-lake-prel}, \ref{subsec:ex-hydrost-euler-prel}, \ref{subsec:ex-compr-prim-prel}, \ref{subsec:ex-QG-prel} and also Rem.~\ref{rem:tartars-framework}), 
    
    \item a subsection which proves that the ``structural assumptions'' of Thm.~\ref{thm:conv-int} hold (see Sects.~\ref{subsec:ex-incomp-euler-ass}, \ref{subsec:ex-compr-euler-ass}, \ref{subsec:ex-lake-ass}, \ref{subsec:ex-hydrost-euler-ass}, \ref{subsec:ex-compr-prim-ass} and \ref{subsec:ex-QG-ass}), and 

    \item a subsection where the proofs of the theorems stated in Sects.~\ref{subsubsec:intro-incomp-euler}-\ref{subsubsec:intro-QG} are completed (see Sects.~\ref{subsec:ex-incomp-euler-pf}, \ref{subsec:ex-compr-euler-pf}, \ref{subsec:ex-lake-pf}, \ref{subsec:ex-hydrost-euler-pf}, \ref{subsec:ex-compr-prim-pf} and \ref{subsec:ex-QG-pf}). 
\end{itemize}

Appendix~\ref{app:linear-algebra} contains a lemma on the largest eigenvalues of a symmetric $2\times 2$ matrix. Some facts regarding convex and $\Lambda$-convex hulls are collected in appendices~\ref{app:convex}-\ref{app:Lconvex}. In Appendix~\ref{app:sfad} a suitable subsolution for any initial data in $L^\infty$ is constructed in the context of the incompressible Euler equations.

\section{Generalized framework for convex integration} \label{sec:general}

As indicated in Sect.~\ref{subsec:intro-ci}, the convex integration framework developed in the present paper may be viewed as complementary to the framework that was presented by the second author in \cite[Sect.~2]{Markfelder24}. We refer to Sect.~\ref{subsec:intro-ci} for a detailed discussion of the differences between the framework established in the present paper and the one developed in \cite{Markfelder24}. However, as some of the individual steps of both approaches are similar, for some parts we will follow \cite{Markfelder24}. 

Sects.~\ref{subsec:general-prelim} and \ref{subsec:general-geometric} contain the preliminaries (formulation of the problem and some definitions) and the geometric setup, respectively. Since the formulation of the problem as well as the geometric setup do not depend on whether the desired solutions are weakly continuous in time or just bounded, Sects.~\ref{subsec:general-prelim} and \ref{subsec:general-geometric} are only a slight modification of the corresponding sections in \cite{Markfelder24}. 

In contrast to that, in Sects.~\ref{subsec:general-functional} and \ref{subsec:general-pert-prop} it becomes essential that we are constructing solutions that are weakly continuous in time instead of merely bounded, so our Sects.~\ref{subsec:general-functional} and \ref{subsec:general-pert-prop} differ significantly from the corresponding sections in \cite{Markfelder24}. Sect.~\ref{subsec:general-functional} contains the functional setup, the main theorem of the framework (Thm.~\ref{thm:conv-int}) and its proof using Prop.~\ref{prop:pert-prop}, which is called \emph{perturbation property}. The perturbation property, which may be viewed as the heart of convex integration, is proved in Sect.~\ref{subsec:general-pert-prop}.

Sect.~\ref{subsec:general-idiK} provides a tool to modify subsolutions at some time positive $T_0$. This tool allows to construct solutions without initial energy jump. 

As mentioned in Sect.~\ref{subsec:intro-ci}, solutions that are weakly continuous in time were already constructed in the context of the incompressible Euler equations in \cite{DelSze10}. Consequently, one may say that our framework generalizes \cite{DelSze10} to a large family of first order PDEs. Indeed, we will make use of many ideas which were originally introduced in \cite{DelSze10}. 

Readers who are unfamiliar with the method of convex integration, might want to consult \cite[Sect.~4.1]{Markfelder}, where the the essential ideas behind convex integration are sketched.

\subsection{Preliminaries} \label{subsec:general-prelim}

\subsubsection{The linear PDE-system and the family of constitutive sets $(\sK_\mu)_{\mu\in \Theta}$} \label{subsubsec:gen-prelim-pde+K}

Let us first introduce some notation. Let $T\in (0,\infty)\cup\{\infty\}$, and $\Omega\subset\R^n$ be a bounded\footnote{See Rem.~\ref{rem:unbounded-domains} for a way how to generalize our framework to unbounded domains $\Omega$.} Lipschitz domain, where $n\in \N$ is the space dimension. The unknown is a vector-valued function\footnote{By $[0,T]$ we mean the closure of the open interval $(0,T)$. In particular $[0,T]$ is the classical closed interval from $0$ to $T$ if $T<\infty$, whereas $[0,T]=[0,\infty)$ for $T=\infty$.} $\zeta: [0,T]\times \Omega \to \R^M$, where $M\in \N$ is the ``number of scalar unknowns''. Let furthermore $a,b_1,b_2:\R^M \to \R^m$ and $A,B: \R^M \to \R^{m\times n}$ be linear maps, where $m\in \N$ is the ``number of scalar equations''. Finally, let\footnote{As commonly used in the literature, $\Cb$ denotes the set of all continuous and bounded functions.} $\theta\in \Cb([0,T]\times\closure{\Omega};\R^m)\cap C^1([0,T]\times\closure{\Omega};\R^m)$ and set $\Theta:= \closure{\theta([0,T]\times\closure{\Omega})}$. Note that $\Theta$ is a compact subset of $\R^m$. 

We study a general linear, first order and not necessarily homogeneous system of PDEs
\begin{equation} \label{eq:lin-eq}
	\partial_t a(\zeta(t,x)) + \Div A (\zeta(t,x)) = \partial_t b_1(\theta(t,x)) + \Div B(\theta(t,x)) + b_2(\theta(t,x)), 
\end{equation} 
supplemented with some constitutive laws which we collect in a family of sets, the so-called \emph{constitutive sets} $\sK_{\mu}\subset \R^M$, where $\mu\in \Theta$, see below. In this paper, we will use the convention that the divergence of a matrix is meant to act row-wise, i.e., 
$$
	[\Div A (\zeta)]_i = \sum_{j=1}^n \partial_j [A(\zeta)]_{ij},
$$
where $\partial_j:= \frac{\partial}{\partial x_j}$ for $j=1,...,n$, and analogously for $\Div B$.

Note that the linearity of $\zeta\mapsto a(\zeta)$ and $\zeta\mapsto A(\zeta)$ implies 
\begin{align}
    \partial_t a(\zeta) &= a (\partial_t \zeta), \label{eq:linearity-a} \\
	[\Div A (\zeta)]_i &= \sum_{j=1}^n \partial_j [A(\zeta)]_{ij} = \sum_{j=1}^n [A(\partial_j\zeta)]_{ij} \qquad \text{ for all }i=1,...,m . \label{eq:linearity-A}
\end{align}
Similar identities hold for $b_1$ and $B$.

As mentioned above, in addition to the linear system \eqref{eq:lin-eq}, we consider for each $\mu\in \Theta$ the constitutive set $\sK_{\mu}\subset \R^M$. Moreover, let $r\in (1,\infty)$. Our goal is to find weak solutions 
$$
    \zeta\in \Cweak([0,T];L^r(\Omega;\R^M)) \cap L^\infty((0,T)\times \Omega;\R^M)
$$ 
of the linear system \eqref{eq:lin-eq} which satisfy 
\begin{equation} \label{eq:zetainK} 
	\zeta(t,x)\in \sK_{\theta(t,x)}\qquad \text{ for all }t\in (0,T)\text{ and a.e. }x\in \Omega. 
\end{equation}

\begin{rem} \label{rem:inequality-possible} 
    We would like to note that it is also possible to study a system of partial differential \emph{inequalities} instead of the system of equations \eqref{eq:lin-eq} as done in \cite[Sect.~2]{Markfelder24}. All proofs and statements in the sequel will still hold with obvious modifications. 
\end{rem}

\begin{rem}
    Note that, heuristically, as \eqref{eq:lin-eq} is a system of $m$ equations for $M$ unknowns, there should hold $m\leq M$. In the case $m<M$ (i.e., the linear system \eqref{eq:lin-eq} is underdetermined), the constitutive laws in $(\sK_\mu)_{\mu\in \Theta}$ formally close the system. Note, however, that our main result (see Thm.~\ref{thm:conv-int}, below) states existence of (infinitely many) solutions under the assumption that a so-called subsolution exists. Since in the overdetermined case (i.e., $m>M$) subsolutions cannot exist, our main statement (i.e., Thm.~\ref{thm:conv-int}, below) becomes void. For this reason, there is no need to insist on $m\leq M$. In other words, we do not need to require any relation between $M$ and $m$. 
\end{rem}

\begin{rem} \label{rem:tartars-framework} 
    In usual applications of our framework, a non-linear system of PDEs is rewritten by replacing all non-linearities by new unknowns. This leads to an (underdetermined) linear system together with a family of constitutive sets. In other words, the non-linear system of PDEs is expressed in the form \eqref{eq:lin-eq}, \eqref{eq:zetainK}, which makes our framework applicable in this case. This procedure is known as \emph{Tartar's framework}, see \cite{Tartar79}. In Sects.~\ref{subsec:ex-incomp-euler-prel}, \ref{subsec:ex-compr-euler-prel}, \ref{subsec:ex-lake-prel}, \ref{subsec:ex-hydrost-euler-prel}, \ref{subsec:ex-compr-prim-prel} and \ref{subsec:ex-QG-prel} Tartar's framework is illustrated by examples. 
\end{rem}

\begin{rem}
    Let us repeat that the main difference between the framework established in the current paper and the one in \cite{Markfelder24}, is that the weak solutions $\zeta$ which were constructed in \cite{Markfelder24} only lie in $L^\infty((0,T)\times \Omega;\R^M)$ and they satisfy \eqref{eq:zetainK} only for a.e.~$t\in (0,T)$ and a.e.~$x\in \Omega$.
\end{rem}

Let us now recall the definition of suitability for a family of constitutive sets from \cite{Markfelder24}.

\begin{defn}[{See \cite[Defn.~2.3]{Markfelder24}}] \label{defn:suitable-K}
	We call the family of constitutive sets $(\sK_\mu)_{\mu\in\Theta}$ \emph{suitable} if it satisfies the following three properties:
	\begin{enumerate} 
		\item \label{item:suitable-K-compact} For all $\mu\in \Theta$, $\sK_\mu$ is compact in $\R^M$.

		\item \label{item:suitable-K-continuity} The dependence of $\sK_{\mu}$ on $\mu$ is uniformly continuous in the following sense: For all $\ep>0$, there exists $\delta>0$ such that for all $\mu_1,\mu_2\in \Theta$ with $|\mu_1-\mu_2|<\delta$ and all $\zeta_1\in \sK_{\mu_1}$ there exists $\zeta_2\in \sK_{\mu_2}$ with $|\zeta_1-\zeta_2|<\ep$.
		
		\item \label{item:suitable-K-boundary} We have $\interior{\big((\sK_{\mu})^\co\big)} \cap \sK_{\mu} = \emptyset$ for all $\mu\in \Theta$, where $(\sK_{\mu})^\co$ denotes the convex hull of $\sK_{\mu}$, see Appendix~\ref{app:convex}. 
	\end{enumerate}
\end{defn} 

The reader should notice that in some places (in particular in Defn.~\ref{defn:suitable-K}, Lemmas~\ref{lemma:uniform-boundedness-KU} and \ref{lemma:properties-dist}, as well as in Sect.~\ref{subsec:general-geometric}), $\zeta$ denotes a point in $\R^M$ while elsewhere $\zeta$ denotes the unknown (i.e.~a function of $(t,x)$) as specified above. Whenever $\zeta$ is understood as a point in $\R^M$, this is explicitly stated, e.g.~by $\zeta\in \sK_\mu\subset \R^M$.

Next we recall some observations from \cite{Markfelder24}, whose proofs can be found therein.

\begin{lemma}[{Cf.~\cite[Lemmas~2.4 and 2.9]{Markfelder24}}] \label{lemma:uniform-boundedness-KU} 
	Let $(\sK_\mu)_{\mu\in\Theta}$ a suitable family of constitutive sets. Then there exists\footnote{In particular $c$ is does not depend on $\mu$.} $c>0$ such that 
	$$
		|\zeta|\leq c \qquad \text{ for all }\mu\in \Theta \text{ and all } \zeta\in \sK_{\mu} .
	$$
    Moreover we have 
    $$
        |\zeta|\leq c \qquad \text{ for all }\mu\in \Theta \text{ and all } \zeta\in (\sK_{\mu})^\co .
	$$
\end{lemma}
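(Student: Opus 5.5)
The first claim follows from compactness of $\Theta$ together with the uniform continuity in Defn.~\ref{defn:suitable-K}~\ref{item:suitable-K-continuity}. We choose $\ep=1$ and let $\delta>0$ be the corresponding number from Defn.~\ref{defn:suitable-K}~\ref{item:suitable-K-continuity}. Since $\Theta\subset\R^m$ is compact, finitely many balls $B_\delta(\mu_1),\dots,B_\delta(\mu_N)$ with centers $\mu_k\in\Theta$ cover $\Theta$. By Defn.~\ref{defn:suitable-K}~\ref{item:suitable-K-compact}, each $\sK_{\mu_k}$ is compact and hence bounded, say $|\zeta|\le c_k$ for all $\zeta\in\sK_{\mu_k}$.

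Now let $\mu\in\Theta$ and $\zeta_1\in\sK_\mu$ be arbitrary. There is some $k$ with $|\mu-\mu_k|<\delta$. Applying the continuity property with $\mu_1:=\mu$ and $\mu_2:=\mu_k$ yields $\zeta_2\in\sK_{\mu_k}$ with $|\zeta_1-\zeta_2|<1$. Hence $|\zeta_1|\le c_k+1$, and
$$
c:=1+\max_{k=1,\dots,N} c_k
$$
is a bound that does not depend on $\mu$. One point to watch is that the property must hold for all ordered pairs $\mu_1,\mu_2\in\Theta$. Since it does, the asymmetry (starting from a point of $\sK_\mu$ and finding a nearby point in $\sK_{\mu_k}$) causes no problem.

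For the second claim we use that the closed ball $\closure{B_c(0)}\subset\R^M$ is convex. By the first step it contains $\sK_\mu$ for every $\mu\in\Theta$. Since the convex hull $(\sK_\mu)^\co$ is the smallest convex set containing $\sK_\mu$ (see Appendix~\ref{app:convex}), it follows that $(\sK_\mu)^\co\subset\closure{B_c(0)}$. Alternatively, any convex combination $\sum_i\lambda_i\zeta_i$ of points $\zeta_i\in\sK_\mu$ satisfies $\big|\sum_i\lambda_i\zeta_i\big|\le\sum_i\lambda_i|\zeta_i|\le c$ by the triangle inequality. Either way, the same constant $c$ works for the convex hulls.

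No step here is a real obstacle. The only care needed is to take the finite subcover from the compactness of $\Theta$, so that the constant is uniform in $\mu$.
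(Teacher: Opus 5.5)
Your proof is correct. The finite $\delta$-cover of the compact set $\Theta$ (with centres in $\Theta$), combined with property~(b) of Defn.~\ref{defn:suitable-K} for $\ep=1$ and the boundedness of the finitely many compact sets $\sK_{\mu_k}$, gives a bound independent of $\mu$, and convexity of the closed ball then extends it to $(\sK_\mu)^\co$. The paper does not reprove this lemma but refers to \cite{Markfelder24}; your argument is the standard one built from these same ingredients (compactness of $\Theta$, the uniform continuity property, compactness of each $\sK_\mu$, and convexity of the ball).
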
 

\begin{lemma}[{See \cite[Lemma~2.16]{Markfelder24}}] \label{lemma:properties-dist} 
	The following statements hold.
	\begin{enumerate}
		\item We have 
		\begin{equation} \label{eq:dist-bounded}
			0\leq \dist (\zeta, \sK_{\mu}) \leq 2c \qquad \text{ for all } \mu\in \Theta \text{ and all } \zeta\in (\sK_\mu)^\co,
		\end{equation}
		where $c>0$ is the bound from Lemma~\ref{lemma:uniform-boundedness-KU}.
  
		\item For all $\zeta_1,\zeta_2\in \R^M$ and $\mu\in \Theta$ it holds that 
		\begin{equation} \label{eq:dist-cont-z}
			\Big| \dist(\zeta_1, \sK_{\mu}) - \dist(\zeta_2, \sK_{\mu}) \Big| \leq |\zeta_1 - \zeta_2|.
		\end{equation}
		Estimate \eqref{eq:dist-cont-z} also holds if one replaces $\sK_\mu$ by\footnote{In fact one can replace $\sK_\mu$ by any other compact set $S\subset \R^M$ and \eqref{eq:dist-cont-z} still holds.} $(\sK_\mu)^\co$ or $\partial(\sK_\mu)^\co$.
		
		\item Let $\ep>0$ and $\delta>0$ the $\delta$ from item \ref{item:suitable-K-continuity} of Defn.~\ref{defn:suitable-K} which corresponds to $\frac{\ep}{3}$. Then for all $\zeta\in \R^M$ and $\mu_1,\mu_2\in \Theta$ with $|\mu_1 - \mu_2 |<\delta$ it holds that 
		\begin{equation} \label{eq:dist-cont-b}
			\Big| \dist(\zeta, \sK_{\mu_1}) - \dist(\zeta, \sK_{\mu_2}) \Big| \leq \ep .
		\end{equation}
		Estimate \eqref{eq:dist-cont-b} also holds if one replaces $\sK_\mu$ by $(\sK_\mu)^\co$ or $\partial(\sK_\mu)^\co$. 
	\end{enumerate}
\end{lemma}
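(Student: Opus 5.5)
We prove the three items of Lemma~\ref{lemma:properties-dist} one at a time. Each reduces to elementary facts about the distance to a compact set, combined with Lemma~\ref{lemma:uniform-boundedness-KU} and property~\ref{item:suitable-K-continuity} of Defn.~\ref{defn:suitable-K}.

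For the first item, take $\zeta\in(\sK_\mu)^\co$ and any $\xi\in\sK_\mu$; such a $\xi$ exists as long as $\sK_\mu\neq\emptyset$, which is implicit. Lemma~\ref{lemma:uniform-boundedness-KU} gives $|\zeta|\le c$ and $|\xi|\le c$. Hence
\[
\dist(\zeta,\sK_\mu)\le|\zeta-\xi|\le 2c .
\]
Nonnegativity holds by definition.

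For the second item, let $S$ be any nonempty compact set. Compactness gives a minimizer $\xi_2\in S$ with $\dist(\zeta_2,S)=|\zeta_2-\xi_2|$. Then
\[
\dist(\zeta_1,S)\le|\zeta_1-\xi_2|\le|\zeta_1-\zeta_2|+\dist(\zeta_2,S).
\]
Swapping the roles of $\zeta_1$ and $\zeta_2$ gives the 1-Lipschitz bound. The sets $(\sK_\mu)^\co$ and $\partial(\sK_\mu)^\co$ are compact: the convex hull of a compact set in $\R^M$ is compact by Carath\'eodory, and the boundary is a closed subset of it. So the same argument covers them.

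For the third item, we first show a one-sided Hausdorff-type bound. By property~\ref{item:suitable-K-continuity} applied with $\ep/3$, for $|\mu_1-\mu_2|<\delta$ every point of $\sK_{\mu_1}$ lies within $\ep/3$ of $\sK_{\mu_2}$, and vice versa by symmetry of the hypothesis. Choose $\xi_1\in\sK_{\mu_1}$ attaining $\dist(\zeta,\sK_{\mu_1})$, and pick $\xi_2\in\sK_{\mu_2}$ with $|\xi_1-\xi_2|<\ep/3$. Then
\[
\dist(\zeta,\sK_{\mu_2})\le\dist(\zeta,\sK_{\mu_1})+\ep/3 ,
\]
and symmetrizing gives \eqref{eq:dist-cont-b} with the better constant $\ep/3$.

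For the convex hulls, write a point of $(\sK_{\mu_1})^\co$ by Carath\'eodory as $\sum\lambda_i\xi_i$ with $\xi_i\in\sK_{\mu_1}$. Replace each $\xi_i$ by some $\xi_i'\in\sK_{\mu_2}$ with $|\xi_i-\xi_i'|<\ep/3$. The resulting convex combination lies in $(\sK_{\mu_2})^\co$ and within $\ep/3$ of the original point. This gives the same mutual $\ep/3$-closeness, and the previous argument applies.

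The boundary case is the main obstacle, since boundaries of nearby convex sets need not be Hausdorff close in a naive pointwise way. The plan is to use the following fact: for compact convex sets $C_1,C_2$ whose Hausdorff distance is at most $\eta$, the boundaries are also within Hausdorff distance $\eta$. To prove it, take $p\in\partial C_1$ and split into cases.
\begin{itemize}
\item If $p\notin\interior{C_2}$, take the nearest point of $C_2$ to $p$. If $p\notin C_2$ this point lies on $\partial C_2$, and its distance to $p$ is $\dist(p,C_2)\le\eta$. If $p\in\partial C_2$, there is nothing to show.
\item If $p\in\interior{C_2}$, take a supporting hyperplane of $C_1$ at $p$ with outer normal $\nu$. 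Move from $p$ along $\nu$ until the ray leaves $C_2$, at parameter $s$. The exit point $p+s\nu$ lies on $\partial C_2$, and it is at distance $\ge s$ from $C_1$ because $C_1$ lies in the half-space $\{y:(y-p)\cdot\nu\le 0\}$. So $s\le\eta$.
\end{itemize}
With $\eta=\ep/3$ this gives the $\partial(\sK_\mu)^\co$ version by the same nearest-point argument, using a margin smaller than $\ep$. A degenerate convex hull with empty interior is harmless, since then the boundary equals the whole set.
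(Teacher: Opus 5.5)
Your proof is correct. The paper does not prove this lemma itself: it is quoted from \cite[Lemma~2.16]{Markfelder24} with the remark that the proofs can be found there. So there is no in-paper argument to compare against, and what you give is a complete, self-contained proof.

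Items (a) and (b), and the $\sK_\mu$ and $(\sK_\mu)^\co$ parts of (c), are the routine arguments:
\begin{itemize}
\item the uniform bound from Lemma~\ref{lemma:uniform-boundedness-KU};
\item the $1$-Lipschitz property of the distance to a nonempty set;
\item mutual $\frac{\ep}{3}$-closeness, transferred to the hulls via Carath\'eodory.
\end{itemize}

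The only genuinely non-routine step is the claim that two compact convex sets at Hausdorff distance at most $\eta$ have boundaries at Hausdorff distance at most $\eta$. Your nearest-point/exit-point dichotomy handles this correctly, including the degenerate case: a supporting hyperplane at $p\in\partial C_1$ still exists when $\interior{C_1}=\emptyset$, since $C_1$ then lies in a hyperplane through $p$.

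As a by-product, your argument yields \eqref{eq:dist-cont-b} with $\frac{\ep}{3}$ in place of $\ep$. The factor $3$ built into the statement is therefore not needed for your route.
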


\subsubsection{Plane waves and the wave cone $\Lambda$} \label{subsubsec:gen-prelim-pl.waves+Lambda}

In this paper we use localized plane waves as building blocks for the convex integration scheme. To this end we have to study plane wave solutions of the linear system 
\begin{equation} \label{eq:lin-eq-homo}
	\partial_t a(\zeta(t,x)) + \Div A (\zeta(t,x)) = 0,
\end{equation}
and the corresponding wave cone $\Lambda$. Note that \eqref{eq:lin-eq-homo} is homogeneous in contrast to \eqref{eq:lin-eq}. 

A plane wave solution $\widetilde{\zeta}$ of \eqref{eq:lin-eq-homo} is a solution of the form 
\begin{equation} \label{eq:plane-wave}
	\widetilde{\zeta}(t,x) = \zeta \, h((t,x)\cdot \eta),
\end{equation} 
where $\zeta\in \R^M$ is a constant vector, $h:\R\to\R$ is a profile and $\eta\in \R^{1+n}\setminus\{0\}$ is a direction in space-time. We write $\eta=(\eta_t,\eta_x)$ with $\eta_t\in \R$ and $\eta_x\in \R^n$. In other words $\eta_t$ and $\eta_x$ denote the temporal and spatial components of the space-time vector $\eta$, respectively. 

Next we define the wave cone $\Lambda$. 

\begin{defn} \label{defn:wave-cone} 
	The wave cone $\Lambda$ is defined by 
	$$
		\Lambda := \left\{ \zeta\in \R^M\,\Big|\,\exists\eta=(\eta_t,\eta_x)\in \R^{1+n}\ \text{ with }\ \eta_x \neq 0\ \text{ and }\ a(\zeta) \eta_t + A(\zeta) \cdot \eta_x = 0 \right\}. 
	$$
\end{defn}

It is simple to see that if $\zeta\in \Lambda$ and $h\in C^1(\R)$, then the plane wave defined in \eqref{eq:plane-wave} is a solution of \eqref{eq:lin-eq-homo}. Indeed using the linearity \eqref{eq:linearity-a}, \eqref{eq:linearity-A}, we compute for any $i=1,...,m$
\begin{align*}
    [\partial_t a(\widetilde{\zeta}) + \Div A (\widetilde{\zeta})]_i &= [a(\partial_t \widetilde{\zeta})]_i + \sum_{j=1}^n [A(\partial_j \widetilde{\zeta})]_{ij} \\
    &= \big[a(\zeta h'((t,x)\cdot \eta) \eta_t)\big]_i + \sum_{j=1}^n \big[A(\zeta h'((t,x)\cdot \eta) \eta_j)\big]_{ij} \\
    &= h'((t,x)\cdot \eta) \left( [a(\zeta)]_i \eta_t + \sum_{j=1}^n [A(\zeta)]_{ij} \eta_j \right) \\
    &= h'((t,x)\cdot \eta) \big[ a(\zeta) \eta_t + A(\zeta) \cdot \eta_x \big]_i \ = \ 0.
\end{align*}

\begin{rem} \label{rem:lambdatilde-vs-lambda}
    First we remark that Defn.~\ref{defn:wave-cone} and the corresponding definition in \cite{Markfelder24} (see Defn.~2.6 therein) differ slightly. In this paper we require $\eta_x\neq 0$ whereas in \cite{Markfelder24} one only asks for $\eta\neq 0$. This difference results from the fact that here we are looking for solutions that are weakly continuous in time, while in \cite{Markfelder24} only $L^\infty$ solutions are sought, see the proof of Lemma~\ref{lemma:pert-prop-step1}.

    Secondly, we would like to note that in cases where the wave cone $\Lambda$ is too large to guarantee existence of suitable differential operators $\opL_\zeta$ for all $\zeta\in \Lambda$ (see Defn.~\ref{defn:suitable-operator}), one may consider a smaller cone $\widetilde{\Lambda}\subset \Lambda$ instead. Such a procedure is implemented in detail in \cite{Markfelder24}, see also Rem.~2.12 therein. We note that the wave cones in most examples that are studied in the present paper (see Sects.~\ref{sec:ex-incomp-euler}-\ref{sec:ex-compr-prim}) are not too large in the sense mentioned above, which means that there is no need to shrink the wave cone $\Lambda$ to some $\widetilde{\Lambda}$. However in the last example (see Sect.~\ref{sec:ex-QG}) we will have to shrink the wave cone to some smaller cone. We will abuse notation and still denote this smaller cone by $\Lambda$, see Sect.~\ref{subsec:ex-QG-prel} and Rem.~\ref{rem:QG-wave-cone} for more details. 
\end{rem}

\subsubsection{Suitable differential operator $\opL_\zeta$} \label{subsubsec:gen-prelim-operator}

Let us now recall the notion of a suitable differential operator $\opL_\zeta$ from \cite{Markfelder24}. We need such operators to localize the plane waves. 

\begin{defn}[{See \cite[Defn.~2.7]{Markfelder24}}] \label{defn:suitable-operator} 
	Let $\zeta\in\Lambda$ and $\ell\in \N$. An $\ell$-th order homogeneous differential operator 
	$$
		\opL_\zeta : C^\infty(\R^{1+n}) \to C^\infty(\R^{1+n};\R^M) 
	$$
	is called \emph{suitable} if it satisfies the following two properties:
	\begin{enumerate}
		\item \label{item:suitable-operator-a} For any function $g\in C^\infty(\R^{1+n})$, $\opL_\zeta[g]$ solves the linear system \eqref{eq:lin-eq-homo}, i.e.
		$$
			\partial_t a(\opL_\zeta[g]) + \Div A (\opL_\zeta[g]) = 0
		$$

		\item \label{item:suitable-operator-b} If we set $g(t,x):= h((t,x)\cdot \eta)$ with an arbitrary function $h\in C^\infty(\R)$ and where $\eta$ corresponds to $\zeta\in \Lambda$ (see Defn.~\ref{defn:wave-cone}), we obtain
		$$
			\opL_\zeta [g](t,x) = \zeta \,h^{(\ell)}((t,x)\cdot \eta) ,
		$$
		where $h^{(\ell)}$ denotes the $\ell$-th derivative of $h$. 
	\end{enumerate}
\end{defn}

\subsection{Geometric setup} \label{subsec:general-geometric}

Next we consider the family of relaxed sets $(\sU_\mu)_{\mu\in \Theta}$. These sets have to be chosen in such a way that they are compatible with plane waves. Therefore we set 
$$
    \sU_{\mu}:= \interior{\big((\sK_{\mu})^\Lambda\big)}\qquad\text{ for all }\mu\in \Theta,
$$
i.e.~the interior of the $\Lambda$-convex hull of the set $\sK_{\mu}$, see Appendix~\ref{app:Lconvex} for the definition of the $\Lambda$-convex hull and its properties. 

Similar to \cite{Markfelder24} we will assume that 
\begin{equation} \label{eq:KbLambda=KbCo}
	\interior{\big((\sK_{\mu})^\Lambda\big)} = \interior{\big((\sK_{\mu})^\co\big)} \qquad \text{ for all } \mu\in \Theta,
\end{equation}
i.e.~the interior of the $\Lambda$-convex hull of $\sK_{\mu}$ coincides with the interior of its convex hull. Note that assumption \eqref{eq:KbLambda=KbCo} is crucial in the proof of the convex integration theorem.

\begin{rem} \label{rem:interiorVSnoint}
    We would like to remark that in \cite{Markfelder24} one works with the stronger assumption 
    \begin{equation} \label{eq:KbLambda=KbCo-noint}
	   (\sK_{\mu})^\Lambda = (\sK_{\mu})^\co \qquad \text{ for all } \mu\in \Theta,
    \end{equation}
    rather than \eqref{eq:KbLambda=KbCo}. It is however not difficult to realize that one could alternatively work with \eqref{eq:KbLambda=KbCo} instead of \eqref{eq:KbLambda=KbCo-noint} even in \cite{Markfelder24} since the proofs therein would still be true without modification. On the other hand in many settings one can even show \eqref{eq:KbLambda=KbCo-noint}, see e.g.~the examples considered in Sects.~\ref{sec:ex-incomp-euler}-\ref{sec:ex-compr-prim}. However in Sect.~\ref{sec:ex-QG} (on the quasi-geostrophic equations) we will only be able to prove \eqref{eq:KbLambda=KbCo}, which therefore makes it necessary to consider the more general case (i.e.~assumption \eqref{eq:KbLambda=KbCo}).
\end{rem}

In the construction below, we will need a map which measures the distance of $\zeta\in \sU_\mu$ to $\sK_\mu$ in some sense. Note that in \cite{Markfelder24} $\dist(\zeta,\sK_\mu)$ serves as such a map. However in the present paper where we construct solutions that are weakly continuous in time, the above choice used in \cite{Markfelder24} is not appropriate here\footnote{In particular, $\dist(\zeta,\sK_\mu)$ does not satisfy property \ref{item:suitable-D-concave} in Defn.~\ref{defn:suitable-distance-map}. However, this property will be essential when proving Lemma~\ref{lemma:properties-I}~\ref{item:prop-I-Baire1} below.}. Instead we will assume existence of a suitable distance map in the following sense.

\begin{defn} \label{defn:suitable-distance-map} 
    A map\footnote{Here $\closure{B_c}(0)$ denotes the closed ball in $\R^M$ centered at $0$ with radius $c$, where the latter is the bound from Lemma~\ref{lemma:uniform-boundedness-KU}.} $D\in C ( \Theta \times \closure{B}_c(0) ; \R)$ is called a \emph{suitable distance map} if it satisfies the following properties: 
    \begin{enumerate}
        \item \label{item:suitable-D-concave} For all $\mu\in \Theta$ the map $\zeta\mapsto D(\mu,\zeta)$ is concave on $\closure{B}_c(0)$. 

        \item \label{item:suitable-D-=0} For all $\mu\in \Theta$ and all $\zeta\in \sK_\mu$ it holds that $D(\mu,\zeta)= 0$. 

        \item \label{item:suitable-D-inK} If $D(\mu,\zeta)=0$ and $\zeta\in (\sK_\mu)^\co$, then $\zeta\in \sK_\mu$.
    \end{enumerate}
\end{defn}

A suitable distance map has some properties which follow immediately from its definition. These properties are the content of the following lemma.

\begin{lemma} \label{lemma:suitable-distance-map} 
    Let $D\in C ( \Theta \times \closure{B}_c(0) ; \R)$ be a suitable distance map. Then the following statement holds.
    \begin{enumerate} \setcounter{enumi}{3}
        \item \label{item:L-suitable-D-geq0} For all $\mu\in \Theta$ and all $\zeta\in (\sK_\mu)^\co$ it holds that $D(\mu,\zeta)\geq 0$. 
    \end{enumerate}
    As soon as the family $(\sK_\mu)_{\mu\in\Theta}$ is suitable in the sense of Defn.~\ref{defn:suitable-K}, then we also have
    \begin{enumerate} \setcounter{enumi}{4}
        \item \label{item:L-suitable-D->0} For all $\mu\in \Theta$ and all $\zeta\in \sU_\mu$ it holds that $D(\mu,\zeta)>0$.
    \end{enumerate}
\end{lemma}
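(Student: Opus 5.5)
For item \ref{item:L-suitable-D-geq0}, the plan is to use concavity together with the vanishing on $\sK_\mu$. Fix $\mu\in\Theta$ and $\zeta\in(\sK_\mu)^\co$. By Carath\'eodory's theorem (or the definition of the convex hull, see Appendix~\ref{app:convex}), we can write $\zeta=\sum_{i=1}^N \lambda_i\zeta_i$ with $\zeta_i\in\sK_\mu$, $\lambda_i\ge 0$ and $\sum_i\lambda_i=1$. Since $\sK_\mu\subset\closure{B}_c(0)$ by Lemma~\ref{lemma:uniform-boundedness-KU}, all the $\zeta_i$ and $\zeta$ itself lie in the convex set $\closure{B}_c(0)$ on which $D(\mu,\cdot)$ is concave. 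Jensen's inequality for concave functions (finite convex combinations, obtained by induction from the two-point definition) gives
\[
D(\mu,\zeta)\ \ge\ \sum_{i=1}^N\lambda_i D(\mu,\zeta_i)\ =\ 0,
\]
using property \ref{item:suitable-D-=0}.

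For item \ref{item:L-suitable-D->0}, assume additionally that $(\sK_\mu)_\mu$ is suitable and let $\zeta\in\sU_\mu$. By the standing assumption \eqref{eq:KbLambda=KbCo}, $\sU_\mu=\interior{((\sK_\mu)^\co)}\subset(\sK_\mu)^\co$, so $D(\mu,\zeta)\ge0$ by item \ref{item:L-suitable-D-geq0}. Suppose for contradiction that $D(\mu,\zeta)=0$. Then property \ref{item:suitable-D-inK} yields $\zeta\in\sK_\mu$, hence $\zeta\in\interior{((\sK_\mu)^\co)}\cap\sK_\mu$, which contradicts Defn.~\ref{defn:suitable-K}~\ref{item:suitable-K-boundary}. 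Therefore $D(\mu,\zeta)>0$.

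There is no real obstacle here; the only points to check are that the convex combination stays inside the domain $\closure{B}_c(0)$ of $D$ (which follows from Lemma~\ref{lemma:uniform-boundedness-KU}), and that the identification $\sU_\mu=\interior{((\sK_\mu)^\co)}$ relies on \eqref{eq:KbLambda=KbCo}. Even without that assumption, one has $(\sK_\mu)^\Lambda\subset(\sK_\mu)^\co$ (Appendix~\ref{app:Lconvex}), and hence $\sU_\mu\subset\interior{((\sK_\mu)^\co)}$, which suffices for the argument.
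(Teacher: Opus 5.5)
Your proposal is correct and follows the paper's argument: concavity plus vanishing on $\sK_\mu$ gives nonnegativity on the convex hull, and strict positivity on $\sU_\mu$ follows by contradiction with Defn.~\ref{defn:suitable-K}~\ref{item:suitable-K-boundary}. You also make explicit the use of property~\ref{item:suitable-D-inK} of Defn.~\ref{defn:suitable-distance-map}. The paper's one-line proof of the second item leaves this implicit, but the argument genuinely needs it.
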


\begin{proof}
Statement~\ref{item:L-suitable-D-geq0} follows immediately from items~\ref{item:suitable-D-concave} and \ref{item:suitable-D-=0} in Defn.~\ref{defn:suitable-distance-map}. The claim in item~\ref{item:L-suitable-D->0} can be deduced from Defn.~\ref{defn:suitable-K}~\ref{item:suitable-K-boundary} and Lemma~\ref{lemma:suitable-distance-map}~\ref{item:L-suitable-D-geq0}.
\end{proof}

Moreover we point out that a suitable distance map $D$ is even uniformly continuous on $\Theta \times \closure{B}_c(0)$ as the latter is a compact subset of $\R^m\times \R^M$. This allows to define the following.

\begin{defn} \label{defn:delta}
    Let $D$ a suitable distance map. We denote by $\delta_D:(0,\infty)\to (0,2c)$ a map with the following property. For all $\ep>0$ and all $(\mu_1,\zeta_1),(\mu_2,\zeta_2)\in \Theta \times \closure{B}_c(0)$ with $\big| (\mu_1,\zeta_1)-(\mu_2,\zeta_2)\big| \leq \delta_D(\ep)$ it holds that $\big|D(\mu_1,\zeta_1) - D(\mu_2,\zeta_2) \big|\leq \ep$.
\end{defn}

Finally we note that $D$ is bounded on $\Theta \times \closure{B}_c(0)$.

The following proposition is a version of \cite[Prop.~2.10]{Markfelder24}.

\begin{prop}[{Cf.~\cite[Prop.~2.10]{Markfelder24}}] \label{prop:geom-property-U} 
	Consider a family of constitutive sets $(\sK_\mu)_{\mu\in \Theta}$ which is suitable in the sense of Defn.~\ref{defn:suitable-K}, and assume that \eqref{eq:KbLambda=KbCo} holds. Moreover let $D$ a suitable distance map in the sense of Defn.~\ref{defn:suitable-distance-map}. Finally let $\mu\in \Theta$ fixed, $\zeta\in \sU_{\mu}$ and $\ep>0$. Then there exist $N\in \N$ with $N\geq 2$, and $(\tau_i,\zeta_i)\in \R^+\times \R^M$ for $i=1,...,N$ with the following properties.
	\begin{enumerate}
		\item \label{item:geom-propU-a} The family $\{(\tau_i,\zeta_i)\}_{i=1,...,N}$ satisfies the\footnote{We refer to Appendix~\ref{app:Lconvex} for the definition and basic properties of the $H_N$-condition and the barycenter.} $H_N$-condition;
		
		\item \label{item:geom-propU-b} All endpoints lie in $\sU_{\mu}$ and their distance to $\partial \sU_{\mu}$ can be estimated by the distance of $\zeta$ to $\partial \sU_{\mu}$, more precisely
		\begin{align*} 
			\zeta_i &\in \sU_{\mu} & &\text{ for all }i=1,...,N, \\
			\dist ( \zeta_i , \partial \sU_{\mu}) &\geq \frac{\delta_D(\ep)}{2c} \dist(\zeta,\partial \sU_{\mu}) & &\text{ for all }i=1,...,N, 
		\end{align*}
		where $c$ is the bound from Lemma~\ref{lemma:uniform-boundedness-KU} and $\delta_D$ is the map defined in Defn.~\ref{defn:delta} which corresponds to $D$.
		
		\item \label{item:geom-propU-c} All endpoints are close to $\sK_{\mu}$, i.e.
		$$
			D(\mu,\zeta_i)\leq \ep \qquad \text{ for all }i=1,...,N.
		$$ 
        
		\item \label{item:geom-propU-d} The barycenter of the family $\{(\tau_i,\zeta_i)\}_{i=1,...,N}$ is given by $\zeta$, i.e. 
		$$
			\sum_{i=1}^N \tau_i \zeta_i = \zeta .
		$$
	\end{enumerate} 
\end{prop}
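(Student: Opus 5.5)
We follow the structure of \cite[Prop.~2.10]{Markfelder24}, with $\dist(\cdot,\sK_\mu)$ replaced by the suitable distance map $D$. Fix $\mu\in\Theta$, $\zeta\in\sU_\mu$ and $\ep>0$. By \eqref{eq:KbLambda=KbCo}, $\zeta$ lies in the interior of $(\sK_\mu)^\Lambda$. By the characterization of the $\Lambda$-convex hull via laminates of finite order (Appendix~\ref{app:Lconvex}), $\zeta$ is the barycenter of a family $\{(\tau_i',\zeta_i')\}_{i=1,\dots,N}$ satisfying the $H_N$-condition. Its endpoints lie in $\sK_\mu$, or at least in any prescribed neighbourhood of $\sK_\mu$; this is the standard inner-approximation description of $(\sK_\mu)^\Lambda$. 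We may take $N\geq 2$ by trivially splitting an endpoint if needed. By Defn.~\ref{defn:suitable-distance-map}~\ref{item:suitable-D-=0}, $D(\mu,\zeta_i')=0$ whenever $\zeta_i'\in\sK_\mu$. We will not use these endpoints directly, since they lie on $\partial\sU_\mu$.

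Next we shrink the laminate toward $\zeta$. Set $\lambda:=\delta_D(\ep)/(2c)\in(0,1)$ and define
\[
\zeta_i:=(1-\lambda)\zeta_i'+\lambda\zeta=\zeta+(1-\lambda)(\zeta_i'-\zeta),
\]
keeping $\tau_i:=\tau_i'$. This is the image of the original family under the affine map $\xi\mapsto \zeta+(1-\lambda)(\xi-\zeta)$. Affine maps fix the barycenter $\zeta$ and preserve the $H_N$-condition, because differences of splitting points are only rescaled by $(1-\lambda)$ and $\Lambda$ is a cone. This gives \ref{item:geom-propU-a} and \ref{item:geom-propU-d}.

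For \ref{item:geom-propU-c}, note $|\zeta_i-\zeta_i'|=\lambda|\zeta-\zeta_i'|\leq 2c\lambda=\delta_D(\ep)$, using Lemma~\ref{lemma:uniform-boundedness-KU}. Definition~\ref{defn:delta} then gives $D(\mu,\zeta_i)\leq D(\mu,\zeta_i')+\ep=\ep$. If the endpoints $\zeta_i'$ are only close to $\sK_\mu$ and not in it, we choose them so close that $D(\mu,\zeta_i')\leq\ep/2$, and run the argument with $\ep/2$ in place of $\ep$. Concavity of $D$ could also be used here, but uniform continuity suffices.

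For \ref{item:geom-propU-b}, we use convexity of $\closure{\sU_\mu}=(\sK_\mu)^\co$; the closure of the interior of a convex set with nonempty interior is the whole set. Since $\zeta_i'\in\closure{\sU_\mu}$, the set $(1-\lambda)\zeta_i'+\lambda B_{\dist(\zeta,\partial\sU_\mu)}(\zeta)$ is contained in $\sU_\mu$. Hence $\zeta_i\in\sU_\mu$ and
\[
\dist(\zeta_i,\partial\sU_\mu)\geq\lambda\,\dist(\zeta,\partial\sU_\mu)=\frac{\delta_D(\ep)}{2c}\dist(\zeta,\partial\sU_\mu).
\]
The main obstacle is the first step: producing a finite $H_N$-laminate with barycenter $\zeta$ and endpoints in, or arbitrarily near, $\sK_\mu$, from the mere fact that $\zeta$ lies in the interior of $(\sK_\mu)^\Lambda$. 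For this we would quote the lamination-convex-hull facts in Appendix~\ref{app:Lconvex} as in \cite{Markfelder24}, approximating when the endpoints are only near $\sK_\mu$.
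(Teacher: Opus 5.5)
Your proposal is correct and is essentially the paper's argument. The paper, following \cite{Markfelder24}, also takes an $H_N$-laminate $\{(\tau_i,\widehat{\zeta}_i)\}$ with barycenter $\zeta$ and endpoints in $\sK_\mu$ and sets $\zeta_i=\tau\widehat{\zeta}_i+(1-\tau)\zeta$; your $\lambda=\delta_D(\ep)/(2c)$ is the admissible choice $1-\tau=\delta_D(\ep)/(2c)$ in \eqref{eq:002}, and item~\ref{item:geom-propU-c} follows exactly as in the paper from $|\zeta_i-\widehat{\zeta}_i|\le\delta_D(\ep)$ together with $D=0$ on $\sK_\mu$.

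The step you flag as the main obstacle is not one. Since $\zeta\in\sU_\mu\subset(\sK_\mu)^\Lambda$, Prop.~\ref{prop:app-laminates} directly gives such a family with endpoints exactly in $\sK_\mu$. You should drop the ``near $\sK_\mu$'' branch, which would actually endanger item~\ref{item:geom-propU-b}, because such endpoints need not lie in $(\sK_\mu)^\co$. Also, $N\ge 2$ is automatic, because $\sU_\mu\cap\sK_\mu=\emptyset$ by Defn.~\ref{defn:suitable-K}~\ref{item:suitable-K-boundary}.
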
 

\begin{proof}
The proof works similar to \cite[Proof of Prop.~2.10]{Markfelder24}. We only replace the choice of $\tau$ by $\tau\in (0,1)$ with 
\begin{equation} \label{eq:002}
    \frac{\delta_D(\ep)}{2c} \leq 1-\tau \leq \frac{\delta_D(\ep)}{\max\limits_{i=1,...,N}|\zeta-\widehat{\zeta}_i|},
\end{equation}
where we use the same notation as in \cite[Proof of Prop.~2.10]{Markfelder24}. Then we proceed exactly as in \cite[Proof of Prop.~2.10]{Markfelder24} to see that items \ref{item:geom-propU-a}, \ref{item:geom-propU-b} and \ref{item:geom-propU-d} of Prop.~\ref{prop:geom-property-U} hold\footnote{Actually one assumes \eqref{eq:KbLambda=KbCo-noint} instead of \eqref{eq:KbLambda=KbCo} in \cite[Prop.~2.10]{Markfelder24}. It is however simple to verify that \cite[Proof of Prop.~2.10]{Markfelder24} still works if one replaces \eqref{eq:KbLambda=KbCo-noint} by \eqref{eq:KbLambda=KbCo}, see also Rem.~\ref{rem:interiorVSnoint}.}. 

Next we observe that \eqref{eq:002} ensures
$$
    \big|(\mu,\zeta_i) - (\mu,\widehat{\zeta}_i)\big| = |\zeta_i - \widehat{\zeta}_i| = (1-\tau) |\zeta - \widehat{\zeta}_i| \leq \delta_D(\ep)\qquad \text{ for all }i=1,...,N,
$$
again see \cite[Proof of Prop.~2.10]{Markfelder24} for more details. Together with Defn.~\ref{defn:suitable-distance-map}~\ref{item:suitable-D-=0} this implies item \ref{item:geom-propU-c} of Prop.~\ref{prop:geom-property-U}.
\end{proof}

\subsection{Functional setup} \label{subsec:general-functional}

\subsubsection{Convex integration theorem}

Next we state our convex integration theorem. The most important difference between Thm.~\ref{thm:conv-int} and \cite[Thm.~2.11]{Markfelder24} is that the solutions $\zeta$ lie in $\Cweak([0,T];L^r(\Omega;\R^M))$ rather than just $L^\infty((0,T)\times \Omega;\R^M)$, and that the statement in item~\ref{item:main-thm-sol2} holds for all $t\in (0,T)$ and a.e.~$x\in \Omega$ rather than for a.e.~$(t,x)\in (0,T)\times \Omega$. An analogous statement in the context of the incompressible Euler equations can be found in \cite[Prop.~2]{DelSze10}.

\begin{thm} \label{thm:conv-int} 
    Let $T\in (0,\infty)\cup\{\infty\}$, $\Omega\subset\R^n$ be a bounded\footnote{See Rem.~\ref{rem:unbounded-domains} for a way how to generalize Thm.~\ref{thm:conv-int} to unbounded domains $\Omega$.} Lipschitz domain, and $r\in (1,\infty)$ an integrability parameter. Let furthermore $a,b_1,b_2:\R^M\to \R^m$ and $A,B:\R^M\to \R^{m\times n}$ linear, $\theta\in \Cb([0,T]\times \closure{\Omega};\R^m)\cap C^1([0,T]\times \closure{\Omega};\R^m)$, and $(\sK_{\mu})_{\mu\in\Theta}$ a family of constitutive sets, where $\Theta:=\closure{\theta([0,T]\times \closure{\Omega})}$ and $\sK_\mu\subset \R^M$ for all $\mu\in \Theta$. \\ 
	Suppose that the following structural assumptions hold:
	\begin{itemize} 
		\item The family of constitutive sets $(\sK_{\mu})_{\mu\in\Theta}$ is suitable in the sense of Defn.~\ref{defn:suitable-K}.
		
		\item For any $\zeta\in \Lambda$ there exists $\ell\in \N$ and an $\ell$-th order homogeneous differential operator 
		$$
			\opL_\zeta : C^\infty(\R^{1+n}) \to C^\infty(\R^{1+n};\R^M) 
		$$ 
		which is suitable in the sense of Defn.~\ref{defn:suitable-operator}.
		
		\item For any $\mu\in \Theta$ the interior of the $\Lambda$-convex hull of $\sK_{\mu}$ coincides with the interior of its convex hull, i.e.\footnote{Note that this assumption is slightly weaker than the corresponding assumption in \cite[Thm.~2.11]{Markfelder24}. This fact is explained in more detail in Rem.~\ref{rem:interiorVSnoint}.} 
		\begin{equation} \label{eq:ass-KbLambda=KbCo} 
			\interior{\big((\sK_{\mu})^\Lambda\big)} = \interior{\big((\sK_{\mu})^\co\big)} \qquad \text{ for all } \mu\in \Theta.
		\end{equation}

        \item There exists a map $D\in C ( \Theta \times \closure{B}_c(0) ; \R)$ which is a suitable distance map in the sense of Defn.~\ref{defn:suitable-distance-map}, where $c$ is the bound from Lemma~\ref{lemma:uniform-boundedness-KU}.
    \end{itemize}
	Finally assume there exists 
    $$
        \ov{\zeta}\in \Cweak([0,T];L^r(\Omega;\R^M)) \cap C^1((0,T)\times \closure{\Omega};\R^M)
    $$ 
    with the following properties: 
    \begin{itemize}
		\item It satisfies the linear system of PDEs \eqref{eq:lin-eq}, i.e. 
		$$ 
            \partial_t a(\ov{\zeta}) + \Div A (\ov{\zeta}) = \partial_t b_1(\theta) + \Div B(\theta) + b_2(\theta) 
		$$
		holds pointwise for all $(t,x)\in (0,T)\times \Omega$.
        
		\item It takes values in $\sU_{\theta(t,x)}$ where $\sU_\mu:= \interior{\big((\sK_{\mu})^\Lambda\big)}$ for all $\mu\in \Theta$, i.e. 
		$$ 
			\ov{\zeta}(t,x) \in \sU_{\theta(t,x)}\qquad \text{ for all }(t,x)\in (0,T)\times \Omega . 
		$$ 
	\end{itemize}
	Then there exist infinitely many solutions 
    $$
        \zeta\in \Cweak([0,T];L^r(\Omega;\R^M)) \cap L^\infty((0,T)\times \Omega;\R^M)
    $$ 
    in the following sense:
    \begin{enumerate}
		\item \label{item:main-thm-sol1a} They satisfy \eqref{eq:lin-eq} in the sense of distributions with initial and boundary data given by $\ov{\zeta}$, more precisely 
		\begin{align*} 
			&-\int_0^T\int_\Omega \Big[ a(\zeta) \cdot \partial_t \varphi + A(\zeta) : \Grad \varphi \Big] \dx\dt + \int_0^T\int_{\partial \Omega} (A(\ov{\zeta})\cdot \nu)\cdot \varphi \dS\dt \\
            &\qquad + \int_\Omega \Big[ a(\ov{\zeta}(T,x)) \varphi(T,x) - a(\ov{\zeta}(0,x)) \varphi(0,x) \Big] \dx \\
            &= \int_0^T \int_\Omega \Big[ \partial_t b_1(\theta) + \Div B(\theta) + b_2(\theta) \Big] \cdot \varphi \dx\dt
		\end{align*}
		for all test functions $\varphi\in \Cc([0,T]\times \closure{\Omega};\R^m)$. Here $\nu$ denotes the outward pointing normal vector on $\partial \Omega$. 

        \item \label{item:main-thm-sol1b} They satisfy the initial condition
        \begin{equation*}
            \zeta(0,x) = \ov{\zeta}(0,x) \qquad \text{ for a.e. }x\in \Omega,
        \end{equation*}
        and moreover if $T<\infty$, the end condition
        \begin{equation*}
            \zeta(T,x) = \ov{\zeta}(T,x) \qquad \text{ for a.e. }x\in \Omega.
        \end{equation*}
		
		\item \label{item:main-thm-sol2} They take values in $\sK_{\theta(t,x)}$, i.e.
		\begin{equation*} 
			\zeta(t,x)\in \sK_{\theta(t,x)}\qquad \text{ for all }t\in (0,T)\text{ and a.e. }x\in \Omega. 
		\end{equation*}
	\end{enumerate}
\end{thm}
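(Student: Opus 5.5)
The plan is to follow the Baire category argument of \cite[Prop.~2]{DelSze10}, adapted to the general setting. First I would define the set of \emph{subsolutions} $X_0$. It consists of all $\zeta\in \Cweak([0,T];L^r(\Omega;\R^M))\cap C^1((0,T)\times\closure{\Omega};\R^M)$ that satisfy \eqref{eq:lin-eq} pointwise, satisfy $\zeta(t,x)\in\sU_{\theta(t,x)}$ for all $(t,x)\in(0,T)\times\Omega$, and satisfy $\zeta-\ov{\zeta}\in \Cc((0,T)\times\Omega)$. In particular $\ov{\zeta}\in X_0$. By Lemma~\ref{lemma:uniform-boundedness-KU} every element of $X_0$ takes values in $\closure{B}_c(0)$, so $X_0$ lies in a bounded subset of $L^r(\Omega)$ uniformly in $t$. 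On that bounded set the weak topology of $L^r$ is metrizable (since $r\in(1,\infty)$, $L^r$ is reflexive and separable). This gives a metric $d$ on $\Cweak([0,T];L^r)$ restricted to this set, namely the topology of uniform-in-time weak convergence, with a weighted version if $T=\infty$. Let $X$ be the closure of $X_0$ with respect to $d$; it is a complete metric space. Every element of $X$ satisfies item~\ref{item:main-thm-sol1a} by passing to the limit in the weak formulation; the boundary and end terms come from $\zeta=\ov{\zeta}$ near $\partial\Omega$ and near $t=0,T$. Items~\ref{item:main-thm-sol1b} and $\zeta(t,x)\in(\sK_{\theta(t,x)})^\co$ follow from weak closedness of the convex hulls, using the continuity in Defn.~\ref{defn:suitable-K}.

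Second, I would introduce the functional $\I(\zeta):=\inf_{t\in[0,T]}$ (or a time-weighted variant) of $-\int_\Omega D(\theta(t,x),\zeta(t,x))\dx$. More conveniently, for each fixed $t$ take $I_t(\zeta):=\int_\Omega D(\theta(t,x),\zeta(t,x))\dx$. Since $D(\mu,\cdot)$ is concave and continuous, $\zeta\mapsto -I_t(\zeta)$ is weakly lower semicontinuous on bounded sets. It follows that $I_t$ is weakly upper semicontinuous, and hence a Baire-1 map on $(X,d)$. Its points of continuity are therefore residual, and a countable intersection over rational times $t$ is still residual. Continuity in $t$ then extends the conclusion to all $t$. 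If at a continuity point $\zeta$ we have $I_t(\zeta)=0$ for all $t$, then $D(\theta,\zeta)=0$ a.e. By item~\ref{item:suitable-D-inK} of Defn.~\ref{defn:suitable-distance-map} together with $\zeta\in(\sK)^\co$, this gives $\zeta(t,x)\in\sK_{\theta(t,x)}$ for all $t$ and a.e.\ $x$.

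The core is the \emph{perturbation property}. For every $\alpha>0$ there is $\beta>0$ such that each $\zeta\in X_0$ with $I_{t_0}(\zeta)\ge\alpha$ (for some $t_0$) admits a sequence $\zeta_k\in X_0$ with $\zeta_k\to\zeta$ in $d$ and $\liminf_k\int_\Omega|\zeta_k-\zeta|^2(t_0)\dx\ge\beta$. Strong convergence along $\zeta_k\to\zeta$ at a continuity point then contradicts this, giving $I_t(\zeta)=0$. To construct the $\zeta_k$, I would work near $(t_0,x)$ and cover a large part of $\Omega$ by small cubes on which $\theta$ and $\zeta$ are nearly constant. On each cube I apply Prop.~\ref{prop:geom-property-U} to obtain an $H_N$-family, and realize it by localized plane waves $\opL_{\zeta_i-\zeta_j}[g]$ using Defn.~\ref{defn:suitable-operator}. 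These are cut off with a time-space cutoff, and the lower-order error terms are made small by taking high frequency. Because $\eta_x\neq0$ in Defn.~\ref{defn:wave-cone}, the oscillations are spatial, so the perturbations converge weakly to $0$ uniformly in time; this is where weak continuity is preserved. Prop.~\ref{prop:geom-property-U}\ref{item:geom-propU-b} keeps the perturbed values inside $\sU$. The $L^2$ size of the perturbation at $t_0$ is bounded below by a quantity comparable to $\dist(\zeta,\sK)$. By \ref{item:geom-propU-c} and the positivity of $D$ on $\sU$ (Lemma~\ref{lemma:suitable-distance-map}), together with uniform continuity and compactness, this quantity is in turn bounded below in terms of $\alpha$.

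The main obstacle is this last quantitative link between $I_{t_0}(\zeta)\ge\alpha$ and a lower bound on the oscillation energy at the single time $t_0$. It requires controlling $D$ by the step sizes of the $H_N$-condition and handling the iterated (nested) laminate structure so that all pieces are simultaneously $\Lambda$-directions with spatial $\eta_x$. If needed, I would first reduce to the relaxation toward the boundary of $\sU$ with an explicit modulus derived from the uniform continuity of $D$ (Defn.~\ref{defn:delta}). Finally, infinitely many solutions follow because $X$ has no isolated points: the perturbation property applied to $\ov{\zeta}$ produces distinct elements. The residual set of a perfect complete metric space is therefore infinite.
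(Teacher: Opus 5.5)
There is a genuine gap where you pass from rational times to all times. For $\zeta\in X\setminus X_0$ the map $t\mapsto\int_\Omega D(\theta(t,x),\zeta(t,x))\dx$ is not continuous. Weak continuity together with concavity of $D(\mu,\cdot)$ gives only upper semicontinuity. So from $t_k\to t$ with $t_k$ rational you get $\int_\Omega D(\theta(t,x),\zeta(t,x))\dx\geq\limsup_k\int_\Omega D(\theta(t_k,x),\zeta(t_k,x))\dx=0$, which is the useless direction. For Euler this says the energy of a weak limit can drop at individual times. Hence the countable intersection over $t\in\mathbb{Q}$ yields item~\ref{item:main-thm-sol2} only at rational times. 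An intersection over all $t$ is uncountable and need not be residual.

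The paper's remedy, following \cite{DelSze10}, is essentially the functional you wrote first and then abandoned: $\I_{\sigma_0,\sigma_1}(\zeta)=\sup_{t\in[\sigma_0,\sigma_1]}\int_\Omega D(\theta(t,x),\zeta(t,x))\dx$ with $[\sigma_0,\sigma_1]\subset(0,T)$. It cannot be taken over all of $[0,T]$, since the values at $t=0,T$ are pinned to $\ov{\zeta}$. One uses it along sequences $\sigma_{0,j}\searrow 0$, $\sigma_{1,j}\nearrow T$. It is still upper semicontinuous on $(X,d)$, because $[\sigma_0,\sigma_1]$ is compact and $d$-convergence gives $\zeta_k(t_k)\rightharpoonup\zeta(t_0)$ (Lemma~\ref{lemma:properties-I}).

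This choice forces the perturbation to make $\int_\Omega D$ small on \emph{every} time slice of an interval simultaneously. A construction centred at a single $t_0$ does not do this, because each space-time cube has slices near its top and bottom faces where the laminate is not in force. The paper handles this in two ways. It shifts the temporal grid over $Q_{\alpha_x,h}$ by $h\,(|\alpha_x|\mod S)/S$, so that at any time at most a fraction $1/S$ of the cubes are in transition, costing at most $|\Omega|\ov{D}/S$. It also requires the laminate regions to be suitable unions of convex polytopes meeting every time slice (Defn.~\ref{defn:suitable-union}). This is a second use of $\eta_x\neq 0$, besides the uniform-in-time weak convergence you mention.

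Second, your contradiction mechanism does not work under the hypotheses of the theorem. The step ``continuity point plus weak convergence implies strong convergence'' needs $D(\mu,\cdot)$ to be strictly, uniformly concave in every component of $\zeta$. Defn.~\ref{defn:suitable-distance-map} only asks for concavity. In the examples $D$ does not depend on $U$ (or $W$) at all. So convergence of $\int_\Omega D$ gives no strong convergence of those components, and an $L^2$ lower bound on the oscillation at $t_0$ contradicts nothing.

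The perturbation property has to be phrased as smallness of the functional itself, which Prop.~\ref{prop:geom-property-U}~\ref{item:geom-propU-c} supplies. All endpoints of the $H_N$-family satisfy $D(\mu,\zeta_i)\leq\ep$. After laminating in each cube, and using uniform continuity on small cubes plus the frame estimate, one gets $\I_{\sigma_0,\sigma_1}(\zeta_\pert)\leq\ep$ directly (Prop.~\ref{prop:pert-prop}). At a common continuity point $\zeta$, approximate by $\zeta_k\in X_0$ and perturb to $\zeta_k'$ with $d(\zeta_k',\zeta_k)\leq 1/k$ and $\I(\zeta_k')\leq 1/k$. Continuity then forces $\I(\zeta)=0$. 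In particular, the ``main obstacle'' you identify — a quantitative lower bound on oscillation energy in terms of $\alpha$ — is never needed, and in this generality it would not close the argument anyway.
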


\begin{rem} 
    Note that if $T=\infty$, the term containing $a(\ov{\zeta}(T,x)) \varphi(T,x)$ in the equation in item~\ref{item:main-thm-sol1a} of Thm.~\ref{thm:conv-int} vanishes since in this case $\varphi$ is compactly supported in $[0,\infty)\times \closure{\Omega}$.
\end{rem}

The remainder of this subsection is devoted to the proof of Thm.~\ref{thm:conv-int}. As part of the proof we will generalize several of the ideas used in \cite{DelSze10}.

\subsubsection{The functionals $\I_{\sigma_0,\sigma_1}$ and their properties}

Let $T\in (0,\infty)\cup\{\infty\}$, $\Omega\subset\R^n$ bounded, $r\in (1,\infty)$, $a,b_1,b_2:\R^M\to \R^m$ and $A,B:\R^M\to \R^{m\times n}$ linear, $\theta\in \Cb([0,T]\times \closure{\Omega};\R^m)\cap C^1([0,T]\times \closure{\Omega};\R^m)$, $(\sK_{\mu})_{\mu\in \Theta}$, $D\in C(\Theta\times \ov{B}_c(0);\R)$ and $\ov{\zeta}\in \Cweak([0,T];L^r(\Omega;\R^M)) \cap C^1((0,T)\times \closure{\Omega};\R^M)$ be given such that the assumptions of Thm.~\ref{thm:conv-int} hold.  

First we define the set $X_0$ similar to \cite[Defn.~2.13]{Markfelder24}, see also \cite[Defn.~4]{DelSze10}.

\begin{defn} \label{defn:X0andX}
	We define the set $X_0$ by
	$$
		X_0 := \left\{ \zeta\in \Cweak([0,T];L^r(\Omega;\R^M)) \cap C^1((0,T)\times \closure{\Omega};\R^M) \, \Big|\, \text{Properties \eqref{eq:defn-X0-eq}-\eqref{eq:defn-X0-boundary-time-end} hold} \right\} , 
	$$
	where the properties \eqref{eq:defn-X0-eq}-\eqref{eq:defn-X0-boundary-time-end} read as follows:
	\begin{align}
		\partial_t a(\zeta) + \Div A (\zeta) &= \partial_t b_1(\theta) + \Div B(\theta) + b_2(\theta) & & \text{ pointwise for all }(t,x)\in (0,T)\times \Omega; \label{eq:defn-X0-eq} \\ 
		\zeta(t,x) &\in \sU_{\theta(t,x)} & & \text{ for all }(t,x)\in (0,T)\times \Omega ; \label{eq:defn-X0-subs} \\ 
		\zeta(t,x) &= \ov{\zeta}(t,x) & & \text{ for all }(t,x)\in (0,T)\times \partial\Omega, \label{eq:defn-X0-boundary-space} \\
        \zeta(0,x) &= \ov{\zeta}(0,x) & & \text{ for a.e. } x\in\Omega, \label{eq:defn-X0-boundary-time-init} \\ 
        \zeta(T,x) &= \ov{\zeta}(T,x) & & \text{ for a.e. } x\in\Omega \text{ if } T<\infty. \label{eq:defn-X0-boundary-time-end} 
	\end{align}
\end{defn}

So one may say that $\zeta\in X_0$ satisfies the linear system \eqref{eq:lin-eq}, takes values in $\sU_{\theta(t,x)}$ and coincides on the boundary in space-time with $\ov{\zeta}$.

Next we denote the topology on $\Cweak([0,T];L^r(\Omega;\R^M))$ by $\topology_1$, and the $L^\infty$ weak-$\ast$ topology on $L^\infty((0,T)\times \Omega;\R^M)$ by $\topology_2$. We endow the space 
\begin{equation} \label{eq:function-space}
    \Cweak([0,T];L^r(\Omega;\R^M)) \cap L^\infty((0,T)\times \Omega;\R^M)
\end{equation}
with the topology\footnote{To be precise, we set $\topology:= \widetilde{\topology}_1 \cap \widetilde{\topology}_2$, where $\widetilde{\topology}_1$ is the subspace topology of $\topology_1$ on \eqref{eq:function-space}, and analogously $\widetilde{\topology}_2$ is the subspace topology of $\topology_2$ on \eqref{eq:function-space}.} $\topology:= \topology_1 \cap \topology_2$. 

We define $X$ as the closure of $X_0$ with respect to the topology $\topology$. 

As in \cite[Prop.~2.14]{Markfelder24} and also in \cite{DelSze10}, we observe the following.

\begin{prop} \label{prop:X-metric-space}
	There exists a metric $d$ on $X$ which induces the topology $\topology$, and furthermore the metric space $(X,d)$ is complete. 
\end{prop}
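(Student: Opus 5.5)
The key observation is that $X$ is contained in a bounded set on which both topologies are metrizable, and $X$ is closed in a complete metric space. Every $\zeta\in X_0$ takes values in $\sU_{\theta(t,x)}\subset(\sK_{\theta(t,x)})^\co$, so by Lemma~\ref{lemma:uniform-boundedness-KU} we have $|\zeta(t,x)|\leq c$ for all $(t,x)$. Set
\[
    Y := \Big\{\zeta\in \Cweak([0,T];L^r(\Omega;\R^M)) : |\zeta(t,x)|\leq c \text{ for all } t \text{ and a.e. } x\Big\}.
\]
Then $X_0\subset Y$. Since $\Omega$ is bounded, the $L^r$ norms of elements of $Y$ are bounded uniformly in $t$ by $c|\Omega|^{1/r}$.

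\textbf{Metrizing $\topology_1$.} I would use the reflexivity and separability of $L^r$, $1<r<\infty$. Fix a countable set $\{\psi_k\}$ dense in $L^{r'}(\Omega;\R^M)$, and on the closed ball $B_R$ of $L^r$ consider the metric $d_{w}(f,g)=\sum_k 2^{-k}\min\{1,|\int (f-g)\cdot\psi_k\dx|\}$. This metric induces the weak topology on $B_R$, and $(B_R,d_w)$ is compact, hence complete, by Banach--Alaoglu. Then $\Cweak([0,T];B_R)$ carries the topology of uniform convergence on compact subsets of $[0,T]$, with values in $(B_R,d_w)$. This is metrized by
\[
    d_1(\zeta_1,\zeta_2)=\sum_{j}2^{-j}\min\Big\{1,\sup_{t\in[0,T]\cap[0,j]} d_w(\zeta_1(t),\zeta_2(t))\Big\},
\]
which becomes a single sup if $T<\infty$. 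It is complete, because uniform limits of continuous maps into a complete metric space are continuous.

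\textbf{Comparing $\topology_1$ and $\topology_2$.} On $Y$, convergence in $\topology_1$ implies weak-$\ast$ convergence in $L^\infty((0,T)\times\Omega)$. Indeed, test with $\psi\in L^1$, approximate $\psi$ in $L^1$ by finite sums $\sum\chi_{I}(t)\phi(x)$ with compact time support, and use the uniform bound $c$ together with dominated convergence in $t$. Hence $\topology_2\subset\topology_1$ on $Y$, so $\topology=\topology_1$ there and $d:=d_1$ induces $\topology$ on $X$.

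One subtlety is to take the closure of $X_0$ in the ambient space and check it stays inside $Y$. The pointwise bound $|\zeta|\leq c$ is preserved under weak-$\ast$ limits, since the closed ball is weak-$\ast$ closed. So $X$ equals the $d$-closure of $X_0$ inside $Y$.

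\textbf{Completeness.} The set $Y$ is closed in $(\Cweak([0,T];B_R),d_1)$. Indeed, for each fixed $t$ the set $\{f : |f|\leq c \text{ a.e.}\}$ is convex and strongly closed in $L^r$, hence weakly closed. Therefore $Y$ is complete, and the closed subset $X$ is complete as well. The main technical care lies in this comparison of topologies and in handling $T=\infty$ via the exhaustion by $[0,j]$. The rest follows \cite[Prop.~2.14]{Markfelder24} and \cite{DelSze10}.
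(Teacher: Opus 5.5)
Your proof is correct, but it is organized differently from the paper's.

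\textbf{The paper's route.} It keeps both topologies. It metrizes $\topology_1$ on $X$ by $d_1=\sup_t d_B$, using only the uniform $L^r$ bound. It metrizes $\topology_2$ by a separate metric $d_2$ on the weak-$\ast$ closure $X^2$, which is compact. It then sets $d=\max\{d_1,d_2\}$. Completeness is proved by taking the $d_1$-limit and the $d_2$-limit of a Cauchy sequence and identifying them through a common coarser topology ($L^\infty$ weak-$\ast$ in time, $L^r$ weak in space).

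\textbf{Your route.} You work in the set $Y$ carrying the pointwise bound $|\zeta|\le c$ for every $t$. There you show that $\topology_1$-convergence already forces weak-$\ast$ convergence in $L^\infty((0,T)\times\Omega)$, using density of finite sums $\sum\chi_I(t)\phi(x)$ in $L^1$ together with the uniform bound. So $\topology_2$ is redundant on $X$, and $d_1$ alone metrizes $\topology$ there. Completeness then reduces to closedness of $Y$ in the complete space $C([0,T];(B_R,d_w))$, which follows from slice-wise weak closedness of the ball.

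\textbf{What each buys.} Your version gives a single metric and removes the limit-identification step. Your exhaustion over $[0,j]$ also makes the $T=\infty$ case explicit with the usual locally uniform topology. By contrast, the paper's single supremum over $[0,\infty)$ metrizes uniform convergence on the whole half-line. The framework works under either convention, but the two metrics are not equivalent when $T=\infty$. The paper's route, for its part, needs only the $L^r$ bound for $d_1$ and only the weaker common-coarser-topology comparison.

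\textbf{Two small points to make explicit.}
\begin{itemize}
\item Your step ``$\topology_2\subset\topology_1$ on $Y$, so $\topology=\topology_1$'' presupposes that $\topology$ is the topology of simultaneous convergence. That is the topology induced by $\max\{d_1,d_2\}$ and the way $\topology$ is used later in the paper. Under the literal set-theoretic intersection of the two topologies, the same inclusion would give $\topology_2$ instead. Passing from your sequential statement to an inclusion of topologies also uses that $\topology_1|_Y$ is metrizable, which you have.
\item Membership in $\sU_{\theta(t,x)}$ constrains elements of $X_0$ only for $t\in(0,T)$. So $X_0\subset Y$ at $t=0$ and $t=T$ again needs weak continuity plus weak closedness of the ball. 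This is the same argument you already use for closedness of $Y$.
\end{itemize}
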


\begin{proof} 
    In order to see that the topology $\topology_1$ is metrizable on $X$, we proceed as in \cite{DelSze10}. We obtain from Lemma~\ref{lemma:uniform-boundedness-KU}, assumption \eqref{eq:ass-KbLambda=KbCo} and condition \eqref{eq:defn-X0-subs} that for any $\zeta\in X_0$, 
    \begin{equation} \label{eq:006} 
        \int_\Omega |\zeta(t,x)|^r \dx \leq c^r |\Omega|\qquad \text{ for all }t\in(0,T).
    \end{equation}
    By weak continuity and the fact that $\zeta\mapsto |\zeta|^r$ is convex, this even holds for $t\in [0,T]$. So $\zeta:[0,T]\to L^r(\Omega;\R^M)$ takes values in a bounded subset $B$ of $L^r(\Omega;\R^M)$. On $B$ the weak topology of $L^r(\Omega;\R^M)$ is metrizable (see e.g.~\name{Megginson}~\cite[Cor.~2.6.20]{Megginson}), and we denote the corresponding metric by $d_B$. The metric $d_1$ defined by  
    $$
        d_1(\zeta_1,\zeta_2) := \sup_{t\in [0,T]} d_B (\zeta_1(t,\cdot) , \zeta_2(t,\cdot)),
    $$ 
    induces the topology $\topology_1$ on $C([0,T];(B,d_B))\subset \Cweak([0,T];L^r(\Omega;\R^M))$. Moreover, the space $\big(C([0,T];(B,d_B)),d_1\big)$ is complete and thus $X^1$, which we define to be closure of $X_0$ with respect to $d_1$, is complete with respect to $d_1$.

    Next we look at the topology $\topology_2$. Like in \cite[Proof of Prop.~2.14]{Markfelder24} we find that $X_0$ is bounded with respect to the $L^\infty$ norm by using Lemma~\ref{lemma:uniform-boundedness-KU}, assumption \eqref{eq:ass-KbLambda=KbCo} and condition \eqref{eq:defn-X0-subs}. Thus the weak-$\ast$ topology on $X^2$, which we define to be closure of $X_0$ with respect to $\topology_2$, is metrizable, and the resulting metric space $(X^2,d_2)$ is compact and complete, see e.g.~\cite[Proof of Prop.~5.1.6]{Markfelder} and references therein for more details. 

    Now, note that $d$ defined by 
    $$
        d(\zeta_1,\zeta_2):= \max\big\{ d_1(\zeta_1,\zeta_2),d_2(\zeta_1,\zeta_2)\big\},
    $$
    induces the topology $\topology$ on $X$, so it remains to show that $(X,d)$ is complete. To this end consider a Cauchy sequence $(\zeta_k)_{k\in \N}$ in $X$. Then $(\zeta_k)_{k}$ is a Cauchy sequence both with respect to $d_1$ and $d_2$. It is simple to see that $X\subset X^1\cap X^2$. Hence the completeness of $(X^1,d_1)$ and $(X^2,d_2)$ yield existence of $\ov{\zeta}^1\in X^1$ and $\ov{\zeta}^2\in X^2$ such that $\zeta_k \mathop{\to}\limits^{d_1} \ov{\zeta}^1$ and $\zeta_k \mathop{\to}\limits^{d_2} \ov{\zeta}^2$ as $k\to \infty$. 

    Note that convergence with respect to $d_1$ implies convergence $L^\infty$ strongly in time and $L^r$ weakly in space, which in turn implies convergence $L^\infty$ weakly-$\ast$ in time and $L^r$ weakly in space. Similarly convergence with respect to $d_2$ means convergence $L^\infty$ weakly-$\ast$ in time and space, which implies convergence $L^\infty$ weakly-$\ast$ in time and $L^r$ weakly in space. From this we infer that $(\zeta_k)_{k}$ converges to both $\ov{\zeta}^1$ and $\ov{\zeta}^2$ with respect to the same topology, namely $L^\infty$ weakly-$\ast$ in time and $L^r$ weakly in space. Thus $\ov{\zeta}^1=\ov{\zeta}^2=:\ov{\zeta}$ and we deduce that $\zeta_k \mathop{\to}\limits^{d} \ov{\zeta}$ as $k\to \infty$. Since $X$ is closed with respect to $d$, this implies $\ov{\zeta}\in X$. All in all this proves that $(X,d)$ is complete as desired.
\end{proof}

\begin{rem} \label{rem:unbounded-domains}
    In our framework we consider the spatial domain $\Omega$ to be bounded. One could also generalize this to unbounded domains. In the case of the incompressible Euler equations, this was done by \name{De~Lellis}-\name{Sz{\'e}kelyhidi}~\cite{DelSze10}. The issue is then, that we still need the left-hand side in \eqref{eq:006} to be bounded. In \cite{DelSze10} this achieved by estimating a power of $|\zeta|$ by $|\theta|$ on $(\sK_\theta)^\co$ and then requiring $\theta$ to be bounded in a suitable $L^p$ norm. Since an implementation of such an approach in an abstract framework makes the notation very complicated, we decided to restrict to bounded domains in order to keep the paper clearer.
\end{rem}

Next we define the functionals $\I_{\sigma_0,\sigma_1}$.

\begin{defn} \label{defn:I} 
	For any $0<\sigma_0\leq \sigma_1< T$ we define the functional $\I_{\sigma_0,\sigma_1}:X \to \R$ by 
	\begin{equation*} 
		\zeta \mapsto \I_{\sigma_0,\sigma_1} (\zeta) := \sup_{t\in [\sigma_0,\sigma_1]}\left[\int_{\Omega} D\big(\theta(t,x),\zeta(t,x)\big) \dx \right]. 
	\end{equation*}
\end{defn}

Similar to \cite[Lemma~2.17]{Markfelder24}, the following lemma summarizes some properties of the functionals $\I_{\sigma_0,\sigma_1}$. 

\begin{lemma} \label{lemma:properties-I}
	The following claims hold.
	\begin{enumerate} 
		\item \label{item:prop-I-Baire1} For all $0<\sigma_0\leq \sigma_1< T$ the map $\I_{\sigma_0,\sigma_1} : X\to \R$ is a Baire-1 function\footnote{Recall that a Baire-1 function $X\to \R$ is by definition the pointwise limit of a sequence of continuous functions. The important fact which we need in this paper (more precisely, for the proof of Lemma~2.24 below) is that the points of continuity of Baire-1 functions form a dense set as soon as $X$ is complete. This fact may be viewed as a version of the Baire Category Theorem.} with respect to the metric $d$. 
		
		\item \label{item:prop-I-X0} For all $0<\sigma_0< \sigma_1< T$ and all $\zeta\in X_0$ we have $\I_{\sigma_0,\sigma_1}(\zeta) > 0$. 
		
		\item \label{item:prop-I-sol} If $\zeta\in X$ with $\I_{\sigma_0,\sigma_1}(\zeta)=0$ for all $0<\sigma_0\leq \sigma_1< T$, then $\zeta$ is a solution in the sense specified in Thm.~\ref{thm:conv-int}, i.e.~items \ref{item:main-thm-sol1a}-\ref{item:main-thm-sol2} hold. 
	\end{enumerate}
\end{lemma}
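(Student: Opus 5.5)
For item \ref{item:prop-I-Baire1} I plan to exhibit $\I_{\sigma_0,\sigma_1}$ as a pointwise limit of $d$-continuous functionals. Since $\zeta\mapsto D(\mu,\zeta)$ is concave and continuous, the functional $\zeta(t,\cdot)\mapsto \int_\Omega D(\theta(t,x),\zeta(t,x))\dx$ is weakly upper semicontinuous on bounded sets of $L^r$, but not continuous, so regularization is needed. Fix a standard mollifier $\rho_\ep$ in space (extending $\zeta$ by $\ov{\zeta}$ or by zero outside $\Omega$, and using that $X\subset L^\infty$ with values in $\closure{B}_c(0)$ by Lemma~\ref{lemma:uniform-boundedness-KU} and \eqref{eq:ass-KbLambda=KbCo}, so that $D$ is defined on the relevant values). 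Set
$$
\I^\ep(\zeta):=\sup_{t\in[\sigma_0,\sigma_1]}\int_\Omega D\big(\theta(t,x),(\zeta(t,\cdot)*\rho_\ep)(x)\big)\dx .
$$
Convergence in $d_1$ means uniform-in-time weak convergence in $L^r$, hence $\zeta_k(t,\cdot)*\rho_\ep\to\zeta(t,\cdot)*\rho_\ep$ pointwise in $x$ and uniformly in $t$, with uniform bounds; together with the uniform continuity of $D$ (Defn.~\ref{defn:delta}) and dominated convergence this shows that each $\I^\ep$ is $d$-continuous. It then remains to show $\I^\ep(\zeta)\to\I_{\sigma_0,\sigma_1}(\zeta)$ as $\ep\to0$ for each fixed $\zeta\in X$. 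For fixed $t$, $\zeta(t,\cdot)*\rho_\ep\to\zeta(t,\cdot)$ strongly in $L^1$, so the integrals converge for each $t$. The sup over $t$ is the delicate part: I would prove the uniformity via the strong continuity of $\ep\mapsto$ mollification on the weakly compact curve $\{\zeta(t)\}_t$. If this is problematic, I would instead use Jensen's inequality (concavity of $D$ plus the $C^1$ regularity of $\theta$) to get $\int D(\theta,\zeta*\rho_\ep)\ge \int D(\theta,\zeta)-o(1)$, giving one inequality uniformly in $t$, and take a lim sup argument for the other. If necessary I would fall back on showing that $\I$ is a monotone limit of continuous functions, i.e.\ upper semicontinuous, which is Baire-1. \textbf{This step is the main obstacle.}

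For item \ref{item:prop-I-X0}, take $\zeta\in X_0$. Then $\zeta(t,x)\in\sU_{\theta(t,x)}$ for all $(t,x)$, so by Lemma~\ref{lemma:suitable-distance-map}~\ref{item:L-suitable-D->0} we have $D>0$ pointwise. Hence for any $t\in[\sigma_0,\sigma_1]$ the integral is positive, and therefore so is the supremum.

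For item \ref{item:prop-I-sol}, the plan is as follows. Elements of $X$ are $\topology$-limits of sequences in $X_0$. The integrated identity in item \ref{item:main-thm-sol1a} is obtained for $X_0$ by integrating by parts using \eqref{eq:defn-X0-eq}, \eqref{eq:defn-X0-boundary-space}, \eqref{eq:defn-X0-boundary-time-init} and \eqref{eq:defn-X0-boundary-time-end}. This identity is linear in $\zeta$ and passes to weak-$\ast$ limits, and the initial and end conditions pass to the limit through $\topology_1$; this gives items \ref{item:main-thm-sol1a} and \ref{item:main-thm-sol1b}. Next, by weak convergence in $L^r$ at each fixed $t$ and the convexity of $(\sK_\mu)^\co$ (a closed convex constraint is weakly closed), we get $\zeta(t,x)\in(\sK_{\theta(t,x)})^\co$ for all $t$ and a.e.\ $x$. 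Finally, $\I_{t,t}(\zeta)=0$ together with $D\ge0$ there (Lemma~\ref{lemma:suitable-distance-map}~\ref{item:L-suitable-D-geq0}) forces $D(\theta(t,x),\zeta(t,x))=0$ for a.e.\ $x$. Defn.~\ref{defn:suitable-distance-map}~\ref{item:suitable-D-inK} then gives $\zeta(t,x)\in\sK_{\theta(t,x)}$, which is item \ref{item:main-thm-sol2}.
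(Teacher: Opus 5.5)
Your items (b) and (c) are fine and follow the paper. In (c) you are slightly more explicit than the paper: you check that elements of $X$ still take values in $(\sK_{\theta(t,x)})^\co$, which is needed to invoke Lemma~\ref{lemma:suitable-distance-map}~\ref{item:L-suitable-D-geq0} and Defn.~\ref{defn:suitable-distance-map}~\ref{item:suitable-D-inK}. You also use $\I_{t,t}(\zeta)=0$ directly instead of an external lemma.

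The gap is item (a), which you leave open, and the first fix you suggest would fail. Mollification converges uniformly only on norm-precompact subsets of $L^r$. A weakly continuous curve $t\mapsto\zeta(t,\cdot)$ can pass through ever more oscillatory states: oscillations of frequency $\sim|t-t_0|^{-1}$ still converge weakly at $t_0$. So $\sup_t\|\zeta(t,\cdot)*\rho_\ep-\zeta(t,\cdot)\|_{L^1}$ need not tend to $0$. Your Jensen bound only gives $\liminf_{\ep\to0}\I^\ep(\zeta)\ge\I_{\sigma_0,\sigma_1}(\zeta)$; the reverse inequality is the substantive step, and you do not supply it.

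The missing idea is compactness in time combined with the topology $\topology$. If $\zeta_k\to\zeta$ in $d$ and $t_k\to t_0$, then $\zeta_k(t_k,\cdot)\rightharpoonup\zeta(t_0,\cdot)$ weakly in $L^r$. Indeed, $\zeta_k(t_k)-\zeta(t_k)\rightharpoonup0$ by the uniform-in-time weak convergence, and $\zeta(t_k)\rightharpoonup\zeta(t_0)$ by weak continuity of $\zeta$. Now argue as follows:
\begin{itemize}
\item choose near-maximizers $t_k\in[\sigma_0,\sigma_1]$ for $\I_{\sigma_0,\sigma_1}(\zeta_k)$ and pass to a subsequence with $t_k\to t_0$;
\item replace $\theta(t_k,\cdot)$ by $\theta(t_0,\cdot)$ using uniform continuity of $D$ and $\theta$;
\item apply the weak upper semicontinuity of $f\mapsto\int_\Omega D(\theta(t_0,x),f(x))\dx$, which you already noted.
\end{itemize}
This gives $\limsup_k\I_{\sigma_0,\sigma_1}(\zeta_k)\le\I_{\sigma_0,\sigma_1}(\zeta)$.

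This is exactly the paper's proof, and it settles (a) immediately. $\I_{\sigma_0,\sigma_1}$ is bounded and upper semicontinuous with respect to $d$, hence Baire-1. For instance, it is the decreasing pointwise limit of the Lipschitz functions $\zeta\mapsto\sup_{\xi\in X}[\I_{\sigma_0,\sigma_1}(\xi)-j\,d(\zeta,\xi)]$.

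So your claim that ``regularization is needed'' is mistaken, and the mollified functionals $\I^\ep$ are a detour. They can be completed: the same argument with $t_\ep\to t_0$ and $\zeta(t_\ep,\cdot)*\rho_\ep\rightharpoonup\zeta(t_0,\cdot)$ gives $\limsup_\ep\I^\ep\le\I_{\sigma_0,\sigma_1}$. But that requires proving the upper semicontinuity estimate anyway, plus the $d$-continuity of each $\I^\ep$ and the handling of the extension across $\partial\Omega$.
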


\begin{proof} 
\begin{enumerate}
	\item Let $0<\sigma_0\leq \sigma_1< T$. We will show below that $\I_{\sigma_0,\sigma_1}:X\to \R$ is upper semi-continuous with respect to $d$. Together with the fact that $\I_{\sigma_0,\sigma_1}$ takes values in a bounded subset of $\R$, this already yields that $\I_{\sigma_0,\sigma_1}$ is Baire-1 with respect to $d$, see e.g.~\cite[Prop.~11 in Sect.~2.7 of Chap.~IX]{Bourbaki:Top2}. So let us prove that $\I_{\sigma_0,\sigma_1}$ is upper semi-continuous with respect to $d$.

    Assume there was a sequence $(\zeta_k)_{k\in\N}\subset X$ and $\zeta\in X$ such that $\zeta_k \mathop{\to}\limits^d \zeta$ as $k\to \infty$ but 
    $$
        \lim_{k\to \infty} \I_{\sigma_0,\sigma_1}(\zeta_k) > \I_{\sigma_0,\sigma_1}(\zeta).
    $$
    Thus there exist $(t_k)_{k\in \N}$ with $t_k\in [\sigma_0,\sigma_1]$ and 
    \begin{equation} \label{eq:001}
        \lim_{k\to \infty} \int_\Omega D\big(\theta(t_k,x),\zeta_k(t_k,x)\big)\dx > \I_{\sigma_0,\sigma_1}(\zeta).
    \end{equation}
    Since $[\sigma_0,\sigma_1]$ is compact, we may assume that $t_k\to t_0$ by considering a subsequence if necessary. By definition of the topology $\topology$, convergence with respect to $d$ implies $\zeta_k(t_k,\cdot)\rightharpoonup \zeta(t_0,\cdot)$ weakly in $L^r(\Omega;\R^M)$. Consequently Defn.~\ref{defn:suitable-distance-map}~\ref{item:suitable-D-concave} implies together with the uniform continuity of $D$ and $\theta$, that for all $\ep>0$
    \begin{align*}
        &\limsup_{k\to \infty} \int_\Omega D\big(\theta(t_k,x),\zeta_k(t_k,x)\big)\dx \\
        &\leq \limsup_{k\to \infty} \int_\Omega D\big(\theta(t_0,x),\zeta_k(t_k,x)\big)\dx \\
        &\qquad + \limsup_{k\to \infty} \int_\Omega \Big|D\big(\theta(t_k,x),\zeta_k(t_k,x)\big) - D\big(\theta(t_0,x),\zeta_k(t_k,x)\big)\Big|\dx \\
        &\leq \int_\Omega D\big(\theta(t_0,x),\zeta(t_0,x)\big)\dx + \ep \leq \I_{\sigma_0,\sigma_1}(\zeta) + \ep,
    \end{align*}
    a contradiction to \eqref{eq:001}. Thus $\I_{\sigma_0,\sigma_1}$ is upper semi-continuous with respect to $d$ as desired. 
	
	\item We infer from property \eqref{eq:defn-X0-subs} and Lemma~\ref{lemma:suitable-distance-map}~\ref{item:L-suitable-D->0}, that 
	$$
		D\big(\theta(t,x),\zeta(t,x)\big) > 0 \qquad \text{ for all } (t,x)\in (0,T)\times \Omega,
	$$
	which immediately leads to $\I_{\sigma_0,\sigma_1}(\zeta)> 0$.

	\item From the definition of the set $X_0$ (Defn.~\ref{defn:X0andX}) we see that any $\zeta\in X_0$ satisfies item \ref{item:main-thm-sol1a} in Thm.~\ref{thm:conv-int}. Now consider $\zeta\in X$. Then by definition of the set $X$, there is a sequence $(\zeta_k)_{k\in \N}\subset X_0$ with $\zeta_k \mathop{\to}\limits^d \zeta$, in particular $\zeta_k \mathop{\rightharpoonup}\limits^\ast \zeta$ in $L^\infty((0,T)\times \Omega;\R^M)$. As explained above item \ref{item:main-thm-sol1a} in Thm.~\ref{thm:conv-int} holds for any $\zeta_k$, $k\in \N$, and by taking the limit we obtain item \ref{item:main-thm-sol1a} even for $\zeta\in X$. 

    Notice that by \eqref{eq:defn-X0-boundary-time-init} we have $\zeta_k(0,x)=\ov{\zeta}(0,x)$ for all $k\in \N$ and a.e.~$x\in \Omega$. Moreover $\zeta_k$ converges to $\zeta$ in $\Cweak([0,T];L^r(\Omega;\R^M))$, and hence item \ref{item:main-thm-sol1b} in Thm.~\ref{thm:conv-int} holds. 
    
    Together with Lemma~\ref{lemma:suitable-distance-map}~\ref{item:L-suitable-D-geq0}, the fact that $\I_{\sigma_0,\sigma_1}(\zeta)=0$ for all $0<\sigma_0\leq \sigma_1<T$ implies 
    $$
		D\big(\theta(t,x),\zeta(t,x)\big) = 0 \qquad \text{ for all } t\in (0,T) \text{ and a.e. }x\in \Omega, 
	$$
    see \cite[Lemma~A.4.1]{Markfelder} for a more detailed proof of an analogous fact. This means by Defn.~\ref{defn:suitable-distance-map}~\ref{item:suitable-D-inK} that item \ref{item:main-thm-sol2} in Thm.~\ref{thm:conv-int} is true.
\end{enumerate}
\end{proof}

\subsubsection{Perturbation property and proof of the convex integration theorem (Thm.~\ref{thm:conv-int})}

Next we state the perturbation property, whose proof can be found in Sect.~\ref{subsec:general-pert-prop} below. 

\begin{prop}[The perturbation property] \label{prop:pert-prop}
	Let $\ep,\ov{\ep}>0$ and $0< \ov{\sigma}_0 < \sigma_0 < \sigma_1 < \ov{\sigma}_1 < T$. For all $\zeta \in X_0$ there exists a perturbation $\zeta_\pert\in X_0$ with the following properties.
	\begin{align}
		d(\zeta_\pert, \zeta) &\leq \ov{\ep} ; \label{eq:pert-prop1} \\
		\I_{\sigma_0,\sigma_1}(\zeta_\pert) &\leq \ep ; \label{eq:pert-prop2} \\
        \zeta_\pert(t,x) &= \zeta(t,x) \qquad \text{ for all }t\in (0,\ov{\sigma}_0)\cup (\ov{\sigma}_1,T)\text{ and all }x\in \closure{\Omega}. \label{eq:pert-prop3}
	\end{align}
\end{prop}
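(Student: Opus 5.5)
The plan follows the De~Lellis--Sz\'ekelyhidi strategy, localized in time to $[\ov{\sigma}_0,\ov{\sigma}_1]$. Fix $\zeta\in X_0$. On the compact set $[\ov{\sigma}_0,\ov{\sigma}_1]\times\closure{\Omega}$, $\zeta$ and $\theta$ are uniformly continuous, and $\zeta$ takes values in $\sU_{\theta}$. The work is organized in three steps.

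\emph{Step 1 (localized plane waves).} For a single $\widehat\zeta\in\Lambda$ with direction $\eta$, where $\eta_x\neq 0$, and a cutoff $\chi\in\Cc((\ov{\sigma}_0,\ov{\sigma}_1)\times\Omega)$, set
\[
w_k:=\opL_{\widehat\zeta}\big[\chi\, k^{-\ell}h(k(t,x)\cdot\eta)\big],
\]
with $h$ a periodic profile whose $\ell$-th derivative takes the two values of a $\Lambda$-segment (after mollifying the jump).
\begin{itemize}
\item By Defn.~\ref{defn:suitable-operator}, $w_k$ solves \eqref{eq:lin-eq-homo}.
\item $w_k$ is compactly supported, so \eqref{eq:defn-X0-boundary-space}--\eqref{eq:defn-X0-boundary-time-end} and \eqref{eq:pert-prop3} are preserved.
\item $w_k=\chi\,\widehat\zeta\,h^{(\ell)}(k\cdot)+O(1/k)$.
\end{itemize}
Because $\eta_x\neq0$, for each fixed $t$ the function $x\mapsto w_k(t,x)$ oscillates in space. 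Hence $w_k(t,\cdot)\rightharpoonup 0$ weakly in $L^r$ uniformly in $t$, which gives smallness in $d_1$ and not only in $d_2$. This is exactly why the wave cone requires $\eta_x\ne0$.

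\emph{Step 2 (iterated laminates on a grid).} By Prop.~\ref{prop:geom-property-U}, each $\zeta(t_0,x_0)$ is the barycenter of an $H_N$-family with endpoints $\zeta_i\in\sU_\mu$ satisfying $D(\mu,\zeta_i)\le\ep'$ and a quantitative distance to $\partial\sU_\mu$. Realize this laminate by nesting Step-1 waves in the usual inductive way over the $H_N$ splitting. Then cover $[\sigma_0,\sigma_1]\times\Omega$, up to a set of small measure, by small space-time cubes on which $\zeta,\theta$ vary by less than a threshold, and on each cube use the laminate at its center. Uniform continuity combined with the distance-to-boundary bound in item~\ref{item:geom-propU-b} ensures that $\zeta+\sum w$ stays in $\sU_\theta$ pointwise. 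Uniform continuity of $D$ (Defn.~\ref{defn:delta}) gives $D(\theta,\zeta_\pert)\le 2\ep'$, except on the regions where the profile transitions and near cube boundaries. Those regions have measure fraction controllable uniformly in time; this is important because $\I$ is a supremum over $t$. Since $D$ is bounded, their contribution is at most $C\cdot$(fraction). This yields $\int_\Omega D(\theta(t),\zeta_\pert(t))\dx\le\ep$ for every $t\in[\sigma_0,\sigma_1]$.

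\emph{Step 3 (the main obstacle: uniform-in-time bad-set control).} This step has two parts.
\begin{itemize}
\item For every fixed $t$, the spatial slices of the cubes, cutoffs and profile transition layers must occupy a small proportion of $\Omega$. I would handle this by choosing the cubes' time extent so that the cutoff $\chi$ equals $1$ on all but a thin time layer. For the $t$ lying in those layers, I would use staggered overlapping cube families in time, two shifted grids with a partition of unity, so that each $t$ lies in the plateau of at least one family. Alternatively, following \cite{DelSze10}, I would simply choose the grid so that the measure estimate holds slice-wise, using the fact that $\Omega$ is bounded.
\item Finally, taking the frequency $k$ large gives \eqref{eq:pert-prop1}.
\end{itemize}
Membership in $X_0$ then holds: $\zeta_\pert$ is smooth, solves \eqref{eq:defn-X0-eq} by linearity, satisfies \eqref{eq:defn-X0-subs}, and has unchanged boundary and initial/end values.
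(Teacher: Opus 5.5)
Your Steps~1--2 follow the paper's route: Lemmas~\ref{lemma:pert-prop-step1}--\ref{lemma:pert-prop-step2} are built from Prop.~\ref{prop:geom-property-U} and the operators $\opL_\zeta$, combined with a small frame, a space-time grid, uniform continuity, and $k$ large for \eqref{eq:pert-prop1}. The gap is in Step~3, which is exactly where the uniform-in-time bound \eqref{eq:pert-prop2} is decided. A cube-localized laminate is effective only on slices at distance $\geq \ep_2$ from the cube's bottom and top faces (Lemma~\ref{lemma:pert-prop-step2}~\ref{item:pert-prop-step2-d}). In those layers the perturbation is cut off and $D(\theta,\zeta_\pert)\approx D(\theta,\zeta)$. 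Making the layers thin does not help if all spatial cubes share one time grid: at a face time every cube is bad at once, and $\int_\Omega D(\theta(s),\zeta_\pert(s))\dx\approx\int_\Omega D(\theta(s),\zeta(s))\dx$. Your first remedy only relocates these bad times. Two overlapping families glued by a partition of unity $\psi_A+\psi_B=1$ share the transition zone $\{0<\psi_A<1\}=\{0<\psi_B<1\}$. There $\zeta_\pert$ takes convex combinations of endpoints of two different laminates. By concavity of $D$ these are not close to $\sK_\theta$; for Euler, $D=n(q-p)-|v|^2$ is strictly positive at midpoints of the sphere. If instead the two shifted families are put on disjoint sets of spatial cubes, up to half of the cubes can still be bad at a given time. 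That yields only a decrement-type estimate $\I(\zeta_\pert)\le\I(\zeta)-\beta$, not $\I_{\sigma_0,\sigma_1}(\zeta_\pert)\le\ep$. Your second alternative states the goal but gives no mechanism.

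The missing idea is to desynchronize the bad layers over many classes, with the number of classes tending to infinity as $\ep\to0$. The paper takes cubes $C_{\alpha_t,\alpha_x,h}$ with time centers $\sigma_0+h\alpha_t-h\frac{|\alpha_x|\bmod S}{S}$, so each spatial column has its own time grid shifted by a multiple of $h/S$. It then applies Lemma~\ref{lemma:pert-prop-step2} with $\ep_1=\ep/(4|\mathcal{J}_x|)$ and $\ep_2=h/(4S)$; this is legitimate because the layer width is a free parameter, independent of $h$. For each $s\in[\sigma_0,\sigma_1]$ the bad windows of different residue classes are then disjoint. Hence only about $|\mathcal{J}_x|/S$ cubes are bad at time $s$, contributing at most $|\Omega|\ov{D}/S\le\ep/4$ by the choice \eqref{eq:step2-est2} of $S$. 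The remaining terms are each at most $\ep/4$:
\begin{itemize}
\item the good cubes, by Lemma~\ref{lemma:pert-prop-step2}~\ref{item:pert-prop-step2-d};
\item the frame;
\item the error from replacing $(\theta(s,x),\zeta(s,x))$ by the cube-center values, by uniform continuity.
\end{itemize}
A tiling whose faces are never time slices would be another way to achieve this slice-wise desynchronization. Some such device is indispensable, and your proposal leaves it open.
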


\begin{rem} 
    In the case of the incompressible Euler equations, Prop.~\ref{prop:pert-prop} corresponds to \cite[Prop.~3]{DelSze10}. Note however that in \cite{DelSze10} the estimate \eqref{eq:pert-prop2} is replaced by 
    $$
        \I_{\sigma_0,\sigma_1}(\zeta_\pert) \leq \I_{\sigma_0,\sigma_1}(\zeta) - \beta
    $$
    with a suitable constant $\beta>0$. In view of this we would like to point out that Prop.~\ref{prop:pert-prop} is not only a generalization of \cite[Prop.~3]{DelSze10} but also an improvement regarding the rate of convergence.
\end{rem}

In order to prove Thm.~\ref{thm:conv-int}, consider two sequences $(\sigma_{0,j})_{j\in \N},(\sigma_{1,j})_{j\in \N}\subset (0,T)$ with $\sigma_{0,j}\leq \sigma_{1,j}$ for all $j\in \N$ and $\sigma_{0,j}\searrow 0$, $\sigma_{1,j}\nearrow T$ as $j\to \infty$. Moreover define
\begin{align*}
	\Xi_j &:= \left\{\zeta \in X \, \Big| \, \I_{\sigma_{0,j},\sigma_{1,j}} \text{ is continuous at } \zeta \text{ with respect to } d\right\} , \\
	\Xi \ &:= \bigcap_{j\in \N} \Xi_j .
\end{align*}

The perturbation property allows to prove the following two lemmas.

\begin{lemma} \label{lemma:Xi-inf-elements}
	The sets $X_0$ and $\Xi$ contain infinitely many elements.
\end{lemma}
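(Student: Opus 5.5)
To prove Lemma~\ref{lemma:Xi-inf-elements}, I will first show that $X_0$ is infinite and then that $\Xi$ is infinite.

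\emph{$X_0$ is infinite.} By assumption $\ov{\zeta}$ satisfies \eqref{eq:defn-X0-eq} and \eqref{eq:defn-X0-subs}. Trivially it coincides with itself on the space-time boundary. Hence $\ov{\zeta}\in X_0$, so $X_0\neq\emptyset$. Now fix $0<\sigma_0<\sigma_1<T$ and apply Prop.~\ref{prop:pert-prop} to $\ov{\zeta}$ with $\ep_1:=\frac12 \I_{\sigma_0,\sigma_1}(\ov{\zeta})$. This number is positive by Lemma~\ref{lemma:properties-I}~\ref{item:prop-I-X0}. We obtain $\zeta_1\in X_0$ with $\I_{\sigma_0,\sigma_1}(\zeta_1)\le \ep_1<\I_{\sigma_0,\sigma_1}(\ov{\zeta})$, so $\zeta_1\neq\ov{\zeta}$. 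Iterating with $\ep_{k+1}:=\frac12\I_{\sigma_0,\sigma_1}(\zeta_k)>0$ produces a sequence $(\zeta_k)\subset X_0$ on which $\I_{\sigma_0,\sigma_1}$ is strictly decreasing. The elements are therefore pairwise distinct, and $X_0$ is infinite.

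\emph{$\Xi$ is infinite.} I first show that $\Xi$ is dense in $(X,d)$. By Prop.~\ref{prop:X-metric-space}, $(X,d)$ is a complete metric space. By Lemma~\ref{lemma:properties-I}~\ref{item:prop-I-Baire1}, each $\I_{\sigma_{0,j},\sigma_{1,j}}$ is Baire-1. Hence its set of continuity points $\Xi_j$ is residual, i.e.\ it contains a countable intersection of open dense sets. A countable intersection of residual sets is again residual, so by the Baire category theorem $\Xi=\bigcap_j\Xi_j$ is residual and in particular dense in $X$. I would invoke the standard fact on continuity points of Baire-1 functions (as in the footnote) together with Baire's theorem.

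It remains to see that $X$ has no isolated points, since a dense subset of a space without isolated points must be infinite (indeed uncountable when residual in a complete space). This is the step I expect to be the main obstacle, because the topology $\topology$ is weak and one must rule out isolated points in all of $X$, not only in $X_0$.

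\emph{$X$ has no isolated points.} Given $\zeta\in X$ and $\ov{\ep}>0$, pick $\zeta'\in X_0$ with $d(\zeta',\zeta)<\ov{\ep}/2$.
\begin{itemize}
\item If $\zeta'\neq\zeta$, then $\zeta'$ is already a point of $X$ different from $\zeta$ within distance $\ov{\ep}$.
\item If $\zeta'=\zeta$, so that $\zeta\in X_0$, apply Prop.~\ref{prop:pert-prop} to $\zeta$ with accuracy $\ov{\ep}/2$ and with $\ep<\I_{\sigma_0,\sigma_1}(\zeta)$. This yields $\zeta_\pert\in X_0$ with $d(\zeta_\pert,\zeta)\le\ov{\ep}/2$ and $\I_{\sigma_0,\sigma_1}(\zeta_\pert)<\I_{\sigma_0,\sigma_1}(\zeta)$, hence $\zeta_\pert\neq\zeta$.
\end{itemize}
In either case every ball around $\zeta$ contains another point of $X$. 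Thus $X$ is a nonempty complete metric space without isolated points, and its dense subset $\Xi$ cannot be finite: a finite set is closed, so it would equal $X$, and a finite metric space has only isolated points. This completes the plan.
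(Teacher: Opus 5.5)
Your argument is correct and is essentially the standard one from the references the paper cites for this lemma. It uses $\ov{\zeta}\in X_0$, then iterates the perturbation property with $\ep=\tfrac12\I_{\sigma_0,\sigma_1}(\zeta_k)$ to get pairwise distinct elements of $X_0$, then applies the Baire category theorem to the continuity sets of the Baire-1 functionals so that $\Xi$ is residual and hence dense. The step you flagged as the main obstacle is not needed: $X\supset X_0$ is already infinite and finite subsets of a metric space are closed, so a finite dense $\Xi$ would give $X=\closure{\Xi}=\Xi$, a contradiction, and you need not rule out isolated points.
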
 

\begin{lemma} \label{lemma:Xi-I=0}
	If $\zeta\in \Xi$, then $I_{\sigma_0,\sigma_1}(\zeta)=0$ for all $0<\sigma_0\leq \sigma_1< T$. 
\end{lemma}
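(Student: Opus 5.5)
We prove Lemma~\ref{lemma:Xi-I=0} by contradiction, following the standard Baire category argument from \cite{DelSze10}, with the perturbation property (Prop.~\ref{prop:pert-prop}) as the key input.

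\textbf{Reduction to a single window.} Let $\zeta\in\Xi$ and fix $0<\sigma_0\le\sigma_1<T$. Choose $j$ so large that $\sigma_{0,j}<\sigma_0$ and $\sigma_1<\sigma_{1,j}$. By Defn.~\ref{defn:I}, $\I_{\sigma_0,\sigma_1}\le \I_{\sigma_{0,j},\sigma_{1,j}}$ pointwise on $X$, since the supremum is taken over a larger interval. Moreover $\I_{\sigma_0,\sigma_1}\ge 0$ on $X$; this follows as in the proof of Lemma~\ref{lemma:properties-I}, because elements of $X$ take values in the convex hulls for every $t$ (weak limits of functions with values in a convex set), combined with Lemma~\ref{lemma:suitable-distance-map}~\ref{item:L-suitable-D-geq0}. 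It therefore suffices to show $\I_{\sigma_{0,j},\sigma_{1,j}}(\zeta)=0$ for every $j$.

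\textbf{Key step.} Fix $j$ and write $I:=\I_{\sigma_{0,j},\sigma_{1,j}}$. Since $\zeta\in\Xi_j$, $I$ is continuous at $\zeta$ with respect to $d$. Because $X$ is the $d$-closure of $X_0$, we can pick $\zeta_k\in X_0$ with $d(\zeta_k,\zeta)\le 1/k$.

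Next choose $\ov{\sigma}_0,\ov{\sigma}_1$ with $0<\ov{\sigma}_0<\sigma_{0,j}$ and $\sigma_{1,j}<\ov{\sigma}_1<T$. This strict inclusion is harmless if $\sigma_{0,j}=\sigma_{1,j}$: the functional over a degenerate window is bounded by the one over a slightly larger window inside $(0,T)$, so we may enlarge the window first. Apply Prop.~\ref{prop:pert-prop} to $\zeta_k$ with $\ov{\ep}=1/k$ and $\ep=1/k$. This yields $\widetilde\zeta_k\in X_0$ with
\begin{equation*}
d(\widetilde\zeta_k,\zeta_k)\le 1/k \qquad\text{and}\qquad I(\widetilde\zeta_k)\le 1/k .
\end{equation*}
Hence $d(\widetilde\zeta_k,\zeta)\le 2/k\to 0$, and continuity of $I$ at $\zeta$ gives
\begin{equation*}
I(\zeta)=\lim_{k\to\infty} I(\widetilde\zeta_k)\le \lim_{k\to\infty}\frac{1}{k}=0 .
\end{equation*}
Together with $I\ge0$, this gives $I(\zeta)=0$.

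\textbf{Main obstacle.} All the difficulty lies in Prop.~\ref{prop:pert-prop}, which is assumed here. The strong form of estimate \eqref{eq:pert-prop2}, an absolute bound $\le\ep$ rather than a decrement by $\beta$ as in \cite{DelSze10}, makes the argument above immediate. With only a decrement we would instead need a lower-semicontinuity-type estimate relating $I$ to some $L^2$-defect, which we avoid. The remaining points to check carefully are the nonnegativity of $\I_{\sigma_0,\sigma_1}$ on all of $X$, not just on $X_0$, which follows from the concavity argument above, and the handling of a degenerate window $\sigma_{0,j}=\sigma_{1,j}$ by enlarging it before applying Prop.~\ref{prop:pert-prop}.
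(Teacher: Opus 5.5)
Your argument is correct and is essentially the standard one the paper defers to (cf.\ \cite[Lemmas~2.19 and 2.20]{Markfelder24}). You approximate $\zeta$ by elements of $X_0$, apply Prop.~\ref{prop:pert-prop} to push the functional below $1/k$ while staying $d$-close, and conclude via continuity at $\zeta$; you then pass from the windows $[\sigma_{0,j},\sigma_{1,j}]$ to an arbitrary $[\sigma_0,\sigma_1]$ using monotonicity in the window together with nonnegativity on $X$ (values in the convex hulls survive weak limits by Mazur). Choosing $j$ with $\sigma_{0,j}<\sigma_0\le\sigma_1<\sigma_{1,j}$ already makes the relevant window nondegenerate, so your separate enlargement step is unnecessary, though harmless.
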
 

For the proofs of Lemmas~\ref{lemma:Xi-inf-elements} and \ref{lemma:Xi-I=0} we refer to the literature, e.g.~\cite[Lemmas~5.1.12, 5.1.13 and 5.1.14]{Markfelder}, \cite[Lemmas~2.19 and 2.20]{Markfelder24} or the corresponding statements in \cite{DelSze10}.

\begin{proof}[Proof of Thm.~\ref{thm:conv-int}] 
Now Thm.~\ref{thm:conv-int} follows from Lemmas~\ref{lemma:properties-I}~\ref{item:prop-I-sol}, \ref{lemma:Xi-inf-elements} and \ref{lemma:Xi-I=0}, see e.g.~\cite[Proof of Thm.~5.1.2]{Markfelder} for more details.
\end{proof}

\subsection{Proof of the perturbation property (Prop.~\ref{prop:pert-prop})} \label{subsec:general-pert-prop}

It remains to prove the perturbation property (Prop.~\ref{prop:pert-prop}). Note that compared to \cite{Markfelder24}, the functional $I_{\sigma_0,\sigma_1}(\zeta)$ defined in Defn.~\ref{defn:I} only contains a spatial integral while the corresponding definition in \cite{Markfelder24} contains a spatial and a temporal integral, and therefore the proof of the perturbation property is different in structure compared to \cite{Markfelder24}.

\subsubsection{Step 1: inductive lemma} 

One difference compared to \cite{Markfelder24} is that the estimate in Lemma~\ref{lemma:pert-prop-step1}~\ref{item:pert-prop-step1-e} below deals with integrals only in space and must hold for all times in a certain interval. To be able to achieve this, the sets $\Gamma^\ast$ must have a non-empty intersection with each time slice. We implement this properly by introducing the notion of a suitable union of convex polytopes as follows. 

\begin{defn} \label{defn:suitable-union} 
    A set $\Gamma^\ast\subset \R^{1+n}$ in space-time is called a \emph{suitable union of convex polytopes} if it has the following properties (see Fig.~\ref{fig:suitable-union} for an illustration):
    \begin{itemize}
        \item The set $\Gamma^\ast$ is open and bounded. 
        \item The set $\Gamma^\ast$ can be written as the union of finitely many open, convex and pairwise disjoint polytopes.
        \item It holds that
        $$
            \Gamma^\ast \cap \{t=s\} \neq \emptyset \qquad \text{ for all }s\in (\sigma^\ast_0, \sigma^\ast_1 ),
        $$
        where 
        \begin{equation} \label{eq:defn-sigma-ast}
            \sigma^\ast_0:= \inf_{(t,x)\in \Gamma^\ast} t, \qquad \text{ and } \qquad \sigma^\ast_1:= \sup_{(t,x)\in \Gamma^\ast} t.
        \end{equation}
    \end{itemize}
\end{defn}

\begin{figure}[htb] 
	\centering
	\subfloat[\label{fig:1a}]{
		\centering
		\includegraphics[width=0.25\textwidth]{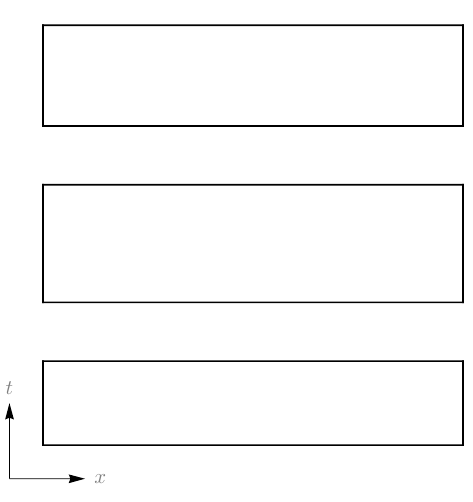}
	}
	\hspace{1cm}
	\subfloat[\label{fig:1b}]{
		\centering
		\includegraphics[width=0.25\textwidth]{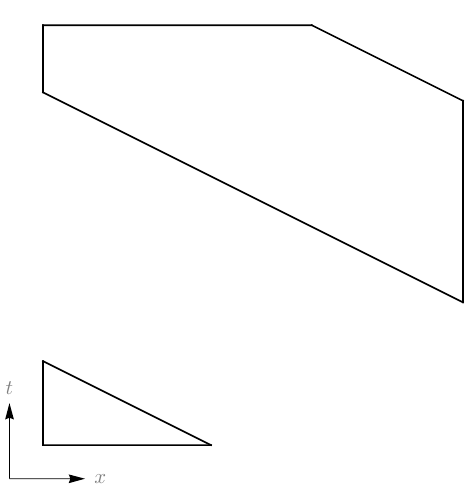}
	}
    \hspace{1cm}
	\subfloat[\label{fig:1c}]{
		\centering
		\includegraphics[width=0.25\textwidth]{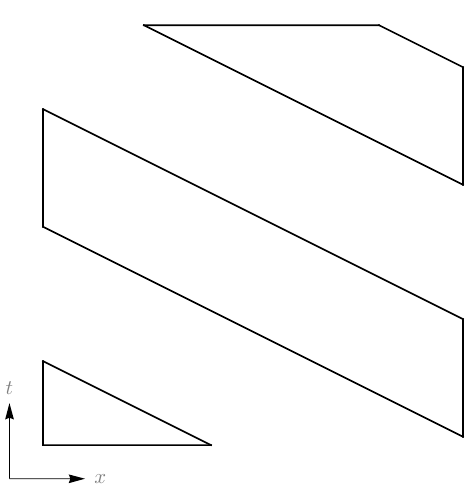}
	}
	\caption{Illustration of Defn.~\ref{defn:suitable-union}. While examples (a) and (b) are not suitable, (c) is.} 
	\label{fig:suitable-union}
\end{figure} 

We need the following version of \cite[Lemma~2.21]{Markfelder24}.

\begin{lemma}[{Cf.~\cite[Lemma~2.21]{Markfelder24}}] \label{lemma:pert-prop-step1} 
	Let $\Gamma^\ast\subset \R^{1+n}$ be a suitable union of convex polytopes (see Defn.~\ref{defn:suitable-union}), $\ep_1,\ep_2>0$, $\mu^\ast\in \Theta$ and $\zeta^\ast\in \sU_{\mu^\ast}$. Let furthermore $N\in\N$ with $N\geq 2$, and $(\tau_i,\zeta_i)\in \R^+ \times \sU_{\mu^\ast}$ for all $i=1,...,N$ such that 
	\begin{itemize} 
		\item the family $\{(\tau_i,\zeta_i)\}_{i=1,...,N}$ satisfies the $H_N$-condition and 
		\item its barycenter is given by $\zeta^\ast$, i.e.~$\sum_{i=1}^N \tau_i \zeta_i = \zeta^\ast$. 
	\end{itemize}
	Then there exists a sequence of oscillations $(\widetilde{\zeta}_k)_{k\in \N} \subset \Cc(\Gamma^\ast;\R^M)$ with the following properties:
	\begin{enumerate}
		\item \label{item:pert-prop-step1-a} The sequence $(\widetilde{\zeta}_k)_{k\in \N}$ converges to $0$ with respect to $d$, i.e.~$\widetilde{\zeta}_k \mathop{\to}\limits^d 0$ as $k\to \infty$. 
	\end{enumerate}	
	For each fixed $k\in \N$ the following statements hold:
	\begin{enumerate} \setcounter{enumi}{1}
		\item \label{item:pert-prop-step1-b} The linear system of PDEs $\ \partial_t a(\widetilde{\zeta}_k) + \Div A(\widetilde{\zeta}_k) = 0\ $ holds pointwise for all $(t,x)\in \Gamma^\ast$. 
		
		\item \label{item:pert-prop-step1-c} There exist open sets $\Gamma_i \subsetcomp \Gamma^\ast$ for all $i=1,...,N$ such that
		\begin{itemize} 
			\item the $\closure{\Gamma_i}$ are pairwise disjoint, 
			\item $\zeta^\ast+\widetilde{\zeta}_k(t,x) = \zeta_i$ for all $(t,x)\in \Gamma_i$ and
            \item each $\Gamma_i$ is a suitable union of convex polytopes in the sense of Defn.~\ref{defn:suitable-union}. 
		\end{itemize} 
		
		\item \label{item:pert-prop-step1-d} The sum $\zeta^\ast + \widetilde{\zeta}_k$ takes values in $\sU_{\mu^\ast}$ and its distance to $\partial\sU_{\mu^\ast}$ can be estimated by the distance of the $\zeta_i$ to $\partial\sU_{\mu^\ast}$, more precisely 
		\begin{align*}
			\zeta^\ast + \widetilde{\zeta}_k(t,x) &\in \sU_{\mu^\ast} & &\text{ for all }(t,x)\in \Gamma^\ast, \\ 
			\dist (\zeta^\ast + \widetilde{\zeta}_k(t,x), \partial\sU_{\mu^\ast}) &\geq \half \min_{i=1,...,N} \dist(\zeta_i,\partial\sU_{\mu^\ast}) & & \text{ for all }(t,x)\in \Gamma^\ast.  
		\end{align*}
		
		\item \label{item:pert-prop-step1-e} The estimate
        $$ 
			\int_{\Gamma^\ast\cap \{t=s\}} D\big(\mu^\ast, \zeta^\ast + \widetilde{\zeta}_k(s,x)\big) \dx < \ep_1 + \sum_{i=1}^N \int_{\Gamma_i\cap \{t=s\}} D(\mu^\ast,\zeta_i) \dx
		$$ 
        holds for all $s\in [\Sigma^\ast_0,\Sigma^\ast_1]$, where
        \begin{align*}
            \Sigma^\ast_0 &:= \left\{ \begin{array}{ll} \sigma^\ast_0 + \ep_2 & \text{ if there exists a face }F\text{ of }\Gamma^\ast \text{ with }t=\sigma^\ast_0 \text{ for all }(t,x)\in F, \\ \sigma^\ast_0 & \text{ else}, \end{array} \right. \\
            \Sigma^\ast_1 &:= \left\{ \begin{array}{ll} \sigma^\ast_1 - \ep_2 & \text{ if there exists a face }F\text{ of }\Gamma^\ast \text{ with }t=\sigma^\ast_1 \text{ for all }(t,x)\in F, \\ \sigma^\ast_1 & \text{ else}. \end{array} \right. 
        \end{align*}
        Keep in mind that $\sigma^\ast_0,\sigma^\ast_1$ are defined in \eqref{eq:defn-sigma-ast}.
	\end{enumerate} 
\end{lemma}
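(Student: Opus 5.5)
We argue by induction on $N$, following the scheme of \cite[Lemma~2.21]{Markfelder24} and keeping track of the two new requirements: every $\Gamma_i$ must again be a suitable union of convex polytopes, and the estimate in item~\ref{item:pert-prop-step1-e} must hold slice-wise in time. Since $\Gamma^\ast$ is a finite disjoint union of open convex polytopes and all claimed properties are local, it suffices to construct the oscillations on each polytope and add them up.

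\textbf{Base case $N=2$.} By the $H_2$-condition, $\zeta_2-\zeta_1\in\Lambda$. Let $\eta=(\eta_t,\eta_x)$ be the associated direction with $\eta_x\neq 0$. We choose a $1$-periodic sawtooth-type profile $h_k$ with frequency $k$ whose $\ell$-th derivative takes only the values $\tau_2$ and $-\tau_1$ (up to thin transition layers). Next we localize with a cutoff $\chi$ that equals $1$ on a polytope slightly smaller than $\Gamma^\ast$ and vanishes near $\partial\Gamma^\ast$, and we set
$$
\widetilde\zeta_k:=\opL_{\zeta_2-\zeta_1}\big[\chi\, k^{-\ell} H_k\big],
$$
where $H_k^{(\ell)}$ is the step profile. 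By Defn.~\ref{defn:suitable-operator}, item~\ref{item:pert-prop-step1-b} holds. Away from the cutoff region and the transition layers, $\zeta^\ast+\widetilde\zeta_k\in\{\zeta_1,\zeta_2\}$; the remaining error is $O(1/k)$ in $C^0$, which gives item~\ref{item:pert-prop-step1-d} for large $k$. We then take $\Gamma_i$ to be the intersections of the inner polytope with the open strips $\{(t,x)\cdot\eta\in(\cdot,\cdot)\}$ on which $h_k^{(\ell)}$ is constant. These are convex polytopes.

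The crucial point is that $\eta_x\neq0$. Because of it, the strips are not horizontal time slabs, so every time slice $\{t=s\}$ with $s$ in the time range of the inner polytope meets strips of both types. This makes each $\Gamma_i$ a suitable union, and on every slice the proportion of $\Gamma_i$ is asymptotically $\tau_i$. Convergence $\widetilde\zeta_k\to0$ in $d$ follows because the oscillations are uniformly bounded and oscillate in $x$ on each slice: weak $L^r$ convergence holds uniformly in $t$ (Riemann–Lebesgue for periodic profiles in a spatial direction), and weak-$\ast$ convergence holds in space-time.

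\textbf{Inductive step.} This proceeds as in \cite{Markfelder24}: write the $H_N$-family as a splitting of $\zeta^\ast$ into two points along a $\Lambda$-direction, one of which is the barycenter of an $H_{N-1}$-family. We apply the $N=2$ construction, then the induction hypothesis inside the resulting sub-polytopes (each a suitable union), with small parameters, and sum the results. The disjointness of the closures and item~\ref{item:pert-prop-step1-d} are inherited as in the reference, with the factor $\tfrac12$ coming from choosing $k$ large.

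\textbf{Main obstacle.} The slice estimate in item~\ref{item:pert-prop-step1-e} is the main obstacle. On a slice $\{t=s\}$, the set $\Gamma^\ast\cap\{t=s\}$ splits into $\bigcup_i\Gamma_i\cap\{t=s\}$ plus a remainder consisting of the cutoff and transition layers. On the remainder $D$ is bounded, so the integral there is at most $\sup|D|$ times the remainder's $n$-dimensional measure. We must therefore make this measure smaller than $\ep_1/\sup|D|$ uniformly in $s\in[\Sigma^\ast_0,\Sigma^\ast_1]$. This is exactly where the restriction to $[\Sigma_0^\ast,\Sigma_1^\ast]$ enters: if $\Gamma^\ast$ has a horizontal bottom or top face, then slices near that face lie entirely inside the cutoff layer, so we stay $\ep_2$ away and choose the layer thickness below $\ep_2$. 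If no such face exists, the slice measure of $\Gamma^\ast$ itself tends to zero near the extreme times, and for polytopes the layer measure is controlled uniformly by continuity of the slice volumes. I would handle this by choosing the cutoff layer width $\delta\ll\ep_2$ and then $k\gg\delta^{-1}$, and checking the uniformity via the explicit polytope geometry.
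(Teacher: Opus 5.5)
Your outline follows the paper's proof. The paper also runs the proof of \cite[Lemma~2.21]{Markfelder24} unchanged and adds only two remarks: $\eta_x\neq 0$ together with large $k$ gives the third bullet of item (c), and (e) follows by adjusting the inner set. However, two of your specific justifications fail as written.

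\emph{Suitability of the $\Gamma_i$.} Non-horizontal strips do not by themselves make every slice of the inner polytope meet strips of both types. Near a vertex $v$ of the inner polytope at its extreme time, the slices shrink linearly while being carried across the strips. In the variable $k(t-t_v)$ the picture is independent of $k$ up to a phase. Suppose, e.g., that $(t,x)\cdot\eta$ increases along every edge leaving $v$ and the vertex cone is narrow. Then there are times at which the whole slice lies in one gap between type-$i$ strips, although earlier slices do meet type-$i$ strips. So the full intersection of the inner polytope with the type-$i$ strips has components isolated in time, for every large $k$. The fix is as follows. Define $\Gamma_i$ using the inner set cut by $\{t=\sigma^\ast_0+\delta\}$ and $\{t=\sigma^\ast_1-\delta\}$. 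By compactness (slice widths of each constituent polytope are concave and positive on its time interval), on $[\sigma^\ast_0+\delta,\sigma^\ast_1-\delta]$ some constituent slice has width in direction $\eta_x$ bounded below, so large $k$ makes every such slice meet all strip types. The discarded part goes into the remainder in (e). This is harmless, since near $\sigma^\ast_j$ either the slices of $\Gamma^\ast$ have small measure or those times are excluded by $\ep_2$.

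\emph{Item (e).} Your treatment only considers horizontal facets at $\sigma^\ast_0$ and $\sigma^\ast_1$. But you build $\widetilde\zeta_k$ polytope by polytope, and Defn.~\ref{defn:suitable-union} allows a constituent polytope $P$ to have a horizontal facet $F\subset\partial\Gamma^\ast$ at a time $t_F\in(\Sigma^\ast_0,\Sigma^\ast_1)$. Fix $k$ and take $s$ close enough to $t_F$ on the side of $P$. Since ${\rm supp}\,\widetilde\zeta_k$ and all $\Gamma_i$ are compactly contained in $\Gamma^\ast$, the whole slice $P\cap\{t=s\}$ lies outside both and carries the value $\zeta^\ast$. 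It therefore contributes about $|F|\,D(\mu^\ast,\zeta^\ast)>0$ to the left side of (e) and nothing to the right side. The remaining parts cannot compensate, because $D\geq 0$ on $(\sK_{\mu^\ast})^\co$. Hence (e) fails for $\ep_1<|F|\,D(\mu^\ast,\zeta^\ast)$, however thin your layers are; for such $\Gamma^\ast$ the statement as given cannot hold. You need the extra hypothesis that all horizontal facets lie in $\{t=\sigma^\ast_0\}\cup\{t=\sigma^\ast_1\}$. This covers the cube in Lemma~\ref{lemma:pert-prop-step2}. Your induction must then ensure that every set it feeds back into the construction keeps this property. That is why the cuts above must be made globally in time rather than polytope by polytope, and why the inner sets should be inner parallel bodies, which create no new facet normals.

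Two minor points. With $\opL_{\zeta_2-\zeta_1}$, the profile must take the values $\tau_1$ and $-\tau_2$ (on fractions $\tau_2$ and $\tau_1$), not $\tau_2$ and $-\tau_1$. Also, Defn.~\ref{defn:app-hn} naturally gives the induction order ``first the $H_{N-1}$ family with $\zeta_1,\zeta_2$ merged, then split the merged value inside its set''. It does not give a splitting of $\zeta^\ast$ into a single point and an $H_{N-1}$ barycenter.
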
 

\begin{proof} 
The only differences between Lemma~\ref{lemma:pert-prop-step1} and the corresponding \cite[Lemma~2.21]{Markfelder24} lie in items~\ref{item:pert-prop-step1-c} (in particular the third bullet point therein) and \ref{item:pert-prop-step1-e}. 

The proof of Lemma~\ref{lemma:pert-prop-step1} works exactly as \cite[Proof of Lemma~2.21]{Markfelder24}. So we only show here that the aforementioned differences hold. 

First we observe that each $\Gamma_i$ is indeed a suitable union of convex polytopes due to the fact that $\eta$ is not parallel to the time-axis, see Defn.~\ref{defn:wave-cone} and also Rem.~\ref{rem:lambdatilde-vs-lambda}, and because $k$ can be chosen suitably large. The former excludes the situation shown in Fig.~\ref{fig:1a}, the latter ensures that the situation depicted in Fig.~\ref{fig:1b} does not occur. Hence the third bullet point in item~\ref{item:pert-prop-step1-c} of Lemma~\ref{lemma:pert-prop-step1} holds. 

Secondly, we find that item~\ref{item:pert-prop-step1-e} is valid by adjusting the set $\widetilde{\Gamma}$ (which appears in \cite[Proof of Lemma~2.21]{Markfelder24}) to $\ep_1,\ep_2$ if necessary.
\end{proof}

\subsubsection{Step 2: perturbations around fixed vectors $\zeta^\ast$ and $\mu^\ast$} 

Similar to \cite[Lemma~2.22]{Markfelder24} we next combine Lemma~\ref{lemma:pert-prop-step1} with Prop.~\ref{prop:geom-property-U}.

\begin{lemma}[{Cf.~\cite[Lemma~2.22]{Markfelder24}}] \label{lemma:pert-prop-step2} 
	Let $\Gamma^\ast\subset \R^{1+n}$ be an open cube with edge length $h>0$. Let furthermore $\ep_1,\ep_2>0$, $\mu^\ast\in \Theta$ and $\zeta^\ast\in \sU_{\mu^\ast}$. Then there exists a sequence of oscillations $(\widetilde{\zeta}_k)_{k\in \N} \subset \Cc(\Gamma^\ast;\R^M)$ with the following properties:
	\begin{enumerate}
		\item \label{item:pert-prop-step2-a} The sequence $(\widetilde{\zeta}_k)_{k\in \N}$ converges to $0$ with respect to $d$, i.e.~$\widetilde{\zeta}_k \mathop{\to}\limits^d 0$ as $k\to \infty$. 
	\end{enumerate}	
	For each fixed $k\in \N$ the following statements hold:
	\begin{enumerate} \setcounter{enumi}{1}
		\item \label{item:pert-prop-step2-b} The linear system of PDEs $\ \partial_t a(\widetilde{\zeta}_k) + \Div A(\widetilde{\zeta}_k) = 0\ $ holds pointwise for all $(t,x)\in \Gamma^\ast$;
		
		\item \label{item:pert-prop-step2-c} The sum $\zeta^\ast + \widetilde{\zeta}_k$ takes values in $\sU_{\mu^\ast}$ and its distance to $\partial\sU_{\mu^\ast}$ can be estimated by the distance of the $\zeta_i$ to $\partial\sU_{\mu^\ast}$, more precisely 
		\begin{align*}
			\zeta^\ast + \widetilde{\zeta}_k(t,x) &\in \sU_{\mu^\ast} & &\text{ for all }(t,x)\in \Gamma^\ast, \\ 
			\dist (\zeta^\ast + \widetilde{\zeta}_k(t,x), \partial\sU_{\mu^\ast}) &\geq \frac{1}{4c} \,\delta_D\Big(\frac{\ep_1}{2 h^n}\Big) \,\dist(\zeta^\ast, \partial\sU_{\mu^\ast}) & & \text{ for all }(t,x)\in \Gamma^\ast, 
		\end{align*} 
		where $c$ is the bound from Lemma~\ref{lemma:uniform-boundedness-KU} and $\delta_D$ is the map defined in Defn.~\ref{defn:delta}; 
		
		\item \label{item:pert-prop-step2-d} The estimate 
		$$ 
			\int_{\Gamma^\ast \cap \{t=s\}} D\big(\mu^\ast,\zeta^\ast + \widetilde{\zeta}_k(s,x)\big) \dx < \ep_1 
		$$
        holds for all $s\in [\sigma^\ast_0 + \ep_2 , \sigma^\ast_1 - \ep_2]$, where $\sigma^\ast_0,\sigma^\ast_1$ are defined in \eqref{eq:defn-sigma-ast}. 
	\end{enumerate}
\end{lemma}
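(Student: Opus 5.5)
The plan is to combine Prop.~\ref{prop:geom-property-U} with Lemma~\ref{lemma:pert-prop-step1}, exactly as in \cite[Lemma~2.22]{Markfelder24}.

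\textbf{Step 1 (geometric decomposition).} Apply Prop.~\ref{prop:geom-property-U} with $\mu=\mu^\ast$, $\zeta=\zeta^\ast$ and tolerance $\ep':=\frac{\ep_1}{2h^n}$. This yields $N\ge 2$ and a family $\{(\tau_i,\zeta_i)\}_{i=1,\dots,N}$ with the following properties:
\begin{itemize}
\item it satisfies the $H_N$-condition;
\item its barycenter is $\zeta^\ast$;
\item $\zeta_i\in\sU_{\mu^\ast}$ for all $i$;
\item $\dist(\zeta_i,\partial\sU_{\mu^\ast})\ge \frac{\delta_D(\ep')}{2c}\dist(\zeta^\ast,\partial\sU_{\mu^\ast})$;
\item $D(\mu^\ast,\zeta_i)\le \ep'$.
\end{itemize}

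\textbf{Step 2 (oscillations).} The open cube $\Gamma^\ast$ is a single open convex polytope whose time slices are nonempty for all $s\in(\sigma^\ast_0,\sigma^\ast_1)$, so it is a suitable union of convex polytopes in the sense of Defn.~\ref{defn:suitable-union}. We may therefore apply Lemma~\ref{lemma:pert-prop-step1} to $\Gamma^\ast$ with parameters $\frac{\ep_1}{2}$ and $\ep_2$, and with the family from Step 1. This produces $(\widetilde{\zeta}_k)_k\subset\Cc(\Gamma^\ast;\R^M)$, and items \ref{item:pert-prop-step2-a} and \ref{item:pert-prop-step2-b} follow directly from items \ref{item:pert-prop-step1-a} and \ref{item:pert-prop-step1-b} of Lemma~\ref{lemma:pert-prop-step1}.

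\textbf{Step 3 (distance to the boundary).} Item \ref{item:pert-prop-step1-d} of Lemma~\ref{lemma:pert-prop-step1} gives $\zeta^\ast+\widetilde{\zeta}_k\in\sU_{\mu^\ast}$. It also gives
$$
\dist(\zeta^\ast+\widetilde{\zeta}_k,\partial\sU_{\mu^\ast})\ge\tfrac12\min_i\dist(\zeta_i,\partial\sU_{\mu^\ast}).
$$
Combining this with the lower bound from Step 1 yields the factor $\frac{1}{4c}\delta_D\big(\frac{\ep_1}{2h^n}\big)$, which is item \ref{item:pert-prop-step2-c}.

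\textbf{Step 4 (the integral estimate).} The cube has faces $\{t=\sigma^\ast_0\}$ and $\{t=\sigma^\ast_1\}$, so in Lemma~\ref{lemma:pert-prop-step1} we have $\Sigma^\ast_0=\sigma^\ast_0+\ep_2$ and $\Sigma^\ast_1=\sigma^\ast_1-\ep_2$. Hence, for $s$ in this interval, item \ref{item:pert-prop-step1-e} bounds the spatial integral over $\Gamma^\ast\cap\{t=s\}$ by
$$
\frac{\ep_1}{2}+\sum_i\int_{\Gamma_i\cap\{t=s\}}D(\mu^\ast,\zeta_i)\dx .
$$
The sets $\Gamma_i\cap\{t=s\}$ are pairwise disjoint subsets of the spatial slice of the cube. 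That slice has $n$-dimensional measure $h^n$, and by Step 1 each integrand is at most $\frac{\ep_1}{2h^n}$. So the sum is at most $\frac{\ep_1}{2}$, and the total is $<\ep_1$, which is item \ref{item:pert-prop-step2-d}.

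The one point needing care is that $D(\mu^\ast,\zeta_i)$ might be negative. This causes no difficulty: $\zeta_i\in\sU_{\mu^\ast}$ gives $D(\mu^\ast,\zeta_i)>0$ by Lemma~\ref{lemma:suitable-distance-map}, so the upper bound is what matters.

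The main obstacle is confirming that the hypotheses of Lemma~\ref{lemma:pert-prop-step1} transfer: in particular, that a cube qualifies as a suitable union and that the $\Sigma^\ast$ endpoints reduce to $\sigma^\ast_{0,1}\pm\ep_2$. Both are checked in Steps 2 and 4 above, so the argument is bookkeeping on top of the earlier results.
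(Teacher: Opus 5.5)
Your proposal is correct and follows the paper's proof: apply Prop.~\ref{prop:geom-property-U} with tolerance $\frac{\ep_1}{2h^n}$, then apply Lemma~\ref{lemma:pert-prop-step1} with $\frac{\ep_1}{2}$ and $\ep_2$, and read off the four claims. You actually check those claims in more detail than the paper, which defers to \cite{Markfelder24}; the only implicit assumption is that $\Gamma^\ast$ is axis-aligned in space-time, which the paper also assumes and which gives slices of measure $h^n$ together with $\Sigma^\ast_0=\sigma^\ast_0+\ep_2$ and $\Sigma^\ast_1=\sigma^\ast_1-\ep_2$.
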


\begin{proof} 
We first apply Prop.~\ref{prop:geom-property-U} where $\mu^\ast$, $\zeta^\ast$ and $\frac{\ep_1}{2h^n}$ play the role of $\mu$, $\zeta$ and $\ep$ in Prop.~\ref{prop:geom-property-U}, respectively. Next we apply Lemma~\ref{lemma:pert-prop-step1}, where $\half\ep_1$ and $\ep_2$ play the role of $\ep_1$ and $\ep_2$ therein. Then the claims can be readily checked, see \cite[Proof of Lemma~2.22]{Markfelder24} for more details.
\end{proof}

\subsubsection{Step 3: conclusion} 

With the help of Lemma~\ref{lemma:pert-prop-step2} we are now ready to prove the perturbation property (Prop.~\ref{prop:pert-prop}). Note however that since our functionals $I_{\sigma_0,\sigma_1}$ differ from the ones used in \cite{Markfelder24}, a slightly different approach in proving Prop.~\ref{prop:pert-prop} is required. In particular the grid needs to be shifted (see below) as first done in \cite{DelSze10}.

\begin{proof}[Proof of the perturbation property (Prop.~\ref{prop:pert-prop})]
Let us first note that we may assume that $\sigma_0<\sigma_1$ as the case $\sigma_0=\sigma_1$ is trivial. 

We begin by fixing $\ov{D}>0$ such that $\ov{D}\geq \max\limits_{(\mu',\zeta')\in \Theta \times \closure{B}_c(0)} D(\mu',\zeta')$. Note that this is possible since $D$ is bounded on $\Theta \times \closure{B}_c(0)$.

Next we introduce a spatial grid in $\R^n$ with size $h>0$: Consider open cubes $Q_{\alpha_x,h}\subset\R^n$ for $\alpha_x\in \Z^n$ with center $h\alpha_x$ and edge length $h$, i.e.
$$
    Q_{\alpha_x,h}:= h\alpha_x + \left(-\half h, \half h\right)^n .
$$
Fix $\ov{h}>0$ such that the volume of the frame $\Omega_\tf:= \Omega \setminus \widetilde{\Omega}$ satisfies 
\begin{equation} \label{eq:step2-est1}
    |\Omega_\tf| \ov{D} \leq \frac{\ep}{4},
\end{equation}
where 
$$
    \widetilde{\Omega}:= \interior{\left( \bigcup_{\alpha_x\in \Z^n \text{ with }Q_{\alpha_x,\ov{h}} \subsetcomp \Omega} \closure{Q_{\alpha_x,\ov{h}}} \right) }.
$$

As explained in \cite[Proof of Prop.~2.18]{Markfelder24}, there exists $\beta>0$ such that 
\begin{equation*} 
	\dist\big( \zeta(t,x), \partial\sU_{\theta(t,x)}\big) > \beta \qquad \text{ for all } (t,x)\in (\ov{\sigma}_0,\ov{\sigma}_1)\times \widetilde{\Omega}. 
\end{equation*}
We also fix a ``shifting parameter'' $S\in \N$ such that 
\begin{equation} \label{eq:step2-est2}
    \frac{|\Omega| \ov{D}}{S} \leq \frac{\ep}{4}. 
\end{equation}

We notice that the functions $(t,x)\mapsto \zeta(t,x)$ and $(t,x)\mapsto \theta(t,x)$ are uniformly continuous on $[\ov{\sigma}_0,\ov{\sigma}_1]\times \closure{\Omega}$ as the latter is compact. Together with the uniform continuity of the maps 
$$
	(\zeta,\mu)\mapsto D(\mu,\zeta) \qquad \text{ and } \qquad (\zeta,\mu)\mapsto \dist(\zeta,\partial\sU_\mu),
$$
see Lemma~\ref{lemma:properties-dist}, we infer that there exists $q\in \N$ such that for $h:= \frac{\ov{h}}{q\cdot S}$ the following holds: If $x_1$ and $x_2$ lie in the same cube $Q_{\alpha_x,h}$ and $|t_1-t_2|<h$, then 
\begin{align}
	\Big| D\big(\theta(t_1,x_1), \zeta(t_1,x_1) + \widehat{\zeta} \big) - D\big(\theta(t_2,x_2) , \zeta(t_2,x_2) + \widehat{\zeta} \big) \Big| &< \frac{\ep}{4|\widetilde{\Omega}|}, \label{eq:unif-cont-K} \\ 
	\Big| \dist\big(\zeta(t_1,x_1) + \widehat{\zeta} , \partial\sU_{\theta(t_1,x_1)} \big) - \dist\big(\zeta(t_2,x_2) + \widehat{\zeta} , \partial\sU_{\theta(t_2,x_2)} \big) \Big| &< \frac{1}{8c} \delta_D \Big(\frac{\ep}{8 |\widetilde{\Omega}| }\Big)\beta , \label{eq:unif-cont-boundaryU} 
\end{align}
for any $\widehat{\zeta}\in \R^M$. In addition to that we may assume that 
\begin{equation} \label{eq:003}
    3h< \ov{\sigma}_1 - \sigma_1 \qquad \text{ and } \qquad 3h< \sigma_0 - \ov{\sigma}_0
\end{equation}
by increasing $q$ if necessary. 

We set 
$$
	\mathcal{J}_x := \left\{ \alpha_x\in \Z^n \,\Big|\, Q_{\alpha_x,h} \subset \widetilde{\Omega} \right\}.
$$
and observe that $\mathcal{J}_x$ contains only finitely many points $\alpha_x$. Moreover we have 
$$
	\widetilde{\Omega} = \interior{ \left(\bigcup_{\alpha_x\in \mathcal{J}_x} \closure{Q_{\alpha_x,h}} \right)}
$$
because $\frac{\ov{h}}{h}\in \N$, and consequently
\begin{equation} \label{eq:aux06}
	|\widetilde{\Omega}| = |\mathcal{J}_x| \cdot |Q_{\alpha_x,h}| = |\mathcal{J}_x| h^n .
\end{equation} 

Next we extend our grid with respect to time. To this end, we set 
$$
    \mathcal{J}_t:= \left\{ 0,1,...,\left\lceil \frac{\sigma_1-\sigma_0}{h} + 1\right\rceil \right\} ,
$$
and introduce the cubes in space-time\footnote{Here we use the usual notation $|\alpha|= |\alpha_1| + ... + |\alpha_n|$ for $\alpha=(\alpha_1,...,\alpha_n)\in \Z^n$.} 
\begin{align*}
    C_{\alpha_t,\alpha_x,h} &= \left( \sigma_0 + h \alpha_t - h \frac{|\alpha_x|\mod S}{S} + \left(-\half h, \half h \right) \right) \times Q_{\alpha_x,h} \\
    &=\left( \sigma_0 + h \alpha_t - h \frac{|\alpha_x|\mod S}{S} , h \alpha_x \right) + \left(-\half h, \half h \right)^{1+n} . 
\end{align*} 
One can readily check using \eqref{eq:003} that 
$$
    [\sigma_0,\sigma_1] \subset \interior{\left(\bigcup_{\alpha_t\in \mathcal{J}_t} \left( \sigma_0 + h \alpha_t - h \frac{|\alpha_x|\mod S}{S} + \left[-\half h, \half h \right] \right) \right)}\subset (\ov{\sigma}_0,\ov{\sigma}_1) \qquad \text{ for all }\alpha_x\in \mathcal{J}_x,
$$ 
and hence 
\begin{equation} \label{eq:cubes-in-sets}
    [\sigma_0,\sigma_1] \times \widetilde{\Omega} \subset \interior{\left( \bigcup_{(\alpha_t,\alpha_x)\in \mathcal{J}_t \times \mathcal{J}_x} \closure{C_{\alpha_t,\alpha_x,h}}\right)} \subset (\ov{\sigma}_0,\ov{\sigma}_1) \times \widetilde{\Omega}.
\end{equation} 
Let us also introduce the abbreviation 
$$
    ( t_{\alpha_t,\alpha_x,h}, x_{\alpha_t,\alpha_x,h} ) := \left( \sigma_0 + h \alpha_t - h \frac{|\alpha_x|\mod S}{S} , h \alpha_x \right)
$$
for the centers of the cubes $C_{\alpha_t,\alpha_x,h}$. 

Now we are ready to apply Lemma~\ref{lemma:pert-prop-step2} for each $(\alpha_t,\alpha_x)\in \mathcal{J}_t\times \mathcal{J}_x$ where we will choose
\begin{align*}
    &\Gamma^\ast = C_{\alpha_t,\alpha_x,h}, \quad \ep_1 = \frac{\ep}{4|\mathcal{J}_x|}, \quad \ep_2 = \frac{h}{4S}, \quad \\
    &\mu^\ast = \theta(t_{\alpha_t,\alpha_x,h}, x_{\alpha_t,\alpha_x,h})\quad\text{ and } \quad \zeta^\ast = \zeta(t_{\alpha_t,\alpha_x,h}, x_{\alpha_t,\alpha_x,h}).
\end{align*} 

This yields for each $(\alpha_t,\alpha_x)\in \mathcal{J}_t\times \mathcal{J}_x$ a sequence of oscillations 
$$
    (\widetilde{\zeta}_{\alpha_t,\alpha_x,k})_{k\in \N} \subset \Cc(C_{\alpha_t,\alpha_x,h};\R^M)
$$ 
with the properties stated in Lemma~\ref{lemma:pert-prop-step2}.

We define 
$$
	\zeta_{\pert} := \zeta + \sum_{(\alpha_t,\alpha_x)\in \mathcal{J}_t\times \mathcal{J}_x} \widetilde{\zeta}_{\alpha_t,\alpha_x,k},
$$
where $k\in\N$ is chosen sufficiently large, such that \eqref{eq:pert-prop1} holds. Here we have used property~\ref{item:pert-prop-step2-a} in Lemma~\ref{lemma:pert-prop-step2}.

It remains to prove that $\zeta_\pert\in X_0$ and that \eqref{eq:pert-prop2}, \eqref{eq:pert-prop3} are satisfied. Let us begin with the former. From Lemma~\ref{lemma:pert-prop-step2}~\ref{item:pert-prop-step2-b} and \eqref{eq:cubes-in-sets} we observe that $\zeta_\pert$ fulfills \eqref{eq:defn-X0-eq}, \eqref{eq:defn-X0-boundary-space}-\eqref{eq:defn-X0-boundary-time-end}, and also \eqref{eq:pert-prop3}. So we only have to check \eqref{eq:defn-X0-subs}. Since this can be done exactly as in \cite[Proof of Prop.~2.18]{Markfelder24}, we skip the details here.

Finally we show \eqref{eq:pert-prop2}. To this end let $s\in [\sigma_0,\sigma_1]$. We estimate
\begin{align}
    &\int_{\Omega} D\big(\theta(s,x),\zeta_\pert(s,x)\big) \dx \notag\\
    &= \int_{\Omega_\tf} D\big(\theta(s,x),\zeta_\pert(s,x)\big) \dx + \sum_{(\alpha_t,\alpha_x)\in \mathcal{J}_t\times \mathcal{J}_x} \int_{C_{\alpha_t,\alpha_x,h} \cap \{t=s\}} D\big(\theta(s,x),\zeta_\pert(s,x)\big) \dx \notag \\
    &\leq  \int_{\Omega_\tf} D\big(\theta(s,x),\zeta_\pert(s,x)\big) \dx \label{eq:004}\\
    &\qquad + \sum_{(\alpha_t,\alpha_x)\in \mathcal{J}_t\times \mathcal{J}_x} \int_{C_{\alpha_t,\alpha_x,h} \cap \{t=s\}} D\big(\theta( t_{\alpha_t,\alpha_x,h}, x_{\alpha_t,\alpha_x,h} ),\zeta( t_{\alpha_t,\alpha_x,h}, x_{\alpha_t,\alpha_x,h} ) + \widetilde{\zeta}_{\alpha_t,\alpha_x,h}(s,x)\big) \dx \notag \\
    &\qquad + \sum_{(\alpha_t,\alpha_x)\in \mathcal{J}_t\times \mathcal{J}_x} \int_{C_{\alpha_t,\alpha_x,h} \cap \{t=s\}} \Big| D\big(\theta(s,x),\zeta(s,x) + \widetilde{\zeta}_{\alpha_t,\alpha_x,h} (s,x)\big) - \notag \\
    &\qquad \qquad \qquad \qquad \quad \qquad \qquad D\big(\theta( t_{\alpha_t,\alpha_x,h}, x_{\alpha_t,\alpha_x,h} ),\zeta( t_{\alpha_t,\alpha_x,h}, x_{\alpha_t,\alpha_x,h} ) + \widetilde{\zeta}_{\alpha_t,\alpha_x,h}(s,x)\big) \Big| \dx. \notag
\end{align}
We consider each summand in \eqref{eq:004} separately. 

Using \eqref{eq:step2-est1} we find that 
\begin{equation} \label{eq:004-est1}
    \int_{\Omega_\tf} D\big(\theta(s,x),\zeta_\pert(s,x)\big) \dx \leq \ov{D} |\Omega_\tf| \leq \frac{\ep}{4}.
\end{equation}

By virtue of \eqref{eq:unif-cont-K} we have 
\begin{align}
    &\sum_{(\alpha_t,\alpha_x)\in \mathcal{J}_t\times \mathcal{J}_x} \int_{C_{\alpha_t,\alpha_x,h} \cap \{t=s\}} \Big| D\big(\theta(s,x),\zeta(s,x) + \widetilde{\zeta}_{\alpha_t,\alpha_x,h} (s,x)\big) - \notag \\
    &\qquad \qquad \qquad \qquad \qquad D\big(\theta( t_{\alpha_t,\alpha_x,h}, x_{\alpha_t,\alpha_x,h} ),\zeta( t_{\alpha_t,\alpha_x,h}, x_{\alpha_t,\alpha_x,h} ) + \widetilde{\zeta}_{\alpha_t,\alpha_x,h}(s,x)\big) \Big| \dx \notag \\
    &\leq \frac{\ep}{4|\widetilde{\Omega}|} \sum_{(\alpha_t,\alpha_x)\in \mathcal{J}_t\times \mathcal{J}_x} \int_{C_{\alpha_t,\alpha_x,h} \cap \{t=s\}} 1 \dx \ \leq \ \frac{\ep}{4}, \label{eq:004-est2}
\end{align}
where we have used that 
\begin{equation} \label{eq:005}
    \sum_{(\alpha_t,\alpha_x)\in \mathcal{J}_t\times \mathcal{J}_x} \int_{C_{\alpha_t,\alpha_x,h} \cap \{t=s\}} 1 \dx \leq |\widetilde{\Omega}|
\end{equation}
which results from \eqref{eq:aux06} and the fact that for each $\alpha_x\in\mathcal{J}_x$ there exists at most one $\alpha_t\in \mathcal{J}_t$ such that $C_{\alpha_t,\alpha_x,h} \cap \{t=s\}\neq \emptyset$. 

To estimate the remaining term, we first observe that by construction there are at most\footnote{Note that $\frac{|\mathcal{J}_x|}{S}$ is indeed a natural number due to the above choice of $h$.} $\frac{|\mathcal{J}_x|}{S}$ many pairs $(\alpha_t,\alpha_x)\in \mathcal{J}_t \times \mathcal{J}_x$ for the fixed time $s$ for which the estimate in Lemma~\ref{lemma:pert-prop-step2}~\ref{item:pert-prop-step2-d} does not hold. For the other pairs $(\alpha_t,\alpha_x)$ we can apply the latter estimate. This argumentation leads to 
\begin{align}
    &\sum_{(\alpha_t,\alpha_x)\in \mathcal{J}_t\times \mathcal{J}_x} \int_{C_{\alpha_t,\alpha_x,h} \cap \{t=s\}} D\big(\theta( t_{\alpha_t,\alpha_x,h}, x_{\alpha_t,\alpha_x,h} ),\zeta( t_{\alpha_t,\alpha_x,h}, x_{\alpha_t,\alpha_x,h} ) + \widetilde{\zeta}_{\alpha_t,\alpha_x,h}(s,x)\big) \dx \notag\\
    &\leq \frac{|\mathcal{J}_x|}{S} \ov{D} h^n + |\mathcal{J}_x| \frac{\ep}{4|\mathcal{J}_x|} \ \leq \ \frac{\ep}{2}, \label{eq:004-est3}
\end{align}
where we have used a similar argument as in \eqref{eq:005}, and \eqref{eq:aux06} and \eqref{eq:step2-est2}.

Plugging \eqref{eq:004-est1}, \eqref{eq:004-est2} and \eqref{eq:004-est3} into \eqref{eq:004} we find \eqref{eq:pert-prop2} as desired. 
\end{proof}

\subsection{Construction of a subsolution with initial data taking values in $\sK_\theta$} \label{subsec:general-idiK}

This section is devoted to the construction of a subsolution which takes values in $\sK_\theta$ at initial time. This can be viewed as a generalization of \cite[Section~5]{DelSze10} to the general system \eqref{eq:lin-eq}. To proceed, we have to require that the suitable distance map $D$ has the following additional property:
\begin{enumerate} \setcounter{enumi}{5}
    \item \label{item:suitable-D-convergence} If $\widehat{\theta}\in C^1(\closure{\Omega};\Theta)$, $\widehat{\zeta}_k,\widehat{\zeta}\in L^r(\Omega;\closure{B}_c(0))$ for $k\in \N$, and $\widehat{\zeta}_k\to \widehat{\zeta}$ strongly in $L^r$, then 
    $$
        \lim_{k\to \infty} \int_\Omega D\big(\widehat{\theta}(x), \widehat{\zeta}_k(x) \big) \dx = \int_\Omega D\big(\widehat{\theta}(x), \widehat{\zeta}(x) \big) \dx.
    $$
\end{enumerate}

The following theorem modifies a given subsolution $\ov{\zeta}$, which does not need to take values in $\sK_\theta$ at initial time, into a new subsolution $\zeta$, which takes values in $\sK_\theta$ at some time $t=T_0$. Here $T_0\in (0,T)$ may be chosen arbitrarily and will play the role of the new initial time. Thm.~\ref{thm:idiK} will be used to prove the results of type Thm.~\ref{thm:general-wilddata} below.

\begin{thm} \label{thm:idiK} 
    Let $T\in (0,\infty)\cup\{\infty\}$, $\Omega\subset\R^n$ bounded, $r\in (1,\infty)$, $a,b_1,b_2:\R^M\to \R^m$ and $A,B:\R^M\to \R^{m\times n}$ linear, $\theta\in \Cb([0,T]\times \closure{\Omega};\R^m)\cap C^1([0,T]\times \closure{\Omega};\R^m)$, $(\sK_{\mu})_{\mu\in \Theta}$, $D\in C(\Theta\times \ov{B}_c(0);\R)$ and $\ov{\zeta}\in \Cweak([0,T];L^r(\Omega;\R^M)) \cap C^1((0,T)\times \closure{\Omega};\R^M)$ be given such that the assumptions of Thm.~\ref{thm:conv-int} hold. We assume that $D$ satisfies the additional property \ref{item:suitable-D-convergence} above. Let furthermore $T_0\in (0,T)$ and $\beta>0$. 
    
    Then there exists
    $$
        \zeta\in \Cweak([0,T];L^r(\Omega;\R^M)) \cap C^1(((0,T)\setminus\{T_0\})\times \closure{\Omega};\R^M)
    $$ 
    with the following properties: 
    \begin{enumerate}
		\item \label{item:idiK-pde} It satisfies the linear system of PDEs \eqref{eq:lin-eq}, i.e. 
		$$ 
            \partial_t a(\zeta) + \Div A (\zeta) = \partial_t b_1(\theta) + \Div B(\theta) + b_2(\theta) 
		$$
		holds pointwise for all $t\in (0,T)\setminus\{T_0\}$ and all $x \in \Omega$.
        
		\item \label{item:idiK-valuesinUK} It takes values in $\sU_{\theta(t,x)}$ if $t\neq T_0$ and in $\sK_{\theta(t,x)}$ for $t=T_0$, i.e. 
		\begin{align}
		    \zeta(t,x) &\in \sU_{\theta(t,x)}\qquad \text{ for all }t\in (0,T)\setminus\{T_0\} \text{ and all }x\in \Omega , \label{eq:idiK-valuesinU} \\
            \zeta(T_0,x) &\in \sK_{\theta(T_0,x)}\qquad \text{ for a.e. }x\in \Omega . \label{eq:idiK-valuesinK}
		\end{align} 

        \item \label{item:idiK-boundary+outside} On the boundary $\partial\Omega$ and outside the temporal interval $(T_0-\beta,T_0+\beta)$, it coincides with $\ov{\zeta}$, i.e. 
        \begin{align}
            \zeta(t,x) &= \ov{\zeta}(t,x) \qquad \text{ for all }t\in (0,T)\setminus\{T_0\} \text{ and all }x\in \partial\Omega , \label{eq:idiK-boundary} \\ 
            \zeta(t,x) &= \ov{\zeta}(t,x) \qquad \text{ for all }t\in (0,T)\setminus(T_0-\beta,T_0+\beta) \text{ and all }x\in \Omega. \label{eq:idiK-outside}
        \end{align} 
	\end{enumerate}
\end{thm}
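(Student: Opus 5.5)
We follow the strategy of \cite[Section~5]{DelSze10}, adapted to the abstract setting. We work in the closed set
$$
X_\beta := \overline{\left\{ \zeta\in X_0 \,\Big|\, \zeta = \ov{\zeta} \text{ on } \big((0,T)\setminus(T_0-\beta,T_0+\beta)\big)\times\closure{\Omega} \right\}}^{\,d},
$$
which is complete by Prop.~\ref{prop:X-metric-space}. The goal is to build a Cauchy sequence $(\zeta_k)\subset X_0$, each equal to $\ov{\zeta}$ outside $(T_0-\beta,T_0+\beta)$, whose values at time $T_0$ converge \emph{strongly} in $L^r$ to a function with $\int_\Omega D(\theta(T_0,x),\cdot)\dx=0$. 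We then glue the limit at time $T_0$ with the smooth pieces of the $\zeta_k$ away from $T_0$.

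\textbf{Iteration.} Set $\zeta_0=\ov{\zeta}$ and choose radii $\beta_k\searrow 0$ with $\beta_0<\beta$. Given $\zeta_k\in X_0$, apply Prop.~\ref{prop:pert-prop} with $\sigma_0=\sigma_1=T_0$, or rather a tiny interval $[T_0-\beta_{k+1},T_0+\beta_{k+1}]\subset(\ov{\sigma}_0,\ov{\sigma}_1)\subset(T_0-\beta_k,T_0+\beta_k)$. This produces $\zeta_{k+1}\in X_0$ with the following properties:
\begin{itemize}
\item $\zeta_{k+1}=\zeta_k$ outside $(T_0-\beta_k,T_0+\beta_k)$;
\item $\int_\Omega D(\theta(T_0,x),\zeta_{k+1}(T_0,x))\dx\le 2^{-k}$;
\item $d(\zeta_{k+1},\zeta_k)\le 2^{-k}$.
\end{itemize}
Weak closeness alone does not give strong convergence at $T_0$. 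So, as in \cite{DelSze10}, we additionally require a quantitative increase of a strictly convex quantity, namely $\|\zeta_{k+1}(T_0)\|_{L^2}^2\ge \|\zeta_k(T_0)\|_{L^2}^2 + c\big(\int D(\theta(T_0),\zeta_k(T_0))\big)^2$. We also choose the parameters $\ov{\ep}$ so small that the weak distance at $T_0$ controls $\|\zeta_{k+1}(T_0)-\zeta_k(T_0)\|_{L^2}^2\approx\|\zeta_{k+1}(T_0)\|^2-\|\zeta_k(T_0)\|^2$ up to $2^{-k}$.

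\textbf{Main obstacle.} Obtaining strong convergence of $\zeta_k(T_0)$ is the hardest step. In \cite{DelSze10} it rests on the specific relation between energy and distance to $\sK$. Here I would instead extract it from the structure of the perturbation in Lemma~\ref{lemma:pert-prop-step2}. The oscillations $\widetilde{\zeta}_k$ are weakly small but have $L^2$ size bounded below by the spread of the $H_N$-configuration of Prop.~\ref{prop:geom-property-U}. That spread is at least comparable to $\dist(\zeta^\ast,\partial \sU)$-type quantities and hence bounded below in terms of $D(\mu^\ast,\zeta^\ast)$, using the concavity of $D$ and its vanishing on $\sK_\mu$. If this lower bound is unavailable abstractly, the fallback is to choose $\ov{\ep}$ at each step so small that $\|\zeta_{k+1}(T_0)-\zeta_k(T_0)\|_{L^r}$ is controlled through the weak metric against a fixed finite family of test functions. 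Either way, the end result is that $(\zeta_k(T_0))$ is Cauchy in $L^r$.

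\textbf{Limit and conclusion.} Given strong convergence $\zeta_k(T_0)\to\zeta(T_0)$, property \ref{item:suitable-D-convergence} yields $\int_\Omega D(\theta(T_0),\zeta(T_0))\dx=0$. Since the values lie in $(\sK_{\theta(T_0,x)})^\co$ and $D\ge 0$ there (Lemma~\ref{lemma:suitable-distance-map}), we get $D=0$ a.e. Defn.~\ref{defn:suitable-distance-map}~\ref{item:suitable-D-inK} then gives \eqref{eq:idiK-valuesinK}.

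For $t\neq T_0$, the sequence $\zeta_k(t)$ is eventually constant because $\beta_k\to0$. The limit is therefore $C^1$ off $\{T_0\}$, solves \eqref{eq:lin-eq} pointwise there, takes values in $\sU$, and equals $\ov{\zeta}$ on $\partial\Omega$ and outside $(T_0-\beta,T_0+\beta)$. Finally, weak continuity in $L^r$ at $T_0$ follows from $d$-convergence, since the $d_1$-limit lies in $\Cweak([0,T];L^r)$.
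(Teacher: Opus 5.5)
Your skeleton coincides with the paper's: shrinking windows $\beta_k\searrow 0$, Prop.~\ref{prop:pert-prop} with $\ep=2^{-k}$, eventual constancy of $\zeta_k$ away from $T_0$, and closing with property~\ref{item:suitable-D-convergence} and Defn.~\ref{defn:suitable-distance-map}~\ref{item:suitable-D-inK}. But the step you yourself flag as the main obstacle, strong $L^r$ convergence of $\zeta_k(T_0,\cdot)$, is never established, and neither mechanism you offer delivers it.

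The increase $\|\zeta_{k+1}(T_0)\|_{L^2}^2\ge\|\zeta_k(T_0)\|_{L^2}^2+c(\int_\Omega D)^2$ is not supplied by Prop.~\ref{prop:pert-prop}, and you only sketch why it might hold. More importantly, it would not help. Combined with near-orthogonality of \emph{consecutive} increments, it only gives $\sum_k\|\zeta_{k+1}(T_0)-\zeta_k(T_0)\|_{L^2}^2<\infty$. That is compatible with a weakly convergent sequence whose norms stay strictly above the norm of its weak limit (think of a slow spiral through infinitely many orthogonal directions). In the argument of \cite{DelSze10} that the paper follows, such an increase is only used to close the energy gap \emph{after} strong convergence is known. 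Here $\I\le\ep$ already makes it redundant.

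Your fallback fails outright. Shrinking $\ov\ep$ only raises the frequency of the oscillations built in Prop.~\ref{prop:pert-prop}, not their amplitude. So closeness against finitely many fixed test functions can never make $\|\zeta_{k+1}(T_0)-\zeta_k(T_0)\|_{L^r}$ itself small.

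The missing idea is to test against objects adapted to the \emph{already constructed} iterates. The paper does this with mollifiers. Write $f_k:=\zeta_k(T_0,\cdot)$ and $f:=\zeta(T_0,\cdot)$. Before perturbing $\zeta_k$, fix $\widehat\ep_k<2^{-k}$ with $\|f_k-f_k\ast\phi_{\widehat\ep_k}\|_{L^r}\le 2^{-k}$. Then take $\ov\ep$ so small that $d(\zeta_{k+1},\zeta_k)\le\ov\ep$ forces $\|(f_{k+1}-f_k)\ast\phi_{\widehat\ep_\ell}\|_{L^r}\le 2^{-k}$ for all $\ell\le k$. This is possible because convolution with a fixed kernel turns weak convergence into strong convergence on bounded sets. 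Telescoping gives $\|(f_k-f)\ast\phi_{\widehat\ep_k}\|_{L^r}\le 2^{1-k}$, hence
\[
\|f_k-f\|_{L^r}\le\|f_k-f_k\ast\phi_{\widehat\ep_k}\|_{L^r}+\|(f_k-f)\ast\phi_{\widehat\ep_k}\|_{L^r}+\|f\ast\phi_{\widehat\ep_k}-f\|_{L^r}\longrightarrow 0 .
\]

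Alternatively, your orthogonality idea can be repaired by demanding $|\langle f_{j+1}-f_j,f_\ell\rangle_{L^2}|\le 2^{-j}$ for \emph{all} $\ell\le j$, not just $\ell=j$. Then $\|f_m-f_k\|_{L^2}^2\le\|f_m\|_{L^2}^2-\|f_k\|_{L^2}^2+2^{2-k}$ for $m\ge k$. The bounded norms are therefore almost increasing, hence convergent, so $(f_k)$ is Cauchy in $L^2$, and hence in $L^r$ by the uniform bound $|f_k|\le c$. Either way, no energy increase is needed.
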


The proof of Thm.~\ref{thm:idiK} follows \cite[Section~5]{DelSze10}.

\begin{proof}
We may assume without loss of generality that $0<T_0-\beta < T_0+\beta < T$ (by shrinking $\beta$ if necessary). We set $\beta_k:= \frac{\beta}{2^k}$ for $k\in \N$. 

Next we construct a sequence $(\zeta_k)_{k\in \N}$ in $X_0$ as follows. We define $\zeta_1:= \ov{\zeta}$. Having fixed $\zeta_1,...,\zeta_k$ for some $k\in \N$, we will now construct $\zeta_{k+1}$. To this end, we first define $0<\widehat{\ep}_k < \left( \half \right)^k$ such that 
\begin{equation} \label{eq:idiK-est-1}
    \| \zeta_k (T_0,\cdot) - \zeta_k (T_0,\cdot) \ast \phi_{\widehat{\ep}_k} \|_{L^r(\Omega;\R^M)} \leq \left( \half \right)^k , 
\end{equation}
where $\phi\in C^\infty(\R^n;\R^+_0)$ is the standard $n$-dimensional Friedrichs mollifier. To obtain $\zeta_{k+1}$, we apply Prop.~\ref{prop:pert-prop} with 
\begin{align*}
    \ep &= \left( \half \right)^k, \\
    \sigma_0 &= T_0 - \beta_{k+1}, & \sigma_1 &= T_0 + \beta_{k+1}, \\
    \ov{\sigma}_0 &= T_0 - \beta_{k}, & \ov{\sigma}_1 &= T_0 + \beta_{k},
\end{align*}
and $\ov{\ep}\leq \left( \half \right)^k$ small enough such that \eqref{eq:pert-prop1} implies 
\begin{equation} \label{eq:idiK-est-2}
    \left\| \big(\zeta_k (T_0,\cdot) - \zeta_{k+1} (T_0,\cdot)\big) \ast \phi_{\widehat{\ep}_\ell} \right\|_{L^r(\Omega;\R^M)} \leq \left( \half \right)^k \qquad \text{ for all }\ell\leq k.
\end{equation}
Hence, according to Prop.~\ref{prop:pert-prop} we have 
\begin{align}
    \zeta_{k+1} &\in X_0, \label{eq:idiK-constseq-X0} \\
    d( \zeta_{k+1}, \zeta_k) &\leq \left( \half \right)^k, \label{eq:idiK-constseq-d} \\
    \I_{T_0 - \beta_{k+1}, T_0 + \beta_{k+1}}(\zeta_{k+1} ) & \leq \left( \half \right)^k, \label{eq:idiK-constseq-I} \\
    \zeta_{k+1} &= \zeta_k \qquad \text{ for all }t\in (0,T_0 - \beta_{k})\cup (T_0 + \beta_{k}, T) \text{ and all } x\in \Omega. \label{eq:idiK-constseq-outside}
\end{align}
So we have constructed a sequence $(\zeta_k)_{k\in \N}$, which lies in $X_0$ according to \eqref{eq:idiK-constseq-X0}.

From \eqref{eq:idiK-constseq-d} we deduce that $(\zeta_k)_{k\in \N}$ is a Cauchy sequence in the complete metric space $(X,d)$ (cf.~Lemma~\ref{prop:X-metric-space}). Thus there exists $\zeta\in X$ with $\zeta_k \mathop{\to}\limits^d \zeta$. 

Let $t\in (0,T)\setminus\{T_0\}$ and $x \in \closure{\Omega}$. Due to \eqref{eq:idiK-constseq-outside}, there exists $\ov{k}\in \N$ such that $\zeta \equiv \zeta_{\ov{k}}$ on a small neighbourhood of $(t,x)$. Consequently $\zeta\in C^1(((0,T)\setminus\{T_0\})\times \closure{\Omega};\R^M)$, and item~\ref{item:idiK-pde} of Thm.~\ref{thm:idiK} and \eqref{eq:idiK-valuesinU} hold. Property \eqref{eq:idiK-outside} is shown in a similar fashion. In order to prove \eqref{eq:idiK-boundary}, we notice that $\zeta_{\ov{k}}\equiv\ov{\zeta}$ on $\partial\Omega$ due to the fact that $\zeta_{\ov{k}}\in X_0$. So we have shown item~\ref{item:idiK-boundary+outside} of Thm.~\ref{thm:idiK}.

It remains to prove \eqref{eq:idiK-valuesinK}. We obtain from \eqref{eq:idiK-est-2} that 
\begin{align*}
    \left\| \big(\zeta_k (T_0,\cdot) - \zeta (T_0,\cdot) \big) \ast \phi_{\widehat{\ep}_k} \right\|_{L^r(\Omega;\R^M)} &\leq \sum_{\ell=0}^\infty \left\| \big(\zeta_{k+\ell} (T_0,\cdot) - \zeta_{k+\ell+1} (T_0,\cdot) \big) \ast \phi_{\widehat{\ep}_k} \right\|_{L^r(\Omega;\R^M)} \\ 
    &\leq \sum_{\ell=0}^\infty \left( \half \right)^{k+\ell} \ = \ 2 \left( \half \right)^k.
\end{align*}
Combining the latter with \eqref{eq:idiK-est-1} and 
\begin{align*}
    \| \zeta_k (T_0,\cdot) - \zeta (T_0,\cdot) \|_{L^r(\Omega;\R^M)} &\leq \| \zeta_k (T_0,\cdot) - \zeta_k (T_0,\cdot) \ast \phi_{\widehat{\ep}_k} \|_{L^r(\Omega;\R^M)} \\
    & \qquad + \left\| \big(\zeta_k (T_0,\cdot) - \zeta (T_0,\cdot) \big) \ast \phi_{\widehat{\ep}_k} \right\|_{L^r(\Omega;\R^M)} \\
    & \qquad + \| \zeta (T_0,\cdot) \ast \phi_{\widehat{\ep}_k} - \zeta (T_0,\cdot) \|_{L^r(\Omega;\R^M)},
\end{align*}
we find that $\zeta_k (T_0,\cdot) \to \zeta (T_0,\cdot)$ strongly in $L^r$. Thus property \ref{item:suitable-D-convergence} of the suitable distance map $D$ shows that 
$$
    \lim_{k\to \infty} \int_\Omega D\big(\theta(T_0,x), \zeta_k(T_0,x) \big) \dx = \int_\Omega D\big(\theta(T_0,x), \zeta(T_0,x) \big) \dx.
$$

On the other hand, \eqref{eq:idiK-constseq-I} implies 
$$
    \int_\Omega D\big(\theta(T_0,x), \zeta_{k+1}(T_0,x) \big) \dx \leq \left( \half \right)^k, 
$$
and consequently\footnote{To be precise, we also use Lemma~\ref{lemma:suitable-distance-map}~\ref{item:L-suitable-D-geq0} here.}
$$
    \int_\Omega D\big(\theta(T_0,x), \zeta(T_0,x) \big) \dx = 0 .
$$
Arguing as in the proof of Lemma~\ref{lemma:properties-I}~\ref{item:prop-I-sol}, we deduce that 
$$
    D\big(\theta(T_0,x), \zeta(T_0,x) \big) = 0 \qquad \text{ for a.e. }x\in \Omega,
$$
and, by Defn.~\ref{defn:suitable-distance-map}~\ref{item:suitable-D-inK}, we conclude with \eqref{eq:idiK-valuesinK}.
\end{proof}

\section{Incompressible Euler equations} \label{sec:ex-incomp-euler}

As a first example, we consider the incompressible Euler equations \eqref{eq:incomp-euler-mass}, \eqref{eq:incomp-euler-mom}. The goal of this section is to show that the general framework established in Sect.~\ref{sec:general} covers the convex integration approach by \name{De~Lellis}-\name{Sz{\'e}kelyhidi}~\cite{DelSze09,DelSze10}. In other words we will show that the structural assumptions of Thm.~\ref{thm:conv-int} hold in the context of the incompressible Euler equations \eqref{eq:incomp-euler-mass}, \eqref{eq:incomp-euler-mom}, see Sect.~\ref{subsec:ex-incomp-euler-ass} below. Thus Thm.~\ref{thm:conv-int} turns into a version of \cite[Prop.~2]{DelSze10} for bounded domains\footnote{Again see Rem.~\ref{rem:unbounded-domains} for a way how to generalize Thm.~\ref{thm:conv-int} to unbounded domains.} $\Omega\subset \R^n$. 

This will then immediately lead to a proof of Thms.~\ref{thm:incomp-euler-exforalldata} and \ref{thm:incomp-euler-wilddata}, see Sect.~\ref{subsec:ex-incomp-euler-pf} below. In particular our framework established in Sect.~\ref{sec:general} is able to recover and extend the results which were found by \name{De~Lellis}-\name{Sz{\'e}kelyhidi}~\cite{DelSze10} and \name{Wiedemann}~\cite{Wiedemann11}. Other results in the literature which are based on \cite[Prop.~2]{DelSze10} can be recovered as well (e.g.~\cite{SzeWie12}, see Rem.~\ref{rem:incomp-euler-furtherresults}), but we will not present any details here. The main new result (that is not the literature) that we will prove in this section is the global existence and non-uniqueness of weak solutions for all $L^\infty (\mathbb{T}^3)$ data, which implies the ill-posedness of the incompressible Euler equations in that setting.

\subsection{Preliminaries and reformulation of the problem} \label{subsec:ex-incomp-euler-prel}

As originally carried out by \name{De~Lellis}-\name{Sz{\'e}kelyhidi}~\cite{DelSze09}, we rewrite the non-linear system \eqref{eq:incomp-euler-mass}, \eqref{eq:incomp-euler-mom} in terms of \name{Tartar}~\cite{Tartar79}, see also Rem.~\ref{rem:tartars-framework}. We introduce new unknowns $U$ and $q$ which take values in\footnote{Here $\symz{n}$ denotes the space of all symmetric $n\times n$-matrices which have zero trace.} $\symz{n}$ and $\R$, respectively. This way the non-linear system \eqref{eq:incomp-euler-mass}, \eqref{eq:incomp-euler-mom} turns into the linear one\footnote{For $n\in \N$, $\id_n$ denotes the $n\times n$ identity matrix.}
\begin{align}
    \Div v &= 0, \label{eq:lin1-incomp-euler-mass} \\
    \partial_t v + \Div (U+q\id_n) &= 0, \label{eq:lin1-incomp-euler-mom}
\end{align}
and we define the constitutive set
\begin{equation} \label{eq:const1-incomp-euler}
    \sK := \left\{ (v,p,U,q)\in \R^n \times \R \times \symz{n} \times \R \,\Big|\, v\otimes v - U + (p-q)\id_n = 0 \right\}.
\end{equation}
Instead of solving the non-linear system \eqref{eq:incomp-euler-mass}, \eqref{eq:incomp-euler-mom}, we can equivalently look for solutions of the linear system \eqref{eq:lin1-incomp-euler-mass}, \eqref{eq:lin1-incomp-euler-mom} which take values in $\sK$ as defined in \eqref{eq:const1-incomp-euler}.

As was done in the literature (see e.g.~\cite{DelSze10}), we treat $p$ and $q$ as parameters. With the notation of Sect.~\ref{sec:general} this means that we have $\theta=(p,q)$, and consequently $\zeta=(v,U)$. Therefore we rewrite \eqref{eq:lin1-incomp-euler-mass}, \eqref{eq:lin1-incomp-euler-mom} in the form \eqref{eq:lin-eq}, i.e. 
\begin{align}
    \Div v &= 0, \label{eq:lin-incomp-euler-mass} \\
    \partial_t v + \Div U &= - \Grad q. \label{eq:lin-incomp-euler-mom}
\end{align}
Moreover, the set in \eqref{eq:const1-incomp-euler} turns into the family of constitutive sets 
\begin{equation} \label{eq:const-incomp-euler}
    \sK_{p,q} := \left\{ (v,U)\in \R^n \times \symz{n} \,\Big|\, v\otimes v - U + (p-q)\id_n = 0 \right\}.
\end{equation}

Assuming that $\theta=(p,q)$ has the regularity required in Thm.~\ref{thm:conv-int}, we may suppose that $(p,q)$ takes values in a compact set $\Theta$. Moreover, without loss of generality we have $q\geq p$ on $\Theta$ as otherwise $\sK_{p,q}=\emptyset$ and thus existence of $\ov{\zeta}$ as required in Thm.~\ref{thm:conv-int} is impossible. 

In the setting we just introduced, the wave cone (see Defn.~\ref{defn:wave-cone}) reads 
\begin{align*}
    \Lambda = \bigg\{ (v,U)\in \R^n\times \symz{n}\, \Big|\, &\exists\eta=(\eta_t,\eta_x)\in \R^{1+n}\ \text{ with }\ \eta_x \neq 0 \ \text{ and } \ \\
    & \qquad \qquad \left(\begin{array}{cc} 0 & v^\trans \\ v & U \end{array}\right)\cdot\eta = 0 \quad\bigg\} .
\end{align*}

\subsection{Structural assumptions of Thm.~\ref{thm:conv-int}} \label{subsec:ex-incomp-euler-ass}

Let us next check the structural assumptions of Thm.~\ref{thm:conv-int}. We summarize them in the following lemma.

\begin{lemma} \label{lemma:ass-incomp-euler}
    \begin{enumerate}
        \item \label{item:ass1-incomp-euler} The family of constitutive sets $(\sK_{p,q})_{(p,q)\in \Theta}$ defined in \eqref{eq:const-incomp-euler} is suitable in the sense of Defn.~\ref{defn:suitable-K}. 

        \item \label{item:ass2-incomp-euler} Let $(v,U)\in\Lambda$. Then there exists a third order homogeneous differential operator
        $$
		      \opL_{(v,U)} : C^\infty(\R^{1+n}) \to C^\infty(\R^{1+n};\R^n\times \symz{n}) 
        $$
        which is suitable in the sense of Defn.~\ref{defn:suitable-operator}. 

        \item \label{item:ass3-incomp-euler} It holds that 
        $$
            (\sK_{p,q})^\Lambda = (\sK_{p,q})^\co \qquad \text{ for all } (p,q) \in \Theta.
        $$ 

        \item \label{item:ass4-incomp-euler} There exists a map $D\in C(\Theta\times \ov{B}_c(0);\R)$ which is a suitable distance map in the sense of Defn.~\ref{defn:suitable-distance-map}. Moreover if $r=2$, then $D$ satisfies the additional property~\ref{item:suitable-D-convergence} introduced in Sect.~\ref{subsec:general-idiK}.
    \end{enumerate}
\end{lemma}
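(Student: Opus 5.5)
We verify the four items essentially by reduction to De~Lellis--Sz\'ekelyhidi \cite{DelSze09,DelSze10}, with the only genuinely new work in \ref{item:ass4-incomp-euler}.

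For \ref{item:ass1-incomp-euler}, write $e(\mu):=q-p\geq 0$ for $\mu=(p,q)\in\Theta$. Taking traces in $v\otimes v-U+(p-q)\id_n=0$ gives $|v|^2=n\,e(\mu)$, so $\sK_{p,q}=\{(v,U): |v|^2=n e,\ U=v\otimes v-\frac{|v|^2}{n}\id_n\}$. This set is the image of the sphere of radius $\sqrt{ne}$ under a continuous map, hence compact. Uniform continuity in $\mu$ follows by radially rescaling $v$ and using compactness of $\Theta$. For the boundary property, the key tool is the function $e(v,U):=\frac n2\lambda_{\max}(v\otimes v-U)$, which is convex in $(v,U)$. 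On $\sK_\mu$ we have $e(v,U)=\frac n2 e(\mu)$, and on $(\sK_\mu)^\co$ we have $e(v,U)\le \frac n2 e(\mu)$. The interior of the hull is precisely where $e(v,U)<\frac n2e(\mu)$ (\cite[Lemma~3]{DelSze10}). Since points of $\sK_\mu$ attain equality, they are not interior points of the hull.

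For \ref{item:ass2-incomp-euler}, we take the third order operator from \cite[Prop.~4.1]{DelSze09}. First embed the system into $(1+n)\times(1+n)$ symmetric matrices $\begin{pmatrix}0&v^\trans\\ v&U\end{pmatrix}$, with divergence taken in space-time. Given $\eta$ with $\eta_x\neq0$, the operator is built from the potentials $E_{ij}\partial_{ij}$-type combinations as in \cite{DelSze09}. We only need to check that the spatial nondegeneracy $\eta_x\neq 0$ (which is required in Defn.~\ref{defn:wave-cone}) is compatible with that construction, which it is.

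Item \ref{item:ass3-incomp-euler} is \cite[Lemma~4]{DelSze09} (see also \cite[Lemma~3]{DelSze10}). There one shows that every extreme point of the hull lies in $\sK$, and that segments used in the Krein--Milman decomposition have $\Lambda$-directions $(a,a\otimes a)-(b,b\otimes b)$ with $|a|=|b|$. These are shown to lie in $\Lambda$ with some $\eta_x\neq0$, and since we only need the new restriction $\eta_x\neq0$ this requires re-checking the explicit choice of $\eta$. Because the hull at each $\mu$ is a rescaling of the one at $e=1$, uniformity in $\mu$ is automatic.

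For \ref{item:ass4-incomp-euler}, which we expect to be the main obstacle, we set
\[
D(\mu,(v,U)) := e(\mu) - \lambda_{\max}\big(v\otimes v-U\big)\cdot\tfrac{n}{n}\;-\;\big(\text{correction}\big).
\]
The first attempt is $D(\mu,\zeta)=e(\mu)-\lambda_{\max}(v\otimes v-U)$, which is concave since $\lambda_{\max}$ is convex, and continuous. It vanishes on $\sK_\mu$. The difficulty is property (c): $\lambda_{\max}(v\otimes v-U)=e$ within the hull need not force membership in $\sK$. We will therefore use instead
\[
D(\mu,(v,U)) = e(\mu)-\tfrac1n|v|^2 ,
\]
which is concave. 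On the hull, $\tfrac1n|v|^2\le\lambda_{\max}(v\otimes v-U)\le e$, so $D\ge 0$ there. If $D=0$, then $\operatorname{tr}(v\otimes v-U)=|v|^2=ne$ while all eigenvalues are at most $e$. This forces $v\otimes v-U=e\id_n$, i.e.\ $(v,U)\in\sK_\mu$. For property \ref{item:suitable-D-convergence} with $r=2$: $D$ involves only $|v|^2$, so strong $L^2$ convergence gives convergence of $\int|v_k|^2$, and continuity of the integral of $e(\widehat\theta)$ is trivial. The remaining care is that $D$ must be defined on all of $\Theta\times\closure{B}_c(0)$, which holds since the formula is global.
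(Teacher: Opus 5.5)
Your proposal is correct and follows the paper's proof essentially item by item: radial rescaling of $v$ for the continuity in $\mu$, the $\lambda_{\max}$-description of $(\sK_{p,q})^\co$ and of its interior for the boundary property, citations for \ref{item:ass2-incomp-euler} and \ref{item:ass3-incomp-euler}, and the same distance map, since your $e(\mu)-\frac{1}{n}|v|^2$ is the paper's $n(q-p)-|v|^2$ divided by $n$ (your trace/eigenvalue argument for property \ref{item:suitable-D-inK} of Defn.~\ref{defn:suitable-distance-map} is a nice self-contained substitute for the paper's appeal to \cite[Lemma~3]{DelSze10}). Only cosmetic points remain: delete the garbled first display for $D$, note that the $\eta_x\neq 0$ check in \ref{item:ass3-incomp-euler} is immediate (take $\eta_x\neq 0$ orthogonal to $a-b$ and $\eta_t=-a\cdot\eta_x$), and in \ref{item:ass2-incomp-euler} make sure the cited operator produces a trace-free $U$ for every $g$, which is required because $q$ is frozen as a parameter here (the paper takes this from \cite[Prop.~4.4.1~(c)]{Markfelder}).
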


\begin{proof} 
\begin{enumerate}
    \item We have to check the properties in Defn.~\ref{defn:suitable-K} (denoted in sequel by ``Prop.~(a)'', ``Prop.~(b)'' and ``Prop.~(c)'').
    \begin{itemize}
        \item[Prop.~(a)] It is obvious that the $\sK_{p,q}$ are closed. Their boundedness was shown in \cite[Lemma~3]{DelSze10}.

        \item[Prop.~(b)] Let $\ep>0$ given. Note that on $\sK_{p,q}$ it holds that $|v|^2 = n (q-p)$, which can be simply verified by taking the trace of the equation in \eqref{eq:const-incomp-euler}. This implies together with the fact that $\Theta$ is compact, that there exists $c_v>1$ such that $|v|\leq c_v$ for all $(v,U)\in \sK_{p,q}$ and all $(p,q)\in \Theta$.
    
        Now choose $\delta>0$ such that $\delta<\frac{\ep}{3n}$ and 
        $$
            \Big| \sqrt{q_1-p_1} - \sqrt{q_2-p_2} \Big| < \frac{\ep}{6 \sqrt{n} c_v}
        $$
        for all $(p_1,q_1),(p_2,q_2)\in \Theta$ with $|(p_1,q_1)-(p_2,q_2)| < \delta$. Note that the latter is possible by uniform continuity of the square root. 

        Next consider $|(p_1,q_1)-(p_2,q_2)| < \delta$ and $(v_1,U_1)\in \sK_{p_1,q_1}$. If $v_1=0$, simply choose $v_2\in \R^n$ such that $|v_2|^2 = n (q_2-p_2)$. In the case $v_1\neq 0$, set 
        $$
            v_2 := \sqrt{ n (q_2-p_2)} \frac{v_1}{|v_1|}.
        $$
        In both cases define 
        $$
            U_2 := v_2 \otimes v_2 + (p_2-q_2) \id_n. 
        $$
        Then $(u_2,U_2)\in \sK_{p_2,q_2}$ by construction. It remains to prove that $|(u_1,U_1)-(u_2,U_2)| <\ep$.

        We find that in both cases 
        \begin{align*}
            |v_1-v_2| &= \sqrt{n} \Big| \sqrt{q_1-p_1} - \sqrt{q_2-p_2} \Big| < \frac{\ep}{6 c_v} < \ep, \\
            |U_1-U_2| &= \Big| v_1 \otimes v_1 - v_2 \otimes v_2 + (p_1-p_2-q_1+q_2) \id_n \Big| \\
            &\leq |v_1-v_2| \Big( |v_1| + |v_2| \Big) + n |p_1-p_2| + n|q_1- q_2| \\
            &< \frac{\ep}{6 c_v} 2c_v + \frac{\ep}{3} + \frac{\ep}{3} = \ep. 
        \end{align*}

        \item[Prop.~(c)] As shown in \cite[Lemma~3]{DelSze10} (see also \cite[Lemma~4.3.6]{Markfelder}), it holds that\footnote{We write $\lambda_{\max}(A)$ for the largest eigenvalue of a symmetric matrix $A\in \R^{n\times n}$.} 
        \begin{equation} \label{eq:convhull-incomp-euler}
            (\sK_{p,q})^\Lambda = (\sK_{p,q})^\co = \left\{ (v,U)\in \R^n \times \symz{n} \,\Big|\, \lambda_{\max} \big(v\otimes v - U + (p-q)\id_n\big) \leq 0 \right\} 
        \end{equation}
        for all $(p,q) \in \Theta$. From that it is not difficult to see that 
        \begin{equation} \label{eq:interior-convhull-incomp-euler}
            \interior{\big((\sK_{p,q})^\co\big)} = \left\{ (v,U)\in \R^n \times \symz{n} \,\Big|\, \lambda_{\max} \big(v\otimes v - U + (p-q)\id_n\big) < 0 \right\},
        \end{equation}
        see e.g.~\cite[Lemma~5.1.1]{Markfelder} for a detailed proof. Thus we have 
        $$
            \interior{\big((\sK_{p,q})^\co\big)}\cap \sK_{p,q} = \emptyset \qquad \text{ for all }(p,q) \in \Theta
        $$    
        as desired.
    \end{itemize}

    \item The claim was proven in \cite[Prop.~4.4.1~(c)]{Markfelder}.

    \item As mentioned above (see equ.~\eqref{eq:convhull-incomp-euler}), the claim was shown in \cite[Lemma~3]{DelSze10}, see also \cite[Lemma~4.3.6]{Markfelder}.

    \item Inspired by \cite{DelSze10} we choose $D((p,q),(v,U)):= n(q-p) - |v|^2$. Then obviously item~\ref{item:suitable-D-concave} of Defn.~\ref{defn:suitable-distance-map} holds. From the explicit form of $\sK_{p,q}$ (see \eqref{eq:const-incomp-euler}) we infer property \ref{item:suitable-D-=0}. For item~\ref{item:suitable-D-inK} we again refer to \cite[Lemma~3]{DelSze10}. Finally we note that property~\ref{item:suitable-D-convergence} follows immediately from the definition of $D$. 
\end{enumerate}
\end{proof}

\subsection{Proof of Thms.~\ref{thm:incomp-euler-exforalldata} and \ref{thm:incomp-euler-wilddata}} \label{subsec:ex-incomp-euler-pf}

As observed in Sect.~\ref{subsec:ex-incomp-euler-ass}, the structural assumptions of Thm.~\ref{thm:conv-int} hold in the context of the incompressible Euler equations \eqref{eq:incomp-euler-mass}, \eqref{eq:incomp-euler-mom}, i.e.~Thm.~\ref{thm:conv-int} can be applied to these equations. Moreover, Thm.~\ref{thm:idiK} can be applied to the to the incompressible Euler system too, which was originally carried out in \cite[Sect.~5]{DelSze10}. 

Now, with Thms.~\ref{thm:conv-int} and \ref{thm:idiK} at hand, it is straightforward to prove Thms.~\ref{thm:incomp-euler-exforalldata} and \ref{thm:incomp-euler-wilddata}:

\begin{proof}[Proof of Thm.~\ref{thm:incomp-euler-exforalldata}]
The goal is to find a subsolution for the initial data $v_0\in L^\infty(\T^3;\R^3)$ and then apply Thm.~\ref{thm:conv-int}. Such a subsolution $(\ov{v},\ov{U})$ is generated in Prop.~\ref{prop:app-sfad}. Indeed, the boundedness of $(\ov{v},\ov{U})$ (see Prop.~\ref{prop:app-sfad}~\ref{item:sfad-proposition-c}) ensures that that there are constants $p,q$ such that $(\ov{v},\ov{U})$ takes values in $\sU_{p,q}=\interior{\big( (\sK_{p,q})^\Lambda\big)}$, see \eqref{eq:convhull-incomp-euler} and \eqref{eq:interior-convhull-incomp-euler}. As $p,q$ are constant, property \ref{item:sfad-proposition-a} from Prop.~\ref{prop:app-sfad} shows that $(\ov{v},\ov{U})$ solves the required PDEs \eqref{eq:lin-incomp-euler-mass}, \eqref{eq:lin-incomp-euler-mom}. Finally, Thm.~\ref{thm:conv-int} yields infinitely many weak solutions as stated in Thm.~\ref{thm:incomp-euler-exforalldata}, and property \ref{item:sfad-proposition-b} from Prop.~\ref{prop:app-sfad} proves that these solutions satisfy the right initial condition. 
\end{proof}

\begin{proof}[Proof of Thm.~\ref{thm:incomp-euler-wilddata}]
Choose $(\ov{v},\ov{U})\equiv (0,0)$, $p,q\in\R$ constant such that $q>p$. Then $(\ov{v},\ov{U})$ is a subsolution, i.e.~it satisfies the required PDEs \eqref{eq:lin-incomp-euler-mass}, \eqref{eq:lin-incomp-euler-mom}, and it takes values in $\sU_{p,q}=\interior{\big( (\sK_{p,q})^\Lambda\big)}$. Hence we may apply Thm.~\ref{thm:idiK} with $T_0>0$ arbitrary to obtain a new subsolution $(\ov{\ov{v}},\ov{\ov{U}})$ which (after shifting in time by $T_0$) takes values in $\sK_{p,q}$ for $t=0$ and in $\sU_{p,q}$ for $t>0$. 

Now we apply Thm.~\ref{thm:conv-int} to obtain infinitely many weak solutions which not only take values in $\sK_{p,q}$ for $t>0$, but even at $t=0$. Thus they satisfy 
$$
    \half |v(t,x)|^2 = \tfrac{n}{2}(q-p) \qquad \text{ for all }t\in [0,T)\text{ and a.e. }x\in \Omega,
$$
which can be verified by taking the trace in \eqref{eq:const-incomp-euler}. Consequently the solutions are admissible in the sense of Defn.~\ref{defn:incomp-euler-sol}~\ref{item:incomp-euler-sol-adm} as desired. 

It remains to prove that $\ov{\ov{v}}(T_0,\cdot)$ (before shifting in time) satisfies the compatibility condition \eqref{eq:incomp-euler-compatibility-initialdata} such that we may use it as initial datum. According to Thm.~\ref{thm:idiK} we have 
$$
    \Div \ov{\ov{v}} = 0 \qquad \text{ pointwise on }\big((0,T)\setminus\{T_0\}\big) \times \Omega.
$$
Hence for all test functions $\phi\in\Cc(\closure{\Omega})$ and $\psi\in \Cc((0,T))$ we have 
\begin{align}
    0 &= -\int_0^T \int_{\Omega} \Div \ov{\ov{v}}(t,x) \, \phi(x) \, \psi(t) \dx\dt \notag \\
    &= \int_0^T \int_{\Omega} \ov{\ov{v}}(t,x) \cdot \Grad \phi(x) \, \psi(t) \dx\dt - \int_0^T \int_{\partial\Omega} \ov{\ov{v}} \cdot \nu \, \phi \, \psi \dS\dt \notag \\
    &= \int_0^T \left( \int_{\Omega} \ov{\ov{v}}(t,x) \cdot \Grad \phi(x) \dx \right) \psi(t) \dt, \label{eq:div-equation-continous}
\end{align}
where we used integration by parts, and the fact that $\ov{\ov{v}}$ coincides with $\ov{v}$ on $\partial\Omega$ and thus the boundary term cancels. Since the map $t\mapsto \int_{\Omega} \ov{\ov{v}} \cdot \Grad \phi\dx$ is continuous on $(0,T)$ because $\ov{\ov{v}}\in \Cweak((0,T);L^2(\Omega;\R^n))$, we deduce from \eqref{eq:div-equation-continous} that %
$$
    \int_{\Omega} \ov{\ov{v}}(t,x) \cdot \Grad \phi(x) \dx = 0 \qquad \text{ for all }t\in (0,T),
$$
in particular for $t=T_0$.
\end{proof}

\section{Barotropic compressible Euler equations} \label{sec:ex-compr-euler} 

Next we apply our general convex-integration-framework to the barotropic compressible Euler equations \eqref{eq:compr-euler-mass}, \eqref{eq:compr-euler-mom} and the shallow water equations \eqref{eq:sw-mass}, \eqref{eq:sw-mom}. The goal is to recover \name{Feireisl}'s result Thm.~\ref{thm:compr-euler-wilddata} (see \cite[Thm.~1.3]{Feireisl14}) and Cor.~\ref{cor:sw-wilddata}.

As already seen in the introduction (see Sect.~\ref{subsubsec:intro-sw}), Cor.~\ref{cor:sw-wilddata} follows from Thm.~\ref{thm:compr-euler-wilddata}. Thus we will only deal with Thm.~\ref{thm:compr-euler-wilddata} in this section in order to cover both cases of the barotropic compressible Euler equations \eqref{eq:compr-euler-mass}, \eqref{eq:compr-euler-mom} and the shallow water equations \eqref{eq:sw-mass}, \eqref{eq:sw-mom}.

\subsection{Preliminaries and reformulation of the problem} \label{subsec:ex-compr-euler-prel}

We proceed as in \name{Feireisl}~\cite{Feireisl14}, see also \cite{Chiodaroli14} and note that this process was already mentioned in \cite{DelSze10}. More precisely we rewrite the barotropic compressible Euler equations \eqref{eq:compr-euler-mass}, \eqref{eq:compr-euler-mom} as 
\begin{align}
    \Div m &= - \partial_t \rho, \label{eq:lin-compr-euler-mass} \\
    \partial_t m + \Div U &= - \Grad q, \label{eq:lin-compr-euler-mom}
\end{align}
and we define the family of constitutive sets 
\begin{equation} \label{eq:const-compr-euler}  
    \sK_{\rho,q} := \left\{ (m,U)\in \R^n \times \symz{n} \,\Big|\, \frac{m\otimes m}{\rho} - U + (p(\rho)-q)\id_n = 0 \right\}.
\end{equation}
With the notation used in Sect.~\ref{sec:general} we may set $\theta=(\rho,q)$ and $\zeta=(m,U)$. Note furthermore, that we work with the momentum $m$ as an unknown rather than the velocity $u$. As mentioned in Sect.~\ref{subsubsec:intro-compr-euler}, we exclude vacuum, i.e.~$\rho$ will always be positive and consequently there is no difficulty when computing the velocity from the momentum via $u=\frac{m}{\rho}$ and vice versa. 

Finally, we observe that the wave cone in the setting of equations \eqref{eq:lin-compr-euler-mass}, \eqref{eq:lin-compr-euler-mom} is given by 
\begin{align*} 
    \Lambda = \bigg\{ (m,U)\in \R^n\times \symz{n}\, \Big|\, &\exists\eta=(\eta_t,\eta_x)\in \R^{1+n}\ \text{ with }\ \eta_x \neq 0 \ \text{ and } \ \\
    & \qquad \qquad \left(\begin{array}{cc} 0 & m^\trans \\ m & U \end{array}\right)\cdot\eta = 0 \quad\bigg\} .
\end{align*}
and thus it coincides with the wave cone in the context of the incompressible Euler equations, see Sect.~\ref{subsec:ex-incomp-euler-prel}.

\subsection{Structural assumptions of Thm.~\ref{thm:conv-int}} \label{subsec:ex-compr-euler-ass}

Also in the context of the barotropic compressible Euler equations, the structural assumptions of Thm.~\ref{thm:conv-int} hold, see the following lemma. 

\begin{lemma} \label{lemma:ass-compr-euler}
    \begin{enumerate}
        \item \label{item:ass1-compr-euler} The family of constitutive sets $(\sK_{\rho,q})_{(\rho,q)\in \Theta}$ defined in \eqref{eq:const-compr-euler} is suitable in the sense of Defn.~\ref{defn:suitable-K}. 

        \item \label{item:ass2-compr-euler} Let $(m,U)\in\Lambda$. Then there exists a third order homogeneous differential operator
        $$
		      \opL_{(m,U)} : C^\infty(\R^{1+n}) \to C^\infty(\R^{1+n};\R^n\times \symz{n}) 
        $$
        which is suitable in the sense of Defn.~\ref{defn:suitable-operator}. 

        \item \label{item:ass3-compr-euler} It holds that 
        $$
            (\sK_{\rho,q})^\Lambda = (\sK_{\rho,q})^\co \qquad \text{ for all } (\rho,q) \in \Theta.
        $$ 

        \item \label{item:ass4-compr-euler} There exists a map $D\in C(\Theta\times \ov{B}_c(0);\R)$ which is a suitable distance map in the sense of Defn.~\ref{defn:suitable-distance-map}. Moreover if $r=2$, then $D$ satisfies the additional property~\ref{item:suitable-D-convergence} introduced in Sect.~\ref{subsec:general-idiK}.
    \end{enumerate}
\end{lemma}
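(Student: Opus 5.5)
The plan is to reduce everything to the incompressible case, Lemma~\ref{lemma:ass-incomp-euler}, through a linear change of variables. The wave cone here coincides with the incompressible one and the linear operator is the same. Fix $(\rho,q)\in\Theta$ with $\rho$ ranging in a compact subset of $(0,\infty)$, so that $\rho\geq\un{\rho}>0$ on $\Theta$. Consider the map
$$
\Phi_\rho:(m,U)\mapsto (v,W):=\Big(\tfrac{m}{\sqrt{\rho}},\,U\Big).
$$
Then $\frac{m\otimes m}{\rho}-U+(p(\rho)-q)\id_n = v\otimes v - W + (p(\rho)-q)\id_n$, so $\sK_{\rho,q}=\Phi_\rho^{-1}\big(\sK^{\incomp}_{p(\rho),q}\big)$, where $\sK^{\incomp}$ is the set from \eqref{eq:const-incomp-euler}.

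The map $\Phi_\rho$ is linear and invertible, so it commutes with convex hulls. It also maps $\Lambda$ onto itself. Indeed, $(m,U)\in\Lambda$ with direction $\eta=(\eta_t,\eta_x)$ means $m\cdot\eta_x=0$ and $m\eta_t+U\eta_x=0$. This is equivalent to $v\cdot\eta_x=0$ and $v\eta_t'+U\eta_x=0$ with $\eta_t'=\sqrt{\rho}\,\eta_t$, and $\eta_x\neq0$ is unchanged. Hence $\Lambda$-segments correspond to $\Lambda$-segments, $\Phi_\rho$ commutes with $\Lambda$-convex hulls, and item~\ref{item:ass3-compr-euler} follows from item~\ref{item:ass3-incomp-euler} of Lemma~\ref{lemma:ass-incomp-euler}. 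Concretely,
$$
(\sK_{\rho,q})^\co=\big\{\lambda_{\max}(\tfrac{m\otimes m}{\rho}-U+(p(\rho)-q)\id_n)\le0\big\},
$$
with interior given by strict inequality. Item~\ref{item:ass2-compr-euler} is immediate, because $\Lambda$ and the linear system \eqref{eq:lin-compr-euler-mass}, \eqref{eq:lin-compr-euler-mom} (homogeneous part) are identical to the incompressible ones; I will simply take the same third-order operator from \cite[Prop.~4.4.1~(c)]{Markfelder}.

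For item~\ref{item:ass1-compr-euler}, compactness and property (c) transfer through $\Phi_\rho$ from Lemma~\ref{lemma:ass-incomp-euler}, using the explicit description of the interior above. We may again assume $q\geq p(\rho)$ on $\Theta$, since otherwise $\sK_{\rho,q}=\emptyset$. For the uniform continuity (b) I will mimic the incompressible argument. On $\sK_{\rho,q}$ the trace gives $|m|^2=n\rho(q-p(\rho))$. Given $(m_1,U_1)\in\sK_{\rho_1,q_1}$, set
$$
m_2=\sqrt{n\rho_2(q_2-p(\rho_2))}\,\frac{m_1}{|m_1|}
$$
(with an arbitrary direction if $m_1=0$) and $U_2=\frac{m_2\otimes m_2}{\rho_2}+(p(\rho_2)-q_2)\id_n$. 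Closeness then follows from the uniform continuity of $\sqrt{\cdot}$, of $p$ on the compact range of $\rho$, and of $1/\rho$, all of which hold since $\rho$ is bounded away from $0$ and $p\in C^1$.

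For item~\ref{item:ass4-compr-euler}, take
$$
D((\rho,q),(m,U)):=n(q-p(\rho))-\frac{|m|^2}{\rho}.
$$
This is continuous on $\Theta\times\closure{B}_c(0)$ and concave in $(m,U)$ since $\rho>0$. It vanishes on $\sK_{\rho,q}$ by the trace identity. Property \ref{item:suitable-D-inK} follows from the incompressible case via $\Phi_\rho$: $D=0$ on the convex hull forces $\lambda_{\max}\le0$ together with $\tr(\cdot)=0$, so the matrix vanishes. For $r=2$, property~\ref{item:suitable-D-convergence} holds because $\widehat{\theta}$ is continuous with $\rho$ bounded below, and strong $L^2$ convergence of $\widehat{\zeta}_k$ gives convergence of $\int|m_k|^2/\rho$.

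The main (mild) obstacle is checking carefully that $\Phi_\rho$ preserves $\Lambda$, including the condition $\eta_x\neq0$, so that the $\Lambda$-convex hull result of \cite[Lemma~3]{DelSze10} transfers. The rescaling of $\eta_t$ above handles this.
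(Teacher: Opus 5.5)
Your proposal is correct and follows the paper's route: you reduce everything to the incompressible case, Lemma~\ref{lemma:ass-incomp-euler}, and obtain the same description of $(\sK_{\rho,q})^\co$ and the same distance map $D((\rho,q),(m,U)) = n(q-p(\rho)) - \frac{|m|^2}{\rho}$. Your linear rescaling $(m,U)\mapsto (m/\sqrt{\rho},U)$, together with $\eta_t\mapsto\sqrt{\rho}\,\eta_t$ to show that $\Lambda$ is preserved, is a tidy way of making the paper's ``modify the incompressible proof'' precise, and it is in the same spirit as the scaling used in the proof of Prop.~\ref{prop:zero-in-U-compr}.
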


\begin{proof} 
The proof of Lemma~\ref{lemma:ass-compr-euler} can be achieved by modifying the proof of the corresponding lemma in the incompressible case (see Lemma~\ref{lemma:ass-incomp-euler}). We also refer to \cite[Sect.~3.2]{Markfelder24} and \cite[Chap.~4]{Markfelder}. Similar to \eqref{eq:convhull-incomp-euler} and \eqref{eq:interior-convhull-incomp-euler} we find 
\begin{align}
    (\sK_{\rho,q})^\Lambda = (\sK_{\rho,q})^\co &= \left\{ (m,U)\in \R^n \times \symz{n} \,\Big|\, \lambda_{\max} \left(\frac{m\otimes m}{\rho} - U + (p(\rho)-q)\id_n\right) \leq 0 \right\}, \label{eq:convhull-compr-euler} \\ 
    \interior{\big((\sK_{\rho,q})^\co\big)} &= \left\{ (m,U)\in \R^n \times \symz{n} \,\Big|\, \lambda_{\max} \left(\frac{m\otimes m}{\rho} - U + (p(\rho)-q)\id_n\right) < 0 \right\}, \label{eq:interior-convhull-compr-euler} 
\end{align}
for all $(\rho,q) \in \Theta$. Moreover, the suitable distance map in the compressible case reads 
$$
    D((\rho,q),(m,U))= n(q-p(\rho)) - \frac{|m|^2}{\rho}.
$$
\end{proof}

\subsection{Proof of Thm.~\ref{thm:compr-euler-wilddata}} \label{subsec:ex-compr-euler-pf}

The proof of Thm.~\ref{thm:compr-euler-wilddata} follows the same lines as the proof of the corresponding theorem in the incompressible setting (see Thm.~\ref{thm:incomp-euler-wilddata}). It was originally done in\footnote{In contrast to \name{Feireisl}'s proof in \cite{Feireisl14}, the explicit application of the Helmholtz decomposition is not necessary in our proof.} \cite{Feireisl14}.

\begin{proof}[Proof of Thm.~\ref{thm:compr-euler-wilddata}]
We first fix a time $T_0>0$ and a cut-off function $h\in C^\infty([0,\infty);\R^+_0)$ with 
\begin{equation*}
    h(t) = \left\{ \begin{array}{ll} 1 & \text{ for }t\in [0,2T_0] , \\ 0 & \text{ for }t\in [3T_0,\infty) , \end{array} \right.
\end{equation*}
and $h'(t)\leq 0$ for all $t\in [0,\infty)$. Next we set the density to be
$$
    \rho(t,x) := h(t) \rho_0(x) + (1-h(t)) \widetilde{\rho},
$$
where $\widetilde{\rho}:= \int_{\Omega} \rho_0 (x) \dx$. 

Now we choose $\ov{m}:= \Grad \Psi$, where $\Psi$ is the solution of $\Lap \Psi = - \partial_t\rho$. Note that $\Psi \in C^2([0,\infty)\times \Omega)$ according to Schauder estimates, in particular $\ov{m} \in C^1([0,\infty)\times \Omega;\R^n)$. Moreover we set $\ov{U}\equiv 0$ and $q(t,x):= \chi(t) - \partial_t \Psi(t,x)$ with a function $\chi\in \Cb([0,\infty)) \cap C^\infty([0,\infty))$ to be chosen below. 

Next we show that $(\ov{m},\ov{U})$ indeed form a subsolution for the compressible Euler equations with parameters $(\rho,q)$. First we note that $(\rho,q)\in \Cb([0,\infty)\times \Omega)\cap C^1 ([0,\infty)\times \Omega)$. The linear system \eqref{eq:lin-compr-euler-mass}, \eqref{eq:lin-compr-euler-mom} holds obviously. Finally there is a constant $C_1>0$ such that $(\ov{m},\ov{U})$ takes values in $\sU_{\rho,q}=\interior{\big( (\sK_{\rho,q})^\Lambda\big)}$ (see \eqref{eq:interior-convhull-compr-euler}) as soon as we ensure that $\chi(t)\geq C_1$ for all $t\in [0,\infty)$. 

Since the structural assumptions of Thm.~\ref{thm:conv-int} hold due to Lemma~\ref{lemma:ass-compr-euler}, we can apply Thm.~\ref{thm:idiK} to get a new subsolution which (after shifting in time by $T_0$) takes values in $\sK_{\rho,q}$ for $t=0$ and in $\sU_{\rho,q}$ for $t>0$. Then Thm.~\ref{thm:conv-int} yields infinitely many weak solutions which take values in $\sK_{\rho,q}$ even for $t\in [0,\infty)$. 

It remains to show that these solutions are admissible in the sense of Defn.~\ref{defn:compr-euler-sol}~\ref{item:compr-euler-sol-adm}. Taking the trace in \eqref{eq:const-compr-euler}, we find that the solutions satisfy 
\begin{equation} \label{eq:compr-1000-trace}
    \frac{|m(t,x)|^2}{2\rho(t,x)} = \frac{n}{2}\Big(q(t,x)-p(\rho(t,x))\Big) \qquad \text{ for all }t\in [0,T)\text{ and a.e. }x\in \Omega. 
\end{equation}
With this at hand, we check the validity of the energy inequality: 
\begin{align*}
    &\partial_t \left( \frac{|m|^2}{2\rho} + P(\rho) \right) + \Div \left[ \left( \frac{|m|^2}{2\rho} + P(\rho) + p(\rho) \right) \frac{m}{\rho} \right] \\
    &= \partial_t \left( \frac{n}{2}\big(q-p(\rho)\big) + P(\rho) \right) - \left[ \left( \frac{n}{2}\big(q-p(\rho)\big) + P(\rho) + p(\rho) \right) \frac{\partial_t \rho}{\rho} \right] \\
    &\qquad + m\cdot \Grad \left[ \left( \frac{n}{2}\big(q-p(\rho)\big) + P(\rho) + p(\rho) \right) \frac{1}{\rho} \right] \\
    &\leq \frac{n}{2} \partial_t\chi + \ep C_2 \chi^2 + C_3\chi + C_4,
\end{align*}
which suitable constants $C_2,C_3,C_4>0$. Here we made use of the inequality $a\cdot b \leq \half \big( |a|^2 + |b|^2 \big)$ (which holds for all $a,b \in \R^n$) together with \eqref{eq:compr-1000-trace}. Notice that the left-hand side of the above energy inequality has to be understood in the sense of distributions. We also observe that $C_i$ ($i=1,...,4$) only depend on $\un{\rho}$, $\ov{\rho}$ and $p$. 

For $t\geq 3T_0$ (even for $t\geq 2T_0$ due to the aforementioned time shift) we obtain an even better estimate:
$$
    \partial_t \left( \frac{|m|^2}{2\rho} + P(\rho) \right) + \Div \left[ \left( \frac{|m|^2}{2\rho} + P(\rho) + p(\rho) \right) \frac{m}{\rho} \right] \leq \frac{n}{2} \partial_t\chi.
$$

In order to find a suitable $\chi$, we choose $\widetilde{C}>\frac{2}{n}\max\{C_2,C_3/2,C_4\}$, $\ov{T}:=4T_0 \widetilde{C}$ and $\ov{C}:= C_1$, and apply Lemma~\ref{lemma:app-constr-phi} to obtain $\ov{\chi}\in C^\infty([0,\ov{T}])$ with properties \eqref{eq:app-constr-phi-p1}, \eqref{eq:app-constr-phi-p2}. Setting 
$$
    \widetilde{\chi}(t) := \left\{ \begin{array}{rl} \ov{\chi}\Big(\widetilde{C} (4T_0 - t)\Big) & \text{ if } t\leq 4 T_0, \\ C_1 & \text{ if } t\geq 4 T_0, \end{array} \right.
$$
and mollifying the latter with mollifying parameter smaller than $T_0$ yields a desired $\chi$. Indeed, $\chi\in \Cb([0,\infty)) \cap C^\infty([0,\infty))$ and $\chi(t)\geq C_1$ for all $t\in [0,\infty)$. Moreover, for $t\geq 3T_0$ it holds $\partial_t \chi \leq 0$, and for $t\in [0,3T_0]$ we have $\chi\equiv \widetilde{\chi}$ and thus
\begin{align*}
    \frac{n}{2} \partial_t\chi + \ep C_2 \chi^2 + C_3\chi + C_4 &\leq \frac{n}{2} \left( \partial_t\chi + \widetilde{C} \Big(\ep \chi^2 + 2 \chi + 1 \Big)\right) \\
    &\leq \frac{n}{2} \widetilde{C} \Big( -\partial_t\ov{\chi} + \ep \ov{\chi}^2 + 2 \ov{\chi} + 1 \Big) = 0.
\end{align*}
\end{proof}

\section{Lake equations} \label{sec:ex-lake} 

In this section we will apply the general framework from Sect.~\ref{sec:general} to the lake equations \eqref{eq:lake-mass}, \eqref{eq:lake-mom}. Our aim is to prove Thms.~\ref{thm:lake-exforalldata} and \ref{thm:lake-wilddata}.

\subsection{Preliminaries and reformulation of the problem} \label{subsec:ex-lake-prel}

As we did above for the Euler equations, we rewrite the lake equations \eqref{eq:lake-mass}, \eqref{eq:lake-mom} as 
\begin{align}
    \Div m &= 0, \label{eq:lin-lake-mass} \\
    \partial_t m + \Div U &= - \Grad q - b\Grad p, \label{eq:lin-lake-mom} 
\end{align}
and we define the family of constitutive sets 
\begin{equation} \label{eq:const-lake}  
    \sK_{b,p,q} := \left\{ (m,U)\in \R^2 \times \symz{2} \,\Big|\, \frac{m\otimes m}{\rho} - U -q\id_2 = 0 \right\}.
\end{equation}
Using the notation from Sect.~\ref{sec:general}, we may write $\theta=(b,p,q)$ and $\zeta=(m,U)$. Like in the case of the compressible Euler equations, we work with the momentum $m$ as an unknown rather than the velocity $u$, which can be transformed into each other via $m=bu$ (note that we require $b$ to be strictly positive, see Sect.~\ref{subsubsec:intro-lake}).

It turns out that the wave cone for the above setting coincides with the wave cone in the context of the incompressible Euler equations, see Sect.~\ref{subsec:ex-incomp-euler-prel}.

\subsection{Structural assumptions of Thm.~\ref{thm:conv-int}} \label{subsec:ex-lake-ass} 

\begin{lemma} \label{lemma:ass-lake}
    \begin{enumerate}
        \item \label{item:ass1-ilake} The family of constitutive sets $(\sK_{b,p,q})_{(b,p,q)\in \Theta}$ defined in \eqref{eq:const-lake} is suitable in the sense of Defn.~\ref{defn:suitable-K}. 

        \item \label{item:ass2-lake} Let $(m,U)\in\Lambda$. Then there exists a third order homogeneous differential operator
        $$
		      \opL_{(m,U)} : C^\infty(\R^{3}) \to C^\infty(\R^{3};\R^2\times \symz{2}) 
        $$
        which is suitable in the sense of Defn.~\ref{defn:suitable-operator}. 

        \item \label{item:ass3-lake} It holds that 
        $$
            (\sK_{b,p,q})^\Lambda = (\sK_{b,p,q})^\co \qquad \text{ for all } (b,p,q) \in \Theta.
        $$ 

        \item \label{item:ass4-lake} There exists a map $D\in C(\Theta\times \ov{B}_c(0);\R)$ which is a suitable distance map in the sense of Defn.~\ref{defn:suitable-distance-map}. Moreover if $r=2$, then $D$ satisfies the additional property~\ref{item:suitable-D-convergence} introduced in Sect.~\ref{subsec:general-idiK}.
    \end{enumerate}
\end{lemma}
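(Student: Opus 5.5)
Note first that the constitutive set \eqref{eq:const-lake} contains a typo: the $\rho$ should be $b$, so that
$$
\sK_{b,p,q}=\Big\{(m,U)\in\R^2\times\symz{2}\,\Big|\,\tfrac{m\otimes m}{b}-U-q\id_2=0\Big\}.
$$
With this reading, $\sK_{b,p,q}$ is exactly the compressible Euler set \eqref{eq:const-compr-euler} with $n=2$, $\rho$ replaced by $b$, and the pressure term $p(\rho)-q$ replaced by $-q$. The parameter $p$ enters only the right-hand side of \eqref{eq:lin-lake-mom}, not the set. The plan is therefore to reduce each item to Lemma~\ref{lemma:ass-compr-euler} (equivalently Lemma~\ref{lemma:ass-incomp-euler}) by rescaling. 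Since $b\in C^1(\closure{\Omega};\R^+)$ on a compact closure, $\Theta$ satisfies $b\geq\un{b}>0$. As in the incompressible case, we may assume $q\geq0$ on $\Theta$, since otherwise $\sK_{b,p,q}=\emptyset$.

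Items \ref{item:ass2-lake} and \ref{item:ass3-lake} need only structural observations. The linear operator on the left of \eqref{eq:lin-lake-mass}, \eqref{eq:lin-lake-mom} is identical to that of \eqref{eq:lin-incomp-euler-mass}, \eqref{eq:lin-incomp-euler-mom} with $n=2$. Hence $\Lambda$ is the same cone, and the third-order operator from \cite[Prop.~4.4.1~(c)]{Markfelder} (valid in any dimension $n\geq2$) works unchanged. For the hulls, the linear bijection $(m,U)\mapsto(m/\sqrt b,U)$ maps $\sK_{b,p,q}$ onto $\sK_{0,q}$ of \eqref{eq:const-incomp-euler}. It commutes with convex hulls and preserves $\Lambda$, because $\Lambda$ is invariant under rescaling the $m$-component. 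To see this, take $(m,U)\in\Lambda$ with $\eta=(\eta_t,\eta_x)$, so that $m\cdot\eta_x=0$ and $m\eta_t+U\eta_x=0$. For $(\lambda m,U)$ the direction $(\eta_t/\lambda,\eta_x)$ works. Consequently $\Lambda$-convex hulls are mapped to $\Lambda$-convex hulls, and \eqref{eq:convhull-incomp-euler} transports to
$$
(\sK_{b,p,q})^\Lambda=(\sK_{b,p,q})^\co=\{\lambda_{\max}(\tfrac{m\otimes m}{b}-U-q\id_2)\le0\},
$$
with interior given by the strict inequality.

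Item \ref{item:ass1-ilake} proceeds as in Lemma~\ref{lemma:ass-incomp-euler}. Compactness follows from the rescaling. Property (c) follows from the strict-inequality description of the interior. Continuity (property (b)) follows as in the incompressible proof. On $\sK_{b,p,q}$ we have $|m|^2=2bq$, and for $(m_1,U_1)\in\sK_{b_1,p_1,q_1}$ we set $m_2:=\sqrt{2b_2q_2}\,m_1/|m_1|$ (any vector of that length if $m_1=0$) and $U_2:=m_2\otimes m_2/b_2-q_2\id_2$. We then estimate using uniform continuity of $\sqrt{2bq}$ and $1/b$ on compact $\Theta$, with $b\ge\un b$.

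For item \ref{item:ass4-lake}, the choice is $D((b,p,q),(m,U)):=2q-|m|^2/b$. It is continuous on $\Theta\times\closure{B}_c(0)$ since $b\ge\un b$, concave in $\zeta$, and vanishes on $\sK$ by taking the trace. Property \ref{item:suitable-D-inK} follows from the hull description: $\lambda_{\max}(\tfrac{m\otimes m}{b}-U-q\id_2)\le0$ together with trace zero forces the matrix to vanish, because $\operatorname{tr}=|m|^2/b-2q=-D=0$ and a negative semidefinite traceless matrix is zero. Property \ref{item:suitable-D-convergence} for $r=2$ holds because $\int|\widehat\zeta_k|^2/\widehat b$ converges under strong $L^2$ convergence, with $1/\widehat b$ bounded. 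The only mildly delicate point is checking that the rescaling genuinely preserves $\Lambda$-convexity, i.e. that $\Lambda$ is invariant under the map; the one-line computation above settles it.
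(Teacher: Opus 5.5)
Your proposal is correct and takes essentially the paper's approach. The paper omits the proof, saying only that it follows the lines of Lemmas~\ref{lemma:ass-incomp-euler} and \ref{lemma:ass-compr-euler}. Your rescaling $(m,U)\mapsto(m/\sqrt{b},U)$, together with the check that $\Lambda$ is invariant under it, is the same device the paper uses in the proof of Prop.~\ref{prop:zero-in-U-compr}, and it yields the expected hull description and the distance map $D=2q-|m|^2/b$; you are also right that the $\rho$ in \eqref{eq:const-lake} is a typo for $b$.
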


We skip a detailed proof of Lemma~\ref{lemma:ass-lake} because it follows the same lines as the proofs of Lemmas~\ref{lemma:ass-incomp-euler} and \ref{lemma:ass-compr-euler}.

\subsection{Proof of Thms.~\ref{thm:lake-exforalldata} and \ref{thm:lake-wilddata}} \label{subsec:ex-lake-pf} 

Since the structural assumptions of Thm.~\ref{thm:conv-int} hold, we may apply Thms.~\ref{thm:conv-int} and \ref{thm:idiK} in the context of the lake equations, which allows to prove Thms.~\ref{thm:lake-exforalldata} and \ref{thm:lake-wilddata}.

\begin{proof}[Proof of Thm.~\ref{thm:lake-exforalldata}]
Since the initial data $u_0$ are even $C^1$, the proof of Thm.~\ref{thm:lake-exforalldata} is much simpler than the proof of the corresponding theorem in the case of the incompressible Euler equations (see Thm.~\ref{thm:incomp-euler-exforalldata}). 

For the given topography $b$, we simply choose $p$ and $q$ constant, $\ov{m}(t,x)=b(x) u_0(x)$ and $\ov{U}\equiv 0$. If $q$ is taken sufficiently large, then $(\ov{m},\ov{U})$ takes values in $\sU_{b,p,q}=\interior{\big( (\sK_{b,p,q})^\Lambda\big)}$. Hence $(\ov{m},\ov{U})$ is indeed a desired subsolution (the PDEs \eqref{eq:lin-lake-mass}, \eqref{eq:lin-lake-mom} hold trivially) and we can apply Thm.~\ref{thm:conv-int} to obtain infinitely many solutions as stated in Thm.~\ref{thm:lake-exforalldata}. 
\end{proof}

\begin{proof}[Proof of Thm.~\ref{thm:lake-wilddata}]
Again we choose $p$ constant, and set $(\ov{m},\ov{U})\equiv (0,0)$. We will fix $q=q(t)>0$ later. It is then obvious that $(\ov{m},\ov{U})$ is a subsolution and we can proceed as in the proofs of Thms.~\ref{thm:incomp-euler-wilddata} and \ref{thm:compr-euler-wilddata}: We apply Thm.~\ref{thm:idiK} with $T_0>0$ arbitrary to obtain a new subsolution which (after shifting in time by $T_0$) takes values in $\sK_{b,p,q}$ for $t=0$ and in $\sU_{b,p,q}$ for $t>0$. Afterwards we apply Thm.~\ref{thm:conv-int} to obtain infinitely many weak solutions which take values in $\sK_{b,p,q}$ for $t\geq 0$. The corresponding initial data satisfy the compatibility condition \ref{eq:hydrost-euler-compatibility-initialdata} which can be shown by proceeding like in the proof of Thm.~\ref{thm:incomp-euler-wilddata}.

Next we estimate the energy inequality:
\begin{align*}
    \partial_t \left( \frac{|m|^2}{2b}\right) + \Div \left[ \left( \frac{|m|^2}{2b} + b p \right) \frac{m}{b} \right] &= \partial_t q + m\cdot \Grad \left( \frac{q}{b} + p \right) \\
    &\leq \partial_t q + C_1 q^2 + C_2 q,
\end{align*}
with $C_1,C_2>0$ depending only on $b$. It is not difficult (e.g.~using the Picard-Lindel{\"o}f-theorem) to show that there exists a $q$ making the energy inequality \eqref{eq:lake-enineq} valid and taking positive values at least on a short time interval $[0,T]$. The time $T$ only depends on $C_1$ and $C_2$ and thus on the topography $b$. 
\end{proof}

\section{Hydrostatic Euler equations}  \label{sec:ex-hydrost-euler}

In this section we cover one of the main aims of this paper, namely the application of the framework established in Sect.~\ref{sec:general} to the hydrostatic Euler equations \eqref{eq:hydrost-euler-mass}-\eqref{eq:hydrost-euler-p}. Compared to Sects.~\ref{sec:ex-incomp-euler}-\ref{sec:ex-lake} above, we now need to show the structural assumptions of Thm.~\ref{thm:conv-int} in the context of the hydrostatic Euler equations \eqref{eq:hydrost-euler-mass}-\eqref{eq:hydrost-euler-p} comprehensively and in detail (see Sect.~\ref{subsec:ex-hydrost-euler-ass} below). This is due to the fact that the authors are not aware of any work in the literature which can be used for our purposes, even though the proof of the structural assumptions is similar to the corresponding proof for the incompressible Euler equations (see Sect.~\ref{subsec:ex-incomp-euler-ass} above). In addition to that, in the context of the hydrostatic Euler equations there seems to be no simple explicit formula to characterize the set $\sU = \interior{(\sK^\Lambda)}$. For this reason, we will work with a sufficiently large subset $\sW\subset \sU$, see Sect.~\ref{subsec:ex-hydrost-euler-subset} below. Also in \cite{Markfelder24,GebKol22}, where convex integration was used in a different context, a subset $\sW\subset \sU$ was considered rather than the whole $\sU$.

Note that all these difficulties do not appear in \cite{ChiMic17} which is due to the special approach used therein (by considering the 3D incompressible Euler equations with an additional requirement on the pressure rather than the hydrostatic Euler equations directly). However this way one cannot achieve admissibility with respect to the right energy, see Rem.~\ref{rem:hydrost-euler-differences-ChiMic}.

\subsection{Preliminaries and reformulation of the problem} \label{subsec:ex-hydrost-euler-prel} 

Like in Sect.~\ref{subsec:ex-incomp-euler-prel} we first rewrite the equations \eqref{eq:hydrost-euler-mass}-\eqref{eq:hydrost-euler-p} as the linear system 
\begin{align}
	\Divh u + \partial_z w &= 0, \label{eq:lin1-hydrost-euler-mass} \\
	\partial_t u + \Divh (U+q\id_2) + \partial_z W &=0, \label{eq:lin1-hydrost-euler-mom} \\
	\partial_z p &=0. \label{eq:lin1-hydrost-euler-p} 
\end{align}
Here we have introduced the additional unknowns $U$, $q$ and $W$, which take values in $\symz{2}$, $\R$ and $\R^2$, respectively. Moreover the constitutive set reads 
\begin{align}
	\sK = \Big\{ (u,w,U,W,p,q)\in \R^2\times \R &\times \symz{2} \times \R^2 \times\R \times\R \,\Big|\, \label{eq:const1-hydrost-euler} \\
    & u\otimes u - U + (p-q)\id_2 = 0, \ W=uw \Big\}. \notag
\end{align} 

Also in the context of the hydrostatic Euler system, we treat $p$ and $q$ as parameters. Consequently we rewrite the linear system \eqref{eq:lin1-hydrost-euler-mass}-\eqref{eq:lin1-hydrost-euler-p} in the form \eqref{eq:lin-eq}, namely 
\begin{align}
	\Divh u + \partial_z w &= 0, \label{eq:lin-hydrost-euler-mass} \\
	\partial_t u + \Divh U + \partial_z W &= - \Gradh q, \label{eq:lin-hydrost-euler-mom} 
\end{align}
where equation \eqref{eq:lin1-hydrost-euler-p} shall be considered as an additional requirement on the given function $p$. 

In order to make the corresponding constitutive sets bounded, we introduce a bound $c\in \R$ and look for solutions which satisfy $|w|=c$. This also ensures that the constructed solutions will be truly hydrostatic in nature, rather than solutions of the 2D Euler equations. Therefore, instead of the set \eqref{eq:const1-hydrost-euler}, we obtain the family of constitutive sets 
\begin{align}
	\sK_{p,q,c} = \Big\{ (u,w,U,W)\in \R^2 &\times \R \times \symz{2} \times \R^2 \,\Big|\, \label{eq:const-hydrost-euler} \\
    & u\otimes u - U + (p-q)\id_2 = 0, \ W=uw , \ |w|=c \Big\}. \notag
\end{align}
Moreover we have $\theta=(p,q,c)$ and $\zeta=(u,w,U,W)$. Similar to Sect.~\ref{subsec:ex-incomp-euler-prel} we may assume that $\theta=(p,q,c)$ takes values in a compact set $\Theta$, and that $q\geq p$ and $c\geq 0$ on $\Theta$ (otherwise $\sK_{p,q,c}=\emptyset$ and existence of $\ov{\zeta}$ as required in Thm.~\ref{thm:conv-int} is impossible).

Finally we note the wave cone for the setting under consideration, which reads 
\begin{align*}
    \Lambda =  \bigg\{ (u,w,U,W)\in \R^2 \times \R \times \symz{2} \times \R^2 \, \Big|\, &\exists\eta=(\eta_t,\eta_x)\in \R^{4}\ \text{ with }\ \eta_x \neq 0 \ \text{ and }\  \\ 
    & \qquad \qquad \left(\begin{array}{ccc} 0 & u^\trans & w\\ u & U & W\end{array}\right)\cdot\eta = 0 \quad\bigg\}.
\end{align*}

\subsection{Structural assumptions of Thm.~\ref{thm:conv-int}} \label{subsec:ex-hydrost-euler-ass}

Like in the case of the incompressible Euler equations (see Sect.~\ref{subsec:ex-incomp-euler-ass}), we check in this subsection if the structural assumptions of Thm.~\ref{thm:conv-int} hold.

\begin{lemma} \label{lemma:ass-hydrost-euler} 
    \begin{enumerate}
        \item \label{item:ass1-hydrost-euler} The family of constitutive sets $(\sK_{p,q,c})_{(p,q,c)\in \Theta}$ defined in \eqref{eq:const-hydrost-euler} is suitable in the sense of Defn.~\ref{defn:suitable-K}.  

        \item \label{item:ass2-hydrost-euler} Let $(u,w,U,W)\in\Lambda$. Then there exists third order homogeneous differential operator
	    $$
		      \opL_{(u,w,U,W)} : C^\infty(\R^{4}) \to C^\infty(\R^{4};\R^2\times \R\times \symz{2}\times \R^2) 
        $$
        which is suitable in the sense of Defn.~\ref{defn:suitable-operator}.

        \item \label{item:ass3-hydrost-euler} It holds that 
        $$
            (\sK_{p,q,c})^\Lambda = (\sK_{p,q,c})^\co \qquad \text{ for all } (p,q,c) \in \Theta.
        $$ 

        \item \label{item:ass4-hydrost-euler} There exists a map\footnote{We write ``$\ov{c}$'' here for the bound denoted by ``$c$'' in Sect.~\ref{sec:general}. The reader should notice that in the current section the symbol ``$c$'' is used for something else. Still the authors believe that this notation does not cause confusion because ``$\ov{c}$'' will appear very rarely.} $D\in C(\Theta\times \ov{B}_{\ov{c}}(0);\R)$ which is a suitable distance map in the sense of Defn.~\ref{defn:suitable-distance-map}. Moreover if $r=2$, then $D$ satisfies the additional property~\ref{item:suitable-D-convergence} introduced in Sect.~\ref{subsec:general-idiK}.
    \end{enumerate}
\end{lemma}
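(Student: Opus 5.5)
The plan is to verify the four items separately, modelling items (a), (b) and (d) on the incompressible case (Lemma~\ref{lemma:ass-incomp-euler}). The genuinely new work is item (c).

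\emph{Item (a).} Closedness of $\sK_{p,q,c}$ is clear. Taking the trace of $u\otimes u - U + (p-q)\id_2=0$ gives $|u|^2=2(q-p)$. Together with $|w|=c$ and $W=uw$ this bounds $u,w,U,W$ uniformly on the compact set $\Theta$. For the continuity property (b) of Defn.~\ref{defn:suitable-K}, given $\zeta_1\in\sK_{\mu_1}$ I set
\[
u_2:=\sqrt{2(q_2-p_2)}\,\tfrac{u_1}{|u_1|},\qquad w_2:=c_2\,\sign(w_1),\qquad U_2:=u_2\otimes u_2+(p_2-q_2)\id_2,\qquad W_2:=u_2w_2,
\]
and estimate the difference exactly as in Lemma~\ref{lemma:ass-incomp-euler}, using uniform continuity of the square root. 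Property (c) of Defn.~\ref{defn:suitable-K} will follow from item (d): by Lemma~\ref{lemma:suitable-distance-map}, $D>0$ on the interior of the hull, while $D=0$ on $\sK$.

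\emph{Item (d).} I take $D((p,q,c),(u,w,U,W)):=2(q-p)+c^2-|u|^2-w^2$. It is continuous and concave in $\zeta$, and it vanishes on $\sK_{p,q,c}$. For property (c) of Defn.~\ref{defn:suitable-distance-map}, let $\zeta=\sum_i\lambda_i\zeta_i$ be a convex combination of points $\zeta_i\in\sK$. Every $\zeta_i$ satisfies $|u_i|^2+w_i^2=2(q-p)+c^2$. Hence $D(\zeta)=0$ means $|u|^2+w^2$ equals the average of $|u_i|^2+w_i^2$. Strict convexity of $|\cdot|^2$ then forces all $(u_i,w_i)$ to coincide, so all $U_i,W_i$ coincide as well, and $\zeta\in\sK$. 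Property~\ref{item:suitable-D-convergence} for $r=2$ is immediate, since $D$ is affine in $|\zeta|^2$-type quantities and strong $L^2$ convergence passes to the limit.

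\emph{Item (b).} The linear system \eqref{eq:lin-hydrost-euler-mass}, \eqref{eq:lin-hydrost-euler-mom} has $3$ equations in $4$ space-time variables. I will build $\opL_\zeta$ from potentials, as in \cite[Prop.~4.4.1]{Markfelder}. Let $\zeta\in\Lambda$ with direction $\eta$. I choose a constant-coefficient third-order operator, built from the antisymmetric combinations $\eta_j\partial_k-\eta_k\partial_j$ applied to $g$, which solves the system identically and reduces to $\zeta h'''$ on plane waves $g=h(\eta\cdot(t,x))$. I expect this to be a routine algebraic adaptation of the Euler case with the extra $z$-column $(w,W)$.

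\emph{Item (c), the main step.} It suffices to show that every point of $(\sK)^\co$ is a finite $\Lambda$-laminate of points of $\sK$, i.e.\ that segments between the relevant points have directions in $\Lambda$. A difference $\Delta\zeta=(\Delta u,\Delta w,\Delta U,\Delta W)$ lies in $\Lambda$ iff there is $\eta$ with $\eta_x\neq0$ solving
\[
\Delta u\cdot\eta_h+\Delta w\,\eta_z=0,\qquad \Delta u\,\eta_t+\Delta U\eta_h+\Delta W\eta_z=0 .
\]
These are $3$ linear equations in $4$ unknowns, so a nonzero $\eta$ always exists. If $\eta_h=0$, the first equation gives $\Delta w\,\eta_z=0$. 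For $\Delta w\neq0$ this forces $\eta_z=0$ and then $\Delta u\,\eta_t=0$, so $\eta=0$ unless $\Delta u=0$. Hence $\Lambda$ contains every direction except a degenerate set: roughly, $\Delta u=0$ together with $\Delta U$ invertible and $\Delta W,\Delta w$ not compensating.

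I then proceed in two stages. First, for fixed $w=\pm c$, the slice of $\sK$ is a linear image of the 2D Euler set $\sK^{2D}_{p,q}$ under $(u,U)\mapsto(u,\pm c,U,\pm cu)$. Differences within a slice satisfy the 2D Euler cone condition with the shifted time variable $\eta_t+w\eta_z$. So by \eqref{eq:convhull-incomp-euler} (with $n=2$) each slice's convex hull equals its $\Lambda$-hull. Second, a general point of the hull is $\lambda\zeta^++(1-\lambda)\zeta^-$ with $\zeta^\pm$ in the two slice hulls, and I need $\zeta^+-\zeta^-\in\Lambda$. By the computation above this holds unless $u^+=u^-$. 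In that degenerate case I will re-split $\zeta$ into a different pair by moving $\zeta^\pm$ along a common slice direction, or argue via interiors and density, which is allowed since only \eqref{eq:ass-KbLambda=KbCo} is needed. I expect this degenerate case, where the cone condition fails, to be the main obstacle.
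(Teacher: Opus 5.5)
\textbf{Item (c).} Your slicing argument stops exactly where it can fail. If $u^+=u^-$ but $U^+\neq U^-$ (and $c>0$), then $\zeta^+-\zeta^-\notin\Lambda$: with $\Delta u=0$ and $\Delta w=2c\neq0$ the cone condition forces $\eta_z=0$ and then $\Delta U=0$. Your fallback (interiors and density) would at best give $\interior{((\sK_{p,q,c})^\Lambda)}=\interior{((\sK_{p,q,c})^\co)}$, whereas item (c) asserts equality of the hulls themselves.

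The missing idea is to test the cone condition on pairs of points of $\sK_{p,q,c}$ itself, not on points of the slice hulls. Your computation already gives $\Delta\zeta\in\Lambda$ whenever $\Delta u\neq0$, because a nonzero solution with $\eta_x=0$ would force $\Delta u=0$. On $\sK_{p,q,c}$ the relation $U=u\otimes u+(p-q)\id_2$ gives $\Delta U=0$ whenever $\Delta u=0$. In that case any $\eta$ with $\eta_t=\eta_z=0$, $\eta_h\neq0$ works. So $\Lambda$ is complete with respect to $\sK_{p,q,c}$, and Prop.~\ref{prop:app-complete-wc} gives $(\sK_{p,q,c})^\Lambda=(\sK_{p,q,c})^\co$ at once. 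This is the paper's proof, with the explicit choice $\eta_x\perp(u_2-u_1,w_2-w_1)$, $\eta_t=-(u_2,w_2)\cdot\eta_x$.

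Your slicing could also be rescued, though this is unnecessary. Replace $U^\pm$ by $\lambda U^++(1-\lambda)U^-$. Both new points stay in the slice hulls by convexity of the 2D Euler hull, the combination is unchanged, and the new difference is $(0,2c,0,2cu)\in\Lambda$.

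\textbf{Item (b)} is also not established. The hydrostatic system is not a space-time divergence of a symmetric matrix, so the Euler potentials do not carry over verbatim. Moreover, the building block you name cannot work: $(\eta_j\partial_k-\eta_k\partial_j)\,h((t,x)\cdot\eta)=0$. An operator built from such factors therefore kills precisely the plane waves on which Defn.~\ref{defn:suitable-operator}~\ref{item:suitable-operator-b} demands $\zeta h'''$.

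You have to write the operator down. For $\eta_3\neq0$ this is easy: take $\opL_u=\eta_3^{-3}u\,\partial_z^3$, $\opL_U=\eta_3^{-3}U\partial_z^3$, $\opL_w=-\eta_3^{-3}(u\cdot\Gradh)\partial_z^2$ and $\opL_W=-\eta_3^{-3}(u\,\partial_t+U\Gradh)\partial_z^2$. The cone relations turn $\opL_w,\opL_W$ into $wh'''$, $Wh'''$ on plane waves. For $\eta_3=0$ a different ansatz exploiting the trace-free symmetric structure of $U$ is needed; the paper treats $\eta_1\neq0$ and $\eta_1=0$ separately.

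\textbf{Item (a)} has two smaller problems.
\begin{enumerate}
\item Deducing property (c) of Defn.~\ref{defn:suitable-K} from Lemma~\ref{lemma:suitable-distance-map}~\ref{item:L-suitable-D->0} is circular, since that item presupposes suitability. A direct fix: if $\zeta_0\in\sK_{p,q,c}$ were interior to the hull, moving in a $(u,w)$-direction $e\neq0$ with $e\cdot(u_0,w_0)\geq0$ produces hull points with $D<0$. This contradicts item~\ref{item:L-suitable-D-geq0}.
\item Your continuity construction breaks in edge cases: $w_2:=c_2\sign(w_1)$ violates $|w_2|=c_2$ when $c_1=0<c_2$, and $u_1/|u_1|$ is undefined when $u_1=0$.
\end{enumerate}

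\textbf{Item (d)} is correct and slicker than the paper's argument. Strict convexity of $|(u,w)|^2$ on a Carath\'eodory decomposition forces all $(u_i,w_i)$, and hence all the points, to coincide.
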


\begin{proof}[Proof of Lemma~\ref{lemma:ass-hydrost-euler}~\ref{item:ass1-hydrost-euler}] 
We have to show that the properties in Defn.~\ref{defn:suitable-K} hold (denoted in sequel by ``Prop.~(a)'', ``Prop.~(b)'' and ``Prop.~(c)''). 
\begin{itemize}
    \item[Prop.~(a)] We see immediately from the definition of the constitutive sets $\sK_{p,q,c}$ (see \eqref{eq:const-hydrost-euler}) that they are closed. The boundedness of the constitutive sets for the incompressible Euler equations (see Lemma~\ref{lemma:ass-incomp-euler}~\ref{item:ass1-incomp-euler} and \cite[Lemma~3]{DelSze10}) yields bounds on $u$ and $U$. Moreover \eqref{eq:const-hydrost-euler} gives a bound on $w$ and consequently even $W$ is bounded on $\sK_{p,q,c}$.
    
    \item[Prop.~(b)] We proceed similar to the case of the incompressible Euler equations, see proof of Lemma~\ref{lemma:ass-incomp-euler}~\ref{item:ass1-incomp-euler}. 
    
    Let $\ep>0$ given. We may choose $c_u>1$ such that $|u|<c_u$ and $c<c_u$ for all $(u,w,U,W)\in \sK_{p,q,c}$ and all $(p,q,c)\in \Theta$. 

    Like in the proof of Lemma~\ref{lemma:ass-incomp-euler}~\ref{item:ass1-incomp-euler}, we fix $\delta>0$ such that $\delta< \frac{\ep}{6}$, $\delta< \frac{\ep}{2 c_u}$, and 
    $$
        \Big| \sqrt{q_1-p_1} - \sqrt{q_2-p_2} \Big| < \frac{\ep}{6 \sqrt{2} c_u}
    $$
    for all $(p_1,q_1,c_1),(p_2,q_2,c_2)\in \Theta$ with $|(p_1,q_1,c_1)-(p_2,q_2,c_2)| < \delta$.

    Now consider $|(p_1,q_1,c_1)-(p_2,q_2,c_2)| < \delta$ and $(u_1,w_1,U_1,W_1)\in \sK_{p_1,q_1,c_1}$. We choose $u_2$ and $U_2$ as in the proof of Lemma~\ref{lemma:ass-incomp-euler}~\ref{item:ass1-incomp-euler}, i.e. 
    $$
        u_2 := \sqrt{ 2 (q_2-p_2)} \frac{u_1}{|u_1|},
    $$
    if $u_1\neq 0$, and $u_2\in \R^2$ such that $|u_2|^2= 2 (q_2-p_2)$ otherwise. Moreover 
    $$
        U_2 := u_2 \otimes u_2 + (p_2-q_2) \id_2. 
    $$
    In addition to that we set 
    $$
        w_2 := w_1 \frac{c_2}{c_1}
    $$
    if $c_1\neq 0$, and $w_2\in \R$ such that $|w_2|=c_2$ otherwise. Finally define
    $$
        W_2 := u_2 w_2.
    $$
    
    By construction we have $(u_2,w_2,U_2,W_2)\in \sK_{p_2,q_2,c_2}$. As shown in the proof of Lemma~\ref{lemma:ass-incomp-euler}~\ref{item:ass1-incomp-euler} it holds that 
    $$
        |u_1 - u_2| < \ep \qquad \text{ and } \qquad |U_1 - U_2| < \ep. 
    $$
    Furthermore we find 
    \begin{align*}
        |w_1 - w_2 | &\leq |c_1 - c_2| < \delta < \ep, \\
        |W_1 - W_2 | &\leq |u_1-u_2| |w_1| + |u_2| |w_1 - w_2| < \frac{\ep}{6c_u} c_u + c_u \frac{\ep}{2c_u} <\ep.
    \end{align*}

    \item[Prop.~(c)] Proceeding as in \cite[Proof of Lemma~3.2]{Markfelder24}, one may show that 
    $$
        \interior{\big((\sK_{p,q,c})^\co\big)} \subset \Big\{ (u,w,U,W)\in \R^2 \times \R \times \symz{2} \times \R^2 \,\Big|\, \lambda_{\max} \big( u\otimes u - U + (p-q)\id_2 \big) < 0 \Big\}. 
    $$
    Hence 
    $$
        \interior{\big((\sK_{p,q,c})^\co\big)} \cap \sK_{p,q,c} = \emptyset \qquad \text{ for all }(p,q,c)\in \Theta.
    $$
\end{itemize}
\end{proof}

\begin{proof}[Proof of Lemma~\ref{lemma:ass-hydrost-euler}~\ref{item:ass2-hydrost-euler}] 
In this proof we will use the notation $\partial_i:= \partial_{x_i}$ for $i=1,2$, and accordingly $\partial_3:= \partial_z$. Moreover we write $\parthree{i}{j}{k}:=\partial_i \partial_j\partial_k$ where $i,j,k\in \{t,1,2,3\}$.

Let $(u,w,U,W)\in \Lambda$, i.e.~there exists $\eta=(\eta_t,\eta_1,\eta_2,\eta_3)\in \R^4$ with $\eta_1^2 + \eta_2^2 + \eta_3^2\neq 0$ and 
\begin{equation} \label{eq:op-lambda}
    \left(\begin{array}{ccc} 0 & u^\trans & w\\ u & U & W\end{array}\right)\cdot\eta = 0 .
\end{equation}
	
Define 
\begin{align*}
	\opL_{[u]_1}[g] &:= \alpha \parthree{3}{3}{3}g + \epsilon \Big( \parthree{1}{1}{2}g + \parthree{2}{2}{2}g \Big) - \zeta \parthree{1}{1}{3} g ,\\
	\opL_{[u]_2}[g] &:= \beta \parthree{3}{3}{3} g - \epsilon \Big( \parthree{1}{1}{1}g + \parthree{1}{2}{2}g \Big) -\theta \parthree{2}{2}{3} g ,\\
	\opL_w[g] &:= -\alpha\parthree{1}{3}{3}g - \beta\parthree{2}{3}{3}g + \zeta\parthree{1}{1}{1} g +\theta \parthree{2}{2}{2} g ,\\
	\opL_{[U]_{11}}[g] &:= \gamma\parthree{3}{3}{3}g -2\epsilon \parthree{t}{1}{2}g + \lambda \Big(\parthree{1}{1}{3}g + \parthree{2}{2}{3}g \Big) ,\\
	\opL_{[U]_{12}}[g] &:= \delta\parthree{3}{3}{3}g + \epsilon \Big( \parthree{t}{1}{1}g - \parthree{t}{2}{2}g \Big) - \kappa \Big(\parthree{1}{1}{3} g + \parthree{2}{2}{3}g \Big) ,\\
	\opL_{[W]_1}[g] &:= -\alpha\parthree{t}{3}{3}g - \gamma\parthree{1}{3}{3}g -\delta\parthree{2}{3}{3}g + \zeta \parthree{t}{1}{1} g +\kappa \Big(\parthree{1}{1}{2} g + \parthree{2}{2}{2} g\Big) - \lambda\Big(\parthree{1}{1}{1} g + \parthree{1}{2}{2} g \Big), \\
	\opL_{[W]_2}[g] &:= -\beta\parthree{t}{3}{3}g + \gamma\parthree{2}{3}{3}g -\delta\parthree{1}{3}{3}g + \theta\parthree{t}{2}{2}g +\kappa \Big(\parthree{1}{1}{1} g + \parthree{1}{2}{2} g \Big) + \lambda \Big(\parthree{1}{1}{2} g + \parthree{2}{2}{2} g\Big), 
\end{align*}
where\footnote{Note that $\eta_1^2 + \eta_2^2 + \eta_3^2 \neq 0$, and accordingly if $\eta_3=0$, then $\eta_1^2 + \eta_2^2\neq 0$. For the same reason if $\eta_3=\eta_1=0$, then $\eta_2\neq 0$.} 
\renewcommand{\arraystretch}{2.5}
\begin{align*}
	&(\alpha,\beta,\gamma,\delta,\epsilon,\zeta,\theta,\kappa,\lambda) \\
	&:= \left\{ \begin{array}{ll}
		\left(\frac{[u]_1}{\eta_3^3},\frac{[u]_2}{\eta_3^3},\frac{[U]_{11}}{\eta_3^3},\frac{[U]_{12}}{\eta_3^3},0,0,0,0,0\right), & \text{ if }\eta_3\neq 0 , \\
		\left(0,0,0,0,-\frac{[u]_2}{\eta_1 (\eta_1^2 + \eta_2^2)},\frac{w}{\eta_1^3},0,\frac{\eta_2 [W]_1 + \eta_1 [W]_2 - \frac{\eta_t \eta_2 w}{\eta_1}}{(\eta_1^2+\eta_2^2)^2},\frac{-\eta_1 [W]_1 + \eta_2 [W]_2 + \eta_t w}{(\eta_1^2+\eta_2^2)^2}\right), & \text{ if }\eta_3 = 0 \text{ and }\eta_1 \neq 0 , \\ 
		\left(0,0,0,0,\frac{[u]_1}{\eta_2^3},0,\frac{w}{\eta_2^3},\frac{[W]_1}{\eta_2^3},\frac{[W]_2}{\eta_2^3}-\frac{\eta_t w}{\eta_2^4 }\right), & \text{ if }\eta_3 = 0 \text{ and }\eta_1 = 0. 
	\end{array}\right. 
\end{align*}
\renewcommand{\arraystretch}{1}
It is then straightforward to check that the statement in item~\ref{item:suitable-operator-a} of Defn.~\ref{defn:suitable-operator} holds. 
	
Now let $g(t,x):= h((t,x)\cdot \eta)= h(\eta_t t + \eta_1 x_1 + \eta_2 x_2 + \eta_3 z)$. We first consider the case $\eta_3\neq 0$. We obtain 
\begin{align*}
	\opL_{[u]_1}[g] &= \alpha \eta_3^3 \,h'''((t,x)\cdot \eta) = [u]_1 \,h'''((t,x)\cdot \eta) ,\\
	\opL_{[u]_2}[g] &= \beta \eta_3^3 \,h'''((t,x)\cdot \eta) = [u]_2 \,h'''((t,x)\cdot \eta),\\
	\opL_w[g] &= \Big[-\alpha \eta_1 \eta_3^2 -\beta \eta_2 \eta_3^2\Big]\,h'''((t,x)\cdot \eta) = \left[-\frac{\eta_1 [u]_1}{\eta_3} - \frac{\eta_2 [u]_2}{\eta_3}\right] \,h'''((t,x)\cdot \eta)\\
	\opL_{[U]_{11}}[g] &= \gamma \eta_3^3 \,h'''((t,x)\cdot \eta) = [U]_{11} \,h'''((t,x)\cdot \eta) ,\\
	\opL_{[U]_{12}}[g] &= \delta \eta_3^3 \,h'''((t,x)\cdot \eta) = [U]_{12} \,h'''((t,x)\cdot \eta) ,\\
	\opL_{[W]_1}[g] &= \Big[-\alpha \eta_t \eta_3^2 - \gamma \eta_1 \eta_3^2 - \delta \eta_2 \eta_3^2\Big] \,h'''((t,x)\cdot \eta) = \left[-\frac{\eta_t [u]_1}{\eta_3} - \frac{\eta_1 [U]_{11}}{\eta_3} - \frac{\eta_2 [U]_{12}}{\eta_3}\right] \,h'''((t,x)\cdot \eta) , \\
	\opL_{[W]_2}[g] &= \Big[-\beta \eta_t \eta_3^2 + \gamma \eta_2 \eta_3^2 - \delta \eta_1 \eta_3^2\Big] \,h'''((t,x)\cdot \eta) = \left[- \frac{\eta_t [u]_2}{\eta_3} + \frac{\eta_2 [U]_{11}}{\eta_3} - \frac{\eta_1 [U]_{12}}{\eta_3} \right] \,h'''((t,x)\cdot \eta). 
\end{align*}
According to \eqref{eq:op-lambda} we have
\begin{align*}
	\eta_1 [u]_1 + \eta_2 [u]_2 + \eta_3 w &=0, \\
	\eta_t [u]_1 + \eta_1 [U]_{11} + \eta_2 [U]_{12} + \eta_3 [W]_1 &=0, \\
	\eta_t [u]_2 + \eta_1 [U]_{12} - \eta_2 [U]_{11} + \eta_3 [W]_2 &=0,
\end{align*}
and hence item~\ref{item:suitable-operator-b} of Defn.~\ref{defn:suitable-operator} holds. 

Next we consider $\eta_3 = 0$, $\eta_1\neq 0$. We obtain in a similar fashion 
\begin{align*} 
	\opL_{[u]_1}[g] &= \epsilon \Big[ \eta_1^2 \eta_2 + \eta_2^3\Big] \,h'''((t,x)\cdot \eta) = \frac{-\eta_2 [u]_2}{\eta_1} \,h'''((t,x)\cdot \eta) ,\\
	\opL_{[u]_2}[g] &= -\epsilon \Big[ \eta_1^3 + \eta_1 \eta_2^2\Big] \,h'''((t,x)\cdot \eta) = [u]_2 \,h'''((t,x)\cdot \eta),\\
	\opL_w[g] &= \zeta \eta_1^3 \,h'''((t,x)\cdot \eta) = w \,h'''((t,x)\cdot \eta)\\
	\opL_{[U]_{11}}[g] &= -2\epsilon \eta_t \eta_1 \eta_2 \,h'''((t,x)\cdot \eta) = \frac{2 \eta_t \eta_2 [u]_2}{\eta_1^2 + \eta_2^2} \,h'''((t,x)\cdot \eta) ,\\
	\opL_{[U]_{12}}[g] &= \epsilon\Big[ \eta_t \eta_1^2 - \eta_t \eta_2^2\Big] \,h'''((t,x)\cdot \eta) = -\frac{\eta_t(\eta_1^2 - \eta_2^2)[u]_2}{\eta_1(\eta_1^2 + \eta_2^2)} \,h'''((t,x)\cdot \eta) ,\\
	\opL_{[W]_1}[g] &= \Big[\zeta \eta_t \eta_1^2 + \kappa \eta_1^2 \eta_2 + \kappa \eta_2^3 - \lambda \eta_1^3 - \lambda \eta_1 \eta_2^2 \Big] \,h'''((t,x)\cdot \eta) = [W]_1 \,h'''((t,x)\cdot \eta) , \\
	\opL_{[W]_2}[g] &= \Big[\kappa \eta_1^3 + \kappa \eta_1 \eta_2^2 + \lambda \eta_1^2 \eta_2 + \lambda \eta_2^3 \Big] \,h'''((t,x)\cdot \eta) = [W]_2 \,h'''((t,x)\cdot \eta).
\end{align*}
In this case \eqref{eq:op-lambda} turns into 
\begin{align*}
	\eta_1 [u]_1 + \eta_2 [u]_2 &=0, \\
	\eta_t [u]_1 + \eta_1 [U]_{11} + \eta_2 [U]_{12} &=0, \\
	\eta_t [u]_2 + \eta_1 [U]_{12} - \eta_2 [U]_{11} &=0,
\end{align*} 
which yields by elementary manipulations 
\begin{align*}
	-2 \eta_t \eta_2 [u]_2 + (\eta_1^2 + \eta_2^2) [U]_{11} &=0, \\
	\frac{\eta_t(\eta_1^2 - \eta_2^2) [u]_2}{\eta_1} + (\eta_1^2 + \eta_2^2) [U]_{12} &=0.
\end{align*}
Thus again item~\ref{item:suitable-operator-b} of Defn.~\ref{defn:suitable-operator} is satisfied. 

Finally we suppose $\eta_3=\eta_1= 0$. We get
\begin{align*} 
	\opL_{[u]_1}[g] &= \epsilon \eta_2^3 \,h'''((t,x)\cdot \eta) = [u]_1 \,h'''((t,x)\cdot \eta) ,\\
	\opL_{[u]_2}[g] &= 0,\\
	\opL_w[g] &= \theta \eta_2^3\,h'''((t,x)\cdot \eta) = w \,h'''((t,x)\cdot \eta)\\
	\opL_{[U]_{11}}[g] &= 0,\\
	\opL_{[U]_{12}}[g] &= -\epsilon \eta_t \eta_2^2 \,h'''((t,x)\cdot \eta) = -\frac{\eta_t [u]_1}{\eta_2} \,h'''((t,x)\cdot \eta) ,\\ 
	\opL_{[W]_1}[g] &= \kappa \eta_2^3 \,h'''((t,x)\cdot \eta) = [W]_1 \,h'''((t,x)\cdot \eta) , \\
	\opL_{[W]_2}[g] &= \Big[\theta \eta_t \eta_2^2 + \lambda \eta_2^3 \Big] \,h'''((t,x)\cdot \eta) = [W]_2 \,h'''((t,x)\cdot \eta). 
\end{align*}
As $\eta_3=\eta_1=0$, \eqref{eq:op-lambda} yields 
\begin{align*}
	\eta_2 [u]_2 &=0, \\
	\eta_t [u]_1 + \eta_2 [U]_{12} &=0, \\
	\eta_t [u]_2 - \eta_2 [U]_{11} &=0,
\end{align*} 
and since $\eta_2\neq 0$, we infer $[u]_2=0$ and $[U]_{11}=0$. So we deduce that item~\ref{item:suitable-operator-b} of Defn.~\ref{defn:suitable-operator} holds in this case too. 
\end{proof}

\begin{proof}[Proof of Lemma~\ref{lemma:ass-hydrost-euler}~\ref{item:ass3-hydrost-euler}]
It turns out that the proof of Lemma~\ref{lemma:ass-hydrost-euler}~\ref{item:ass3-hydrost-euler} is very similar to the corresponding claim in the case of the incompressible Euler equations, i.e.~Lemma~\ref{lemma:ass-incomp-euler}~\ref{item:ass3-incomp-euler}, see \cite[Lemma~3]{DelSze10}.

We show that $\Lambda$ is complete with respect to $\sK_{p,q,c}$. Then the claim immediately follows from Prop.~\ref{prop:app-complete-wc}. So let $(u_1,w_1,U_1,W_1),(u_2,w_2,U_2,W_2)\in \sK_{p,q,c}$. Choose $\eta=(\eta_t,\eta_x)\in \R^4$ with $\eta_x\neq 0$, 
$$
    \eta_x \perp \left( \begin{array}{c} u_2-u_1 \\ w_2-w_1 \end{array} \right) \qquad \text{ and } \qquad\eta_t= -\left( \begin{array}{c} u_2 \\ w_2 \end{array} \right) \cdot \eta_x .
$$
Then a straightforward computation shows
\begin{align*}
    &\left(\begin{array}{ccc} 0 & u_2^\trans - u_1^\trans & w_2-w_1 \\ u_2-u_1 & U_2-U_1 & W_2 - W_1\end{array}\right)\eta \\
    &= \left(\begin{array}{ccc} 0 & u_2^\trans - u_1^\trans & w_2-w_1 \\ u_2-u_1 & u_2\otimes u_2 - u_1\otimes u_1 & u_2 w_2 - u_1 w_1 \end{array}\right)\eta \\ 
    &= \left(\begin{array}{ccc} 0 & u_2^\trans - u_1^\trans & w_2-w_1 \\ u_2-u_1 & u_2 u_2^\trans - u_1 u_2^\trans + u_1 u_2^\trans - u_1 u_1^\trans & u_2 w_2 - u_1 w_2 + u_1 w_2 - u_1 w_1 \end{array}\right)\eta \\
    &= \left(\begin{array}{ccc} 0 & u_2^\trans - u_1^\trans & w_2-w_1 \\ 0 & u_1 (u_2^\trans - u_1^\trans) & u_1 (w_2 - w_1) \end{array}\right)\eta + \left(\begin{array}{ccc} 0 & 0 & 0 \\ u_2-u_1 & (u_2 - u_1 ) u_2^\trans & (u_2 - u_1) w_2 \end{array}\right)\eta \\\\
    &= 0,
\end{align*} 
and thus $(u_2,w_2,U_2,W_2) - (u_1,w_1,U_1,W_1) \in \Lambda$.
\end{proof}

\begin{proof}[Proof of Lemma~\ref{lemma:ass-hydrost-euler}~\ref{item:ass4-hydrost-euler}]
Define $D((p,q,c),(u,w,U,W)):= 2(q-p) - |u|^2 + c^2 - w^2$. Obviously item~\ref{item:suitable-D-concave} of Defn.~\ref{defn:suitable-distance-map} holds. From \eqref{eq:const-hydrost-euler} we deduce property \ref{item:suitable-D-=0}. 

In order to prove item~\ref{item:suitable-D-inK}, let $(u,w,U,W)\in (\sK_{p,q,c})^\co$ with $D((p,q,c),(u,w,U,W))=0$. Since the maps $(u,w,U,W)\mapsto 2(q-p) - |u|^2$ and $(u,w,U,W)\mapsto c^2 - w^2$ are concave, we have $2(q-p) - |u|^2 \geq 0$ and $c^2 - w^2\geq 0$. So we deduce that both 
$$
    2(q-p) - |u|^2 =0 \qquad \text{ and }\qquad c^2 - w^2 = 0.
$$
As shown in \cite[Lemma~3]{DelSze10} for the case of the incompressible Euler equations, the former implies that 
$$
    u\otimes u - U + (p-q)\id_2 = 0,
$$
while the latter obviously leads to $|w|=c$. It remains to prove that $W=uw$. According to Prop.~\ref{prop:app-caratheodory} there exist $N\in \N$, $\tau_i\in \R^+$ with $\sum_{i=1}^N \tau_i = 1$, and $(u_i,w_i,U_i,W_i)\in \sK_{p,q,c}$ such that 
$$
    (u,w,U,W) = \sum_{i=1}^N \tau_i (u_i,w_i,U_i,W_i).
$$
In particular 
$$
    \sum_{i=1}^N \tau_i |w_i| = \sum_{i=1}^N \tau_i c = c=|w| = \left| \sum_{i=1}^N \tau_i w_i \right|. 
$$
This implies that $w_i=w$ for all $i=1,...,N$. Thus 
$$
    W = \sum_{i=1}^N \tau_i W_i = \sum_{i=1}^N \tau_i u_i w_i = \sum_{i=1}^N \tau_i u_i w = uw. 
$$
So all in all we have proven that $(u,w,U,W)\in \sK_{p,q,c}$, i.e.~item~\ref{item:suitable-D-inK} of Defn.~\ref{defn:suitable-distance-map} is satisfied.

Finally we observe that property~\ref{item:suitable-D-convergence} follows immediately from the definition of $D$. 
\end{proof}

\subsection{A subset of $\sU_{p,q,c}$} \label{subsec:ex-hydrost-euler-subset}

In contrast to the preceding sections (see the proofs of Lemmas~\ref{lemma:ass-incomp-euler}, \ref{lemma:ass-compr-euler} and \ref{lemma:ass-lake}), we have not found an explicit expression of $\sU_{p,q,c}$ while proving Lemma \ref{lemma:ass-hydrost-euler}. Such an explicit form would be helpful in order to find a suitable subsolution. However, instead we will construct an explicit subset $\sW_{p,q,c}\subset \sU_{p,q,c}$ which is large enough for our purposes. The construction of this subset $\sW_{p,q,c}$ will be the content of the current subsection. 

\begin{rem} \label{rem:zero-in-U-incomp}
    In this paper we only need that 
    \begin{equation} \label{eq:zero-in-U-incomp}
        (0,0,0,0)\in \sU_{p,q,c} \qquad \text{ for all }q>p,\ c>0.
    \end{equation}
    The explicit form of $\sW_{p,q,c}$ will immediately allow us to show that $(0,0,0,0)\in \sW_{p,q,c}$ which implies \eqref{eq:zero-in-U-incomp}. However there might be simpler way to prove \eqref{eq:zero-in-U-incomp} than the approach presented here. On the other hand our approach (i.e.~the explicit construction of a subset $\sW_{p,q,c}$ of $\sU_{p,q,c}$) might be quite powerful in view of possible future works, see also Rem.~\ref{rem:hydrost-euler-playwithc}.
\end{rem}

\subsubsection{Definition of the subset $\sW_{p,q,c}$}

In order to define $\sW_{p,q,c}$, we introduce the set 
\begin{equation*} 
    \sM_{p,q} := \left\{ (u,U)\in \R^2\times \symz{2} \,\Big|\, \lambda_{\max} ( u\otimes u - U - (q-p)\id_2 ) \leq 0\right\} ,
\end{equation*}
which coincides with the convex hull of the constitutive sets in the context of the incompressible Euler equations, see the proof of Lemma~\ref{lemma:ass-incomp-euler}. In the same proof we noted that 
\begin{equation*} 
    \interior{(\sM_{p,q})} = \left\{ (u,U)\in \R^2\times \symz{2} \,\Big|\, \lambda_{\max} ( u\otimes u - U - (q-p)\id_2 ) < 0\right\} .
\end{equation*}
Next we define four functions $f_{p,q}^{j,s}: \interior{(\sM_{p,q})}\to \R$ for $j\in \{1,2\}$ and $s\in \{-,+\}$ by 
\begin{align*} 
    f_{p,q}^{1,-}(u,U) &:= \frac{[u]_2 [U]_{12} - \sqrt{(q-p-[U]_{11} - [u]_2^2) \det((q-p)\id_2 + U)}}{q-p-[U]_{11}}, \\
    f_{p,q}^{1,+}(u,U) &:= \frac{[u]_2 [U]_{12} + \sqrt{(q-p-[U]_{11} - [u]_2^2) \det((q-p)\id_2 + U)}}{q-p-[U]_{11}}, \\
    f_{p,q}^{2,-}(u,U) &:= \frac{[u]_1 [U]_{12} - \sqrt{(q-p+[U]_{11} - [u]_1^2) \det((q-p)\id_2 + U)}}{q-p+[U]_{11}}, \\
    f_{p,q}^{2,+}(u,U) &:= \frac{[u]_1 [U]_{12} + \sqrt{(q-p+[U]_{11} - [u]_1^2) \det((q-p)\id_2 + U)}}{q-p+[U]_{11}}.
\end{align*}

\begin{rem}
    The inequality $\lambda_{\max} ( u\otimes u - U - (q-p)\id_2 ) < 0$, which constitutes the set $\interior{(\sM_{p,q})}$, may be translated into bounds for the components of $u$. As shown in Lemma~\ref{lemma:geom-properties-f}~\ref{item:prop-f-c} below, the functions $f_{p,q}^{j,s}$ exactly give these lower (for $s=-$) and upper (for $s=+$) bounds for $[u]_j$.
\end{rem} 

The following lemma summarizes some important properties of the functions $f_{p,q}^{j,s}$.

\begin{lemma} \label{lemma:geom-properties-f}
    The functions $f_{p,q}^{j,s}$ ($j\in \{1,2\}$, $s\in \{-,+\}$) have the following properties.
    \begin{enumerate}
        \item \label{item:prop-f-a} The functions $f_{p,q}^{j,s}$ are well-defined and continuous on $\interior{(\sM_{p,q})}$ for $j\in \{1,2\}$ and $s\in \{-,+\}$. 

        \item \label{item:prop-f-b} The functions $f_{p,q}^{1,-}$ and $f_{p,q}^{1,+}$ do not depend on $[u]_1$, while $f_{p,q}^{2,-}$ and $f_{p,q}^{2,+}$ do not depend on $[u]_2$. Therefore we will sometimes abuse notation and treat $f_{p,q}^{1,\pm}$ as functions from $\sM_{p,q}^1 \to \R$ and $f_{p,q}^{2,\pm}:\sM_{p,q}^2 \to \R$, where
        \begin{align*} 
            \sM_{p,q}^1 &:= \left\{ ([u]_2,U)\in \R\times \sz \,\Big|\, \exists [u]_1 \in \R \text{ such that } (u,U)\in \interior{(\sM_{p,q})} \right\} , \\
            \sM_{p,q}^2 &:= \left\{ ([u]_1,U)\in \R\times \sz \,\Big|\, \exists [u]_2 \in \R \text{ such that } (u,U)\in \interior{(\sM_{p,q})} \right\}  .
        \end{align*}
        
        \item \label{item:prop-f-c} It holds that 
        \begin{align}
            f_{p,q}^{1,-}(u,U) &< [u]_1 < f_{p,q}^{1,+}(u,U) , \label{eq:bound-u1-f} \\
            f_{p,q}^{2,-}(u,U) &< [u]_2 < f_{p,q}^{2,+}(u,U) , \label{eq:bound-u2-f}
        \end{align}
        for all $(u,U)\in \interior{(\sM_{p,q})}$. 

        \item \label{item:prop-f-d} 
        For all $([u]_2, U)\in \sM_{p,q}^1$ we have 
        \begin{equation} \label{eq:bound-in-M-1}
            \left( \left( \begin{array}{c} f_{p,q}^{1,-}([u]_2,U) \\ {[u]_2} \end{array} \right), U \right), \left( \left( \begin{array}{c} f_{p,q}^{1,+}([u]_2,U) \\ {[u]_2} \end{array} \right), U \right) \in \sM_{p,q} ,
        \end{equation}
        and similarly for all $([u]_1, U)\in \sM_{p,q}^2$ it holds that
        \begin{equation} \label{eq:bound-in-M-2}
            \left( \left( \begin{array}{c} {[u]_1} \\ f_{p,q}^{2,-}([u]_1,U) \end{array} \right), U \right), \left( \left( \begin{array}{c} {[u]_1} \\ f_{p,q}^{2,+}([u]_1,U) \end{array} \right), U \right) \in \sM_{p,q} .
        \end{equation}
    \end{enumerate}
\end{lemma}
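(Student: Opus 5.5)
Set $\kappa:=q-p>0$ and $S:=\kappa\id_2+U$. Note that $u\otimes u-U-\kappa\id_2=u\otimes u-S$, so $(u,U)\in\interior{(\sM_{p,q})}$ is equivalent to $S-u\otimes u$ being positive definite. I will translate this into scalar inequalities and read off all four items from a single quadratic in one component of $u$. Since $\tr U=0$, write $U=\begin{pmatrix} a & b\\ b & -a\end{pmatrix}$ with $a=[U]_{11}$ and $b=[U]_{12}$. Then $S=\begin{pmatrix}\kappa+a & b\\ b&\kappa-a\end{pmatrix}$ and $\det S=\kappa^2-a^2-b^2$.

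A symmetric $2\times2$ matrix is positive definite iff one diagonal entry and its determinant are positive. Hence $S-u\otimes u>0$ iff
\[
\kappa-a-[u]_2^2>0 \qquad\text{and}\qquad \det(S-u\otimes u)>0 .
\]
For the determinant I use the rank-one formula $\det(S-u\otimes u)=\det S-u^\trans \mathrm{adj}(S)u$. With $\mathrm{adj}(S)=\begin{pmatrix}\kappa-a&-b\\-b&\kappa+a\end{pmatrix}$ this gives
\[
\det(S-u\otimes u)=\det S-(\kappa-a)[u]_1^2+2b[u]_1[u]_2-(\kappa+a)[u]_2^2 ,
\]
which I regard as a quadratic polynomial $P([u]_1)$ with $[u]_2,U$ fixed. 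Its leading coefficient is $-(\kappa-a)<0$, because the first condition forces $\kappa-a>[u]_2^2\ge 0$.

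The roots of $P$ are
\[
[u]_1=\frac{b[u]_2\pm\sqrt{b^2[u]_2^2+(\kappa-a)\big(\det S-(\kappa+a)[u]_2^2\big)}}{\kappa-a}.
\]
Expanding the discriminant and using $(\kappa-a)(\kappa+a)-b^2=\det S$, it simplifies to $(\kappa-a-[u]_2^2)\det S$. The roots are therefore exactly $f^{1,\mp}_{p,q}$.

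Since $\kappa-a>0$ and the determinant condition is $P([u]_1)>0$, $P$ is a downward parabola and the open set $\interior{(\sM_{p,q})}$ is the set where $P>0$. This gives \eqref{eq:bound-u1-f} strictly, which is item \ref{item:prop-f-c}. The same argument applies with the roles of the components swapped: use the other diagonal entry $\kappa+a-[u]_1^2$ and solve for $[u]_2$. The diagonal entry $\kappa+a$ of $S$ also yields $\det S>0$ and $\kappa+a>0$ on the interior, since $S\ge S-u\otimes u>0$.

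For item \ref{item:prop-f-a}, on $\interior{(\sM_{p,q})}$ the denominators $\kappa\mp a$ are positive. The radicands are positive, being a product of positive quantities. So the $f^{j,s}_{p,q}$ are continuous compositions of continuous functions. Item \ref{item:prop-f-b} is immediate from the formulas, which involve only $[u]_2$ (respectively $[u]_1$) and $U$.

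For item \ref{item:prop-f-d}, fix $([u]_2,U)\in\sM^1_{p,q}$ and set $[u]_1=f^{1,\pm}_{p,q}$. Then $P([u]_1)=0$, so $\det(S-u\otimes u)=0$, while the diagonal entry $\kappa-a-[u]_2^2$ remains positive. Hence $S-u\otimes u$ is positive semidefinite, so $\lambda_{\max}(u\otimes u-S)\le 0$, i.e. the point lies in $\sM_{p,q}$. This is \eqref{eq:bound-in-M-1}, and \eqref{eq:bound-in-M-2} follows symmetrically.

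The only real obstacle is the algebra: the rank-one determinant identity and the simplification of the discriminant to $(\kappa-a-[u]_2^2)\det S$. I will verify both by direct expansion.
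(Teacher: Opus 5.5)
Your proof is correct. The core of item~(c) is exactly what the paper does: you read the determinant condition as a downward parabola in $[u]_1$ whose roots are $f^{1,\pm}_{p,q}$, with discriminant $(\kappa-a-[u]_2^2)\det S$. The surrounding steps, however, follow a different and somewhat more economical route.

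The paper works throughout with the trace/determinant criterion of Lemma~\ref{lemma:app-eigenvalues}, which costs it three extra steps:
\begin{enumerate}
\item It gets positivity of the two diagonal entries from the sign of their product and of their sum.
\item It proves $\det((q-p)\id_2+U)>0$ by an explicit expansion against $([u]_2,-[u]_1)$. This is your identity $\det S=\det(S-u\otimes u)+u^\trans\mathrm{adj}(S)\,u$ in disguise, followed by definiteness.
\item In item~(d) it must check the trace inequality $[u]_1^2+[u]_2^2\le 2(q-p)$ separately. It does so by evaluating the quadratic at $\pm\sqrt{2(q-p)-[u]_2^2}$ to confine the roots.
\end{enumerate}

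You instead apply Sylvester's criterion with the single diagonal entry $\kappa-a-[u]_2^2$. Since this entry does not depend on $[u]_1$, it stays positive when $[u]_1$ is moved to a root. ``Zero determinant plus one positive diagonal entry'' then gives positive semidefiniteness directly. Likewise, $S=(S-u\otimes u)+u\otimes u\succ 0$ yields $\det S>0$ and $\kappa\pm a>0$ in one line. This removes both the reliance on Lemma~\ref{lemma:app-eigenvalues} and the root-localization step.

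The paper's version has the advantage of staying within the scalar inequalities \eqref{eq:app-eigenvalues3}, which it reuses later, e.g.\ in the proof of Prop.~\ref{prop:zero-in-U-compr}.

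One small remark: your standing assumption $\kappa>0$ is never used. It is in any case automatic whenever $\interior{(\sM_{p,q})}\neq\emptyset$, by taking traces.
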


\begin{proof} 
\begin{enumerate}
    \item First we observe from Lemma~\ref{lemma:app-eigenvalues}, that if $(u,U)\in \interior{(\sM_{p,q})}$, then the inequalities \eqref{eq:app-eigenvalues3} hold strictly. From the second inequality in \eqref{eq:app-eigenvalues3} we deduce that 
    $$
        \Big( [u]_1^2 - [U]_{11} - (q-p)\Big) \Big( [u]_2^2 + [U]_{11} - (q-p) \Big) > 0,
    $$
    which yields together with the first inequality in \eqref{eq:app-eigenvalues3} that 
    \begin{equation} \label{eq:lf-terms-nonneg}
        q-p+[U]_{11} - [u]_1^2 > 0, \qquad \text{ and } \qquad q-p-[U]_{11} - [u]_2^2 > 0 . 
    \end{equation}

    Next we show that 
    \begin{equation} \label{eq:lf-det>0}
        \det((q-p)\id_2 + U) > 0. 
    \end{equation}
    We compute using \eqref{eq:app-eigenvalues3}
    \begin{align}
        \det((q-p)\id_2 +U) &= \big(q-p+[U]_{11}\big) \big(q-p-[U]_{11}\big) - \big([U]_{12}\big)^2 \notag \\
        &= \Big(q-p+[U]_{11} - [u]_1^2 \Big) \Big(q-p-[U]_{11} - [u]_2^2 \Big) - \Big([U]_{12} - [u]_1 [u]_2 \Big)^2 \notag \\
        &\qquad + \big(q-p+[U]_{11}\big) [u]_2^2 + \big(q-p-[U]_{11}\big) [u]_1^2 - 2 [U]_{12} [u]_1 [u]_2 \notag \\
        &> \Big([u]_2, -[u]_1\Big) \cdot \Big( (q-p) \id_2 + U\Big) \cdot \left( \begin{array}{c} [u]_2 \\ -[u]_1 \end{array} \right) . \label{eq:lf-computation-det}
    \end{align}
    Since obviously 
    $$
        \Big([u]_2, -[u]_1\Big) \cdot u\otimes u \cdot \left( \begin{array}{c} [u]_2 \\ -[u]_1 \end{array} \right) = 0,
    $$
    we obtain from \eqref{eq:lf-computation-det} that 
    \begin{equation} \label{eq:lf-computation-det2}
        \det((q-p)\id_2 +U) > -\Big([u]_2, -[u]_1\Big) \cdot \Big(u\otimes u - U -(q-p) \id_2 \Big) \cdot \left( \begin{array}{c} [u]_2 \\ -[u]_1 \end{array} \right) .
    \end{equation}
    Because $\lambda_{\max} (u\otimes u - U - (q-p)\id_2) < 0$ implies that the matrix $u\otimes u - U - (q-p)\id_2$ is negative definite, the desired relation \eqref{eq:lf-det>0} follows from \eqref{eq:lf-computation-det2}. 

    Finally we observe that \eqref{eq:lf-terms-nonneg} leads to 
    \begin{equation} \label{eq:lf-denom-nonzero}
        q-p+[U]_{11} > 0\qquad \text{ and } \qquad q-p-[U]_{11} > 0.
    \end{equation}

    From \eqref{eq:lf-terms-nonneg}, \eqref{eq:lf-det>0} and \eqref{eq:lf-denom-nonzero} we deduce that the functions $f_{p,q}^{j,s}$ are well-defined for all $j\in\{1,2\}$ and $s\in \{-,+\}$. The continuity of the maps $f_{p,q}^{j,s}$ follows immediately from their definition.

    \item It is obvious that $f_{p,q}^{1,\pm}$ and $f_{p,q}^{2,\pm}$ are independent of $[u]_1$ and $[u]_2$, respectively. 

    \item Let us rewrite the second inequality in \eqref{eq:app-eigenvalues3} as 
    \begin{equation} \label{eq:lf-det-u1}
        - \big(q-p-[U]_{11}\big) [u]_1^2 + 2 [U]_{12} [u]_1 [u]_2 - \big(q-p+[U]_{11}\big) [u]_2^2 + \det((q-p)\id_2 +U) > 0 .
    \end{equation}
    Using \eqref{eq:lf-denom-nonzero} we deduce from \eqref{eq:lf-det-u1} that
    \begin{align*}
        \frac{[u]_2 [U]_{12} - \sqrt{(q-p-[U]_{11} - [u]_2^2) \det((q-p)\id_2 + U)}}{q-p-[U]_{11}} &< [u]_1 , \\
        \frac{[u]_2 [U]_{12} + \sqrt{(q-p-[U]_{11} - [u]_2^2) \det((q-p)\id_2 + U)}}{q-p-[U]_{11}} &> [u]_1 ,
    \end{align*}
    i.e.~\eqref{eq:bound-u1-f} holds. Relation \eqref{eq:bound-u2-f} follows in a similar fashion. 
    
    \item Let $([u]_2,U)\in \sM_{p,q}^1$, and define a function $A:\R\to \R$, 
    $$
        A(v) := - (q-p-[U]_{11}) v^2 + 2 [U]_{12} v [u]_2 - (q-p+[U]_{11}) [u]_2^2 + \det((q-p)\id_2 +U). 
    $$
    According to the definition of $\sM_{p,q}^1$ and Lemma~\ref{lemma:app-eigenvalues} there exists $[u]_1\in \R$ such that the inequalities in \eqref{eq:app-eigenvalues3} hold (strictly). We have seen already (see proof of item~\ref{item:prop-f-c} above) that 
    $$
        A\big(f_{p,q}^{1,-}([u]_2,U)\big) = 0, \qquad \text{ and } \qquad A \big( f_{p,q}^{1,+}([u]_2,U)\big) =0.
    $$
    Thus the points in \eqref{eq:bound-in-M-1} satisfy the second inequality in \eqref{eq:app-eigenvalues3} (as equality). Next we compute
    \begin{align*}
        A\left( \pm \sqrt{2(q-p) - [u]_2^2} \right) &= -\left(q-p - [U]_{11} - [u]_2^2 \right)^2 - \left([U]_{12} \mp u_2 \sqrt{2(q-p) - [u]_2^2} \right)^2 \\
        &\leq 0,
    \end{align*}
    which leads to 
    $$
        -\sqrt{2(q-p) -[u]_2^2} \leq f_{p,q}^{1,-}([u]_2,U) \leq f_{p,q}^{1,+}([u]_2,U) \leq  \sqrt{2(q-p) -[u]_2^2} .
    $$
    Hence the points in \eqref{eq:bound-in-M-1} also satisfy the first inequality in \eqref{eq:app-eigenvalues3}, and with Lemma~\ref{lemma:app-eigenvalues} we infer that \eqref{eq:bound-in-M-1} holds. 

    The fact that \eqref{eq:bound-in-M-2} is valid can be shown in the same way.
\end{enumerate}
\end{proof}

Now we are ready to define $\sW_{p,q,c}$ by 
\begin{align}
    \sW_{p,q,c} := \bigg\{ &(u,w,U,W)\in \R^2\times \R\times \symz{2}\times \R^2 \,\Big|\, \label{eq:defn-W} \\ 
        &\ \bullet\ \lambda_{\max}\left( u\otimes u - U - (q-p)\id_2\right) < 0, \notag \\ 
        &\ \bullet\ |w| < c , \notag \\
        &\ \bullet\ \exists \tau \in (0,1) \text{ such that } \notag \\
        &\ \quad \bullet\ [W]_1 < [u]_1 w + \tau \min\Big\{ \big( [u]_1 - f_{p,q}^{1,-}(u,U) \big) \big( c - w \big) , \big( f_{p,q}^{1,+}(u,U) - [u]_1 \big) \big( w + c \big) \Big\}, \notag \\
        &\ \quad \bullet\ [W]_1 > [u]_1 w - \tau \min\Big\{ \big( [u]_1 - f_{p,q}^{1,-}(u,U) \big) \big( w + c \big) , \big( f_{p,q}^{1,+}(u,U) - [u]_1 \big) \big( c - w \big) \Big\}, \notag \\
        &\ \quad \bullet\ [W]_2 < [u]_2 w + (1-\tau) \min\Big\{ \big( [u]_2 - f_{p,q}^{2,-}(u,U) \big) \big( c - w \big) , \big( f_{p,q}^{2,+}(u,U) - [u]_2 \big) \big( w + c \big) \Big\}, \notag \\
        &\ \quad \bullet\ [W]_2 > [u]_2 w - (1-\tau) \min\Big\{ \big( [u]_2 - f_{p,q}^{2,-}(u,U) \big) \big( w + c \big) , \big( f_{p,q}^{2,+}(u,U) - [u]_2 \big) \big( c - w \big) \Big\} \bigg\} . \notag
\end{align}

In order to show $\sW_{p,q,c}\subset \sU_{p,q,c}$, we proceed in several steps which are the content of the following Lemmas~\ref{lemma:geom-step1}, \ref{lemma:geom-step2a} and \ref{lemma:geom-step2b}. Afterwards we conclude in Cor.~\ref{cor:geom-WsubsetU}.

\subsubsection{Step 1: rigid $w$ and $W$}

\begin{lemma} \label{lemma:geom-step1} 
    Let $(u,w,U,W)\in \R^2\times \R\times \symz{2}\times \R^2$ such that 
    \begin{align*}
        \lambda_{\max}\left( u\otimes u - U - (q-p)\id_2\right) &\leq 0, \\ 
        |w| &= c , \\
        W &= uw .  
    \end{align*}
    Then $(u,w,U,W)\in (\sK_{p,q,c})^\co$.
\end{lemma}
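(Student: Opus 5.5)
The plan is to reduce the statement to the known convex-hull characterization for the incompressible Euler equations, \eqref{eq:convhull-incomp-euler} in dimension $n=2$. That characterization says precisely that $\sM_{p,q}=(\sK^{E}_{p,q})^\co$, where
$$
\sK^{E}_{p,q}:=\left\{(u,U)\in\R^2\times\sz \,\Big|\, u\otimes u-U+(p-q)\id_2=0\right\}
$$
is the two-dimensional Euler constitutive set. Since $\lambda_{\max}(u\otimes u-U-(q-p)\id_2)\le 0$ means $(u,U)\in\sM_{p,q}$, we obtain the following. By Carath\'eodory (Prop.~\ref{prop:app-caratheodory}) there exist $N\in\N$, weights $\tau_i>0$ with $\sum_{i=1}^N\tau_i=1$, and points $(u_i,U_i)\in\sK^{E}_{p,q}$ with
$$
(u,U)=\sum_{i=1}^N\tau_i(u_i,U_i).
$$

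Next I would lift this decomposition to the hydrostatic constitutive set while keeping the vertical variable frozen. For each $i$ set $w_i:=w$ and $W_i:=u_iw$. Then $|w_i|=|w|=c$, $W_i=u_iw_i$, and $u_i\otimes u_i-U_i+(p-q)\id_2=0$, so each $(u_i,w_i,U_i,W_i)$ lies in $\sK_{p,q,c}$ as defined in \eqref{eq:const-hydrost-euler}.

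It then remains to check the barycenter, which is immediate from linearity. The first components give $\sum\tau_iu_i=u$ and $\sum\tau_iU_i=U$. The frozen vertical component gives $\sum\tau_iw_i=w$. Finally,
$$
\sum_{i=1}^N\tau_iW_i=\Big(\sum_{i=1}^N\tau_iu_i\Big)w=uw=W,
$$
using the hypothesis $W=uw$. Hence $(u,w,U,W)$ is a convex combination of points of $\sK_{p,q,c}$, so it lies in $(\sK_{p,q,c})^\co$.

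The only substantive input is the two-dimensional Euler hull identity from \cite[Lemma~3]{DelSze10}. The key structural point is that fixing $|w|=c$ and $W=uw$ makes $W$ depend linearly on $u$, so the whole decomposition is linear in the Euler variables. I see no real obstacle beyond confirming that this identity is stated for $\sz$, i.e.\ for $n=2$.
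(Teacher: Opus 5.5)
Your proof is correct and follows the paper's argument essentially verbatim. You decompose $(u,U)$ using the two-dimensional Euler hull identity \eqref{eq:convhull-incomp-euler} together with Carath\'eodory (Prop.~\ref{prop:app-caratheodory}), then lift each point by freezing $w_i:=w$ and setting $W_i:=u_iw$, and the barycenter check goes through by linearity exactly as you wrote.
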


\begin{proof} 
From \eqref{eq:convhull-incomp-euler} and Prop.~\ref{prop:app-caratheodory} we know that there exist $N\in \N$ and $\big(\tau_i,(u_i,U_i)\big) \in \R^+\times \R^2\times \sz$ for $i=1,...,N$ such that 
\begin{align*}
    &\bullet\ \sum_{i=1}^N \tau_i = 1 , \\
    &\bullet\ u_i\otimes u_i - U_i - (q-p)\id_2 =0 \quad \text{ for all }i=1,...,N, \\
    &\bullet\ \sum_{i=1}^N \tau_i (u_i,U_i) = (u,U) .
\end{align*}
Setting 
$$
    (w_i,W_i) :=(w, u_i w) \quad \text{ for all }i=1,...,N,
$$
we obviously have 
\begin{align*}
    &\bullet\ (u_i,w_i,U_i,W_i)\in \sK_{p,q,c} \quad \text{ for all }i=1,...,N, \\
    &\bullet\ \sum_{i=1}^N \tau_i (u_i,w_i,U_i,W_i) = (u,w,U,W) ,
\end{align*}
and again by Prop.~\ref{prop:app-caratheodory} this means $(u,w,U,W)\in (\sK_{p,q,c})^\co$ as desired.
\end{proof}

\subsubsection{Step 2: one component of $W$ is rigid}

\begin{lemma} \label{lemma:geom-step2a} 
    Let $(u,w,U,W)\in \R^2\times \R\times \symz{2}\times \R^2$ such that 
    \begin{align*} 
        \lambda_{\max}\left( u\otimes u - U - (q-p)\id_2\right) &< 0, \\ 
        |w| &< c , \\
        [W]_1 &< [u]_1 w + \min\Big\{ \big( [u]_1 - f_{p,q}^{1,-}(u,U) \big) \big( c - w \big) , \big( f_{p,q}^{1,+}(u,U) - [u]_1 \big) \big( w + c \big) \Big\}, \notag \\
        [W]_1 &> [u]_1 w - \min\Big\{ \big( [u]_1 - f_{p,q}^{1,-}(u,U) \big) \big( w + c \big) , \big( f_{p,q}^{1,+}(u,U) - [u]_1 \big) \big( c - w \big) \Big\}, \notag \\
        [W]_2 &= [u]_2 w .  
    \end{align*}
    Then $(u,w,U,W)\in (\sK_{p,q,c})^\co$.
\end{lemma}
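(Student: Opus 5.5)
The plan is to write $(u,w,U,W)$ as a convex combination of four points that each satisfy the hypotheses of Lemma~\ref{lemma:geom-step1}. Then each of them lies in $(\sK_{p,q,c})^\co$ by that lemma, and convexity of $(\sK_{p,q,c})^\co$ gives the claim.

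\textbf{The four corner points.} Keep $[u]_2$ and $U$ fixed, and abbreviate $a:=f_{p,q}^{1,-}(u,U)$ and $b:=f_{p,q}^{1,+}(u,U)$. By Lemma~\ref{lemma:geom-properties-f}~\ref{item:prop-f-c} we have $a<[u]_1<b$, and by assumption $-c<w<c$. For $x\in\{a,b\}$ and $\omega\in\{-c,c\}$ define
$$
    P_{x,\omega}:=\left( \left(\begin{array}{c} x \\ {[u]_2}\end{array}\right), \omega, U, \left(\begin{array}{c} x\omega \\ {[u]_2}\,\omega\end{array}\right) \right).
$$
By Lemma~\ref{lemma:geom-properties-f}~\ref{item:prop-f-d}, namely \eqref{eq:bound-in-M-1}, the pair $((x,[u]_2)^\trans,U)$ lies in $\sM_{p,q}$, so $\lambda_{\max}\le 0$. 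Moreover $|\omega|=c$ and $W=u\omega$ hold by construction. Hence Lemma~\ref{lemma:geom-step1} applies and gives $P_{x,\omega}\in(\sK_{p,q,c})^\co$.

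\textbf{Reduction to a three-dimensional problem.} In the $U$- and $[u]_2$-components every $P_{x,\omega}$ agrees with the target point. In the $[W]_2$-component every convex combination with weights $\lambda_{x,\omega}$ gives $[u]_2\sum\lambda_{x,\omega}\,\omega$, which equals $[u]_2 w=[W]_2$ as soon as the $w$-components combine to $w$. It therefore suffices to show that $([u]_1,w,[W]_1)$ lies in the convex hull of the four points $(x,\omega,x\omega)$, $x\in\{a,b\}$, $\omega\in\{\pm c\}$.

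\textbf{Main step: the convex hull of the four points.} This is the step I expect to carry the content: the classical McCormick envelope of the bilinear function $(x,\omega)\mapsto x\omega$ on the rectangle $[a,b]\times[-c,c]$. The convex hull of the four vertices of this saddle surface is a tetrahedron. Its fibre over a given $(x,\omega)$ in the rectangle is the interval between
$$
    \max\{x\omega-(x-a)(\omega+c),\ x\omega-(b-x)(c-\omega)\}
    \quad\text{and}\quad
    \min\{x\omega+(x-a)(c-\omega),\ x\omega+(b-x)(\omega+c)\}.
$$
These are exactly the bounds in the hypotheses (expand, e.g., $x\omega+(x-a)(c-\omega)=cx+a\omega-ac$).

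I would verify this concretely rather than quote it. On each of the two triangles obtained by cutting the rectangle along a diagonal, the upper bound is the affine interpolation of $x\omega$ at the three vertices of that triangle. The lower bound is the affine interpolation on the triangles of the other diagonal. Consequently, for $(x,\omega)=([u]_1,w)$ both the upper-envelope value and the lower-envelope value are convex combinations of corner values $(x,\omega,x\omega)$. Any $[W]_1$ between them is then a convex combination of these two convex combinations, and in particular of the four corners. This yields the desired barycentric weights and finishes the proof.

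Strict inequalities are not needed here, since non-strict ones suffice for membership in the closed convex hull.
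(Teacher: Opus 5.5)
Your proposal is correct and follows the paper's proof essentially step for step. It uses the same four corner points built from $f_{p,q}^{1,\pm}$ and $\pm c$, places them in $(\sK_{p,q,c})^\co$ via Lemma~\ref{lemma:geom-properties-f}~\ref{item:prop-f-d} and Lemma~\ref{lemma:geom-step1}, and takes the weights from the convex hull of the points $(x,\omega,x\omega)$ in the variables $([u]_1,w,[W]_1)$. The only difference is in that last step: the paper quotes Lemma~\ref{lemma:app-tetrahedron}, whose proof it omits, whereas your diagonal-triangulation (McCormick) argument actually proves the one inclusion needed there.
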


\begin{proof} 
According to Lemma~\ref{lemma:geom-properties-f}~\ref{item:prop-f-c} we have $f_{p,q}^{1,-}([u]_2,U) < [u]_1 < f_{p,q}^{1,+}([u]_2,U)$. Hence Lemma~\ref{lemma:app-tetrahedron} yields existence of $\tau_1,\tau_2,\tau_3,\tau_4\in \R^+$ with 
\begin{align*} 
    \left( \begin{array}{c} [u]_1 \\ w \\ {[W]_1} \end{array} \right) &= \tau_1 \left( \begin{array}{c} f_{p,q}^{1,-}([u]_2,U) \\ -c \\ - f_{p,q}^{1,-}([u]_2,U) c \end{array} \right) + \tau_2 \left( \begin{array}{c} f_{p,q}^{1,-}([u]_2,U) \\ c \\ f_{p,q}^{1,-}([u]_2,U) c \end{array} \right) \\
    &\qquad + \tau_3 \left( \begin{array}{c} f_{p,q}^{1,+}([u]_2,U) \\ -c \\ -f_{p,q}^{1,+}([u]_2,U) c \end{array} \right) + \tau_4 \left( \begin{array}{c} f_{p,q}^{1,+}([u]_2,U) \\ c \\ f_{p,q}^{1,+}([u]_2,U) c \end{array} \right), \\
    \sum_{i=1}^4 \tau_i &= 1.
\end{align*}
Let us set 
\begin{align*}
    (u_1,w_1,U_1,W_1) &:= \left( \left( \begin{array}{c} f_{p,q}^{1,-}([u]_2,U) \\ {[u]_2} \end{array} \right), -c , U, \left( \begin{array}{c} -f_{p,q}^{1,-}([u]_2,U) c \\ -{[u]_2}c \end{array} \right) \right) , \\
    (u_2,w_2,U_2,W_2) &:= \left( \left( \begin{array}{c} f_{p,q}^{1,-}([u]_2,U) \\ {[u]_2} \end{array} \right), c , U, \left( \begin{array}{c} f_{p,q}^{1,-}([u]_2,U) c \\ {[u]_2}c \end{array} \right) \right) , \\
    (u_3,w_3,U_3,W_3) &:= \left( \left( \begin{array}{c} f_{p,q}^{1,+}([u]_2,U) \\ {[u]_2} \end{array} \right), -c , U, \left( \begin{array}{c} -f_{p,q}^{1,+}([u]_2,U) c \\ -{[u]_2}c \end{array} \right) \right) , \\
    (u_4,w_4,U_4,W_4) &:= \left( \left( \begin{array}{c} f_{p,q}^{1,+}([u]_2,U) \\ {[u]_2} \end{array} \right), c , U, \left( \begin{array}{c} f_{p,q}^{1,+}([u]_2,U) c \\ {[u]_2}c \end{array} \right) \right) .
\end{align*}
Then we compute 
$$
    (u,w,U,W) = \sum_{i=1}^4 \tau_i (u_i,w_i,U_i,W_i) .
$$

Next we show that $(u_i,w_i,U_i,W_i) \in (\sK_{p,q,c})^\co$ for all $i=1,...,4$. We obviously have $|w_i|=c$ and $W_i= u_i w_i$ for $i=1,..,4$. Moreover Lemma~\ref{lemma:geom-properties-f}~\ref{item:prop-f-d} ensures that 
$$
    \lambda_{\max}(u_i \otimes u_i - U_i - (q-p)\id_2) \leq 0 \quad\text{ for all } i=1,..,4. 
$$
Hence by Lemma~\ref{lemma:geom-step1} we have $(u_i,w_i,U_i,W_i) \in (\sK_{p,q,c})^\co$ for all $i=1,...,4$ and thus $(u,w,U,W)\in (\sK_{p,q,c})^\co$.
\end{proof}

Similar to Lemma~\ref{lemma:geom-step2a} we obtain the following.

\begin{lemma} \label{lemma:geom-step2b} 
    Let $(u,w,U,W)\in \R^2\times \R\times \symz{2}\times \R^2$ such that 
    \begin{align*} 
        \lambda_{\max}\left( u\otimes u - U - (q-p)\id_2\right) &< 0, \\ 
        |w| &< c , \\
        [W]_1 &= [u]_1 w , \\
        [W]_2 &< [u]_2 w + \min\Big\{ \big( [u]_2 - f_{p,q}^{2,-}(u,U) \big) \big( c - w \big) , \big( f_{p,q}^{2,+}(u,U) - [u]_2 \big) \big( w + c \big) \Big\}, \\
        [W]_2 &> [u]_2 w - \min\Big\{ \big( [u]_2 - f_{p,q}^{2,-}(u,U) \big) \big( w + c \big) , \big( f_{p,q}^{2,+}(u,U) - [u]_2 \big) \big( c - w \big) \Big\} .
    \end{align*}
    Then $(u,w,U,W)\in (\sK_{p,q,c})^\co$.
\end{lemma}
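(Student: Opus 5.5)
The proof mirrors that of Lemma~\ref{lemma:geom-step2a}, with the roles of the two horizontal components interchanged. By Lemma~\ref{lemma:geom-properties-f}~\ref{item:prop-f-c} we have
$$
f_{p,q}^{2,-}([u]_1,U) < [u]_2 < f_{p,q}^{2,+}([u]_1,U).
$$
Together with $|w|<c$ and the two strict bounds on $[W]_2$, this places the point $([u]_2,w,[W]_2)\in\R^3$ inside the tetrahedron (in the sense of Lemma~\ref{lemma:app-tetrahedron}) whose vertices are
$$
(f_{p,q}^{2,\pm}, \pm' c, \pm' f_{p,q}^{2,\pm} c),
$$
with the two signs $\pm$ and $\pm'$ chosen independently. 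The bounds on $[W]_2$ in the hypothesis are exactly the ones from Lemma~\ref{lemma:geom-step2a} with the index $1$ replaced by $2$, so Lemma~\ref{lemma:app-tetrahedron} applies verbatim. It gives $\tau_1,\dots,\tau_4\in\R^+$ with $\sum_i\tau_i=1$ such that $([u]_2,w,[W]_2)$ is the corresponding convex combination of these four vertices.

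We then define four points $(u_i,w_i,U_i,W_i)$. In each of them $[u_i]_1:=[u]_1$ and $U_i:=U$ are kept fixed. The second component is $[u_i]_2\in\{f_{p,q}^{2,-}([u]_1,U),f_{p,q}^{2,+}([u]_1,U)\}$ and the vertical component is $w_i\in\{-c,c\}$, matching the vertices above. Finally we set $W_i:=u_iw_i$, so that $[W_i]_1=[u]_1w_i$ and $[W_i]_2=[u_i]_2w_i$.

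We check that the convex combination $\sum_i\tau_i(u_i,w_i,U_i,W_i)$ equals $(u,w,U,W)$ component by component:
\begin{itemize}
\item The components $[u]_2$, $w$ and $[W]_2$ agree by the choice of the $\tau_i$.
\item The components $[u]_1$ and $U$ agree trivially, since they are constant across the four points.
\item For the first component of $W$ we get $\sum_i\tau_i[u]_1w_i=[u]_1w=[W]_1$, using the hypothesis $[W]_1=[u]_1w$.
\end{itemize}

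Each point has $|w_i|=c$ and $W_i=u_iw_i$. Moreover, by \eqref{eq:bound-in-M-2} in Lemma~\ref{lemma:geom-properties-f}~\ref{item:prop-f-d}, each satisfies $\lambda_{\max}(u_i\otimes u_i-U-(q-p)\id_2)\le0$. Hence Lemma~\ref{lemma:geom-step1} gives $(u_i,w_i,U_i,W_i)\in(\sK_{p,q,c})^\co$ for every $i$, and convexity of the hull yields the claim.

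The only point requiring care is verifying that the hypotheses of Lemma~\ref{lemma:app-tetrahedron} are met in the index-$2$ version, namely that the strict inequalities on $[W]_2$ describe precisely the interior of the tetrahedron. This is a symmetric transcription of the argument in Lemma~\ref{lemma:geom-step2a}.
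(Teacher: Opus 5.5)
Your proof is correct and is essentially the paper's own argument. The paper proves this lemma by repeating the proof of Lemma~\ref{lemma:geom-step2a} with the components of $u$ and $W$ interchanged. That is, it applies Lemma~\ref{lemma:app-tetrahedron} to $([u]_2,w,[W]_2)$ with $a_1=f^{2,-}_{p,q}([u]_1,U)$, $a_2=f^{2,+}_{p,q}([u]_1,U)$, $b_1=-c$, $b_2=c$, keeps $[u]_1$ and $U$ fixed, uses $[W]_1=[u]_1w$ for the first component of $W$, and concludes via Lemma~\ref{lemma:geom-properties-f}~\ref{item:prop-f-d} and Lemma~\ref{lemma:geom-step1}, exactly as you do.
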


The proof of Lemma~\ref{lemma:geom-step2b} works exactly as the proof of Lemma~\ref{lemma:geom-step2a} with the only difference that the components of $u$ and $W$ are interchanged. Hence we skip the details of the proof of Lemma~\ref{lemma:geom-step2b}.

\subsubsection{Step 3: conclusion}

Now we are ready to conclude with the following corollary.

\begin{cor} \label{cor:geom-WsubsetU}
    It holds that $\sW_{p,q,c}\subset \sU_{p,q,c}$.
\end{cor}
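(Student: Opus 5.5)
The plan is to show that every point of $\sW_{p,q,c}$ lies in $(\sK_{p,q,c})^\co$ and that $\sW_{p,q,c}$ is open. Then $\sW_{p,q,c}\subset \interior{\big((\sK_{p,q,c})^\co\big)}$, and by Lemma~\ref{lemma:ass-hydrost-euler}~\ref{item:ass3-hydrost-euler} (the $\Lambda$-convex hull equals the convex hull) this interior is exactly $\sU_{p,q,c}$.

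For the inclusion $\sW_{p,q,c}\subset(\sK_{p,q,c})^\co$, fix $(u,w,U,W)\in\sW_{p,q,c}$ together with the corresponding $\tau\in(0,1)$. I split $W$ as
$$
W = \tau W^{(1)} + (1-\tau) W^{(2)},
$$
where
\begin{align*}
W^{(1)} &:= \Big( [u]_1 w + \tfrac{1}{\tau}\big([W]_1 - [u]_1 w\big),\ [u]_2 w \Big)^\trans, \\
W^{(2)} &:= \Big( [u]_1 w,\ [u]_2 w + \tfrac{1}{1-\tau}\big([W]_2 - [u]_2 w\big) \Big)^\trans.
\end{align*}
A direct check confirms that $\tau W^{(1)} + (1-\tau)W^{(2)} = W$ in both components. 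Dividing the two inequalities for $[W]_1$ in the definition of $\sW_{p,q,c}$ by $\tau$ shows that $(u,w,U,W^{(1)})$ satisfies exactly the hypotheses of Lemma~\ref{lemma:geom-step2a}. Similarly, dividing the inequalities for $[W]_2$ by $1-\tau$ shows that $(u,w,U,W^{(2)})$ satisfies the hypotheses of Lemma~\ref{lemma:geom-step2b}. The other conditions, namely $\lambda_{\max}<0$ and $|w|<c$, are shared by both points. So both points lie in $(\sK_{p,q,c})^\co$, and since this set is convex, so does their convex combination $(u,w,U,W)$.

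For openness, note that $\interior{(\sM_{p,q})}$ is open and that the $f^{j,s}_{p,q}$ are continuous on it by Lemma~\ref{lemma:geom-properties-f}~\ref{item:prop-f-a}. Hence the right-hand sides of the four strict inequalities are continuous functions of $(u,w,U,W,\tau)$. For each fixed $\tau$ the defining conditions therefore cut out an open set, and $\sW_{p,q,c}$ is the union of these open sets over $\tau\in(0,1)$, so it is open.

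The main subtle point is this openness argument. A set merely contained in the convex hull need not lie in its interior, and the existential quantifier over $\tau$ has to be handled carefully. Writing $\sW_{p,q,c}$ as a union over $\tau$ of open sets resolves both issues. Once openness is in hand, the remaining steps are routine bookkeeping.
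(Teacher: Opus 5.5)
Your proposal is correct and is essentially the paper's own proof: the same splitting $W=\tau W^{(1)}+(1-\tau)W^{(2)}$ fed into Lemmas~\ref{lemma:geom-step2a} and \ref{lemma:geom-step2b}, then convexity of $(\sK_{p,q,c})^\co$, then openness of $\sW_{p,q,c}$ to pass to the interior. Your description of $\sW_{p,q,c}$ as a union over $\tau\in(0,1)$ of open sets spells out what the paper compresses into a one-line remark, namely that continuity of $\lambda_{\max}$ and of the $f^{j,s}_{p,q}$ makes $\sW_{p,q,c}$ open.
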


\begin{proof} 
The mappings $(u,U)\mapsto \lambda_{\max}(u\otimes u- U -(q-p)\id_2)$ and $f_{p,q}^{j,s}$ are continuous on $\interior{(\sM_{p,q})}$, see \cite[Lemma~4.3.4]{Markfelder} and Lemma~\ref{lemma:geom-properties-f}~\ref{item:prop-f-a}, respectively. Hence $\sW_{p,q,c}$ is open. 

We show that $\sW_{p,q,c}\subset (\sK_{p,q,c})^\co$. Then by taking the interior and using the fact that $\sW_{p,q,c}$ is open, we obtain the desired relation $\sW_{p,q,c}\subset \sU_{p,q,c}$.

Let $(u,w,U,W)\in \sW_{p,q,c}$. Set 
\begin{align*}
    (u_1,w_1,U_1,W_1) &:= \left(u,w,U, \left( \begin{array}{c} \frac{[W]_1 - [u]_1 w}{\tau} + [u]_1 w \\ {[u]_2} w \end{array} \right) \right) , \\
    (u_2,w_2,U_2,W_2) &:= \left(u,w,U, \left( \begin{array}{c} {[u]_1} w \\ \frac{[W]_2 - [u]_2 w}{1-\tau} + {[u]_2} w \end{array} \right) \right).
\end{align*}
Then $(u_1,w_1,U_1,W_1),(u_2,w_2,U_2,W_2)\in (\sK_{p,q,c})^\co$ by Lemmas~\ref{lemma:geom-step2a} and \ref{lemma:geom-step2b}, respectively. Since 
$$
    (u,w,U,W) = \tau (u_1,w_1,U_1,W_1) + (1-\tau) (u_2,w_2,U_2,W_2) ,
$$
we deduce $(u,w,U,W)\in (\sK_{p,q,c})^\co$. 
\end{proof}

\begin{rem} \label{rem:hydrost-euler-playwithc} 
    We would like to remark, that the knowledge of the subset $\sW_{p,q,c}$ is quite powerful in the following sense. It is obvious that if $q,p\in \R$ and $(u,w,U,W)\in \R^2\times \R\times \symz{2}\times \R^2$ are given such that $\lambda_{\max}\left( u\otimes u - U - (q-p)\id_2\right)<0$, then there exists $c>0$ such that $(u,w,U,W)\in \sW_{p,q,c}\subset \sU_{p,q,c}$. This means that if one is able to choose $c$ arbitrarily, there is no restriction on $(w,W)$ for a point $(u,w,U,W)$ lying in $\sU_{p,q,c}$. This fact is never used in this paper, but it might be helpful in future works. 
\end{rem}

\subsection{Proof of Thms.~\ref{thm:hydrost-euler-exforalldata} and \ref{thm:hydrost-euler-wilddata}} \label{subsec:ex-hydrost-euler-pf} 

As we have already pointed out at the beginning of Sect.~\ref{sec:ex-hydrost-euler}, the application of convex integration to the hydrostatic Euler equations \eqref{eq:hydrost-euler-mass}-\eqref{eq:hydrost-euler-p} involves several new ideas compared to the preceding applications (i.e.~Sects.~\ref{sec:ex-incomp-euler}-\ref{sec:ex-lake}). This holds especially for the proof of Thm.~\ref{thm:hydrost-euler-wilddata}, where convex integration is applied directly to the hydrostatic Euler equations \eqref{eq:hydrost-euler-mass}-\eqref{eq:hydrost-euler-p}.

On the other hand, in order to prove Thm.~\ref{thm:hydrost-euler-exforalldata}, we will actually apply convex integration to the \emph{incompressible} Euler equations \eqref{eq:incomp-euler-mass}, \eqref{eq:incomp-euler-mom} (and not directly to the hydrostatic Euler equations \eqref{eq:hydrost-euler-mass}-\eqref{eq:hydrost-euler-p}). A related approach was already used in \cite{ChiMic17}. This also means that while the preceding Subsects.~\ref{subsec:ex-hydrost-euler-prel}-\ref{subsec:ex-hydrost-euler-subset} are essential for the proof of Thm.~\ref{thm:hydrost-euler-wilddata}, they are not required for the proof of Thm.~\ref{thm:hydrost-euler-exforalldata}. 

\begin{proof}[Proof of Thm.~\ref{thm:hydrost-euler-exforalldata}] 
We recall that the hydrostatic Euler system \eqref{eq:hydrost-euler-mass}-\eqref{eq:hydrost-euler-p} on the channel $\T^2\times [0,1]$ can be equivalently interpreted on the three-dimensional torus $\T^3$ with even symmetry of $u$ in $z$ and odd symmetry of $w$ in $z$. We refer to \cite{GILT22} for more details on this equivalence. Consequently we may consider the initial data $(u_0,w_0)$ as an object in $L^\infty(\T^3;\R^3)$, where the aforementioned symmetry conditions are satisfied. Hence we can apply Prop.~\ref{prop:app-sfad} to obtain a subsolution for the incompressible Euler equations \eqref{eq:incomp-euler-mass}, \eqref{eq:incomp-euler-mom}. Looking into the proof of Prop.~\ref{prop:app-sfad}, we observe that the subsolution satisfies the symmetry conditions mentioned above. Transforming back to the channel and proceeding as in the proof of Thm.~\ref{thm:incomp-euler-exforalldata}, we obtain infinitely many solutions to the incompressible Euler equations on the channel with initial data $(u_0,w_0)$. Keeping in mind, that the pressure $p$ of these solutions is constant (see the proof of Thm.~\ref{thm:incomp-euler-exforalldata} in Sect.~\ref{subsec:ex-incomp-euler-pf}), we deduce that these solutions (of the incompressible Euler equations) solve the hydrostatic Euler system too.
\end{proof}

\begin{proof}[Proof of Thm.~\ref{thm:hydrost-euler-wilddata}]
Choose $(\ov{u},\ov{w},\ov{U},\ov{W})=(0,0,0,0)$, $p,q,c\in \R$ constant such that $q>p$, $c>0$. Then $(\ov{u},\ov{w},\ov{U},\ov{W})$ satisfies the PDEs \eqref{eq:lin-hydrost-euler-mass}, \eqref{eq:lin-hydrost-euler-mom} and $\partial_z p=0$, and it takes values in $\sW_{p,q,c}\subset \sU_{p,q,c}$. Thus $(\ov{u},\ov{w},\ov{U},\ov{W})$ is a desired subsolution, and -- since the structural assumptions hold according to Lemma~\ref{lemma:ass-hydrost-euler} -- we can proceed like in the proofs of Thms.~\ref{thm:incomp-euler-wilddata}, \ref{thm:compr-euler-wilddata} and \ref{thm:lake-wilddata}. For the horizontal kinetic energy balance, we compute
\begin{align*}
    &\partial_t \left(\half |u|^2\right) + \Divh \left[\left(\half |u|^2 + p\right)u\right] + \partial_z \left[\left(\half |u|^2 + p\right)w\right] \\
    &= \partial_t (q-p) + \Divh (q u) + \partial_z (q w) = 0,
\end{align*}
which must be understood in the sense of distributions. Here we have used that $p,q$ are constant, and \eqref{eq:hydrost-euler-mass}. This finishes the proof of Thm.~\ref{thm:hydrost-euler-wilddata}.

In order to address to Rem.~\ref{rem:hydrost-euler-decreasing-energy}, one can show conservation of vertical kinetic energy in exactly the same way. By choosing $q=q(t)$ and $c=c(t)$ (which still must satisfy $q(t)>p$ and $c(t)>0$ for all $t\in [0,T)$) suitably, one can achieve that the horizontal and the vertical kinetic energy are strictly decreasing. 
\end{proof}

\section{Compressible inviscid primitive equations} \label{sec:ex-compr-prim}

Now we turn our attention towards the compressible inviscid primitive equations \eqref{eq:compr-prim-mass}-\eqref{eq:compr-prim-p}. In particular we will prove Thm.~\ref{thm:compr-prim-wilddata} in this section.

\subsection{Preliminaries and reformulation of the problem} \label{subsec:ex-compr-prim-prel} 

We combine the ideas which we used in the case of the compressible Euler equations (see Sect.~\ref{subsec:ex-compr-euler-prel}) with the approach for the hydrostatic Euler equations (see Sect.~\ref{subsec:ex-hydrost-euler-prel}). So, we rewrite the compressible inviscid primitive equations \eqref{eq:compr-prim-mass}-\eqref{eq:compr-prim-p} as
\begin{align} 
	\Divh m_h + \partial_z m_v &= - \partial_t \rho , \label{eq:lin-compr-prim-mass} \\
	\partial_t m_h + \Divh U + \partial_z W &= - \Gradh q, \label{eq:lin-compr-prim-mom}
\end{align}
and we consider the family of constitutive sets
\begin{align}
	\sK_{\rho,q,c} = \bigg\{ (m_h,m_v,U,W)\in \R^2 &\times \R \times \symz{2} \times \R^2 \,\Big|\, \label{eq:const-compr-prim} \\
    & \frac{m_h\otimes m_h}{\rho} - U + (p(\rho)-q)\id_2 = 0, \ W=\frac{m_h m_v}{\rho} ,\ |m_v| = c\bigg\}. \notag 
\end{align} 
Similar to Sect.~\ref{subsec:ex-compr-euler-prel}, we work with the horizontal and vertical momentum $m_h=\rho u$ and $m_v= \rho w$, respectively, rather than with the velocities $u$ and $w$. As in Sect.~\ref{subsec:ex-hydrost-euler-prel}, equation \eqref{eq:compr-prim-p} is understood as an additional requirement on the given function $\rho$. 

In the setting of the compressible inviscid primitive equations, we have $\theta=(\rho,q,c)$ and $\zeta=(m_h,m_v,U,W)$. Moreover the wave cone is given by
\begin{align*} 
    \Lambda =  \bigg\{ (m_h,m_v,U,W)\in \R^2 \times \R \times \symz{2} \times \R^2 \, \Big|\, &\exists\eta=(\eta_t,\eta_x)\in \R^{4}\ \text{ with }\ \eta_x \neq 0 \ \text{ and }\  \\ 
    & \qquad \qquad \left(\begin{array}{ccc} 0 & m_h^\trans & m_v\\ m_h & U & W\end{array}\right)\cdot\eta = 0 \quad\bigg\}.
\end{align*}

\subsection{Structural assumptions of Thm.~\ref{thm:conv-int}} \label{subsec:ex-compr-prim-ass}

Next we check the structural assumptions of Thm.~\ref{thm:conv-int}.

\begin{lemma} \label{lemma:ass-compr-prim}
    \begin{enumerate}
        \item \label{item:ass1-compr-prim} The family of constitutive sets $(\sK_{\rho,q,c})_{(\rho,q,c)\in \Theta}$ defined in \eqref{eq:const-compr-prim} is suitable in the sense of Defn.~\ref{defn:suitable-K}. 

        \item \label{item:ass2-compr-prim} Let $(m_h,m_v,U,W)\in\Lambda$. Then there exists a third order homogeneous differential operator
        $$
		      \opL_{(m_h,m_v,U,W)} : C^\infty(\R^{4}) \to C^\infty(\R^{4};\R^2 \times \R \times \symz{2} \times \R^2) 
        $$
        which is suitable in the sense of Defn.~\ref{defn:suitable-operator}. 

        \item \label{item:ass3-compr-prim} It holds that 
        $$
            (\sK_{\rho,q,c})^\Lambda = (\sK_{\rho,q,c})^\co \qquad \text{ for all } (\rho,q,c) \in \Theta.
        $$ 

        \item \label{item:ass4-compr-prim} There exists a map $D\in C(\Theta\times \ov{B}_{\ov{c}}(0);\R)$ which is a suitable distance map in the sense of Defn.~\ref{defn:suitable-distance-map}. Moreover if $r=2$, then $D$ satisfies the additional property~\ref{item:suitable-D-convergence} introduced in Sect.~\ref{subsec:general-idiK}.
    \end{enumerate}
\end{lemma}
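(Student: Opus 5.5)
The plan is to reduce every item to the corresponding item of Lemma~\ref{lemma:ass-hydrost-euler}. For fixed $\rho>0$, the rescaling $(m_h,m_v,U,W)\mapsto (m_h/\sqrt{\rho}, m_v/\sqrt{\rho}, U, W)$ is linear and invertible. It maps $\sK_{\rho,q,c}$ onto $\sK_{p(\rho),q,c/\sqrt{\rho}}$ from \eqref{eq:const-hydrost-euler}, because $\frac{m_h\otimes m_h}{\rho}=\frac{m_h}{\sqrt\rho}\otimes\frac{m_h}{\sqrt\rho}$ and $W=\frac{m_h}{\sqrt\rho}\frac{m_v}{\sqrt\rho}$. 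The wave cone $\Lambda$ is literally the same cone as in Sect.~\ref{subsec:ex-hydrost-euler-prel}, since the linear part of \eqref{eq:lin-compr-prim-mass}, \eqref{eq:lin-compr-prim-mom} coincides with \eqref{eq:lin-hydrost-euler-mass}, \eqref{eq:lin-hydrost-euler-mom}. The rescaling, however, does not preserve $\Lambda$, so for items involving $\Lambda$ I will redo the argument directly rather than transport it.

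Item~\ref{item:ass2-compr-prim} depends only on the linear operator and on $\Lambda$, both of which are identical to the hydrostatic case. The operators $\opL$ constructed in the proof of Lemma~\ref{lemma:ass-hydrost-euler}~\ref{item:ass2-hydrost-euler} can therefore be reused verbatim, with $(u,w)$ renamed to $(m_h,m_v)$.

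For item~\ref{item:ass1-compr-prim}, I would argue as follows.
\begin{itemize}
\item Compactness follows from the rescaling together with Lemma~\ref{lemma:ass-hydrost-euler}~\ref{item:ass1-hydrost-euler}. On the compact set $\Theta$ we have $\rho\ge\un{\rho}>0$, so the bounds are uniform.
\item For the continuity property (b), I would mimic the explicit construction there: set $m_{h,2}=\sqrt{2\rho_2(q_2-p(\rho_2))}\,m_{h,1}/|m_{h,1}|$ and $m_{v,2}=m_{v,1}c_2/c_1$, and define $U_2,W_2$ by the constraints. Uniform continuity of $\rho\mapsto\sqrt{\rho(q-p(\rho))}$ and of $1/\rho$ on $\Theta$ gives the estimates.
\item Property (c) follows from the rescaling, because linear isomorphisms commute with convex hulls and interiors.
\end{itemize}

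For item~\ref{item:ass3-compr-prim}, I would follow the proof of Lemma~\ref{lemma:ass-hydrost-euler}~\ref{item:ass3-hydrost-euler}: show $\Lambda$ is complete with respect to $\sK_{\rho,q,c}$ and invoke Prop.~\ref{prop:app-complete-wc}. Given two points of $\sK_{\rho,q,c}$, choose $\eta_x\ne0$ orthogonal to $(m_{h,2}-m_{h,1},m_{v,2}-m_{v,1})$ and set $\eta_t=-\frac1\rho(m_{h,2},m_{v,2})\cdot\eta_x$. The same splitting computation, now with the factor $1/\rho$ carried along, shows the difference lies in $\Lambda$. This step needs the most care, because the factor $\rho$ has to be inserted consistently in $\eta_t$. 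I expect it to go through since $\rho$ is a fixed parameter.

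For item~\ref{item:ass4-compr-prim}, I would take
$$D((\rho,q,c),(m_h,m_v,U,W)) := 2(q-p(\rho)) - \frac{|m_h|^2}{\rho} + \frac{c^2-m_v^2}{\rho}.$$
This map is continuous on $\Theta\times\ov{B}_{\ov c}(0)$ since $\rho\ge\un\rho$, and it is concave in $\zeta$. It vanishes on $\sK_{\rho,q,c}$ by taking the trace of the defining relation. Property~\ref{item:suitable-D-inK} then follows exactly as in the hydrostatic proof. Each concave summand is nonnegative on the convex hull, so both summands vanish. The first forces the matrix constraint via the incompressible argument of \cite[Lemma~3]{DelSze10}, rescaled. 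The second gives $|m_v|=c$. A Carathéodory decomposition then forces all $m_{v,i}=m_v$, and hence $W=m_hm_v/\rho$. Property~\ref{item:suitable-D-convergence} for $r=2$ holds because $D$ is quadratic in $\zeta$ with bounded, continuous coefficients, so strong $L^2$ convergence passes to the limit.
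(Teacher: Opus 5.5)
Your proposal is correct and matches the paper's route, since the paper only says to adapt the proof of Lemma~\ref{lemma:ass-hydrost-euler}; your $\sqrt{\rho}$-rescaling is the same device the paper uses in the proof of Prop.~\ref{prop:zero-in-U-compr}, and your completeness computation and your choice of $D$ both check out. One small inaccuracy: the rescaling does preserve $\Lambda$, because if $(\eta_t,\eta_x)$ works for $(m_h,m_v,U,W)$ then $(\sqrt{\rho}\,\eta_t,\eta_x)$ works for $(m_h/\sqrt{\rho},m_v/\sqrt{\rho},U,W)$, so item~\ref{item:ass3-compr-prim} could even be transported directly from the hydrostatic case, though redoing it directly as you do is equally valid.
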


\begin{proof} 
Lemma~\ref{lemma:ass-compr-prim} can be proved by modifying the proof of Lemma~\ref{lemma:ass-hydrost-euler}. We leave the details to the reader.
\end{proof}

\subsection{Some points in $\sU_{\rho,q,c}$} \label{subsec:ex-compr-prim-point}

Like in the incompressible case (see Rem.~\ref{rem:zero-in-U-incomp}) we will need an analogue of equation \eqref{eq:zero-in-U-incomp} for the compressible inviscid primitive equations, where we have to consider some more points rather than just $(0,0,0,0)$. This is the content of the following proposition.

\begin{prop} \label{prop:zero-in-U-compr} 
    It holds that 
    $$
        (m_h,0,0,0) \in \sU_{\rho,q,c} \qquad \text{ for all }m_h\in \R^2,\ \rho>0,\ q>p(\rho) + \frac{|m_h|^2}{\rho},\ c>0. 
    $$
\end{prop}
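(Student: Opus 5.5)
Since $\sU_{\rho,q,c}=\interior{\big((\sK_{\rho,q,c})^\Lambda\big)}=\interior{\big((\sK_{\rho,q,c})^\co\big)}$ by Lemma~\ref{lemma:ass-compr-prim}~\ref{item:ass3-compr-prim}, it suffices to show that $(m_h,0,0,0)$ has an open neighbourhood contained in $(\sK_{\rho,q,c})^\co$. I will not repeat the whole construction of $\sW$ from Sect.~\ref{subsec:ex-hydrost-euler-subset}. Instead, I will reduce to the incompressible hydrostatic case by rescaling.

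\textbf{Rescaling.} For fixed $\rho>0$, set $u=m_h/\sqrt{\rho}$, $w=m_v/\sqrt{\rho}$ and $\widetilde W=W$. Then
\[
\frac{m_h\otimes m_h}{\rho}=u\otimes u,\qquad \frac{m_hm_v}{\rho}=uw,\qquad |w|=\frac{c}{\sqrt\rho}.
\]
The linear map $\Phi(m_h,m_v,U,W):=(m_h/\sqrt\rho,\,m_v/\sqrt\rho,\,U,\,W)$ is therefore a bijection from $\sK_{\rho,q,c}$ onto the hydrostatic constitutive set $\sK_{p(\rho),q,c/\sqrt\rho}$ of \eqref{eq:const-hydrost-euler}. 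Because $\Phi$ is a linear homeomorphism, it commutes with taking convex hulls and interiors. Hence it suffices to show that $(m_h/\sqrt\rho,0,0,0)\in\sU_{p(\rho),q,c/\sqrt\rho}$. By Cor.~\ref{cor:geom-WsubsetU}, this follows once $(u,0,0,0)\in \sW_{p,q,c'}$ with $u=m_h/\sqrt\rho$, $p=p(\rho)$ and $c'=c/\sqrt\rho>0$.

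\textbf{Checking the conditions in \eqref{eq:defn-W}.} The hypothesis $q>p(\rho)+|m_h|^2/\rho$ says $q-p>|u|^2$. Since $U=0$, we have $\lambda_{\max}(u\otimes u-(q-p)\id_2)=|u|^2-(q-p)<0$, so the first condition holds. The condition $|w|=0<c'$ is immediate. For the $W$-conditions, $w=0$ and $W=0$, so each one reads
\[
0<\tau\,c'\min\big\{[u]_1-f^{1,-},\;f^{1,+}-[u]_1\big\},
\]
together with the analogous inequality with weight $1-\tau$ for the second component. Lemma~\ref{lemma:geom-properties-f}~\ref{item:prop-f-c} gives $f^{j,-}<[u]_j<f^{j,+}$ for $j=1,2$ on $\interior{(\sM_{p,q})}$, and $(u,0)$ lies in this set by the first condition. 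So all four strict inequalities hold with, say, $\tau=\tfrac12$, and therefore $(u,0,0,0)\in\sW_{p,q,c'}$.

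\textbf{Main obstacle.} The delicate point is confirming that $\Phi$ really maps $\sK_{\rho,q,c}$ onto $\sK_{p(\rho),q,c/\sqrt\rho}$, including the parameters and the constraint on $|m_v|$, and that interiors of convex hulls are preserved. Both follow from linearity and invertibility of $\Phi$. The alternative would be to redo Lemmas~\ref{lemma:geom-step1}–\ref{lemma:geom-step2b} with the factor $1/\rho$ inserted, which is possible but tedious. The remaining checks are routine.
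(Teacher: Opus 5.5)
Your proposal is correct and follows essentially the paper's route. You rescale $m_h,m_v$ by $1/\sqrt{\rho}$ to identify $\sK_{\rho,q,c}$ with the hydrostatic set $\sK_{p(\rho),q,c/\sqrt{\rho}}$, check that $(m_h/\sqrt{\rho},0,0,0)\in\sW_{p(\rho),q,c/\sqrt{\rho}}$ via Lemma~\ref{lemma:geom-properties-f}, and transport back. Phrasing the rescaling as a linear homeomorphism $\Phi$, which commutes with convex hulls and interiors, is simply a cleaner packaging of the paper's Carath\'eodory decomposition followed by its ``same steps for a neighbourhood'' remark.
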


In order to prove Prop.~\ref{prop:zero-in-U-compr} we will make use of the set $\sW_{p,q,c}$ for the incompressible case, see Sect.~\ref{subsec:ex-hydrost-euler-subset}.

\begin{proof} 
In this proof we will write $\sU^\incomp$, $\sW^\incomp$ and $\sK^\incomp$ for the ``$\sU$'', ``$\sW$'' and ``$\sK$'' for the incompressible hydrostatic Euler equations \eqref{eq:hydrost-euler-mass}-\eqref{eq:hydrost-euler-p} which we treated in Sect.~\ref{sec:ex-hydrost-euler}.

It is straightforward to verify that 
$$
    \left(\frac{m_h}{\sqrt{\rho}},0,0,0\right) \in \sW^\incomp_{p(\rho),q,\frac{c}{\sqrt{\rho}}} \qquad \text{ for all }m_h\in \R^2,\ \rho>0,\ q>p(\rho) + \frac{|m_h|^2}{\rho},\ c>0, 
$$
just by using the explicit formula for $\sW^\incomp$, see \eqref{eq:defn-W}, together with Lemmas~\ref{lemma:geom-properties-f} and \ref{lemma:app-eigenvalues}. Since 
$$
    \sW^\incomp_{p(\rho),q,\frac{c}{\sqrt{\rho}}} \subset \sU^\incomp_{p(\rho),q,\frac{c}{\sqrt{\rho}}} = \interior{\Big(\Big(\sK^\incomp_{p(\rho),q,\frac{c}{\sqrt{\rho}}}\Big)^\co\Big)},
$$
there exist $N\in \N$ (with $N\geq 2$) and $\tau_i \in \R^+$, $(u_i,w_i,U_i,W_i)\in \sK^\incomp_{p(\rho),q,\frac{c}{\sqrt{\rho}}}$ for all $i=1,...,N$ such that 
$$
    \left(\frac{m_h}{\sqrt{\rho}},0,0,0\right)= \sum_{i=1}^N \tau_i (u_i,w_i,U_i,W_i)\qquad \text{ and } \qquad\sum_{i=1}^N \tau_i=1,
$$
see Prop.~\ref{prop:app-caratheodory}. Now let us define 
$$
    (m_h)_i := u_i \sqrt{\rho}, \qquad \text{ and } \qquad (m_v)_i := w_i \sqrt{\rho}
$$
for any $i=1,...,N$. It is easy to verify that $\big((m_h)_i,(m_v)_i,U_i,W_i\big) \in \sK_{\rho,q,c}$ for all $i=1,...,N$, and that 
$$
    (m_h,0,0,0)= \sum_{i=1}^N \tau_i \big((m_h)_i,(m_v)_i,U_i,W_i\big).
$$
Hence $(m_h,0,0,0)\in (\sK_{\rho,q,c})^\co$. As $\sU^\incomp_{p(\rho),q,\frac{c}{\sqrt{\rho}}}$ is open, it contains even a neighbourhood of $\left(\frac{m_h}{\sqrt{\rho}},0,0,0\right)$. Following the same steps as above, this means that even a neighbourhood of $(m_h,0,0,0)$ is contained in $(\sK_{\rho,q,c})^\co$, and thus 
$$
    (m_h,0,0,0) \in \interior{\big((\sK_{\rho,q,c})^\co\big)} = \sU_{\rho,q,c}
$$
as desired. 
\end{proof}

\begin{rem}
    Let us remark that one can find a compressible analogue of the set $\sW_{p,q,c}$ from Sect.~\ref{subsec:ex-hydrost-euler-subset} by following the same steps as in the proof of Prop.~\ref{prop:zero-in-U-compr}. 
\end{rem}

\subsection{Proof of Thm.~\ref{thm:compr-prim-wilddata}} \label{subsec:ex-compr-prim-pf} 

Now we are ready to prove Thm.~\ref{thm:compr-prim-wilddata}. To this end, we will proceed as in the proof of Thm.~\ref{thm:compr-euler-wilddata}, see Sect.~\ref{subsec:ex-compr-euler-pf}.

\begin{proof}[Proof of Thm.~\ref{thm:compr-prim-wilddata}]
Fix $T_0$, $h$, $\widetilde{\rho}$ and $\rho$ as in the proof of Thm.~\ref{thm:compr-euler-wilddata}. Let $\Psi$ be the solution of $\Lap_h \Psi = -\partial_t \rho$ and set $\overline{m_h}:= \Gradh \Psi$. Note that $\rho$ and thus $\Psi$ and $\overline{m_h}$ do not depend on $z$. Moreover $\Psi$ has the regularity specified in the proof of Thm.~\ref{thm:compr-euler-wilddata} (i.e.~$\Psi\in C^2$). In addition to that, define $\overline{m_v}\equiv 0$, $\overline{U}\equiv 0$, $\overline{W}\equiv 0$, $c\equiv 1$, and $q(t,x):= \chi(t) - \partial_t \Psi (t,x)$ like in the proof of Thm.~\ref{thm:compr-euler-wilddata}.

With the help of Prop.~\ref{prop:zero-in-U-compr} we deduce that $(\overline{m_h},\overline{m_v},\overline{U},\overline{W})$ is a subsolution as soon as $\chi(t)\geq C_1$ for all $t\in [0,\infty)$ where $C_1>0$ is a suitable constant. Then we may continue exactly as in the proof of Thm.~\ref{thm:compr-euler-wilddata} to finish the proof of Thm.~\ref{thm:compr-prim-wilddata}.
\end{proof}

\section{Quasi-geostrophic equations} \label{sec:ex-QG}

Finally we apply the general framework from Sect.~\ref{sec:general} to the quasi-geostrophic equations \eqref{eq:QG-curl}, \eqref{eq:QG-mom}. Our goal is to prove Thm.~\ref{thm:QG-wilddata}.

\subsection{Preliminaries and reformulation of the problem} \label{subsec:ex-QG-prel} 

In order to rewrite the quasi-geostrophic equations \eqref{eq:QG-curl}, \eqref{eq:QG-mom} as a set of linear PDEs with a family of constitutive sets, we have to introduce the following notation. Let\footnote{The set $\QGmatrixset$ corresponds to $\symz{n}$, which was used in other examples treated in the present paper, see e.g.~Sect.~\ref{subsec:ex-incomp-euler-prel}.} 
$$
    \QGmatrixset := \left\{ A= \left( \begin{array}{rr} [A]_{11} & [A]_{12} \\ {[A]_{21}} & [A]_{22} \\ {[A]_{31}} & [A]_{32} \end{array} \right) \in \R^{3\times 2}\,\Big|\, [A]_{12}=[A]_{21} \text{ and }[A]_{11} = -[A]_{22} \right\} 
$$
and
$$
    \mathbb{E}^{3\times 2} := \left( \begin{array}{rr} 0 & 1 \\ -1 & 0 \\ 0 & 0 \end{array} \right).
$$

Now we introduce new unknowns $(U,q,c)\in \QGmatrixset\times \R\times \R$, and consider the linear system 
\begin{align} 
	\Curl \QGunknown &= 0 , \label{eq:lin-QG-curl} \\
	\partial_t \QGunknown + \Divh U &= \Curl Q - \Curl \left( \begin{array}{c} 0 \\ 0 \\ q \end{array} \right) , \label{eq:lin-QG-mom} 
\end{align}
together with the family of constitutive sets 
\begin{equation} \label{eq:const-QG}
	\sK_{Q,q,c} = \bigg\{ (\QGunknown,U)\in \R^3 \times \QGmatrixset \,\Big|\, \QGunknown\otimes (\QGunknown_h)^\perp - U - q \mathbb{E}^{3\times 2} = 0, \ |\QGunknown_v| = c\bigg\}.  
\end{equation} 
With the notation of Sect.~\ref{sec:general}, we have $\zeta=(\QGunknown,U)$ and $\theta=(Q,q,c)$ in this setting. We will assume that $q>0$ on $\Theta$ as otherwise $\sK_{Q,q,c}=\emptyset$ (for $q<0$) or $\sK_{Q,q,c}=\{(0,0)\}$ (for $q=0$) both of which are too small to run convex integration\footnote{In particular, $\interior{\big((\sK_{Q,q,c})^\co\big)} = \emptyset$ in both cases.}. 

Furthermore, we will work with the following wave cone
\begin{align*} 
    \Lambda =  \bigg\{ (\QGunknown,U)\in \R^3 \times \QGmatrixset \, &\Big|\, \QGunknown_h\neq 0\text{ and }\exists\eta=(\eta_t,\eta_1,\eta_2,\eta_3)\in \R^{4}\ \text{ with }\ (\eta_1,\eta_2) \neq 0 ,\ \\ 
    & \qquad \qquad \qquad \QGunknown\times \eta_x=0\ \text{ and }\ \QGunknown \eta_t + U\cdot (\eta_x)_h = 0 \ \bigg\}. 
\end{align*} 

\begin{rem} \label{rem:QG-wave-cone}
    In view of Defn.~\ref{defn:wave-cone}, the wave cone should actually be defined as 
    \begin{align} 
        \Lambda' = \bigg\{ (\QGunknown,U)\in \R^3 \times \QGmatrixset \, &\Big|\, \exists\eta=(\eta_t,\eta_x)\in \R^{4}\ \text{ with }\ \eta_x \neq 0 ,\ \label{eq:QG-actual-WC} \\ 
        & \qquad \qquad \qquad \QGunknown\times \eta_x=0\ \text{ and }\ \QGunknown \eta_t + U\cdot (\eta_x)_h = 0 \ \bigg\}. \notag
    \end{align}
    Note however that the latter cone is too large to ensure that suitable differential operators $\opL_{(\QGunknown,U)}$ exist. As already explained in Rem.~\ref{rem:lambdatilde-vs-lambda}, we solve this by shrinking the wave cone by requiring in addition that $(\eta_1,\eta_2)\neq 0$ (which makes the condition $\eta_x\neq 0$ redundant). The condition $\QGunknown_h\neq 0$ is needed to prove \eqref{eq:KbLambda=KbCo}, see Proof of Lemma~\ref{lemma:ass-QG}~\ref{item:ass3-QG} below.
\end{rem}

\subsection{Structural assumptions of Thm.~\ref{thm:conv-int}} \label{subsec:ex-QG-ass}

Let us verify the structural assumptions of Thm.~\ref{thm:conv-int}.

\begin{lemma} \label{lemma:ass-QG} 
    \begin{enumerate}
        \item \label{item:ass1-QG} The family of constitutive sets $(\sK_{Q,q,c})_{(Q,q,c)\in \Theta}$ defined in \eqref{eq:const-QG} is suitable in the sense of Defn.~\ref{defn:suitable-K}. 

        \item \label{item:ass2-QG} Let $(\QGunknown,U)\in\Lambda$. Then there exists a third order homogeneous differential operator
        $$
		      \opL_{(\QGunknown,U)} : C^\infty(\R^{4}) \to C^\infty(\R^{4};\R^3 \times \QGmatrixset) 
        $$
        which is suitable in the sense of Defn.~\ref{defn:suitable-operator}. 

        \item \label{item:ass3-QG} It holds that\footnote{The reader should notice that in contrast to the other examples considered in the present paper (see Sects.~\ref{sec:ex-incomp-euler}-\ref{sec:ex-compr-prim}) where we showed \eqref{eq:KbLambda=KbCo-noint}, we will only prove \eqref{eq:KbLambda=KbCo} here.} 
        $$
            \interior{\big((\sK_{Q,q,c})^\Lambda\big)} = \interior{\big((\sK_{Q,q,c})^\co\big)} \qquad \text{ for all } (Q,q,c) \in \Theta.
        $$ 

        \item \label{item:ass4-QG} There exists a map $D\in C(\Theta\times \ov{B}_{\ov{c}}(0);\R)$ which is a suitable distance map in the sense of Defn.~\ref{defn:suitable-distance-map}. Moreover if $r=2$, then $D$ satisfies the additional property~\ref{item:suitable-D-convergence} introduced in Sect.~\ref{subsec:general-idiK}.
    \end{enumerate}
\end{lemma}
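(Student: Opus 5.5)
I will verify the four items in the same order as for Lemma~\ref{lemma:ass-hydrost-euler}, using the hydrostatic case as the template throughout. The reason is structural: writing $\QGunknown=(\QGunknown_h,\QGunknown_v)$ and $(\QGunknown_h)^\perp$, the constraint in \eqref{eq:const-QG} splits into a $2\times2$ block $\QGunknown_h\otimes(\QGunknown_h)^\perp - U_h - q\mathbb{E}$ and a row $\QGunknown_v(\QGunknown_h)^\perp = [U]_{3\cdot}$, together with $|\QGunknown_v|=c$. This mirrors $u\otimes u - U + (p-q)\id$, $W=uw$, $|w|=c$.

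\emph{Item (a).} Closedness is immediate. For boundedness, take the trace-type identity obtained by pairing the $2\times2$ block with $\mathbb{E}$. This gives $|\QGunknown_h|^2 = 2q$ (up to sign conventions), which bounds $\QGunknown_h$. Then $U_h$ and $[U]_{3\cdot}=\QGunknown_v(\QGunknown_h)^\perp$ are bounded, since $|\QGunknown_v|=c$. Property (b) follows as in the hydrostatic proof: rescale $\QGunknown_h$ by $\sqrt{q_2/q_1}$ and $\QGunknown_v$ by $c_2/c_1$, then redefine $U$ through the constraint. For property (c), I define $D((Q,q,c),(\QGunknown,U)) := 2q-|\QGunknown_h|^2+c^2-\QGunknown_v^2$. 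It is concave, it vanishes on $\sK$, and it is strictly positive on the interior of the hull; the last point is obtained by the same eigenvalue argument used for the incompressible Euler hull.

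\emph{Item (b).} I will construct third-order operators explicitly, case-splitting on which of $\eta_1,\eta_2$ is nonzero, as in the proof of Lemma~\ref{lemma:ass-hydrost-euler}~\ref{item:ass2-hydrost-euler}. The key observation is that $\QGunknown\times\eta_x=0$ together with $\QGunknown_h\neq0$ forces $\QGunknown\parallel\eta_x$. I therefore take $\opL_{\QGunknown}[g]=\Grad(\text{second order operator in }g)$, which makes the curl condition automatic. The components of $U$ are then chosen so that $\partial_t\opL_\QGunknown+\Divh\opL_U=0$. This is possible because the horizontal divergence of the symmetric trace-free block can realise any vector-valued term $-\partial_t\Grad(\cdot)$ once $(\eta_1,\eta_2)\neq0$. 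I expect this step to be the tedious one, but it is routine once the ansatz is fixed.

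\emph{Items (c) and (d).} For (c) I plan to prove $\Lambda$-completeness in a restricted form. Given two points of $\sK$ with $(\QGunknown_2-\QGunknown_1)_h\neq0$, choose $\eta_x\parallel \QGunknown_2-\QGunknown_1$ and $\eta_t$ as in the hydrostatic computation. The same factorisation $a\otimes b-a'\otimes b'=(a-a')\otimes b+a'\otimes(b-b')$ then gives the second wave-cone relation. This works because on $\sK$ the $U$-part equals $\QGunknown\otimes(\QGunknown_h)^\perp$ minus a constant, and $(\eta_x)_h\cdot(\QGunknown_2-\QGunknown_1)_h^\perp=0$ since $\eta_x$ is parallel to $\QGunknown_2-\QGunknown_1$. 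The obstacle is pairs with equal horizontal parts, i.e.\ differences purely in $\QGunknown_v$. These differences are excluded from our shrunken cone, which is exactly why only the interior statement \eqref{eq:KbLambda=KbCo} can be proved. I would handle this with a perturbation argument: pairs with $(\QGunknown_2-\QGunknown_1)_h=0$ form a lower-dimensional set. Every interior point of the convex hull is then a convex combination (via Carath\'eodory, Prop.~\ref{prop:app-caratheodory}) of points of $\sK$ that can be chosen with pairwise distinct horizontal parts. This is done by slightly rotating the $\QGunknown_h$ on the circle $|\QGunknown_h|^2=2q$ and using openness. Laminates built from such points, via the $H_N$-condition constructions in Appendix~\ref{app:Lconvex}, give $\interior{(\sK^\co)}\subset \sK^\Lambda$. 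I expect this genericity step to be the main difficulty. Finally, property (vi) of $D$ for $r=2$ is immediate, since $D$ is affine in $|\QGunknown|^2$.
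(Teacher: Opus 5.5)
For items (b) and (c) you follow the paper's route closely: the same ansatz $\opL_{\QGunknown}[g]=\alpha\Grad\Lap_h g$, and for (c) the same choice $\eta_x=\QGunknown_2-\QGunknown_1$, $\eta_t=-((\QGunknown_2)_h)^\perp\cdot(\eta_x)_h$, followed by a perturbation on the circle $|\QGunknown_h|^2=2q$. For item (a) you check suitability directly, whereas the paper uses a linear isomorphism $\sK_{Q,q,c}\cong\sK^{\hydrost}_{0,q,c}$ to import it from Lemma~\ref{lemma:ass-hydrost-euler}; your direct check is fine. Item (d), however, has a genuine gap.

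\textbf{The gap in item (d).} A suitable distance map must satisfy Defn.~\ref{defn:suitable-distance-map}~\ref{item:suitable-D-inK}: if $D=0$ at a point of $(\sK_{Q,q,c})^\co$, then that point lies in $\sK_{Q,q,c}$. You check concavity, vanishing on $\sK$, strict positivity on the interior of the hull, and \ref{item:suitable-D-convergence}, but never this property. It does not follow from what you check. Your $D$ does not depend on $U$ at all, and positivity in the interior says nothing about boundary points of the hull. For example, take $\{\pm1\}\times\{0,1\}\subset\R^2$ with $D=1-x^2$: every property you verify holds, yet $D(1,\tfrac12)=0$ at a point outside the set. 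This is exactly the property that makes the Baire-category limit take values in $\sK$, so it cannot be skipped.

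It is easy to supply:
\begin{enumerate}
\item The maps $2q-|\QGunknown_h|^2$ and $c^2-\QGunknown_v^2$ are concave and vanish on $\sK$, so both are nonnegative on the hull.
\item Hence $D=0$ forces $|\QGunknown_h|^2=2q$ and $|\QGunknown_v|=c$.
\item Write $\zeta=\sum_i\tau_i\zeta_i$ with $\zeta_i\in\sK$ (Prop.~\ref{prop:app-caratheodory}). Equality in the triangle inequality gives $(\QGunknown_i)_h=\QGunknown_h$ and $(\QGunknown_i)_v=\QGunknown_v$ for all $i$.
\item Therefore $U=\sum_i\tau_iU_i=\QGunknown\otimes(\QGunknown_h)^\perp-q\mathbb{E}^{3\times 2}$, so $\zeta\in\sK$.
\end{enumerate}
Alternatively, transport Lemma~\ref{lemma:ass-hydrost-euler}~\ref{item:ass4-hydrost-euler} through the isomorphism, as the paper does.

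\textbf{Smaller points.} In (b), the equation for the third row only fixes the component of $([U]_{31},[U]_{32})$ along $(\eta_1,\eta_2)$. The orthogonal component is free in $\Lambda$. To meet Defn.~\ref{defn:suitable-operator}~\ref{item:suitable-operator-b} you must reproduce it by adding a horizontally divergence-free term, namely $\Gradh^\perp$ of a second-order operator in $g$ (the paper's $\beta,\gamma$ terms).

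In (c), a Carath\'eodory representation of $\zeta$ is not stable when you move the $\zeta_i$. Use instead that an interior point lies in the interior of the convex hull of some finite subset of $\sK$. Perturb that subset, and then apply Prop.~\ref{prop:app-complete-wc} to it.
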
 

\begin{proof}[Proof of Lemma~\ref{lemma:ass-QG}~\ref{item:ass1-QG}] 
We observe that $\sK_{Q,q,c}$ and $\sK_{p,q,c}^\hydrost$ (the constitutive set for the hydrostatic Euler equations, see \eqref{eq:const-hydrost-euler}) are in some sense equal. Indeed it is simple to verify that 
$$ 
    \left(\QGunknown_h, \QGunknown_v, \left(\begin{array}{rr} [U]_{12} & -[U]_{11} \\ {[U]_{22}} & -[U]_{21} \end{array} \right),\left( \begin{array}{r} [U]_{32} \\ -[U]_{31} \end{array} \right)\right) \in \sK_{0,q,c}^\hydrost \qquad \text{ for all } (\QGunknown,U)\in \sK_{Q,q,c},
$$
and conversely 
$$ 
    \left( \left(\begin{array}{c} u \\ w \end{array}\right) , \left(\begin{array}{rr} -[U]_{12} & [U]_{11} \\ -[U]_{22} & [U]_{21} \\ -[W]_{2} & [W]_{1} \end{array} \right) \right) \in \sK_{Q,q-p,c} \qquad \text{ for all } (u,w,U,W)\in \sK_{p,q,c}^\hydrost.
$$ 

With this at hand, it is simple to see that suitability of the family $(\sK_{Q,q,c})_{(Q,q,c)\in \Theta}$ follows from suitability of the sets $(\sK_{p,q,c}^\hydrost)_{(p,q,c)\in \Theta}$ which we proved in Sect.~\ref{subsec:ex-hydrost-euler-ass}, see Lemma~\ref{lemma:ass-hydrost-euler}~\ref{item:ass1-hydrost-euler}. 
\end{proof}

\begin{proof}[Proof of Lemma~\ref{lemma:ass-QG}~\ref{item:ass2-QG}] 
Let $(\QGunknown,U) \in \Lambda$, i.e.~$\QGunknown_h\neq 0$ and there exists $\eta=(\eta_t,\eta_1,\eta_2,\eta_3)\in \R^4$ with $\eta_1^2 + \eta_2^2 \neq 0$ and 
\begin{equation} \label{eq:op-lambda-QG}
    \QGunknown \times \eta_x = 0 \qquad \text{ and }\qquad \QGunknown \eta_t + U \cdot (\eta_x)_h = 0. 
\end{equation}

Define
\begin{align*} 
	\opL_{[\QGunknown]_1}[g] &:= \alpha \Big( \parthree{1}{1}{1}g + \parthree{1}{2}{2}g \Big),\\
	\opL_{[\QGunknown]_2}[g] &:= \alpha \Big( \parthree{1}{1}{2}g + \parthree{2}{2}{2}g \Big),\\
	\opL_{[\QGunknown]_3}[g] &:= \alpha \Big( \parthree{1}{1}{3}g + \parthree{2}{2}{3}g \Big),\\
	\opL_{[U]_{12}}[g] &:= -2 \alpha \parthree{t}{1}{2}g ,\\
    \opL_{[U]_{22}}[g] &:= \alpha \Big( \parthree{t}{1}{1}g - \parthree{t}{2}{2}g \Big),\\
	\opL_{[U]_{31}}[g] &:= -\alpha \parthree{t}{1}{3}g + \beta \parthree{2}{2}{2}g + \gamma \parthree{1}{1}{2}g, \\
	\opL_{[U]_{32}}[g] &:= -\alpha \parthree{t}{2}{3}g - \beta \parthree{1}{2}{2}g - \gamma \parthree{1}{1}{1}g,  
\end{align*}
where\footnote{Note that $\eta_1^2 + \eta_2^2 \neq 0$, and accordingly if $\eta_1=0$, then $\eta_2\neq 0$.} 
\renewcommand{\arraystretch}{2.0}
\begin{equation*}
	(\alpha,\beta,\gamma) := \left\{ \begin{array}{ll}
		\left(\frac{[\QGunknown]_1}{\eta_1 (\eta_1^2 + \eta_2^2)},0,\frac{\eta_2 [U]_{31} - \eta_1 [U]_{32}}{\eta_1^2 (\eta_1^2 + \eta_2^2)}\right), & \text{ if }\eta_1\neq 0 , \\
		\left(\frac{[\QGunknown]_2}{\eta_2^3},\frac{[U]_{31}}{\eta_2^3},0\right), & \text{ if }\eta_1 = 0. 
	\end{array}\right. 
\end{equation*}
\renewcommand{\arraystretch}{1}
It is then straightforward to check that item~\ref{item:suitable-operator-a} of Defn.~\ref{defn:suitable-operator} holds. 

Now let $g(t,x):= h((t,x)\cdot \eta) = h(\eta_t t + \eta_1 x_1 + \eta_2 x_2 + \eta_3 x_3)$. We first consider the case $\eta_1\neq 0$. We obtain 
\begin{align*} 
	\opL_{[\QGunknown]_1}[g] &= \alpha \eta_1 (\eta_1^2 + \eta_2^2) \,h'''((t,x)\cdot \eta) = [\QGunknown]_1 \,h'''((t,x)\cdot \eta) ,\\
	\opL_{[\QGunknown]_2}[g] &= \alpha \eta_2 (\eta_1^2 + \eta_2^2) \,h'''((t,x)\cdot \eta) = [\QGunknown]_1 \frac{\eta_2}{\eta_1} \,h'''((t,x)\cdot \eta) ,\\
	\opL_{[\QGunknown]_3}[g] &= \alpha \eta_3 (\eta_1^2 + \eta_2^2) \,h'''((t,x)\cdot \eta) = [\QGunknown]_1 \frac{\eta_3}{\eta_1} \,h'''((t,x)\cdot \eta) ,\\
	\opL_{[U]_{12}}[g] &= -2 \alpha \eta_t \eta_1 \eta_2 \,h'''((t,x)\cdot \eta) = -2 [\QGunknown]_1 \frac{\eta_t \eta_2}{\eta_1^2 + \eta_2^2}  \,h'''((t,x)\cdot \eta) ,\\
    \opL_{[U]_{22}}[g] &= \alpha \eta_t (\eta_1^2 - \eta_2^2) \,h'''((t,x)\cdot \eta) = [\QGunknown]_1\frac{\eta_t (\eta_1^2 - \eta_2^2)}{\eta_1 (\eta_1^2 + \eta_2^2)} \,h'''((t,x)\cdot \eta) ,\\
	\opL_{[U]_{31}}[g] &= \Big[ -\alpha \eta_t \eta_1 \eta_3 + \gamma \eta_1^2 \eta_2 \Big] \,h'''((t,x)\cdot \eta) = \frac{-[\QGunknown]_1\eta_t \eta_3 + \eta_2^2 [U]_{31} - \eta_1 \eta_2 [U]_{32}}{\eta_1^2 + \eta_2^2} \,h'''((t,x)\cdot \eta), \\
	\opL_{[U]_{32}}[g] &= \Big[ -\alpha \eta_t \eta_2 \eta_3 - \gamma \eta_1^3 \Big] \,h'''((t,x)\cdot \eta) = \frac{-[\QGunknown]_1 \eta_t \eta_2 \eta_3 - \eta_1^2 \eta_2 [U]_{31} + \eta_1^3 [U]_{32}}{\eta_1 (\eta_1^2 + \eta_2^2)} \,h'''((t,x)\cdot \eta).  
\end{align*}
According to \eqref{eq:op-lambda-QG}, we have
\begin{align*}
	\eta_1 [\QGunknown]_2 - \eta_2 [\QGunknown]_1 &= 0, \\
    \eta_1 [\QGunknown]_3 - \eta_3 [\QGunknown]_1 &= 0, \\
	\eta_t [\QGunknown]_1 - \eta_1 [U]_{22} + \eta_2 [U]_{12} &=0, \\
    \eta_t [\QGunknown]_2 + \eta_1 [U]_{12} + \eta_2 [U]_{22} &=0, \\
    \eta_t [\QGunknown]_3 + \eta_1 [U]_{31} + \eta_2 [U]_{32} &=0, 
\end{align*}
which yields by elementary manipulations
\begin{align*}
    2 \eta_t \eta_2 [\QGunknown]_1 + (\eta_1^2 + \eta_2^2) [U]_{12} &= 0, \\
    [\QGunknown]_1 \frac{\eta_t}{\eta_1} (\eta_1^2 - \eta_2^2) - (\eta_1^2 + \eta_2^2) [U]_{22} &= 0,\\ 
    -[\QGunknown]_1 \eta_t \eta_3 + \eta_2^2 [U]_{31} - \eta_1 \eta_2 [U]_{32} &= (\eta_1^2 + \eta_2^2) [U]_{31}, \\
    -[\QGunknown]_1 \eta_t \eta_2 \eta_3 - \eta_1^2 \eta_2 [U]_{31} + \eta_1^3 [U]_{32} &= \eta_1 (\eta_1^2 + \eta_2^2) [U]_{32} .
\end{align*}
Hence item \ref{item:suitable-operator-b} of Defn.~\ref{defn:suitable-operator} holds. 

Next we consider $\eta_1=0$. We obtain in a similar fashion
\begin{align*} 
	\opL_{[\QGunknown]_1}[g] &= 0 ,\\
	\opL_{[\QGunknown]_2}[g] &= \alpha \eta_2^3 \,h'''((t,x)\cdot \eta) = [\QGunknown]_2 \,h'''((t,x)\cdot \eta) ,\\
	\opL_{[\QGunknown]_3}[g] &= \alpha \eta_2^2 \eta_3 \,h'''((t,x)\cdot \eta) = [\QGunknown]_2 \frac{\eta_3}{\eta_2} \,h'''((t,x)\cdot \eta) ,\\
	\opL_{[U]_{12}}[g] &= 0 ,\\
    \opL_{[U]_{22}}[g] &= -\alpha \eta_t \eta_2^2 \,h'''((t,x)\cdot \eta) = -[\QGunknown]_2 \frac{\eta_t}{\eta_2} \,h'''((t,x)\cdot \eta) ,\\
	\opL_{[U]_{31}}[g] &= \beta \eta_2^3 \,h'''((t,x)\cdot \eta) = [U]_{31} \,h'''((t,x)\cdot \eta), \\
	\opL_{[U]_{32}}[g] &= -\alpha \eta_t \eta_2 \eta_3 \,h'''((t,x)\cdot \eta) = -[\QGunknown]_2 \frac{\eta_t \eta_3}{\eta_2^2} \,h'''((t,x)\cdot \eta).  
\end{align*}
In this case \eqref{eq:op-lambda-QG} turns into 
\begin{align*}
	\eta_2 [\QGunknown]_1 &= 0, \\ 
    \eta_2 [\QGunknown]_3 - \eta_3 [\QGunknown]_2 &= 0 , \\ 
	\eta_t [\QGunknown]_1 + \eta_2 [U]_{12} &=0, \\
    \eta_t [\QGunknown]_2 + \eta_2 [U]_{22} &=0, \\
    \eta_t [\QGunknown]_3 + \eta_2 [U]_{32} &=0, 
\end{align*}
which implies $[\QGunknown]_1=[U]_{12}=0$ as $\eta_2\neq 0$. Thus Defn.~\ref{defn:suitable-operator}~\ref{item:suitable-operator-b} is satisfied in this case too. 
\end{proof}

\begin{proof}[Proof of Lemma~\ref{lemma:ass-QG}~\ref{item:ass3-QG}] 
Throughout the proof we fix an arbitrary $(Q,q,c) \in \Theta$. As a first step we prove that $\Lambda'$ is complete\footnote{See Defn.~\ref{defn:app-complete-wc} for the definition of ``completeness''.} with respect to $\sK_{Q,q,c}$, where $\Lambda'$ is the larger cone given in \eqref{eq:QG-actual-WC} (the actual ``wave cone'' in the sense of Defn.~\ref{defn:wave-cone}). So let $(\QGunknown_1,U_1),(\QGunknown_2,U_2)\in \sK_{Q,q,c}$. A simple computation shows 
\begin{align}
    U_2 - U_1 &= \QGunknown_2 \cdot \big(((\QGunknown_2)_h)^\perp\big)^\trans - \QGunknown_1 \cdot \big(((\QGunknown_1)_h)^\perp\big)^\trans \notag \\
    &= (\QGunknown_2 - \QGunknown_1) \cdot \big(((\QGunknown_2)_h)^\perp\big)^\trans + \QGunknown_1 \cdot \big(((\QGunknown_2)_h)^\perp - ((\QGunknown_1)_h)^\perp\big)^\trans \label{eq:QG-diff-U}
\end{align}
Setting 
$$
    \eta_x := \left\{\begin{array}{rl} \QGunknown_2-\QGunknown_1 & \text{ if }\QGunknown_2-\QGunknown_1\neq 0, \\ \text{arbitrary, }\neq 0 & \text{ otherwise}, \end{array} \right. \qquad\text{ and }\qquad \eta_t := - ((\QGunknown_2)_h)^\perp \cdot (\eta_x)_h, 
$$
we see that 
$$ 
    (\QGunknown_2 - \QGunknown_1) \times \eta_x = 0,
$$
and, together with \eqref{eq:QG-diff-U}, we obtain 
\begin{align*}
    &(\QGunknown_2-\QGunknown_1)\eta_t + (U_2-U_1) \cdot (\eta_x)_h \\
    &= (\QGunknown_2-\QGunknown_1) \Big( - ((\QGunknown_2)_h)^\perp \cdot (\eta_x)_h + ((\QGunknown_2)_h)^\perp \cdot (\eta_x)_h \Big) + \QGunknown_1 \Big( \big((\QGunknown_2-\QGunknown_1)_h\big)^\perp  \cdot (\eta_x)_h \Big) =0.
\end{align*}
Thus we have shown $(\QGunknown_2,U_2) - (\QGunknown_1,U_1) \in \Lambda'$ as desired. 

Next we prove that in any neighbourhood of a point $(\QGunknown,U)\in \sK_{Q,q,c}$ there is another point $(\widetilde{\QGunknown},\widetilde{U})\in \sK_{Q,q,c}$. So let $(\QGunknown,U)\in \sK_{Q,q,c}$. We immediately get from \eqref{eq:const-QG} that $[\QGunknown]_1^2 + [\QGunknown]_2^2 = 2q$. Since $q>0$ by assumption, we know that $[\QGunknown]_1^2>0$ or $[\QGunknown]_2^2>0$. Let us consider the former case. Then for all $\delta>0$ sufficiently small, 
\begin{align*}
    [\widetilde{\QGunknown}]_1 &:= \sign([\QGunknown]_1) \sqrt{[\QGunknown]_1^2 - \delta} , \\
    [\widetilde{\QGunknown}]_2 &:= \left\{ \begin{array}{rl} \sign([\QGunknown]_2) \sqrt{[\QGunknown]_2^2 + \delta} & \text{ if }[\QGunknown]_2 \neq 0, \\ \sqrt{\delta} & \text{ if }[\QGunknown]_2=0 , \end{array} \right. \\
    [\widetilde{\QGunknown}]_3 &:= [\QGunknown]_3 , \\
    \widetilde{U} &:= \widetilde{\QGunknown}\otimes (\widetilde{\QGunknown}_h)^\perp - q \mathbb{E}^{3\times 2}
\end{align*}
are well-defined, $(\widetilde{\QGunknown},\widetilde{U})\in \sK_{Q,q,c}$ and $(\widetilde{\QGunknown},\widetilde{U})$ arbitrary close to $(\QGunknown,U)$. The case $[\QGunknown]_2^2>0$ works in a similar fashion.

Now we show\footnote{The idea behind the following goes back to \cite[Proof of Lemma~6]{DelSze10} and was also used in \cite[Lemma~5.3.5]{Markfelder}.} that $\interior{\big((\sK_{Q,q,c})^\co\big)} \subset (\sK_{Q,q,c})^\Lambda$. Due to Prop.~\ref{prop:app-Lconvex}, this immediately leads to the desired property $\interior{\big((\sK_{Q,q,c})^\Lambda\big)} = \interior{\big((\sK_{Q,q,c})^\co\big)}$. So let $(\QGunknown,U)\in \interior{\big((\sK_{Q,q,c})^\co\big)}$. Then there exist $N\in \N$, $(\QGunknown_i,U_i)\in\sK_{Q,q,c}$ for $i=1,...,N$ such that 
$$
    (\QGunknown,U) \in \interior{\Big(\big\{(\QGunknown_1,U_1),...,(\QGunknown_N,U_N)\big\}^\co\Big)}, 
$$
see \cite[Prop.~A.5.4]{Markfelder} for a detailed proof of this fact. With the observation above, we may assume that the $(\QGunknown_1)_h,...,(\QGunknown_N)_h$ are pairwise different (if not, one can perturb the $(\QGunknown_i,U_i)$ as explained above). Since $\Lambda'$ is complete, we infer that $(\QGunknown_{i_1},U_{i_1})-(\QGunknown_{i_2},U_{i_2})\in \Lambda'$ for all $i_1,i_2\in \{1,...,N\}$, where $(\QGunknown_{i_1}-\QGunknown_{i_2})_h \neq 0$. The corresponding $\eta$ satisfies $\eta_x\neq 0$. So if $(\eta_1,\eta_2)=0$, then $\eta_3\neq 0$. This however implies $(\QGunknown_{i_1}-\QGunknown_{i_2})_h=0$, a contradiction. Thus we have shown that even $(\QGunknown_{i_1},U_{i_1})-(\QGunknown_{i_2},U_{i_2})\in \Lambda$ for all $i_1,i_2\in \{1,...,N\}$. Then \cite[Prop.~4.2.15]{Markfelder} yields that $(\QGunknown,U)\in (\sK_{Q,q,c})^\Lambda$ which finishes the proof.
\end{proof}

\begin{proof}[Proof of Lemma~\ref{lemma:ass-QG}~\ref{item:ass4-QG}] 
Set $D((Q,q,c),(\QGunknown,U):= 2q + c^2 - |\QGunknown|^2$. Together with the transformation of the constitutive sets $\sK_{Q,q,c}$ to the corresponding sets $\sK_{p,q,c}^\hydrost$ for the case of hydrostatic Euler which is exhibited in the proof of Lemma~\ref{lemma:ass-QG}~\ref{item:ass1-QG}, the desired claim follows from Lemma~\ref{lemma:ass-hydrost-euler}~\ref{item:ass4-hydrost-euler}.
\end{proof}

\subsection{Proof of Thm.~\ref{thm:QG-wilddata}} \label{subsec:ex-QG-pf} 

Before we prove Thm.~\ref{thm:QG-wilddata}, we show that $(0,0)\in \sU_{Q,q,c}$.

\begin{prop} \label{prop:zero-in-U-QG}
    It holds that 
    $$
        (0,0)\in \sU_{Q,q,c} \qquad \text{ for all }Q\in \R,\ q>0,\ c>0.
    $$
\end{prop}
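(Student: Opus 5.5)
We plan to transfer the statement from the hydrostatic Euler setting, using the correspondence between $\sK_{Q,q,c}$ and $\sK^\hydrost_{p,q,c}$ exhibited in the proof of Lemma~\ref{lemma:ass-QG}~\ref{item:ass1-QG}. Consider the linear map
$$
    \Phi:(u,w,U^h,W)\mapsto \left( \left(\begin{array}{c} u \\ w \end{array}\right) , \left(\begin{array}{rr} -[U^h]_{12} & [U^h]_{11} \\ -[U^h]_{22} & [U^h]_{21} \\ -[W]_{2} & [W]_{1} \end{array} \right) \right).
$$
This map is a linear bijection from $\R^2\times\R\times\symz{2}\times\R^2$ onto $\R^3\times\QGmatrixset$. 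Indeed, the trace-free symmetric $U^h$ is mapped to a matrix satisfying $[A]_{12}=[A]_{21}$ and $[A]_{11}=-[A]_{22}$, and the dimensions agree ($2+1+2+2=7=3+4$). By the computation in the proof of Lemma~\ref{lemma:ass-QG}~\ref{item:ass1-QG}, $\Phi$ maps $\sK^\hydrost_{0,q,c}$ into $\sK_{Q,q,c}$. The inverse direction displayed there shows that $\Phi^{-1}$ maps $\sK_{Q,q,c}$ into $\sK^\hydrost_{0,q,c}$. Hence $\Phi(\sK^\hydrost_{0,q,c})=\sK_{Q,q,c}$; note that $\sK_{Q,q,c}$ does not depend on $Q$. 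I would double-check the sign conventions of $(\cdot)^\perp$ and $\mathbb{E}^{3\times2}$ once, but only the bijectivity between the two constitutive sets matters for the argument.

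Since $\Phi$ is a linear homeomorphism, it commutes with taking convex hulls and interiors, so $\Phi\big(\interior{((\sK^\hydrost_{0,q,c})^\co)}\big)=\interior{((\sK_{Q,q,c})^\co)}$. By Lemma~\ref{lemma:ass-QG}~\ref{item:ass3-QG}, the right-hand side equals $\sU_{Q,q,c}$. On the hydrostatic side, Lemma~\ref{lemma:ass-hydrost-euler}~\ref{item:ass3-hydrost-euler} gives $\interior{((\sK^\hydrost_{0,q,c})^\co)}=\sU^\hydrost_{0,q,c}$. Corollary~\ref{cor:geom-WsubsetU} gives $\sW_{0,q,c}\subset\sU^\hydrost_{0,q,c}$.

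It therefore suffices to show $(0,0,0,0)\in\sW_{0,q,c}$ for $q>0$, $c>0$, because $\Phi(0,0,0,0)=(0,0)$. This is the one concrete check, and I expect it to be the only real work. The eigenvalue condition in \eqref{eq:defn-W} reads $\lambda_{\max}(-q\id_2)=-q<0$, and $|w|=0<c$. At $(u,U)=(0,0)$ the formulas give $f^{j,\pm}_{0,q}=\pm\sqrt{q\cdot q^2}/q=\pm\sqrt q$. Each minimum in the $W$-conditions is then $\sqrt q\,c>0$. Choosing $\tau=\tfrac12$, all four strict inequalities $0<\tfrac12\sqrt q\,c$ and $0>-\tfrac12\sqrt q\,c$ hold. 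This gives $(0,0,0,0)\in\sW_{0,q,c}\subset\sU^\hydrost_{0,q,c}$, and applying $\Phi$ yields $(0,0)\in\sU_{Q,q,c}$.
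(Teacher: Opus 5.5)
Your proposal is correct and follows the paper's route. The paper likewise transfers via the linear correspondence $\sK^\hydrost_{0,q,c}\leftrightarrow\sK_{Q,q,c}$ from the proof of Lemma~\ref{lemma:ass-QG}~\ref{item:ass1-QG}, uses $(0,0,0,0)\in\sW_{0,q,c}\subset\sU^\hydrost_{0,q,c}$ (Rem.~\ref{rem:zero-in-U-incomp}), and implicitly relies on Lemma~\ref{lemma:ass-QG}~\ref{item:ass3-QG} to identify $\interior{((\sK_{Q,q,c})^\co)}$ with $\sU_{Q,q,c}$. Your check of the inequalities defining $\sW_{0,q,c}$ at the origin ($f^{j,\pm}_{0,q}(0,0)=\pm\sqrt q$, $\tau=\tfrac12$) is accurate, as are the sign conventions for $(\cdot)^\perp$ and $\mathbb{E}^{3\times 2}$; these make explicit what the paper leaves to the reader.
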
 

\begin{proof}
Prop.~\ref{prop:zero-in-U-QG} follows directly from the transformation of the constitutive sets $\sK_{Q,q,c}$ to the corresponding sets $\sK_{p,q,c}^\hydrost$ for the case of hydrostatic Euler, and the fact that $(0,0,0,0)\in \sU_{p,q,c}^\hydrost$, see Rem.~\ref{rem:zero-in-U-incomp}.
\end{proof}

Now we are ready to prove Thm.~\ref{thm:QG-wilddata}, which follows the same lines as the corresponding theorems for the other cases, e.g.~Thm.~\ref{thm:hydrost-euler-wilddata}.

\begin{proof}[Proof of Thm.~\ref{thm:QG-wilddata}]
Choose $(\ov{\QGunknown},\ov{U})=(0,0)$, and $Q=0$, $q>0$ and $c>0$ constant. Then $(\ov{\QGunknown},\ov{U})=(0,0)$ obviously satisfies the linear PDEs \eqref{eq:lin-QG-curl}, \eqref{eq:lin-QG-mom}, and takes values in $\sU_{Q,q,c}$ according to Prop.~\ref{prop:zero-in-U-QG}. So we can proceed as in the proofs of Thms.~\ref{thm:incomp-euler-wilddata} and \ref{thm:hydrost-euler-wilddata}. For the energy balance, we compute
\begin{align*}
    &\partial_t \left( \half |\QGunknown|^2\right) + \Divh \left( \half |\QGunknown|^2 (\QGunknown_h)^\perp \right) - \Div(Q\times \QGunknown ) \\
    &= \partial_t \left( q + \half c^2\right) + \Divh \left(\left( q + \half c^2 \right) (\QGunknown_h)^\perp \right) = 0,
\end{align*}
which has to be understood in the sense of distributions. Here we have used that $q,c$ are constant, and \eqref{eq:QG-curl}.
\end{proof}

\section*{Acknowledgements} 
D.W.B. acknowledges support from the Cambridge Trust and the Cantab Capital Institute for Mathematics of Information. D.W.B. would also like to thank the Isaac Newton Institute for Mathematical Sciences, Cambridge, for support and hospitality during the programme ``Anti-diffusive dynamics : from sub-cellular to astrophysical scales'' where work on this paper was undertaken. This work was supported by EPSRC grant no EP/R014604/1. S.M. acknowledges financial support from the Alexander von Humboldt foundation and also from the Deutsche Forschungsgemeinschaft (DFG, German Research Foundation) SPP 2410 \emph{Hyperbolic Balance Laws in Fluid Mechanics: Complexity, Scales, Randomness (CoScaRa)}, within the Project 525935467 \emph{Convex integration: towards a mathematical understanding of turbulence, Onsager conjectures and admissibility criteria}. D.W.B. and E.S.T. have benefitted from the inspiring environment of the CRC 1114 “Scaling Cascades in Complex Systems”, Project Number 235221301, Project C09, funded by the Deutsche Forschungsgemeinschaft (DFG). Moreover, this work was also supported in part by the DFG Research Unit FOR 5528 on Geophysical Flows.

\appendix 

\section{Some linear algebra} \label{app:linear-algebra}
We recall the following lemma, which is a version of \cite[Lemma~A.3]{Markfelder24}.

\renewcommand{\arraystretch}{1.3}
\begin{lemma}[See {\cite[Lemma~A.3]{Markfelder24}}] \label{lemma:app-eigenvalues} 
	Let $p,q\in \R$ and $(u,U)\in \R^2 \times \symz{2}$. Then the following assertions are equivalent: 
	\begin{align}
        &\bullet\ \lambda_{\max}(u\otimes u - U -(q-p)\id_2)\leq 0, \label{eq:app-eigenvalues1} \\
        &\bullet\ \frac{[u]_1^2 + [u]_2^2}{2} - (q-p) + \sqrt{\left(\frac{[u]_1^2 - [u]_2^2}{2} - [U]_{11} \right)^2 + \Big( [u]_1 [u]_2 - [U]_{12} \Big)^2 } \leq 0 , \label{eq:app-eigenvalues2} \\
        &\bullet\ \left\{ \begin{array}{r} [u]_1^2 + [u]_2^2 - 2(q-p) \leq 0, \\ \Big( [u]_1^2 - [U]_{11} - (q-p)\Big) \Big( [u]_2^2 + [U]_{11} - (q-p) \Big) - \Big( [u]_1 [u]_2 - [U]_{12} \Big)^2 \geq 0 . \end{array} \right. \label{eq:app-eigenvalues3}
    \end{align}
	The above claim is also true if one replaces all non-strict inequalities by strict inequalities in \eqref{eq:app-eigenvalues1}-\eqref{eq:app-eigenvalues3} (i.e.~one replaces all ``$\leq$'' and ``$\geq$'' by ``$<$'' and ``$>$'' respectively).
\end{lemma}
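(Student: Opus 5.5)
The plan is to reduce everything to the eigenvalues of the symmetric $2\times 2$ matrix $S:=u\otimes u - U -(q-p)\id_2$. For a symmetric $2\times 2$ matrix $S$ one has
$$
\lambda_{\max}(S)=\frac{\tr S}{2}+\sqrt{\Big(\frac{[S]_{11}-[S]_{22}}{2}\Big)^2+[S]_{12}^2}.
$$
Since $U$ is trace-free, $[U]_{22}=-[U]_{11}$. This gives $\tr S = [u]_1^2+[u]_2^2-2(q-p)$, $\tfrac12([S]_{11}-[S]_{22}) = \tfrac12([u]_1^2-[u]_2^2)-[U]_{11}$ and $[S]_{12}=[u]_1[u]_2-[U]_{12}$. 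Substituting these into the formula shows that $\lambda_{\max}(S)$ is literally the left-hand side of \eqref{eq:app-eigenvalues2}. Hence \eqref{eq:app-eigenvalues1}$\Leftrightarrow$\eqref{eq:app-eigenvalues2} is an identity, and it holds equally with strict inequalities.

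For \eqref{eq:app-eigenvalues1}$\Leftrightarrow$\eqref{eq:app-eigenvalues3}, use the standard characterization that a symmetric $2\times2$ matrix has both eigenvalues $\le 0$ iff $\tr S\le 0$ and $\det S\ge 0$. Both eigenvalues are $\le0$ iff the larger is. The eigenvalues have sum $\tr S$ and product $\det S$, so they are both nonpositive iff their sum is $\le0$ and their product is $\ge0$ (if the product is $\ge0$ they have the same sign or one vanishes). The first line of \eqref{eq:app-eigenvalues3} is exactly $\tr S\le0$. The second line is
$$
\det S=\big([u]_1^2-[U]_{11}-(q-p)\big)\big([u]_2^2+[U]_{11}-(q-p)\big)-\big([u]_1[u]_2-[U]_{12}\big)^2\ge 0,
$$
using $[S]_{22}=[u]_2^2+[U]_{11}-(q-p)$.

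For the strict version, both eigenvalues are $<0$ iff $\tr S<0$ and $\det S>0$. A positive product forces equal signs and excludes zero, and a negative sum then forces both to be negative; the converse is immediate.

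The only point needing care is this sign argument in the strict and nonstrict cases, in particular ruling out mixed signs when $\det S=0$. This is elementary, so I expect no real obstacle. The main thing to check is the bookkeeping of $[U]_{22}=-[U]_{11}$ in the entries of $S$.
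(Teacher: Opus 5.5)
Your proof is correct and complete. Since $U$ is trace-free, the left-hand side of \eqref{eq:app-eigenvalues2} is exactly the closed-form $\lambda_{\max}$ of the symmetric matrix $S$, and \eqref{eq:app-eigenvalues3} is precisely $\tr S\le 0$, $\det S\ge 0$ (resp.\ $\tr S<0$, $\det S>0$); your sign argument on the two eigenvalues handles both versions, including the case $\det S=0$. The paper gives no proof of its own here and only cites \cite[Lemma~A.3]{Markfelder24}, so your direct computation supplies the argument the paper omits.
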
 
\renewcommand{\arraystretch}{1}

\section{Convex hulls in general} \label{app:convex}
In this section we recall the definition of the convex hull. Most of the content can be also found in \cite[Sect.~A.5.1]{Markfelder}. 

In this section we suppose $M \in \N$.

\begin{defn}[See {\cite[Defn.~A.5.1]{Markfelder}}] \label{defn:app-convex}
	\begin{itemize}
		\item The \emph{closed line segment} $[p,q]\subset \R^M$ between two points $p,q\in \R^M$ is defined by 
		$$
			[p,q]=\left\{\tau p + (1-\tau) q \,\Big|\, \tau\in [0,1]\right\}.
		$$
		
		\item A set $S\subset \R^M$ is called \emph{convex} if $[p,q]\subset S$ for all $p,q\in S$.
		
		\item The \emph{convex hull $\sK^\co$} of a set $\sK\subset \R^M$ is the smallest convex set which contains $\sK$.
	\end{itemize}
\end{defn}

The following fact is well-known.

\begin{prop}[See e.g.~{\cite[Prop.~A.5.2]{Markfelder}}] \label{prop:app-caratheodory}
	Let $\sK\subset \R^M$. Then 
	\begin{align*}
		\sK^\co = \bigg\{ p\in \R^M\, \Big|\, &\exists N\in \N, \exists (\tau_i,p_i)\in \R^+ \times \R^M \text{ for all }i=1,...,N \text{ such that } \\
		& \bullet \ \sum_{i=1}^N \tau_i = 1,  \\
		& \bullet \ p_i \in \sK \text{ for all }i=1,...,N, \text{ and } \\
		& \bullet \ p= \sum_{i=1}^N \tau_i p_i \bigg\} .
	\end{align*}
\end{prop}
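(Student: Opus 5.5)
The statement to prove is the Carathéodory-type description of the convex hull (Prop.~\ref{prop:app-caratheodory}): $\sK^\co$ equals the set $C$ of all finite convex combinations of points of $\sK$. The plan is to prove the two inclusions $\sK^\co\subset C$ and $C\subset\sK^\co$ separately, using only Defn.~\ref{defn:app-convex}.

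For $\sK^\co\subset C$, I will show that $C$ is a convex set containing $\sK$; minimality of $\sK^\co$ then gives the inclusion. Containment is immediate: for $p\in\sK$ take $N=1$, $\tau_1=1$, $p_1=p$. For convexity, let $p=\sum_{i=1}^N\tau_ip_i$ and $q=\sum_{j=1}^{N'}\tau'_jq_j$ be two elements of $C$, and let $\lambda\in[0,1]$. Then
$$
\lambda p+(1-\lambda)q=\sum_i\lambda\tau_i p_i+\sum_j(1-\lambda)\tau'_j q_j .
$$
The coefficients are nonnegative and sum to $\lambda+(1-\lambda)=1$. Since $\R^+$ means positive weights, I drop any terms with zero coefficient (at least one term survives because the total is $1$). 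Hence $[p,q]\subset C$.

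For $C\subset\sK^\co$, I use that $\sK^\co$ is convex and contains $\sK$, and show by induction on $N$ that every convex set $S\supset\sK$ contains all $N$-term convex combinations of points of $\sK$. The case $N=1$ is trivial. For the step, take $p=\sum_{i=1}^{N+1}\tau_ip_i$ with $\tau_{N+1}<1$; if $\tau_{N+1}=1$ we are back in the case $N=1$. Set $s=1-\tau_{N+1}>0$ and $p'=\sum_{i\le N}(\tau_i/s)p_i$. By the induction hypothesis $p'\in S$, and $p=s\,p'+(1-s)\,p_{N+1}$ lies on the segment $[p',p_{N+1}]$, which is contained in $S$ by convexity.

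No step is a real obstacle. The only care needed is to note that $\sK^\co$ is well defined as the intersection of all convex supersets of $\sK$ (intersections of convex sets are convex), and to handle the positivity convention for the weights as described above.
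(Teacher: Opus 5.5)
Your proof is correct and complete. Showing that the set $C$ of finite convex combinations is a convex superset of $\sK$ gives $\sK^\co\subset C$, and the induction on $N$ via $p=s\,p'+(1-s)\,p_{N+1}$ gives the reverse inclusion; with strictly positive weights and $N+1\geq 2$ the case $\tau_{N+1}=1$ never arises, so that remark is harmless. The paper gives no proof of its own and only cites the statement as well known from \cite[Prop.~A.5.2]{Markfelder}, so your two-inclusion argument is exactly the standard one.
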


\section{Tetrahedron in $\R^3$} \label{app:tetrahedron}

\begin{lemma} \label{lemma:app-tetrahedron} 
    Let $a_1< a_2$, $b_1< b_2$ and set
    $$
        \sK := \Big\{(a_1,b_1,a_1b_1), (a_1,b_2,a_1b_2), (a_2,b_1,a_2b_1), (a_2,b_2,a_2b_2)\Big\} \subset \R^3.
    $$
    Then 
    \begin{align*}
		\sK^\co = \bigg\{ (x,y,z)\in \R^3 \,\Big|\, &\bullet\ a_1\leq x \leq a_2,\\
        &\bullet\ b_1\leq y \leq b_2, \\
        &\bullet\ z \leq xy + \min\big\{ (x-a_1)(b_2-y) , (a_2-x) (y-b_1) \big\}, \\
        &\bullet\ z \geq xy - \min\big\{ (x-a_1)(y-b_1) , (a_2-x) (b_2-y) \big\} \bigg\} . 
	\end{align*} 
\end{lemma}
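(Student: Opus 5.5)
The plan is to identify the convex hull of the four points, which span a (non-degenerate) tetrahedron, with the intersection of the four closed half-spaces bounded by the planes through its faces. I will first show that the four points are affinely independent: after the affine change of variables $x=a_1+(a_2-a_1)s$, $y=b_1+(b_2-b_1)\sigma$, the points become the images of $(0,0),(0,1),(1,0),(1,1)$ lifted to the graph of the bilinear function $xy$, and a bilinear function with nonzero $xy$-coefficient is not affine on these four corners. Hence $\sK^\co$ is a 3-simplex, and it equals the intersection of the four half-spaces determined by its faces, each containing the opposite vertex.

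Next I compute the four face planes. Each face omits one vertex, and each is spanned by three of the points $(a_i,b_j,a_ib_j)$. For the face omitting $(a_2,b_2,a_2b_2)$, the plane through the other three is $z=a_1y+b_1x-a_1b_1$. This can be rewritten as $xy-(x-a_1)(y-b_1)$, and the omitted vertex lies above it. This gives the constraint $z\geq xy-(x-a_1)(y-b_1)$.

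The same computation handles the remaining faces:
\begin{itemize}
\item Omitting $(a_1,b_1,\cdot)$ gives $z\geq xy-(a_2-x)(b_2-y)$.
\item Omitting $(a_1,b_2,\cdot)$ gives $z\leq xy+(a_2-x)(y-b_1)$.
\item Omitting $(a_2,b_1,\cdot)$ gives $z\leq xy+(x-a_1)(b_2-y)$.
\end{itemize}
In each case the sign is fixed by evaluating at the omitted vertex. For instance, at $(a_1,b_1)$ the second plane value is $a_1b_1-(a_2-a_1)(b_2-b_1)<a_1b_1$. The right-hand sides are affine in $(x,y,z)$ because the $xy$ terms cancel. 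Collecting the two upper and the two lower bounds yields exactly the two min-conditions in the statement.

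It remains to see that the conditions $a_1\leq x\leq a_2$ and $b_1\leq y\leq b_2$ are consistent with this description. Inside the tetrahedron they hold automatically, since they hold at every vertex and are convex conditions. Conversely, the four face inequalities already force them: adding the two constraints $xy-(a_2-x)(b_2-y)\leq z\leq xy+(x-a_1)(b_2-y)$ gives $(b_2-y)(a_2-a_1)\geq 0$, so $y\leq b_2$. The other three bounds follow by the analogous pairings. Therefore the stated set coincides with the intersection of the four half-spaces, which is $\sK^\co$.

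The main obstacle is only bookkeeping: getting the orientation of each of the four inequalities right and matching them to the two $\min$ expressions. I would verify each one by plugging in the opposite vertex.
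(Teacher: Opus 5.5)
Your proof is correct. The paper gives no proof of this lemma; it only says the statement is elementary. Your argument supplies the omitted verification: show that $\sK^\co$ is a nondegenerate tetrahedron, write it as the intersection of its four face half-spaces, and fix each orientation by evaluating at the omitted vertex, where the gap is always $\pm(a_2-a_1)(b_2-b_1)$. I checked all four plane equations and their signs, and they assemble exactly into the two $\min$-conditions of the statement.

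Two small remarks:
\begin{itemize}
\item In the affine-independence step you should also exclude a vertical plane. This is immediate, since the projections $(a_i,b_j)$ are not collinear. In fact the strict sign at each omitted vertex already proves nondegeneracy, so that step can be dropped.
\item For the box constraints you only need the inclusion of the tetrahedron in the box. Your converse derivation of the box bounds from the face inequalities is correct but not needed.
\end{itemize}

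Your face description also shows that the interior of $\sK^\co$ is the set where the four face inequalities hold strictly. This is the form actually used in the proof of Lemma~\ref{lemma:geom-step2a}, where the strict inequalities give strictly positive weights $\tau_1,\dots,\tau_4$.
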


The content of Lemma~\ref{lemma:app-tetrahedron} is elementary and easy to prove which is why we skip its proof.

\section{$\Lambda$-convex hulls and the $H_N$-condition} \label{app:Lconvex}

In this section we summarize the definitions and some properties of the $\Lambda$ convex hull and the $H_N$-condition. Most of the content can be found in \cite[Sect.~4.2]{Markfelder} and references therein.

In this section we suppose $M>1$, $\sK\subset \R^M$ a set and $\Lambda\subset \R^M$ a cone.

\subsection{Definition and properties}

Let us first recall the definition of the $\Lambda$-convex hull.

\begin{defn}[See {\cite[Defn.~4.2.2]{Markfelder}}] \label{defn:app-Lconvex}
    \begin{itemize}
        \item A set $S\subset \R^M$ is called \emph{$\Lambda$-convex} if $[p,q]\subset S$ for all $p,q\in S$ with $p-q\in \Lambda$.

        \item The \emph{$\Lambda$-convex hull $\sK^\Lambda$} of $\sK$ is the smallest $\Lambda$-convex set which contains $\sK$.
    \end{itemize}
\end{defn}

\begin{prop}[See {\cite[Prop.~4.2.3]{Markfelder}}] \label{prop:app-Lconvex}
    \begin{itemize}
        \item Every convex set is $\Lambda$-convex.
        \item $\sK^\Lambda\subset \sK^\co$.
    \end{itemize}
\end{prop}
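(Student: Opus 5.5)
The first assertion follows directly from the definitions. The convex hull $\sK^\co$ is convex. For any $p,q\in\sK^\co$ we therefore have $[p,q]\subset \sK^\co$, and this holds in particular when $p-q\in\Lambda$. So every convex set is $\Lambda$-convex. No property of the cone $\Lambda$ is needed here; we only discard the extra hypothesis $p-q\in\Lambda$.

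For the second assertion we use the characterisation of $\sK^\Lambda$ as the smallest $\Lambda$-convex set containing $\sK$. Concretely, $\sK^\Lambda$ is the intersection of all $\Lambda$-convex supersets of $\sK$. This intersection is itself $\Lambda$-convex: if $p,q$ lie in every such set and $p-q\in\Lambda$, then $[p,q]$ lies in every such set. Hence "smallest" is well defined, and $\sK^\Lambda$ is contained in any $\Lambda$-convex set containing $\sK$.

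By the first assertion, $\sK^\co$ is a $\Lambda$-convex set containing $\sK$. Minimality then gives $\sK^\Lambda\subset\sK^\co$.

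There is no real obstacle in this argument. The only point to state explicitly is that the smallest $\Lambda$-convex superset exists, namely as the intersection described above. This mirrors the corresponding fact for the convex hull in Defn.~\ref{defn:app-convex}.
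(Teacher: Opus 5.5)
Your proof is correct and follows the standard argument. The paper gives no proof of its own and defers to Markfelder (Prop.~4.2.3), where the result follows exactly as you show: convexity implies the weaker segment condition, and minimality of $\sK^\Lambda$ then applies to the $\Lambda$-convex superset $\sK^\co$. One small wording fix: in your first paragraph you argue for $\sK^\co$, but the assertion concerns an arbitrary convex set $S$; the same line ($p,q\in S$ implies $[p,q]\subset S$, in particular when $p-q\in\Lambda$) applies verbatim.
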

        
Next we recall the definition of the $H_N$-condition and the barycenter, see \cite[Sect.~4.2]{Markfelder} and references therein. 

\begin{defn}[See {\cite[Defn.~4.2.4]{Markfelder}}] \label{defn:app-hn} 
	Let $N\in \N$ and $(\tau_i,p_i)\in \R^+ \times \R^M$ for $i=1,...,N$. We say that the family of pairs $\big\{(\tau_i,p_i)\big\}_{i=1,...,N}$ satisfies the \emph{$H_N$-condition} if the following holds.
	\begin{itemize}
		\item If $N=1$, then $\tau_1=1$.
		\item If $N\geq 2$, then (after relabeling if necessary) $p_2-p_1 \in\Lambda$ and the family 
		\begin{equation} \label{eq:defn-hn-iteration}
			\left\{ \left( \tau_1 + \tau_2 , \frac{\tau_1}{\tau_1 + \tau_2} p_1 + \frac{\tau_2}{\tau_1 + \tau_2} p_2\right) \right\} \cup \big\{(\tau_i,p_i)\big\}_{i=3,...,N}
		\end{equation}
		satisfies the $H_{N-1}$-condition.
	\end{itemize}
\end{defn} 

\begin{defn}[See {\cite[Defn.~4.2.5]{Markfelder}}] \label{defn:app-barycenter}
	Let $N\in \N$ and $(\tau_i,p_i)\in \R^+ \times \R^M$ for $i=1,...,N$ with $\sum_{i=1}^N \tau_i =1 $. Then 
	$$
		p := \sum_{i=1}^N \tau_i p_i
	$$
	is the \emph{barycenter} of the family of pairs $\big\{(\tau_i,p_i)\big\}_{i=1,...,N}$.
\end{defn}

The following proposition covers one of the most important properties of the $\Lambda$-convex hull.

\begin{prop}[See {\cite[Prop.~4.2.9]{Markfelder}}] \label{prop:app-laminates}
    It holds that 
    \begin{align*}
        \sK^\Lambda = \bigg\{ p\in \R^M\, \Big|\, &\exists N\in \N, \exists (\tau_i,p_i)\in \R^+ \times \R^M \text{ for all }i=1,...,N \text{ such that } \\
        & \bullet \ \text{the family } \big\{ (\tau_i,p_i)\big\}_{i=1,...,N} \text{ satisfies the $H_N$-condition, } \\
        & \bullet \ p_i \in \sK \text{ for all }i=1,...,N \text{ and } \\
        & \bullet \ p \text{ is the barycenter of the family }\big\{ (\tau_i,p_i)\big\}_{i=1,...,N}, \text{ i.e. } p= \sum_{i=1}^N \tau_i p_i \bigg\} .
    \end{align*}
\end{prop}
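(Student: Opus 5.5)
Write $\mathcal{L}$ for the set on the right-hand side. The plan is to show the two inclusions $\mathcal{L}\subset\sK^\Lambda$ and $\sK^\Lambda\subset\mathcal{L}$ separately, each by a short induction on $N$. Throughout, $\sK^\Lambda$ is understood as the intersection of all $\Lambda$-convex supersets of $\sK$. This intersection is itself $\Lambda$-convex, since the defining property is stable under intersections, so the ``smallest'' $\Lambda$-convex set containing $\sK$ exists.

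\emph{Step 1: $\mathcal{L}\subset\sK^\Lambda$.} I will prove the following stronger claim by induction on $N$: if $S$ is $\Lambda$-convex and $\{(\tau_i,p_i)\}_{i=1,\dots,N}$ satisfies the $H_N$-condition with all $p_i\in S$, then $\sum_i\tau_i p_i\in S$. Since $\sum_i \tau_i=1$ is part of the barycenter setting, the case $N=1$ is trivial ($\tau_1=1$). For $N\ge 2$, after relabeling we have $p_2-p_1\in\Lambda$. The merged point $\frac{\tau_1}{\tau_1+\tau_2}p_1+\frac{\tau_2}{\tau_1+\tau_2}p_2$ lies on $[p_1,p_2]\subset S$ by $\Lambda$-convexity of $S$. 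The merged family \eqref{eq:defn-hn-iteration} satisfies $H_{N-1}$, has the same total weight and the same barycenter, and all its points lie in $S$. The induction hypothesis then gives the claim. Applying the claim with $S=\sK^\Lambda\supset\sK$ yields $\mathcal{L}\subset\sK^\Lambda$.

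\emph{Step 2: $\sK^\Lambda\subset\mathcal{L}$.} Clearly $\sK\subset\mathcal{L}$, using $N=1$. By minimality of $\sK^\Lambda$, it suffices to show that $\mathcal{L}$ is $\Lambda$-convex. Let $p,q\in\mathcal{L}$ with $p-q\in\Lambda$ and $r=\lambda p+(1-\lambda)q$, where $\lambda\in[0,1]$; the cases $\lambda\in\{0,1\}$ are trivial. Take $H_N$ and $H_M$ families $\{(\tau_i,p_i)\}$ and $\{(\sigma_j,q_j)\}$ with endpoints in $\sK$ and barycenters $p$ and $q$. The candidate family for $r$ is the concatenation $\{(\lambda\tau_i,p_i)\}\cup\{((1-\lambda)\sigma_j,q_j)\}$; its weights sum to $1$ and its barycenter is $r$. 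To verify the $H_{N+M}$-condition, I will first note a scaling lemma: the $H_N$-condition is invariant under multiplying all weights by a common positive factor. This holds because each merge step uses only the ratios $\tau_1/(\tau_1+\tau_2)$, and the rule ``$N=1\Rightarrow\tau_1=1$'' must be replaced by ``the last weight equals the total mass''. I would therefore formulate an auxiliary ``$H_N$-condition with total mass $m$'' and run the induction in that form.

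Then I perform the merges of the $p$-family in the order dictated by its $H_N$-condition. These merges never touch the $q_j$, so after $N-1$ steps the $p$-part has collapsed to $(\lambda,p)$. Next I perform the merges of the $q$-family, which collapses it to $(1-\lambda,q)$. Since $p-q\in\Lambda$, one final merge gives $(1,r)$. Reading this merge sequence as the recursive definition of the $H$-condition shows that the concatenated family satisfies $H_{N+M}$, so $r\in\mathcal{L}$.

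The main obstacle is this bookkeeping. The $H_N$-condition is defined recursively, with relabeling allowed only at the front of the list and with $N=1$ requiring weight exactly $1$. I therefore need a clean formulation, namely the scaled version together with the statement ``merges within a subfamily commute with carrying along the other points'', so that the concatenation argument is rigorous. I would prove this as a small lemma by induction on the number of merges. The rest of the argument is routine.
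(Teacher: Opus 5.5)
Your proof is correct. The paper does not prove this statement itself but only cites \cite[Prop.~4.2.9]{Markfelder}, and your two inclusions are the standard proof of that result: replaying the merge sequence inside an arbitrary $\Lambda$-convex $S\supset\sK$ gives $\mathcal{L}\subset\sK^\Lambda$, and concatenating the rescaled families $\{(\lambda\tau_i,p_i)\}\cup\{((1-\lambda)\sigma_j,q_j)\}$ shows that $\mathcal{L}$ is $\Lambda$-convex. The bookkeeping is also lighter than you fear: the relabeling in the $H_N$-condition is an arbitrary permutation at every recursion step, and intermediate merges depend only on weight ratios, so you can check the concatenated family by replaying the two merge sequences and then merging $(\lambda,p)$ with $(1-\lambda,q)$.
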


\subsection{Complete wave cones} 

\begin{defn}[See {\cite[Defn.~4.2.12]{Markfelder}}] \label{defn:app-complete-wc}
    The wave cone $\Lambda$ is called \emph{complete with respect to $\sK$} if $p-q \in \Lambda$ for all $p,q\in \sK$.
\end{defn}

In the case where the wave cone is complete, the $\Lambda$-convex hull has the following important property.

\begin{prop}[See {\cite[Cor.~4.2.13]{Markfelder}}] \label{prop:app-complete-wc}
    If $\Lambda$ is complete with respect to $\sK$, then $\sK^\Lambda=\sK^\co$.
\end{prop}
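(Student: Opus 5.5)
The statement is Prop.~\ref{prop:app-complete-wc}: if $\Lambda$ is complete with respect to $\sK$, then $\sK^\Lambda=\sK^\co$. The inclusion $\sK^\Lambda\subset\sK^\co$ is already contained in Prop.~\ref{prop:app-Lconvex}, so only $\sK^\co\subset\sK^\Lambda$ remains. I will prove it by representing an arbitrary point of $\sK^\co$ as the barycenter of a family that satisfies the $H_N$-condition. Prop.~\ref{prop:app-laminates} then places that point in $\sK^\Lambda$.

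Let $p\in\sK^\co$. By Prop.~\ref{prop:app-caratheodory}, there are $N\in\N$, weights $\tau_i>0$ with $\sum_{i=1}^N\tau_i=1$, and points $p_i\in\sK$ such that $p=\sum_{i=1}^N\tau_i p_i$. I will show by induction on $N$ that $p\in\sK^\Lambda$.

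\emph{Base case.} If $N=1$, then $p=p_1\in\sK\subset\sK^\Lambda$.

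\emph{Inductive step.} Let $N\geq2$ and set $s:=\sum_{i=1}^{N-1}\tau_i>0$ and $p':=\sum_{i=1}^{N-1}\frac{\tau_i}{s}p_i$. The point $p'$ is a convex combination of $N-1$ points of $\sK$, so the induction hypothesis gives $p'\in\sK^\Lambda$. Moreover, $p=s\,p'+\tau_N p_N$ lies on the segment $[p',p_N]$.

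It would suffice to know $p'-p_N\in\Lambda$, but completeness only gives $p_i-p_N\in\Lambda$ for each single $i$. Differences between points of $\sK^\Lambda$ are not controlled by completeness, and $\Lambda$ is a cone, not a convex cone, so the linear combination $p'-p_N=\sum_{i<N}\frac{\tau_i}{s}(p_i-p_N)$ need not lie in $\Lambda$. This is the main obstacle, and the naive induction does not go through.

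To avoid this, I will not argue on $\sK^\Lambda$ directly. Instead I will show that the set
$$
S:=\bigcap\big\{T\supset\sK \,:\, T\ \Lambda\text{-convex}\big\}
$$
is convex. Since $S=\sK^\Lambda$ and $S\supset\sK$, convexity gives $S\supset\sK^\co$. For convexity, take an arbitrary $\Lambda$-convex set $T\supset\sK$. Completeness yields $[a,b]\subset T$ for all $a,b\in\sK$, but extending this to arbitrary points of $T$ again requires control of differences outside $\sK$, so the same obstacle reappears.

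I therefore expect the intended argument to be simply the reference to \cite[Cor.~4.2.13]{Markfelder}. That corollary presumably uses a characterization of the $\Lambda$-convex hull in which iterated splittings are performed only along directions given by differences of points of $\sK$, as in \cite[Prop.~4.2.15]{Markfelder}. I would cite that result: it states that if all pairwise differences of the $p_i$ lie in $\Lambda$, then the interior of their convex hull is contained in $\sK^\Lambda$. I would then pass to closure and approximation to handle boundary points. Precisely this boundary step is the delicate part, and I would expect it to need the stronger structure assumed there.
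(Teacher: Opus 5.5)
There is a genuine gap: your proposal never proves $\sK^\co\subset\sK^\Lambda$. After observing that $p'-p_N$ need not lie in $\Lambda$, you abandon the induction and fall back on a citation plus an unspecified ``closure and approximation'' step, and that fallback does not work as stated. An interior result combined with a limiting argument yields at best $\sK^\co\subset\overline{\sK^\Lambda}$, and nothing guarantees that $\sK^\Lambda$ is closed. The interior statement is also vacuous whenever $\sK^\co$ has empty interior, for example when $\sK$ lies in a proper affine subspace. Your intermediate attempt aims at the wrong target as well. An arbitrary $\Lambda$-convex $T\supset\sK$ need not be convex: take $\sK=\{0,e_1\}\subset\R^2$, $\Lambda=\R e_1$ and $T=[0,e_1]\cup\{e_2\}$. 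What you need is only $\sK^\co\subset T$.

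The missing idea is a better splitting in your own induction on $N$. Do not split $p_N$ off against the barycenter of the other points. Instead transfer weight between two vertices, so that the splitting direction is the edge $p_N-p_{N-1}$, which lies in $\Lambda$ by completeness. Explicitly, let $p=\sum_{i=1}^N\tau_ip_i$ with $N\geq2$, $\tau_i>0$, $\sum_i\tau_i=1$ and $p_i\in\sK$. Put $s:=\tau_{N-1}+\tau_N$ and
\[
a:=p-\tau_N(p_N-p_{N-1})=\sum_{i=1}^{N-2}\tau_ip_i+s\,p_{N-1},\qquad b:=p+\tau_{N-1}(p_N-p_{N-1})=\sum_{i=1}^{N-2}\tau_ip_i+s\,p_N .
\]
Both $a$ and $b$ are convex combinations of $N-1$ points of $\sK$, so they lie in $\sK^\Lambda$ by the induction hypothesis. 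Their difference $b-a=s\,(p_N-p_{N-1})$ lies in $\Lambda$, because $\Lambda$ is a cone and is complete with respect to $\sK$. Finally $p=\frac{\tau_{N-1}}{s}\,a+\frac{\tau_N}{s}\,b\in[a,b]$. Since $\sK^\Lambda$ is $\Lambda$-convex, it follows that $p\in\sK^\Lambda$. Combined with Prop.~\ref{prop:app-caratheodory} and Prop.~\ref{prop:app-Lconvex}, this gives $\sK^\Lambda=\sK^\co$. You need neither the laminate characterization, nor an interior result, nor any boundary approximation. The paper itself gives no argument beyond citing \cite[Cor.~4.2.13]{Markfelder}, and this short induction is all the statement requires.
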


\section{Construction of a subsolution for any initial data in $L^\infty$} \label{app:sfad}

In this section we construct a subsolution for any $L^\infty$ initial data of the incompressible Euler equations \eqref{eq:incomp-euler-mass}, \eqref{eq:incomp-euler-mom}. We build on the approach of \name{Wiedemann}~\cite{Wiedemann11}, who considered $L^2$ initial data. Due to the fact that we are dealing with $L^\infty$ data, and in order to construct subsolutions that fit into our framework elaborated in Sect.~\ref{sec:general}, we have to convolve the initial data with a different kernel compared to \cite{Wiedemann11}. Below we give a detailed proof of the following statement. 

\begin{prop} \label{prop:app-sfad} 
    Let $v_0 \in L^\infty (\T^3;\R^3)$ which satisfies $\Div v_0 = 0$ in the sense of distributions. Then there exist\footnote{Here $\symz{3}$ denotes the space of all symmetric $3\times 3$-matrices with zero trace.}
    \begin{align*}
        v &\in \Cweak([0,\infty);L^2(\T^3;\R^3)) \cap C^1 ((0,\infty)\times \T^3;\R^3) , \\
        U &\in \Cweak([0,\infty);L^2(\T^3;\symz{3})) \cap C^1 ((0,\infty)\times \T^3;\symz{3})
    \end{align*}
    with the following properties:
    \begin{enumerate}
        \item \label{item:sfad-proposition-a} The PDEs 
        \begin{align*}
            \Div v &= 0, \\
            \partial_t v + \Div U &= 0
        \end{align*} 
        hold pointwise for all $(t,x)\in (0,\infty)\times \T^3$. 

        \item \label{item:sfad-proposition-b} The following initial conditions are satisfied: 
        $$
            v(0,\cdot ) = v_0, \qquad U(0,\cdot) = 0.
        $$

        \item \label{item:sfad-proposition-c} There exists a universal\footnote{In particular, $C$ is independent of $v_0$.} constant $C>0$ such that 
        \begin{align*}
            \| v(t,\cdot) \|_{L^\infty(\T^3;\R^3)} &\leq C \| v_0 \|_{L^\infty(\T^3;\R^3)} , \\
            \| U(t,\cdot) \|_{L^\infty(\T^3;\R^3)} &\leq C \| v_0 \|_{L^\infty(\T^3;\R^3)} 
        \end{align*}
        for all $t\in [0,\infty)$.
    \end{enumerate}
\end{prop}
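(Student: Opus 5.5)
The plan follows Wiedemann's construction, with the heat kernel replaced by a kernel that is bounded on $L^\infty$ uniformly in time. Take a smooth mollifier $\phi\in \Cc(\R^3)$, radial, nonnegative, with $\int\phi=1$, and set $\phi_t(x):=t^{-3}\phi(x/t)$, viewed as periodized on $\T^3$. Define
$$
v(t,\cdot):=\phi_t\ast v_0 \quad (t>0),\qquad v(0,\cdot):=v_0.
$$
Convolution commutes with derivatives, so $\Div v=0$ pointwise for $t>0$, and $v\in C^\infty((0,\infty)\times\T^3)$. Young's inequality gives $\|v(t)\|_{L^\infty}\le\|v_0\|_{L^\infty}$. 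Standard mollifier convergence gives $v(t)\to v_0$ in $L^2$ as $t\to0$, and hence weak continuity at $t=0$.

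The remaining task is to construct $U$ with $\partial_t v=-\Div U$. We have $\partial_t v=(\partial_t\phi_t)\ast v_0$, and $\partial_t\phi_t(x)=-t^{-1}\Div_x\big(x\,\phi_t(x)\big)$, using the scaling identity $\partial_t\phi_t=-t^{-4}(3\phi+x\cdot\Grad\phi)(x/t)$. So $\partial_t\phi_t=\Div\Psi_t$ with $\Psi_t(x):=-t^{-1}x\,\phi_t(x)$. The key point is that $\|\Psi_t\|_{L^1}=t^{-1}\int|x|\phi_t=\int|y|\phi(y)\,dy$ is bounded independently of $t$; the heat kernel does not have this property on $L^\infty$.

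This representation gives a vector field, but $\Div U$ must be written with $U$ symmetric and trace-free. I would use divergence-freeness of $v_0$. Set $\hat k:=k/|k|$ and expand in Fourier variables:
$$
\partial_t\hat v(k)=\partial_t\hat\phi(tk)\,\hat v_0(k)=\imag\,|k|\,\phi'(t|k|)\,\hat v_0(k),
$$
where, abusing notation, $\phi'$ denotes the derivative of the radial profile of $\hat\phi$ and we use $\hat v_0\perp k$. We need $\hat U(k)$ with $\imag\hat U(k)k=-\partial_t\hat v(k)$. Take the symmetric trace-free choice
$$
\hat U(k)=-\phi'(t|k|)\big(\hat k\otimes\hat v_0+\hat v_0\otimes\hat k\big),
$$
which satisfies $\hat U k=-|k|\phi'(t|k|)\hat v_0$ because $\hat v_0\cdot k=0$, and has trace $2\hat k\cdot\hat v_0=0$. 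In physical space this reads $U=-(R\,m_t)\ast v_0$ plus its transpose, where $R$ denotes the Riesz transforms and $m_t$ is the inverse transform of $\phi'(t|\cdot|)$.

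\textbf{Main obstacle.} The $L^\infty$ bound on $U$ uniform in $t$. The kernel $K_t:=\mathcal F^{-1}\big[\hat k_j\,\phi'(t|k|)\big]$ has to be in $L^1$ with norm independent of $t$. Scaling reduces this to the single function $K=\mathcal F^{-1}[\hat\xi_j\,\phi'(|\xi|)]$, because $\|K_t\|_{L^1}=\|K\|_{L^1}$, together with the fact that periodization does not increase the $L^1$ norm. Choose $\phi$ with $\hat\phi$ radial, Schwartz, and equal to $1$ near the origin, for example $\hat\phi(\xi)=\chi(|\xi|)$ with $\chi\in\Cc$ and $\chi\equiv1$ near $0$. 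Then $\phi'(|\xi|)$ vanishes near $0$, so $\hat\xi_j\phi'(|\xi|)$ is a smooth, compactly supported function away from the origin. Its inverse Fourier transform is therefore Schwartz and in $L^1$. This gives $\|U(t)\|_{L^\infty}\le C\|v_0\|_{L^\infty}$ with a universal constant.

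With this $\hat\phi$, $\phi$ is Schwartz rather than compactly supported. The approximate-identity argument and $\|\phi_t\|_{L^1}=\|\phi\|_{L^1}$ still bound $v$, by $\|\phi\|_{L^1}\|v_0\|_{L^\infty}$. The initial condition $U(0)=0$ has to be set by definition. Weak continuity of $U$ at $0$ then needs $U(t)\rightharpoonup0$: for $k\neq0$, $\phi'(t|k|)\to\phi'(0)=0$ as $t\to0$, and together with the uniform $L^\infty$ bound this gives $U(t)\rightharpoonup0$ in $L^2$. Continuity and smoothness of $v,U$ for $t>0$ follow from dominated convergence in the Fourier series.
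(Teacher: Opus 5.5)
Your construction is, up to the choice of cutoff, the paper's. There $v=tF(t,|\cdot|)\ast v_0$ with $t\widehat F(t,|k|)=\chi(t|k|)$ for the particular profile $\chi\equiv1$ on $[0,1]$, $\chi(s)=P(s-1)$ on $[1,2]$, $\chi\equiv0$ on $[2,\infty)$. Moreover $U$ is the symmetrization of $(xF)\ast v_0$, with $xF=-\Psi_t$ in your notation. Its symbol $\widehat{xF}(t,k)=\frac{\imag}{2\pi}\hat k\,\chi'(t|k|)$ is exactly your Fourier-side $U$.

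What differs is the verification. Because $P'(s)=140s^3(s-1)^3$, the paper's cutoff is only $C^3$, so $F$ decays only like $r^{-6}$. The paper therefore writes $F$ and a primitive $G$ ($\partial_rG=rF$) in closed form and proves the weighted $L^1$ bounds for $F,\partial_tF,\partial_rF,G$ by hand. It checks the PDEs and $\tr U=0$ in physical space, via $W^{1,1}$ test functions and the identity $4F+t\partial_tF+r\partial_rF=0$, which is your scaling identity. Fourier analysis is used only for weak continuity at $t=0$.

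With your $C^\infty$ cutoff all kernels are Schwartz. The uniform bounds then follow from scaling invariance of the $L^1$ norm alone, and the PDEs and trace-freeness are immediate algebra from $k\cdot\hat v_0(k)=0$. This is shorter and works for any smooth radial cutoff, at the price of having no explicit kernel.

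In fact even the requirement $\hat\phi\equiv1$ near $0$ is unnecessary. For your original compactly supported radial mollifier, $U_{ij}=-(\Psi_t)_j\ast[v_0]_i-(\Psi_t)_i\ast[v_0]_j$ already works:
\begin{itemize}
\item the added transposed term is divergence-free because $\Div v_0=0$;
\item the trace vanishes because $x\phi_t(x)$ is the gradient of a compactly supported radial function;
\item $U(t)\rightharpoonup0$ because $\Grad\hat\phi(0)=0$.
\end{itemize}
This is precisely the paper's physical-space argument.

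One slip needs fixing. The derivative $\partial_t[\hat\phi(tk)]=|k|\chi'(t|k|)$ carries no factor $\imag$. With your convention $\imag\hat U(k)k=-\partial_t\hat v(k)$, the correct choice is therefore $\hat U(k)=\imag\,\chi'(t|k|)\big(\hat k\otimes\hat v_0+\hat v_0\otimes\hat k\big)$. As you wrote it, $\hat U(-k)=-\overline{\hat U(k)}$, so $U$ would be purely imaginary.
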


The proof of Prop.~\ref{prop:app-sfad} is quite technical. However we need to convolve with a different kernel compared to \cite{Wiedemann11}.

Let us first define
\begin{align} 
    F(t,r) &:= \frac{- 105 t^2}{4 \pi ^9 r^{10}} \bigg[ \left(7 \pi ^3 r^3 t+75 \pi  r t^3\right) \sin \left(\frac{2 \pi  r}{t}\right) + \left(165 \pi  r t^3-29 \pi ^3 r^3 t\right) \sin \left(\frac{4 \pi  r}{t}\right) \label{eq:app-defn-F} \\
    & \qquad\qquad\qquad + \left(2 \pi ^4 r^4+6 \pi ^2 r^2 t^2-120 t^4\right) \cos \left(\frac{2 \pi  r}{t}\right) \notag \\
    & \qquad\qquad\qquad + \left(4 \pi ^4 r^4-96 \pi ^2 r^2 t^2+120 t^4\right) \cos \left(\frac{4 \pi  r}{t}\right)\bigg] \notag
\end{align}
for $(t,r)\in(0,\infty) \times (0,\infty)$. Next we show some properties of $F$. 

\begin{lemma} \label{lemma:app-sfad-properties-F}
    The function $F$ as defined in \eqref{eq:app-defn-F} has the following properties: 
    \begin{enumerate}
        \item \label{item:sfad-propF-a} It holds that 
        $$
            \lim\limits_{r\to 0} F(t,r) = \frac{14\pi}{3 t^4} \qquad \text{ for all }t>0.
        $$
        Thus we may consider $F\in C^0 ((0,\infty)\times [0,\infty) ; \R)$.

        \item \label{item:sfad-propF-b} We even have $F\in C^1 ((0,\infty)\times [0,\infty) ; \R)$ with $\partial_r F(t,r=0) = 0$. 
        
        \item \label{item:sfad-propF-c} There exist constants $C_\ell>0$ ($\ell=1,...,7$) such that 
        \begin{align}
            |F(r,t)|&\leq \frac{C_1}{t^4} \qquad \text{ for all }t>0 \text{ and } r\geq 0, \label{eq:app-F-bound} \\
            \int_0^\infty |F(t,r)| t r^2 \dr &\leq C_2 \qquad \text{ for all }t>0, \label{eq:app-F-L1bound-tr2} \\
            \int_0^\infty |F(t,r)| r^3 \dr &\leq C_3 \qquad \text{ for all }t>0, \label{eq:app-F-L1bound-r3} \\
            \int_0^\infty |\partial_t F(t,r)| t r^2 \dr &\leq \frac{C_4}{t} \qquad \text{ for all }t>0, \label{eq:app-F_t-L1bound-tr2} \\ 
            \int_0^\infty |\partial_t F(t,r)| r^3 \dr &\leq \frac{C_5}{t} \qquad \text{ for all }t>0, \label{eq:app-F_t-L1bound-r3} \\ 
            \int_0^\infty |\partial_r F(t,r)| t r^2 \dr &\leq \frac{C_6}{t} \qquad \text{ for all }t>0, \label{eq:app-F_r-L1bound-tr2} \\ 
            \int_0^\infty |\partial_r F(t,r)| r^3 \dr &\leq \frac{C_7}{t} \qquad \text{ for all }t>0.\label{eq:app-F_r-L1bound-r3} 
        \end{align}

        \item \label{item:sfad-propF-d} There exist $G\in C^1 ((0,\infty)\times [0,\infty) ; \R)$ and $C_8>0$ such that 
        \begin{align}
            \partial_r G(t,r) &= r F(t,r) \qquad \text{ for all }t>0 \text{ and }r\geq 0, \label{eq:app-Gprime} \\
            \int_0^\infty |G(t,r)| r^2 \dr &\leq t C_8 \qquad \text{ for all }t>0. \label{eq:app-G-L1bound} 
        \end{align}
    \end{enumerate}
\end{lemma}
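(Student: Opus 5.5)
The whole lemma rests on one structural fact: $F$ is self-similar. Writing $s:=r/t$, the bracket in \eqref{eq:app-defn-F} is $t^4$ times a function of $s$ alone, and the prefactor $t^2/r^{10}$ equals $t^{-8}s^{-10}$. Hence
\[
F(t,r)=t^{-4}f(r/t),
\]
where
\[
f(s)=-\tfrac{105}{4\pi^9 s^{10}}\big[(7\pi^3s^3+75\pi s)\sin(2\pi s)+\dots\big].
\]
I will therefore reduce every item to a statement about the one-variable function $f$ on $[0,\infty)$. Two regimes are needed: small $s$, where I use Taylor expansion, and large $s$, where I use the explicit decay of $f$.

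For \ref{item:sfad-propF-a} and \ref{item:sfad-propF-b}, expand the bracket in powers of $s$. The sine terms contribute only odd powers times $s$ (so even powers), and the cosine terms contribute even powers. The coefficients of $s^0,s^2,\dots,s^{8}$ must cancel; this is a finite, routine check, ideally done with computer algebra, and is presumably how $F$ was designed. The first surviving term is $c_{10}s^{10}$, giving $f(0)=\frac{14\pi}{3}$. The remaining terms of the expansion are even powers $s^{12},\dots$, so $f$ extends to a smooth even function with $f'(0)=0$. Then $F=t^{-4}f(r/t)$ is $C^1$ on $(0,\infty)\times[0,\infty)$ with $\partial_rF(t,0)=t^{-5}f'(0)=0$.

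For large $s$, the bracket is $O(s^4)$, so $f(s)=O(s^{-6})$. Differentiating the bracket raises the degree by at most one power, so $f'(s)=O(s^{-5})$. Hence $f$ is bounded, $s^3f\in L^1$ (the integrand decays like $s^{-3}$), and $s^3f'\in L^1$ (like $s^{-2}$). Near $0$ all these quantities are bounded.

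Item \ref{item:sfad-propF-c} is now a scaling computation:
\begin{itemize}
\item $|F|\le t^{-4}\|f\|_\infty$ gives \eqref{eq:app-F-bound}.
\item Substituting $r=ts$ gives $\int|F|tr^2\dr=\int|f(s)|s^2\ds$ and $\int|F|r^3\dr=\int|f(s)|s^3\ds$. Both are finite constants; the first converges because $s^2f=O(s^{-4})$.
\item Since $\partial_rF=t^{-5}f'(r/t)$, the same substitution gives $t^{-1}\int|f'|s^2\ds$ and $t^{-1}\int|f'|s^3\ds$, both finite.
\item Since $\partial_tF=-t^{-5}\big(4f(s)+sf'(s)\big)$, the time-derivative integrals are bounded by $t^{-1}$ times integrals of $(|f|+s|f'|)s^2$ and $(|f|+s|f'|)s^3$. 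Integrability of $s^4f'$ is the one borderline point: the crude bound $f'=O(s^{-5})$ only gives $s^4f'=O(s^{-1})$, which is not integrable.
\end{itemize}

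That borderline term is the main obstacle. I would first compute the leading asymptotics of $f'$ precisely. The derivative of the $s^4$ cosine terms produces $s^5$-order trigonometric pieces, giving $f'\sim s^{-5}\times$(oscillatory), so absolute integrability of $s^4|f'|$ genuinely fails at this order. To fix it, I would examine whether the top-order terms cancel, using the specific coefficients $2\pi^4,4\pi^4$ and the frequencies $2\pi,4\pi$. If they do not cancel, I would instead bound $\partial_tF$ directly from the explicit formula, again tracking the actual decay order. If even that fails, the estimates must be weakened or argued by oscillation, and I expect the authors' coefficients were chosen to make the decay sufficient.

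For \ref{item:sfad-propF-d}, define
\[
G(t,r):=-\int_r^\infty \rho F(t,\rho)\,d\rho=-t^{-2}g(r/t),\qquad g(s)=\int_s^\infty \sigma f(\sigma)\,d\sigma.
\]
This converges because $\sigma f=O(\sigma^{-5})$, and it satisfies $\partial_rG=rF$ by construction. Regularity follows from the regularity of $F$, and $g(s)=O(s^{-4})$. Then
\[
\int_0^\infty|G|r^2\dr=t\int_0^\infty|g|s^2\ds,
\]
which is finite because the integrand decays like $s^{-2}$. This gives \eqref{eq:app-G-L1bound} with $C_8=\int_0^\infty|g|s^2\ds$.
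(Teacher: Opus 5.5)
Apart from one step, your self-similar reduction $F(t,r)=t^{-4}f(r/t)$ is a cleaner version of the paper's own argument. The paper splits at $r/t=\varepsilon$, using the Taylor expansion \eqref{eq:app-F-series} below that threshold and the crude bound $|\sin|,|\cos|\le 1$ above it. Items (a), (b), (d) and six of the seven bounds in (c) go through as you describe. Your tail integral $G=-\int_r^\infty \rho F(t,\rho)\,d\rho$ coincides with the paper's explicit $G$, because the latter vanishes as $r\to\infty$.

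The gap is \eqref{eq:app-F_t-L1bound-r3}. As written you do not prove it, and the obstacle you describe rests on a miscalculation. Differentiating the bracket $B(s)$ does \emph{not} raise its polynomial degree. For example, $\frac{d}{ds}\bigl[s^4\cos(2\pi s)\bigr]=4s^3\cos(2\pi s)-2\pi s^4\sin(2\pi s)$ is still $O(s^4)$, since derivatives of the trigonometric factors are bounded. Hence
\[
f'(s)=-\tfrac{105}{4\pi^9}\bigl(-10s^{-11}B(s)+s^{-10}B'(s)\bigr)=O(s^{-6}),
\]
not merely $O(s^{-5})$. Your claim that $f'$ contains ``$s^5$-order pieces'', and that absolute integrability of $s^4|f'|$ genuinely fails, is false. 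You have conflated $sf'$, which is $O(s^{-5})$, with $f'$.

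With the correct decay, $4f(s)+sf'(s)=O(s^{-5})$. Therefore
\[
\int_0^\infty|\partial_tF(t,r)|\,r^3\,dr=t^{-1}\int_0^\infty|4f(s)+sf'(s)|\,s^3\,ds
\]
is finite, since the integrand is $O(s^{-2})$ at infinity and bounded near $0$. No cancellation of top-order terms and no oscillation argument is needed. This agrees with the paper's explicit formula \eqref{eq:app-d_t-F}: there the bracket is $O(r^5)$ against the prefactor $r^{-10}$, so $\partial_tF=O(r^{-5})$ at fixed $t$. Replace your contingency plan with this one-line estimate and the proposal is complete.
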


\begin{proof}
Obviously $F\in C^\infty((0,\infty)\times (0,\infty);\R)$. Plugging the series expansion of $\sin$ and $\cos$ into \eqref{eq:app-defn-F}, we obtain
\begin{equation} \label{eq:app-F-series}
    F(t,r) = \frac{14 \pi }{3 t^4}-\frac{752 \pi ^3}{165 t^4} \left(\frac{r}{t}\right)^2 + \frac{1}{t^4} O\left(\left(\frac{r}{t}\right)^4\right).
\end{equation}
This yields \ref{item:sfad-propF-a}, and $\partial_r F(t,r=0) = 0$ for all $t>0$. It is straightforward to compute the derivatives of \eqref{eq:app-defn-F}, in particular we obtain
\begin{align} 
    \partial_t F(t,r) &= \frac{-105}{4 \pi ^9 r^{10}} \bigg[\left(4 \pi ^5 r^5+33 \pi ^3 r^3 t^2+135 \pi  r t^4\right) \sin \left(\frac{2 \pi  r}{t}\right) \label{eq:app-d_t-F} \\
    &\qquad \qquad \qquad + \left(16 \pi ^5 r^5-471 \pi ^3 r^3 t^2+1305 \pi  r t^4\right) \sin \left(\frac{4 \pi  r}{t}\right) \notag \\
    &\qquad \qquad \qquad + \left(-10 \pi ^4 r^4 t-126 \pi ^2 r^2 t^3-720 t^5\right) \cos \left(\frac{2 \pi  r}{t}\right) \notag \\
    &\qquad \qquad \qquad + \left(124 \pi ^4 r^4 t-1044 \pi ^2 r^2 t^3+720 t^5\right) \cos \left(\frac{4 \pi  r}{t}\right)\bigg] , \notag \\ 
    \partial_r F(t,r) &= \frac{105 t}{4 \pi ^9 r^{11}} \bigg[\left(4 \pi ^5 r^5+61 \pi ^3 r^3 t^2+435 \pi  r t^4\right) \sin \left(\frac{2 \pi  r}{t}\right) \label{eq:app-d_r-F} \\
    &\qquad \qquad \qquad + \left(16 \pi ^5 r^5-587 \pi ^3 r^3 t^2+1965 \pi  r t^4\right) \sin \left(\frac{4 \pi  r}{t}\right) \notag \\
    &\qquad \qquad \qquad + \left(-2 \pi ^4 r^4 t-102 \pi ^2 r^2 t^3-1200 t^5\right) \cos \left(\frac{2 \pi  r}{t}\right) \notag \\
    &\qquad \qquad \qquad + \left(140 \pi ^4 r^4 t-1428 \pi ^2 r^2 t^3+1200 t^5\right) \cos \left(\frac{4 \pi  r}{t}\right)\bigg] \notag 
\end{align}
for all $(t,r)\in (0,\infty)^2$. In addition to that we get from \ref{item:sfad-propF-a} that $\partial_t F(t,r=0) = -\frac{56 \pi }{3 t^5}$. Computing the limits $r\to 0$ in \eqref{eq:app-d_t-F} and \eqref{eq:app-d_r-F} we observe that the partial derivatives $\partial_t F$ and $\partial_r F$ are continuous in $r=0$ and consequently $F\in C^1((0,\infty)\times [0,\infty);\R)$, i.e.~\ref{item:sfad-propF-b}. 

Next we deduce from \eqref{eq:app-F-series} that there exists $\ep>0$ (independent of $t$ and $r$) such that 
\begin{equation} \label{eq:app-F-bound-a}
    |F(t,r)| \leq \frac{1}{t^4} \left( \frac{14\pi}{3} + 1 \right) \qquad \text{ for all }t>0 \text{ and }0\leq \frac{r}{t}\leq \ep.
\end{equation}
Just by estimating $|\sin(y)|\leq 1$, $|\cos(y)|\leq 1$ for all $y\in \R$, we find 
\begin{equation} \label{eq:app-F-bound-Large-Arg} 
    |F(t,r)| \leq \frac{315}{2 \pi ^9 t^4}  \left[\pi ^4 \left(\frac{r}{t}\right)^{-6} + 6 \pi ^3 \left(\frac{r}{t}\right)^{-7} + 17 \pi ^2 \left(\frac{r}{t}\right)^{-8} + 40 \pi \left(\frac{r}{t}\right)^{-9} + 40 \left(\frac{r}{t}\right)^{-10}\right] 
\end{equation}
for all $t>0$, $r>0$. Hence 
\begin{equation} \label{eq:app-F-bound-b} 
    |F(t,r)| \leq \frac{315}{2 \pi ^9 t^4} \left(\pi ^4 \ep^{-6} + 6 \pi ^3 \ep^{-7} + 17 \pi ^2 \ep^{-8} + 40 \pi \ep^{-9} + 40 \ep^{-10}\right)  \quad \text{ for all }t>0 \text{ and }\frac{r}{t}\geq \ep.
\end{equation}
Combining \eqref{eq:app-F-bound-a} and \eqref{eq:app-F-bound-b} we end up with \eqref{eq:app-F-bound}. 

Next we show \eqref{eq:app-F-L1bound-tr2}. To this end we write 
$$
    \int_0^\infty |F(t,r)| t r^2 \dr = \int_0^{\ep t} |F(t,r)| t r^2 \dr + \int_{\ep t}^\infty |F(t,r)| t r^2 \dr,
$$
with the $\ep>0$ we found above. Using \eqref{eq:app-F-bound-a} we obtain 
$$
    \int_0^{\ep t} |F(t,r)| t r^2 \dr \leq \frac{1}{t^4} \left( \frac{14\pi}{3} + 1 \right) t \frac{\ep^3 t^3}{3},
$$
and from \eqref{eq:app-F-bound-Large-Arg} we deduce 
$$ 
     \int_{\ep t}^\infty |F(t,r)| t r^2 \dr \leq \frac{3}{4 \pi ^9} \left(70 \pi ^4 \ep^{-3} + 315 \pi ^3 \ep^{-4} + 714 \pi ^2 \ep^{-5} + 1400 \pi \ep^{-6} + 1200 \ep^{-7}\right) .
$$
Thus \eqref{eq:app-F-L1bound-tr2} holds. The proofs of \eqref{eq:app-F-L1bound-r3}-\eqref{eq:app-F_r-L1bound-r3} work in a very similar way. We leave the details to the reader. 

It remains to show \ref{item:sfad-propF-d}. We set 
$$
    G(t,r) := -\frac{105 t^3}{2 \pi ^9 r^8} \left[\left(\pi ^3 r^3-15 \pi  r t^2\right) \cos \left(\frac{\pi  r}{t}\right)+\left(15 t^3-6 \pi ^2 r^2 t\right) \sin \left(\frac{\pi  r}{t}\right)\right] \sin \left(\frac{3 \pi  r}{t}\right) 
$$
for $t>0$, $r\geq 0$. Then obviously $G\in C^1 ((0,\infty)\times [0,\infty) ; \R)$. It is straightforward to check \eqref{eq:app-Gprime}. The bound \eqref{eq:app-G-L1bound} can be proven in the same way as the bounds in \ref{item:sfad-propF-c}. We leave the details to the reader.
\end{proof}

Now we are ready to prove Prop.~\ref{prop:app-sfad}.

\begin{proof}[Proof of Prop.~\ref{prop:app-sfad}] 
Considering $v_0 \in L^\infty (\T^3;\R^3)$ as a periodic function in $L^\infty(\R^3;\R^3)$, we define 
\begin{align*}
    v(t,x) &:= \int_{\R^3} t F(t,|x-y|) v_0(y) \dy , \\
    U_{ij}(t,x) &:= \int_{\R^3} (x_i-y_i) F(t,|x-y|) [v_0(y)]_j \dy + \int_{\R^3} (x_j-y_j) F(t,|x-y|) [v_0(y)]_i \dy
\end{align*}
for $(t,x)=(t,x_1,x_2,x_3)\in (0,\infty)\times \R^3$, $i,j=1,2,3$. 

Due to item~\ref{item:sfad-propF-b} of Lemma~\ref{lemma:app-sfad-properties-F}, the function $(t,x)\mapsto F(t,|x|)$ lies in $C^1((0,\infty)\times \R^3;\R)$. Moreover we infer from Lemma~\ref{lemma:app-sfad-properties-F}~\ref{item:sfad-propF-c} that 
\begin{align}
    \int_{\R^3} |t F(t,|x|)| \dx &= 4\pi t \int_0^\infty |F(t,r)| r^2 \dr \leq 4\pi C_2 \qquad \text{ for all }t>0, \label{eq:app-tF-L1} \\
    \int_{\R^3} |x_i F(t,|x|)| \dx &\leq 4\pi \int_0^\infty |F(t,r)| r^3 \dr \leq 4\pi C_3 \qquad \text{ for all }t>0, i=1,2,3. \label{eq:app-xF-L1} 
\end{align}
Hence Young's inequality yields that $v$ and $U$ are well-defined, and $v(t,\cdot)\in L^\infty(\R^3;\R^3)$, $U(t,\cdot)\in L^\infty(\R^3;\R^{3\times 3})$ for all $t>0$ where the corresponding $L^\infty$ bounds are uniform in time. Moreover $v$ and $U$ are periodic because $v_0$ is periodic, and hence we may consider the former as functions on $(0,\infty)\times \T^3$. 

Next we note that $U(t,x)$ is obviously a symmetric matrix for all $(t,x)\in (0,\infty)\times \T^3$. Let us now compute the trace of $U(t,x)$. With the help of \eqref{eq:app-Gprime} we find
$$
    \tr (U(t,x)) = 2 \int_{\R^3} F(t,|x-y|) (x-y) \cdot v_0(y) \dy = - 2 \int_{\R^3} \Grad_y G(t,|x-y|) \cdot v_0(y) \dy.
$$
Since $\Cc(\R^3)$ is dense in $W^{1,1}(\R^3)$, we deduce from the fact that $\Div v_0 = 0$ in the sense of distributions, that 
\begin{equation} \label{eq:app-weakly-divfree}
    \int_{\R^3} \Grad_y \phi(y) \cdot v_0(y) \dy = 0 \qquad \text{ for all }\phi \in W^{1,1}(\R^3).
\end{equation}
Observing that $y\mapsto G(t,|x-y|) \in W^{1,1}(\R^3)$ for all $t>0$, $x\in \R^3$ (see Lemma~\ref{lemma:app-sfad-properties-F}~\ref{item:sfad-propF-d} and \eqref{eq:app-xF-L1}), we conclude that $\tr(U(t,x))=0$ for all $(t,x)\in (0,\infty)\times \T^3$.

From the bounds in Lemma~\ref{lemma:app-sfad-properties-F}~\ref{item:sfad-propF-c} we infer 
$$
    v \in C^1 ((0,\infty)\times \T^3;\R^3) \quad \text{ and }\quad U \in C^1 ((0,\infty)\times \T^3;\symz{3}).
$$
Moreover we find
$$
    \Div v = \sum_{j=1}^3 \int_{\R^3} \partial_{x_j} \big[ t F(t,|x-y|) \big] [v_0(y)]_j \dy = -\sum_{j=1}^3 \int_{\R^3} \partial_{y_j} \big[ t F(t,|x-y|) \big] [v_0(y)]_j \dy,
$$
and, since the bounds in Lemma~\ref{lemma:app-sfad-properties-F}~\ref{item:sfad-propF-c} ensure that $y\mapsto t F(t,|x-y|) \in W^{1,1}(\R^3)$ for all $t>0$, $x\in \R^3$, we conclude from \eqref{eq:app-weakly-divfree} that $\Div v = 0$ for all $(t,x)\in (0,\infty)\times \T^3$. In addition to that we compute
\begin{align*}
    \partial_t v + \Div U &= \int_{\R^3} \Big( 4 F(t,|x-y|) + t \partial_t F(t,|x-y|) + |x-y| \partial_r F(t,|x-y|) \Big) v_0(y) \dy \\
    &\qquad + \sum_{j=1}^3 \partial_{x_j} \big[ (x-y) F(t,|x-y|) \big] [v_0(y)]_j \dy \ = \ 0,
\end{align*}
because 
$$
    4 F(t,r) + t \partial_t F(t,r) + r \partial_r F(t,r) = 0
$$
for all $t>0$ and $r\geq 0$, and
$$
    \sum_{j=1}^3 \partial_{x_j} \big[ (x-y) F(t,|x-y|) \big] [v_0(y)]_j \dy = -\sum_{j=1}^3 \partial_{y_j} \big[ (x-y) F(t,|x-y|) \big] [v_0(y)]_j \dy = 0
$$
due to \eqref{eq:app-weakly-divfree} and $y\mapsto (x-y) F(t,|x-y|) \in W^{1,1}(\R^3)$, the latter of which follows from Lemma~\ref{lemma:app-sfad-properties-F}~\ref{item:sfad-propF-c}. Thus item~\ref{item:sfad-proposition-a} of Prop.~\ref{prop:app-sfad} holds. 

Next we set 
$$
    v(0,x):= v_0(x), \qquad \text{ and } \qquad U(0,x):= 0,
$$
so Prop.~\ref{prop:app-sfad}~\ref{item:sfad-proposition-b} is satisfied by construction. Furthermore we obtain Prop.~\ref{prop:app-sfad}~\ref{item:sfad-proposition-c} from Young's inequality and \eqref{eq:app-tF-L1}, \eqref{eq:app-xF-L1}.

Finally, since $v\in \Cweak((0,\infty);L^2(\T^3;\R^3))$ and $U\in \Cweak((0,\infty);L^2(\T^3;\symz{3}))$, it remains to prove that $v$ and $U$ are weakly continuous at $t=0$ as functions from $[0,\infty)$ to $L^2(\T^3;\R^3))$ and $L^2(\T^3;\symz{3})$ respectively. To this end we will work in Fourier space. The Fourier transform of the function $x\mapsto F(t,|x|)$ (with $t>0$) is given by the Hankel transform
\begin{equation} \label{eq:app-hankel-trafo}
    \widehat{F}(t,|k|) = \int_0^\infty F(t,r) \frac{2\sin(2\pi r |k|)}{r|k|}r^2 \dr,
\end{equation}
see \cite[Appendix~B.5]{Grafakos} for details. This way we obtain\footnote{We note that the calculation of $\widehat{F}(t,|k|)$ from $F(t,|x|)$ via \eqref{eq:app-hankel-trafo} is quite involved. It is simpler to compute $F(t,|x|)$ from $\widehat{F}(t,|k|)$ via the inverse Hankel transform which coincides with the Hankel transform itself, i.e.
$$
    F(t,|x|) = \int_0^\infty \widehat{F}(t,s) \frac{2\sin(2\pi s |x|)}{s|x|}s^2 \ds.
$$
This way one can verify that $\widehat{F}(t,|k|)$ indeed reads as given in \eqref{eq:app-Fhat}.} 
\begin{equation} \label{eq:app-Fhat}
    \widehat{F}(t,|k|) = \frac{1}{t} \cdot \left\{ \begin{array}{cl} 1 & \text{ if }|k| \leq \frac{1}{t}, \\
    P(t|k|-1) & \text{ if }\frac{1}{t} \leq |k| \leq \frac{2}{t} , \\
    0 & \text{ else}, \end{array} \right.
\end{equation}
where $P:\R\mapsto \R$, $P(t):= 20t^7 - 70 t^6+ 84 t^5 - 35 t^4 + 1$. Note that $P(0)=1$, $P(1)=0$, $P'(0)=P'(1)=0$ as well as $P''(0)=P''(1)=0$.

Now using convolution theorem we find
$$
    \widehat{v}(t,k) = t \widehat{F}(t,|k|) \widehat{v_0}(k),
$$
and hence\footnote{More precisely we observe $\widehat{v}(t,k) \to \widehat{v_0}(k)$ pointwise in $k$ and conclude strong $L^2$ convergence using Lebesgue's dominated convergence theorem.} $\widehat{v}(t,\cdot) \to \widehat{v_0}$ strongly in $L^2$ as $t\to 0$. Thus $v(t,\cdot) \to v_0$ strongly in $L^2$ as $t\to 0$.

In order to find $\widehat{U}(t,k)$, we have to compute the Fourier transform of $x\mapsto x F(t,|x|)$ (with $t>0$) which is much more involved. We claim that 
\begin{equation} \label{eq:app-xFhat}
    \widehat{xF}(t,k) = -\frac{k}{2\pi \imag |k|^2} \cdot \left\{ \begin{array}{cl} P'(t|k|-1) |k| & \text{ if }\frac{1}{t} \leq |k| \leq \frac{2}{t} , \\
    0 & \text{ else}. \end{array} \right. 
\end{equation}
As above, instead of computing the Fourier transform directly, we compute the Fourier inverse of $\widehat{xF}(t,k)$ by mimicking the Hankel transform. To this end we use spherical coordinates $(s,\Phi,\Theta)$ to describe $k$, where we rotate the coordinate system such that $x\cdot k= |x| s \cos(\Theta)$. This means that 
$$
    k = s \left( \begin{array}{c} \cos (\theta ) \sin (\Theta ) \cos (\Phi ) \cos (\phi )+\sin (\theta ) \cos (\Theta ) \cos (\phi )-\sin (\Theta ) \sin (\Phi ) \sin (\phi ) \\ \cos (\theta ) \sin (\Theta ) \cos (\Phi ) \sin (\phi )+\sin (\theta ) \cos (\Theta ) \sin (\phi )+\sin (\Theta ) \sin (\Phi ) \cos (\phi ) \\ \cos (\theta ) \cos (\Theta )-\sin (\theta ) \sin (\Theta ) \cos (\Phi ) \end{array} \right),
$$
where $(r,\phi,\theta)$ are the spherical coordinates of $x$. Then we can compute the Fourier inverse
\begin{align*}
    &\int_0^\infty \int_0^{2\pi} \int_0^\pi \widehat{xF}(t,k) \exp(2\pi \imag |x| s \cos(\Theta)) s^2 \sin(\Theta) \dTheta \dPhi \ds \\
    &= -\frac{x}{2 \pi^2 r^3}\int_{1/t}^{2/t} P'(t s-1) \big(\sin (2 \pi  r s)-2 \pi  r s \cos (2 \pi  r s)\big) \ds \\
    &= x F(t,|x|),
\end{align*}
which verifies that $\widehat{xF}(t,k)$ as given in \eqref{eq:app-xFhat} is indeed the Fourier inverse of $x F(t,|x|)$.

Now again by convolution theorem we have 
$$
    \widehat{U}_{ij}(t,k) = [\widehat{xF}(t,k)]_i [\widehat{v_0}(k)]_j + [\widehat{xF}(t,k)]_j [\widehat{v_0}(k)]_i.
$$
Next we observe that $U(t,\cdot)\rightharpoonup 0$ weakly in $L^2$ as $t\to 0$, is equivalent to $\widehat{U}(t,\cdot)\rightharpoonup 0$ weakly in $l^2$ as $t\to 0$. Moreover we infer from \eqref{eq:app-xFhat} that there exists a constant $C>0$ such that $|\widehat{xF}(t,k)|\leq C$ for all $t>0$, $k\in \Z^3$. Hence we find for a fixed $g\in l^2(\Z^3;\R)$ 
$$
    \left| \sum_{k\in \Z^3} \widehat{U}_{ij}(t,k) g(k) \right| \leq C \|v_0\|_{L^2(\T^3;\R^3)} \left( \sum_{k\in \Z^3, |k|\geq \frac{1}{t}} g(k)^2 \right)^{1/2} \to 0 \qquad\text{ as }t\to 0,
$$
which finishes the proof. 
\end{proof}

\section{Construction of a suitable function $\chi$ used in the proof of Thm.~\ref{thm:compr-euler-wilddata}} 

The aim of this section is to prove the following statement. 

\begin{lemma} \label{lemma:app-constr-phi}
    For any given $\ov{T},\ov{C}>0$ there exists $\ov{\ep}(\ov{T},\ov{C})>0$ with the following property: for all $0<\ep<\ov{\ep}(\ov{T},\ov{C})$ there exists $\ov{\chi}\in C^\infty([0,\ov{T}])$ such that 
    \begin{align}
        &\ov{\chi}(t)\geq \ov{C} \qquad \text{ for all }t\in [0,\ov{T}], \label{eq:app-constr-phi-p1} \\
        &\ov{\chi}'(t) = \ep (\ov{\chi}(t))^2 + 2 \ov{\chi}(t) +1 \qquad \text{ for all }t\in [0,\ov{T}]. \label{eq:app-constr-phi-p2}
    \end{align}
\end{lemma}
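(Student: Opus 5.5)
Since \eqref{eq:app-constr-phi-p2} is a scalar autonomous ODE with smooth (polynomial) right-hand side, I will construct $\ov{\chi}$ as the solution of the initial value problem
\begin{equation*}
    \ov{\chi}' = \ep\,\ov{\chi}^2 + 2\ov{\chi} + 1, \qquad \ov{\chi}(0) = \ov{C},
\end{equation*}
and show that for $\ep$ small this solution exists on all of $[0,\ov{T}]$. Smoothness then follows automatically from Picard--Lindel\"of and the smoothness of the right-hand side. Property \eqref{eq:app-constr-phi-p1} is immediate: the right-hand side $\ep y^2+2y+1$ is strictly positive for $y\geq \ov{C}>0$, so $\ov{\chi}$ is increasing and stays $\geq \ov{C}$ as long as it exists.

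The only real issue is to rule out blow-up before time $\ov{T}$, which is why $\ep$ must be small. I plan a comparison argument. As long as $\ov{\chi}\leq R$ for some threshold $R$, we have $\ov{\chi}'\leq (\ep R+2)\ov{\chi}+1$. Grönwall then gives
\begin{equation*}
    \ov{\chi}(t)\leq \Big(\ov{C}+\tfrac{1}{\ep R+2}\Big)e^{(\ep R+2)t}.
\end{equation*}
I choose
\begin{equation*}
    R := 2\Big(\ov{C}+1\Big)e^{3\ov{T}}
\end{equation*}
and set $\ov{\ep} := 1/R$. For $\ep<\ov{\ep}$ we get $\ep R+2<3$, so the bound yields $\ov{\chi}(t)\leq (\ov{C}+1)e^{3\ov{T}} = R/2 < R$ on the existence interval intersected with $[0,\ov{T}]$.

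A standard continuity (bootstrap) argument then finishes the proof. Let $t^*$ be the supremum of times $\leq \ov{T}$ up to which the solution exists and satisfies $\ov{\chi}\leq R$. On $[0,t^*)$ we have $\ov{\chi}\leq R/2$, so the solution cannot blow up and extends past $t^*$ with $\ov{\chi}<R$ nearby. This forces $t^*=\ov{T}$.

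An alternative is to integrate explicitly: the quadratic $\ep y^2+2y+1$ has real roots when $\ep<1$, giving a closed-form logistic-type solution with explicit blow-up time $\sim \frac{1}{2\sqrt{1-\ep}}\log(\cdot)$, which tends to $+\infty$ as $\ep\to0$. I would use the Grönwall bootstrap since it avoids case analysis. I expect no substantial obstacle. The only delicate point is checking that the dependence of $\ov{\ep}$ on $(\ov{T},\ov{C})$ alone is explicit, which the choice above makes clear.
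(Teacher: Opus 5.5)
Your proof is correct, but it takes a different route from the paper, which uses the closed-form alternative you mention at the end. For $0<\ep<1$ the paper writes $\ov{\chi}$ explicitly as a quotient of $\cosh$/$\sinh$ combinations in $t\sqrt{1-\ep}$ with $\ov{\chi}(0)=\ov{C}$. It reads off the exact blow-up time $\frac{1}{\sqrt{1-\ep}}\Artanh\big(\frac{\sqrt{1-\ep}}{1+\ov{C}\ep}\big)$, which tends to $\infty$ as $\ep\to 0$. It obtains \eqref{eq:app-constr-phi-p1} from the explicit derivative $\ov{\chi}'=(1-\ep)(\ep\ov{C}^2+2\ov{C}+1)/(\text{denominator})^2>0$, and \eqref{eq:app-constr-phi-p2} is then a direct computation.

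Your bootstrap replaces the explicit integration by soft ODE facts: local existence, the blow-up alternative, and a linear Gr\"onwall comparison on the region $\{\ov{\chi}\leq R\}$. The numbers check out: with $a=\ep R+2\in[2,3)$ you get $\ov{\chi}\leq(\ov{C}+\tfrac1a)e^{at}\leq(\ov{C}+1)e^{3\ov{T}}=R/2$.

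Your approach gives a fully explicit threshold $\ov{\ep}=\big(2(\ov{C}+1)e^{3\ov{T}}\big)^{-1}$, whereas the paper only gets $\ov{\ep}$ implicitly from a limit. It also gives an argument that survives perturbations of the Riccati equation, e.g.\ bounded time-dependent coefficients, for which no closed form is available. What it costs is sharpness. The exact blow-up time is $\tfrac12\log(1/\ep)+O(1)$ as $\ep\to0$, so the optimal threshold is of order $e^{-2\ov{T}}$ rather than your $e^{-3\ov{T}}$. This is immaterial for the application in the proof of Thm.~\ref{thm:compr-euler-wilddata}, which only uses the existence of some $\ov{\ep}$ depending on $\ov{T},\ov{C}$.
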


\begin{proof} 
For any $0<\ep<1$ we set 
\begin{equation} \label{eq:app-constr-phi-defn}
    \ov{\chi}(t) := \frac{\ov{C} \sqrt{1-\ep} \cosh (t \sqrt{1-\ep}) + (1+\ov{C}) \sinh (t \sqrt{1-\ep})}{\sqrt{1-\ep} \cosh (t \sqrt{1-\ep}) - (1+\ov{C} \ep ) \sinh (t \sqrt{1-\ep})} .
\end{equation}
Note that $\ov{\chi}\in C^\infty([0,T_\ep])$ as long as $T_\ep$ is smaller than the blow-up time at which the denominator in \eqref{eq:app-constr-phi-defn} becomes zero, i.e. 
\begin{equation} \label{eq:app-constr-phi-blowup}
    T_\ep < \frac{1}{\sqrt{1-\ep}} \Artanh \left(\frac{\sqrt{1-\ep}}{1+\ov{C}\ep}\right).
\end{equation}
Since the right-hand side of \eqref{eq:app-constr-phi-blowup} tends to $\infty$ as $\ep\to 0$, we infer that for $\ep<\ov{\ep}(\ov{T},\ov{C})$ with some suitable $\ov{\ep}(\ov{T},\ov{C})$, it holds that $\ov{\chi}\in C^\infty([0,\ov{T}])$.

It is straightforward to verify that 
$$
    \ov{\chi}'(t) = \frac{(1-\ep) (\ep \ov{C}^2 + 2 \ov{C} + 1)}{\big(\sqrt{1-\ep} \cosh (t \sqrt{1-\ep}) - (1+\ov{C} \ep ) \sinh (t \sqrt{1-\ep})\big)^2}. 
$$
Thus $\ov{\chi}$ is increasing on $[0,\ov{T}]$. Since $\ov{\chi}(0)=\ov{C}$, this immediately shows \eqref{eq:app-constr-phi-p1}.

Moreover another simple computation proves \eqref{eq:app-constr-phi-p2} which finishes the proof.
\end{proof}

\AtNextBibliography{\footnotesize}
\printbibliography[heading=bibintoc]
	
\end{document}